\theoremstyle{plain}
\newtheorem{theorem}{Theorem}[section]
\newtheorem{lemma}[theorem]{Lemma}
\newtheorem{proposition}[theorem]{Proposition}
\newtheorem{corollary}[theorem]{Corollary}
\theoremstyle{definition}
\newtheorem{definition}[theorem]{Definition}
\newtheorem{example}[theorem]{Example}
\theoremstyle{remark}
\newtheorem{remark}{Remark}
\DeclareMathOperator{\Div}{\mathrm{Div}}
\DeclareMathOperator{\divf}{\mathrm{div}}
\DeclareMathOperator{\RDiv}{\mathbb{R}\mathrm{Div}}
\DeclareMathOperator{\DivPlus}{\mathrm{Div}_+}
\DeclareMathOperator{\DivPlusD}{\mathrm{Div}_+^d}
\DeclareMathOperator{\DivPlusThree}{\mathrm{Div}_+^3}
\DeclareMathOperator{\DivD}{\mathrm{Div}^d}
\DeclareMathOperator{\DivPlusR}{\mathrm{Div}_+^r}
\DeclareMathOperator{\RDivPlus}{\mathbb{R}\mathrm{Div}_+}
\DeclareMathOperator{\RDivPlusD}{\mathbb{R}\mathrm{Div}_+^d}
\DeclareMathOperator{\RDivPlusM}{\mathbb{R}\mathrm{Div}_+^m}
\DeclareMathOperator{\RDivPlusDM}{\mathbb{R}\mathrm{Div}_+^{cd+m}}
\DeclareMathOperator{\RDivD}{\mathbb{R}\mathrm{Div}^d}
\DeclareMathOperator{\mR}{\mathbb{R}}
\DeclareMathOperator{\mZ}{\mathbb{Z}}
\DeclareMathOperator{\CPA}{\mathrm{CPA}(\Gamma)}
\DeclareMathOperator{\measure}{\mathrm{Meas}^0}
\DeclareMathOperator{\Red}{\mathrm{Red}}
\DeclareMathOperator{\dist}{\mathrm{dist}_\Gamma}
\DeclareMathOperator{\Mod}{\mathrm{mod}}
\DeclareMathOperator{\Gmin}{\Gamma_{\mathrm{min}}}
\DeclareMathOperator{\Gmax}{\Gamma_{\mathrm{max}}}
\DeclareMathOperator{\supp}{\mathrm{supp}}
\DeclareMathOperator{\Tan}{\operatorname{Tan}}
\newcommand{\mbbN}{{\mathbb N}}
\newcommand{\mbbP}{{\mathbb P}}
\newcommand{\mbbR}{{\mathbb R}}
\newcommand{\mbbZ}{{\mathbb Z}}
\newcommand{\mbbTP}{\mathbb{TP}}
\newcommand{\mbbTPP}{\mathbb{TP}^0}
\newcommand{\mcalB}{{\mathcal B}}
\newcommand{\mcalC}{{\mathcal C}}
\newcommand{\mcalL}{{\mathcal L}}
\newcommand{\mcalN}{{\mathcal N}}
\newcommand{\mcalU}{{\mathcal U}}
\newcommand{\mfrakC}{{\mathfrak C}}
\newcommand{\ve}{{\mathbf e}}
\newcommand{\vt}{{\mathbf t}}
\newcommand{\outdeg}{\mathrm{outdeg}}
\newcommand{\Xmin}{X_{\min}}
\newcommand{\Xmax}{X_{\max}}
\newcommand{\minplus}{\oplus}
\newcommand{\maxplus}{\boxplus}
\newcommand{\hminplus}{\widehat{\oplus}}
\newcommand{\hmaxplus}{\widehat{\boxplus}}
\newcommand{\lowerpi}{\underline{\pi}}
\newcommand{\upperpi}{\overline{\pi}}
\newcommand{\lowerTheta}{\underline{\Theta}}
\newcommand{\upperTheta}{\overline{\Theta}}
\newcommand{\lowereta}{\underline{\eta}}
\newcommand{\uppereta}{\overline{\eta}}
\newcommand{\tconv}{\mathrm{tconv}}
\newcommand{\lowertconv}{\underline{\mathrm{tconv}}}
\newcommand{\uppertconv}{\overline{\mathrm{tconv}}}
\newcommand{\loweruppertconv}{\underline{\overline{\mathrm{tconv}}}}
\newcommand{\lowerTCONV}{\underline{\mathrm{TCONV}}}
\newcommand{\upperTCONV}{\overline{\mathrm{TCONV}}}
\newcommand{\lowerupperTCONV}{\underline{\overline{\mathrm{TCONV}}}}
\newcommand{\cl}{cl}
\title{Idempotent Analysis, Tropical Convexity and Reduced Divisors}
\author{Ye Luo}
\address{School of Information Science and Engineering, Xiamen University, Xiamen, Fujian 361005, China}
\email{luoye80@gmail.com, luoye@xmu.edu.cn}
\subjclass[2000]{05C38, 14H99}
\keywords{Idempotent Analysis, Tropical Convexity Analysis, $B$-pseudonorms, Tropical Projections, Reduced Divisors, Harmonic Morphisms, Geometric Rank}
\date{}
\begin{document}

\begin{abstract}
We investigate a canonical extension of  a conventional combinatorial notion of reduced divisors  to  a notion of tropical projections, which  can be defined as the unique minimizers of the so-called $B$-pseudonorms with respect to compact tropical convex sets.  In this paper, we build the foundation of a theory of idempotent analysis using tropical projections and obtain a series of subsequent results, e.g. tropical retracts, construction of compact tropical convex sets and a set-theoretical characterization of tropical weak independence. In particular, we prove a tropical version of Mazur's Theorem on closed tropical convex hulls and discover a fixed point theorem for tropical projections. As the main application of our machinery  of  tropical convexity analysis, we investigate the divisor theory on metric graphs based on tropical projections. We extend the notion of linear systems and redefine the notion of reduced divisors to all linear systems instead of only to complete linear systems. Moreover, we explore the correspondence between reduced divisor maps to dominant tropical trees and harmonic morphisms to metric trees. Furthermore, we propose a notion called the geometric rank for linear systems on metric graphs which resolves the discrepancy between the interpretations of gonality of metric graphs using the conventional Baker-Norine rank function and using harmonic morphisms to  metric trees.
\end{abstract}

\maketitle

\section{Introduction}
In this paper, we present  a new framework for idempotent  analysis over tropical semirings based on a notion called  tropical projection. The backgrounds of this paper come from a few areas with intrinsic connections:

\begin{enumerate}
\item \emph{Idempotent analysis.}  Idempotent analysis is an analysis theory over idempotent semirings, developed by Maslov and his collaborators in 1980s originally as a framework to describe the semicalssical limit of quantum mechanics via Maslov dequantization \cite{KM97,LM05}. Equivalent under negation, the min-plus algebra $(\mbbR_{\min},\odot,\minplus)$ and the max-plus algebra $(\mbbR_{\max},\odot,\maxplus)$ where $\mbbR_{\min}=\mbbR\bigcup\{+\infty\}$, $\mbbR_{\max}=\mbbR\bigcup\{-\infty\}$, $a\odot b:=a+b$, $a\minplus b:=\min(a,b)$ and $a\maxplus b:=\max(a,b)$ are the most intensively studied idempotent semirings. ($(\mbbR_{\min},\odot,\minplus)$ and $(\mbbR_{\max},\odot,\maxplus)$ are also commonly called tropical semirings nowadays and sometimes the addition identities $+\infty$ and $-\infty$ are ignored.)  Under a correspondence principle of idempotent analysis \cite{LM98}, many important results over the field of real or complex numbers have counterparts over idempotent semirings, while the most striking observation is probably that the Legendre transform is just the counterpart of  the Fourier transform in idempotent analysis. 

\item \emph{Tropical geometry and tropical convexity.} 
As a rapidly developing subject in mathematics, tropical geometry is a theory of geometry over tropical semirings which can be described as a piece-wise linear version of algebraic geometry \cite{MS15, Viro08}. A framework for tropical geometry has been developed by Kontsevich-Soibelman \cite{KS01} and Mikhalkin \cite{Mikhalkin05} with applications in enumerative algebraic geometry and homological mirror symmetry, and another framework has been developed by Baker-Payne-Rabinoff \cite{BPR16,BPR13} based on Berkovich analytification \cite{Berkovich12}. The ``linearity'' features of tropical geometry can be captured by the notion of tropical convexity discussed by Develin and Sturmfels  \cite{DS04}, which is intrinsically related to max-linear (or min-linear) systems \cite{Butkovivc10}.

\item \emph{Chip-firing games, the divisor theory on finite graphs/metric graphs  and reduced divisors.}
The chip-firing game on a graph (or the abelian sandpile model) has become a huge topic in combintorics \cite{CP18}, partly because it is related to many areas of mathematics and theoretical physics. Baker and Norine \cite{BN07} have discovered that the classical Riemann--Roch theorem and Abel-Jacobi theory for algebraic curves have analogues for finite graphs with a motivation of degenerating divisors on an algebraic curve $C$ to divisors on the dual graph of the special fiber of a semistable model of $C$.  Their result was immediately extended to the context of abstract tropical curves which are essentially metric graphs  \cite{GK08}. People soon realize that this remarkable work should be unified into the world of tropical geometry under the framework in \cite{BPR16}, and tropical proofs of some theorems in conventional algebraic geometry have been obtained. For example, Cools, Draisma, Payne and Robeva \cite{CDPR12} have derived a new proof of the Brill--Noether Theorem based on Baker's specialization lemma  \cite{Baker08} and an explicit computation of the Baker-Norine combinatorial rank function for a special type of metric graphs. Note that because the degeneration of divisors on algebraic curves to finite graphs or metric graphs has some subtlety, even though the graph-theoretical Riemann--Roch  in \cite{BN07} is formulated after the algebro-geometric Riemann--Roch, one can hardly borrow techniques from algebraic geometry and there are only purely combinatorial proofs of the graph-theoretical Riemann--Roch so far. Actually the divisor theory on finite graphs or metric graphs is closely related to the chip-firing games, and the most important tool in Baker-Norine's original proof of the graph-theoretical Riemann--Roch and many follow-up works is based on a notion called reduced divisors (also named G-parking functions in combinatorics \cite{PS04}). 
\end{enumerate}

Let $X$ be a topological space and $BC(X)$ be the the space of all bounded and continuous real functions on $X$. Instead of equipping $\mbbR$  (when the addition identities $\pm\infty$ are ignored)  with only one of min-plus algebra and max-plus algebra like most other works, both $\minplus$ and $\maxplus$ (called lower tropical addition and upper tropical addition respectively) are put into our scenario. Then operations on $BC(X)$ can be induced from operations on $\mbbR$. For $f,g\in BC(X)$ and $c\in\mbbR$, the lower tropical addition $f\minplus g:=\min(f,g)$, upper tropical addition $f\maxplus g := \max(f,g)$ and tropical scalar multiplication $c\odot f  :=c+f$ are simply defined by pointwise operations (see details in Subsection~\ref{SS:TOper}). The tropical projective space $\mbbTP(X)$ is defined as $BC(X)$ modulo tropical scalar multiplication. Also, a subspace of $BC(X)$ closed under tropical scalar multiplication and lower tropical addition (respectively upper tropical addition) modulo tropical scalar multiplication is a subspace of $\mbbTP(X)$ which is said to be lower tropically convex (respectively upper tropically convex). Note that conventionally in most works on tropical convexity, only the finite dimensional case is explored, i.e., when $X$ is a finite set equipped with discrete topology in our setting. Here we don't have such a restriction. Moreover, $\mbbTP(X)$ is a normed space whose norm is naturally defined as $\Vert [f]\Vert=\max(f)-\min(f)$ where $[f]\in\mbbTP(X)$ is the equivalence class of $f\in BC(X)$ (Definition~\ref{D:TropNorm}). Actually $(\mbbTP(X),\Vert\cdot \Vert)$ is also a Banach space (Proposition~\ref{P:TPBanach}).

For an element $\gamma\in \mbbTP(X)$ and a compact subset $T$ of $\mbbTP(X)$ which is lower (or upper) tropically convex, unlike the case in conventional convex geometry, it is not generally true that the minimizer of the distance function between elements in $T$ and $\gamma$ is a singleton.
Actually because of  the degeneration nature of ``tropicalization'', similar phenomena are quite common in the ``tropical world''. In other words, one may only be able to develop a coarse analysis theory from the notion of tropical norm. We resolve this non-uniqueness issue by introducing a notion of $B$-pseudonorms, which enables use to build a more refined analysis theory on a tropical projective space as desired. 

More precisely, the $B$-pseudonorms on $\mbbTP(X)$ with respect to some Borel measure $\mu$ on $X$ (Definition~\ref{D:pseudonorm}) are a series of pseudonorms $\llfloor \cdot \rrfloor_p$ and $\llceil \cdot \rrceil_p$ for $p\in[1,\infty]$. Here ``pseudo'' means not necessarily symmetric, i.e., $\llfloor \alpha \rrfloor_p\neq \llfloor -\alpha \rrfloor_p$ and $\llceil \alpha \rrceil_p\neq \llceil -\alpha \rrceil_p$ for $p\in[1,\infty)$ in general. But we have $\llfloor \alpha \rrfloor_p = \llceil -\alpha \rrceil_p$ and $\llfloor \cdot \rrfloor_\infty=\llceil \cdot \rrceil_\infty=\Vert \cdot \Vert$. One of the main results of this paper is the following theorem. 

(\textbf{A Restatement of Theorem~\ref{T:main}})  
For a compact lower (respectively upper) tropically convex subset of  $T$ of $\mbbTP(X)$ and an arbitrary element $\gamma$ in $\mbbTP(X)$,  there exists a unique element $\lowerpi_T(\gamma)$ (respectively $\upperpi_T(\gamma)$) in $T$ which minimizes all $B$-pseudonorm functions $\alpha\mapsto\llfloor \alpha-\gamma\rrfloor_p$ (respectively $\alpha\mapsto\llceil \alpha-\gamma\rrceil_p$) with $\alpha\in T$ for $p\in[1,\infty)$. 

Note that in the above theorem, $\lowerpi_T(\gamma)$ and $\upperpi_T(\gamma)$ do not depend on $p$ as long as $p\in[1,\infty)$, and are called lower and upper tropical projections of $\gamma$ to $T$ respectively. Actually such tropical projections are even more intrinsic. Using the criteria of tropical projections in Corollary~\ref{C:CritTropProj}, we see that $\lowerpi_T(\gamma)$ and $\upperpi_T(\gamma)$ can be characterized purely set-theoretically which means that $\lowerpi_T(\gamma)$ and $\upperpi_T(\gamma)$ are even independent of the underlying measure $\mu$ of $X$ (Remark~\ref{R:TropProjMeas}). 

Recall that the notion of $b$-functions is introduced in \cite{BS13} where reduced divisors can be characterized as the minimizers of  $b$-functions. In our scenario, $b$-functions are special cases of $B$-pseudonorms and thus reduced divisors are special cases of tropical projections (see detailed discussions in Subsection~\ref{SS:bFuncBPseudoNorm}). Just as techniques based on reduced divisors  have shown  tremendous power in studying the divisor theory on finite graphs and metric graphs, the notion of tropical projections and its features are also extremely useful for exploring the theory of tropical convexity analysis. Here we summarize some of the results that we have derived:

\begin{enumerate}
\item We show that for a lower (respectively upper) tropical convex set $W$ and a compact lower (respectively upper) tropical convex subset $T$ of $W$, there is always a strong deformation retraction from $W$ to $T$ such that the intermediate sets are also lower (respectively upper) tropical convex (Theorem~\ref{T:TropRetract}). A direct corollary is that $W$ itself must be contractible (Corollary~\ref{C:TropContr}).  
\item We propose a systematic approach to construct compact tropical convex sets (Theorem~\ref{T:construction}) which has a consequence that finitely generated tropical convex hulls (also called tropical polytopes) are always compact (Corollary~\ref{C:Polytope}). Moreover, we prove the following tropical version of Mazur's Theorem on conventional closed convex hulls (Theorem~\ref{T:TropMazur}): the closed tropical convex hull of a compact subset of $\mbbTP(X)$ is also compact. 
\item We study several notions of independence (Definition~\ref{D:TropIndep} and Remark~\ref{R:TropIndep}) in the context of tropical projective spaces including tropical weak independence, Gondran-Minoux independence and tropical independence (the last one is essential in Jensen-Payne's tropical proofs of the Gieseker-Petri Theorem \cite{JP14}  and the maximal rank conjecture for quadrics \cite{JP16}). In particular, we provide a purely set-theoretical criterion for tropical weak independence (Theorem~\ref{T:CriterionTropIndep}) and discuss the extremals of tropical convex sets (Theorem~\ref{T:extremal}).
\item We prove the following fixed-point theorem about tropical projections (Theorem~\ref{T:FixedPoint}): the tropical projections bouncing back and forth between a compact lower tropically convex set and a compact upper tropically convex set stabilize after at most two steps. Note that unlike most of the other results in this paper which have statements for lower and upper tropical convexity separately, this fixed-point theorem involves both lower and upper tropical convex sets. 
\end{enumerate}

We apply this machinery of tropical convexity analysis to the divisor theory on metric graphs (or abstract tropical curves). 

\begin{enumerate}
\item Using the potential theory on metric graphs (Appendix~\ref{S:potential}) and the language of chip-firing moves, we convert several definitions and statements in the previously developed theory of tropical convexity analysis to those in the context of divisor theory on metric graphs, e.g., tropical convexity, tropical projection and its criterion (Theorem~\ref{T:TropProjDiv}), and tropical independence and its criterion (Theorem~\ref {T:FiniteCriterion}). 
\item In the divisor theory of algebraic curves, for each divisor $D$ on an algebraic curve,  the complete linear system $|D|$ associated to $D$ is the projective space consisting of all effective divisors linearly equivalent to $D$, and a linear subspace of $|D|$ is called a linear system whose rank is simply its dimension. However, different complete linear systems on different algebraic curves can degenerate as subsets of the same complete linear system on a single finite graph or metric graph. As a result, even through we may still define the complete linear system $|D|$ associate to a divisor $D$ on a metric graph as the set of all effective divisors linearly equivalent to $D$ (as in \cite{BN07} and the whole subsequent works), $|D|$ is  not purely dimensional in general. This is an obstacle in defining linear systems as the ``linear'' subspaces of $|D|$  as in the case of algebraic curves.  Here we note that $|D|$ is a tropical polytope and propose to define the  linear systems on a metric graph as the tropical polytopes contained in complete linear systems (Definition~\ref{D:LinSys}). 
\item For each point $q$ on a metric graph $\Gamma$ and each nonempty complete linear system $|D|$ on $\Gamma$,  there exists a unique divisor called the $q$-reduced in $|D|$ (Definition~\ref{D:RedDiv}) which is characterized in \cite{BS13} as the minimizer of the so-called $b$-functions restricted to $|D|$ with respect to $q$. Based on an observation that  $b$-functions are just special $B$-pseudonorms, we give the following definition of reduced divisors for all linear systems (Definition~\ref{D:GeneralRed}) instead of only for complete ones as in convention: the $q$-reduced divisor in a linear system $T$ is the tropical projection of $d\cdot(q)$ to $T$ where $d$ is the degree of $T$. In this sense, we naturally extend the notion of reduced divisor maps introduced in \cite{Amini13} which is originally defined as a map from $\Gamma$ to a complete linear system $|D|$ to a more general scope where the target can be any linear system (Definition~\ref{D:RedDivMap}). 
\item We pay special attention to those one dimensional linear systems which we call tropical trees (Definition~\ref{D:TropTree}). Using the techniques based on tropical projections and reduced divisors, we provide criteria for (1) tropical trees (Proposition~\ref{P:TropTreeCrit}), (2) dominant tropical trees which are tropical trees with support being the whole metric graph (Proposition~\ref{P:DomTreeCrit}), and (3) reduced divisor maps to dominant tropical trees (Proposition~\ref{P:CritRedDivMap}). We prove that a harmonic morphism \cite{BN09} from a modification of a metric graph $\Gamma$ to a metric tree essentially corresponds to the reduced divisor map from $\Gamma$ to a dominant tropical tree (Theorem~\ref{T:RedHarm}).
\item   Recall that the gonality of an algebraic curve is defined as  the minimum degree of divisors of rank one or equivalently the minimum degree of finite maps from the algebraic curve to a projective line. However, because of some subtlety in the process of divisor degeneration from curves to metric graphs, these two equivalent interpretations of curve gonality diverge into two non-equivalent notions of gonality: the divisorial gonality \cite{Baker08} and the stable gonality \cite{CKK15} for metric graphs. In particular, the divisorial gonality of $\Gamma$ is the minimum degree of divisors with Baker-Norine  rank being one,  and the stable gonality of $\Gamma$  is the minimum degree of harmonic morphisms from modifications of $\Gamma$ to a metric tree  (Definition~\ref{D:StaGonality}). Therefore, to compute the stable gonality of $\Gamma$ is equivalent to find the minimum degree of dominant tropical trees on $\Gamma$ (Proposition~\ref{P:SGon}). We want to mention that recently there is another independent work also trying to compute stable gonality by looking into linear systems in \cite{Kageyama18}. Other than the  Baker-Norine combinatorial rank and the Caporaso algebraic rank \cite{Caporaso15}, we propose a new rank function called the geometric rank for divisors on metric graphs such that the stable gonality of $\Gamma$ can also be accounted as the the minimum degree of divisors of geometric rank one. 
\end{enumerate}

Since idempotent mathematics has wide applications in optimization problems \cite{K94}, we expect that the notions of $B$-pseudonorms and tropical projections introduced in this paper open up new trends in idempotent optimization. Some of the results in this paper can be directly extended from over tropical semirings to over idempotent semirings in general. In our follow-up work, we will investigate idempotent functional analysis and  operator theory based on the notion of tropical projection. 

The rest of the paper can be divided into two parts. The first part consists of Sections~\ref{S:PreAna}-\ref{S:FixedPoint},  laying out the foundation of the theory of  tropical convexity analysis built around tropical projections. The second part consists of Section~\ref{S:AppMetGra}, in which we discuss in detail an application of our machinery to the divisor theory on metric graphs.  More specifically,  in Section~\ref{S:PreAna}, we give some preliminary results on the  analysis of tropical projective spaces; in Section~\ref{S:Tconvexity}, we discuss the notion of tropical convexity and several types of independence in the context of tropical projective spaces; in Section~\ref{S:Bpseudonorm}, we define $B$-pseudonorms on tropical projective spaces and discuss the main theorem about tropical projections; in Section~\ref{S:TropRetract}, we investigate deformation retracts to tropical convex sets; in Section~\ref{S:Compact}, we provide a general approach of constructing compact tropical convex sets and prove the tropical version of Mazur's theorem on closed convex hulls; in Section~\ref{S:TropIndep}, we provide a set-theoretical criterion of tropical independence; in Section~\ref{S:FixedPoint}, we give a fixed-point theorem for tropical projections; Section~\ref{S:AppMetGra} is about the application of the whole machinery to the divisor theory on metric graphs, in which we discuss the notions of tropical convexity, tropical projection and tropical independence on a divisor space of a metric graph, define a general notion of  linear systems of divisors, define a notion of tropical trees as $1$-dimensional linear systems, study the relation between $b$-functions and $B$-pseudonorms, show that reduced divisors are essentially special tropical projections, give a definition of reduced divisors to linear systems in general, prove that harmonic morphisms from a metric graph to a metric tree are no more than reduced divisor maps to tropical trees, and finally introduce a new rank function called geometric rank function on the space of divisors.

\section{Preliminary Analysis on Tropical Projective Spaces} \label{S:PreAna}
Let $X$ be a topological space.
Let $BC(X)$ be the real linear space  of all bounded and continuous real functions on $X$. Let $BC^0(X)$ be linear subspace of $BC(X)$ whose elements are bounded and continuous functions whose infimum and supremum on $X$ are both attainable. 

Throughout this paper, we denote the infimum, supremum, minimum (when existing) and maximum (when existing) of a real-valued function $f$  on $X$ by $\inf(f)$, $\sup(f)$, $\min(f)$ and $\max(f)$ respectively. And by abuse of notation, we let $\inf_{i\in I}(f_i)$ (respectively $\sup_{i\in I}(f_i)$) be a function on $X$ whose value at $x\in X$ is the infimum  (respectively supremum) value of $\{f_i(x)\}_{i\in I}$, and let $\min(f_1,\cdots,f_n)$ (respectively $\max(f_1,\cdots, f_n)$) be a function on $X$ whose value at $x\in X$ is the minimum  (respectively maximum) value of $f_1(x),\cdots,f_n(x)$. Moreover, we also write the constant function of value $c$ on $X$  simply as $c$ sometimes. 

Let $\Vert \cdot \Vert_\infty$ be the  uniform norm on $BC(X)$.
Recall that  $(BC(X),\Vert \cdot \Vert_\infty)$ is a Banach space. 
\begin{lemma} \label{L:BCBC0}
$(BC(X),\Vert \cdot \Vert_\infty)$  is the completion of $(BC^0(X),\Vert \cdot \Vert_\infty)$. 
\end{lemma}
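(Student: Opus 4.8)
The plan is to reduce the claim to a single density statement. Recall that the completion of a normed space is characterized, up to isometric isomorphism, as any complete normed space in which the given space sits isometrically as a dense linear subspace. Now $BC^0(X)$ is by construction a linear subspace of $BC(X)$ carrying the restriction of $\Vert\cdot\Vert_\infty$, and $(BC(X),\Vert\cdot\Vert_\infty)$ is already known to be a Banach space. Hence the entire content of the lemma is the assertion that $BC^0(X)$ is dense in $BC(X)$ for the uniform norm. (The case $X=\emptyset$ is vacuous, so I assume $X\neq\emptyset$ throughout.)

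To establish density, fix $f\in BC(X)$ and $\varepsilon>0$, and I would produce some $g\in BC^0(X)$ with $\Vert f-g\Vert_\infty\le\varepsilon$. Write $a=\inf(f)$ and $b=\sup(f)$, both finite since $f$ is bounded. If $a=b$ then $f$ is constant and already lies in $BC^0(X)$, so assume $a<b$. The key construction is a truncation strictly inside the range: choose $\delta$ with $0<\delta\le\min\bigl(\varepsilon,(b-a)/2\bigr)$ and set
$$g=\min\bigl(\max(f,\,a+\delta),\,b-\delta\bigr).$$
Being a pointwise maximum and minimum of continuous bounded functions, $g$ is again continuous and bounded, so $g\in BC(X)$.

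It then remains to check two things: that $g\in BC^0(X)$, and that $g$ is close to $f$. For the first, the construction gives $a+\delta\le g\le b-\delta$ everywhere (using $2\delta\le b-a$). Since $b=\sup(f)$, there is a point where $f>b-\delta$, and at any such point $g=b-\delta$; thus $\sup(g)=b-\delta$ is attained, and symmetrically $\inf(g)=a+\delta$ is attained, so $g\in BC^0(X)$. For the approximation, $g$ differs from $f$ only where the truncation is active: where $f>b-\delta$ one has $0\le f-g\le b-(b-\delta)=\delta$, where $f<a+\delta$ one has $0\le g-f\le\delta$, and elsewhere $g=f$; hence $\Vert f-g\Vert_\infty\le\delta\le\varepsilon$.

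I do not expect a serious obstacle in this argument. The only genuinely substantive point is that clipping the function strictly inside its range (by the margin $\delta$, rather than at the endpoints $a$ and $b$ themselves) is precisely what forces the extrema of $g$ to be attained; clipping at the endpoints would in general fail. Beyond that, the proof is just the verification above together with the bookkeeping of the degenerate cases $a=b$ and, if one wishes, $X=\emptyset$.
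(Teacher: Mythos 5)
Your proof is correct and follows essentially the same route as the paper: both approximate $f$ by the truncation $\max(a+\delta,\min(b-\delta,f))$ strictly inside the range of $f$, which forces the extrema to be attained (the paper phrases this with sequences $a_n\downarrow\inf(f)$, $b_n\uparrow\sup(f)$ rather than a single $\delta$). Your write-up is in fact slightly more careful about the degenerate cases and about why the clipped extrema are actually attained.
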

\begin{proof}
To show that $BC^0(X)$ is dense in $BC(X)$, we need to show that any $f\in BC(X)$ is approachable by a sequence $f_1,f_2,\cdots$ in $BC^0(X)$. 
Consider $f\in BC(X)\setminus BC^0(X)$ whose infimum and supremum are $a$ and $b$ respectively.  Then we can choose a decreasing sequence $a_1,a_2,\cdots$ converging to $a$ and an increasing sequence  $b_1,b_2,\cdots$ converging to $b$ such that $a_1\leq b_1$. Let $f_n=\max(a_n,\min(b_n,f))$. Then  $f_n$ has a minimum value $a_n$ and a maximum value $b_n$ and is clearly continuous which means $f_n\in BC^0(X)$. Moreover, the sequence $f_1,f_2,\cdots$ converges to $f$ uniformly. Therefore $BC^0(X)$ is dense in $BC(X)$. 
\end{proof}

We let   $\mbbTPP(X):=BC^0(X)/\sim$ and $\mbbTP(X):=BC(X)/\sim$ where $f_1\sim f_2$ if $f_1-f_2$ is a constant function. We call $\mbbTPP(X)$ and $\mbbTP (X)$ the \emph{inner tropical projective space} and the \emph{(outer) tropical projective space} on $X$ respectively. 

Note that if $X$ is compact, then $BC(X)=BC^0(X)=C(X)$  and $\mbbTP(X)=\mbbTPP(X)$ where $C(X)$ is the linear space of all continuous functions on $X$. 

For $f\in BC(X)$ and $V\subseteq BC(X)$, we have the following notations:

\begin{enumerate}
\item $[f]\in \mbbTP(X)$ is the equivalence class of $f$ and we call $[f]$ the projectivization of $f$;
\item $[V]:=\{[f]\mid f\in V\}$;
\item $\underline{f}:=f-\inf(f)$ and $\overline{f}:=f-\sup(f)$; 
\item  $\Xmin(f):=f^{-1}(\inf(f))$ and $\Xmax(f):=f^{-1}(\sup(f))$;
\item For $\epsilon>0$, $\Xmin^\epsilon(f):=\{x\in X\mid f(x)<\inf(f)+\epsilon\}$ and  $\Xmax^\epsilon(f):=\{x\in X\mid f(x)>\sup(f)-\epsilon\}$.
\end{enumerate} 

We call $\Xmin(f)$ and $\Xmax(f)$ the minimizer and maximizer of $f$ respectively. Note that $\Xmin(f)=\bigcap_{\epsilon>0} \Xmin^\epsilon(f)$ and $\Xmax(f)=\bigcap_{\epsilon>0} \Xmax^\epsilon(f)$. Clearly if $f\in BC^0(X)$, then $\Xmax(f)$ and $\Xmin(f)$ are both nonempty. But if $f\in BC(X)\setminus BC^0(X)$, then at least one of $\Xmax(f)$ and $\Xmin(f)$ must be empty. Note that  for each $g\in[f]$, $\Xmin(f)=\Xmin(g)$, $\Xmax(f)=\Xmax(g)$, $\Xmin^\epsilon(f)=\Xmin^\epsilon(g)$ and $\Xmax^\epsilon(f)=\Xmax^\epsilon(g)$. Hence by abuse of notation, we also write $\Xmin([f]):=\Xmin(f)$, $\Xmax([f]):=\Xmax(f)$, $\Xmin^\epsilon([f]):=\Xmin^\epsilon(f)$ and $\Xmax^\epsilon([f]):=\Xmax^\epsilon(f)$. 

The following are several easily verifiable facts:

\begin{enumerate}
\item $\mbbTP(X)$ and $\mbbTPP(X)$  are both  real linear spaces with the zero element  being the equivalence class $[0]$ of all constant functions on $X$ and $\lambda_1 [f_1]+\lambda_2 [f_2]:=[\lambda_1 f_1+\lambda_2 f_2]$ for all $\lambda_1,\lambda_2\in \mbbR$; 
\item $-\overline{f}=\underline{-f}$;
\item $\Xmin(f)=\Xmin(cf)$ and $\Xmax(f)=\Xmax(cf)$ for all $c>0$; 
\item $\Xmin(f)=\Xmax(-f)$;
\item $\Xmin^\epsilon(f)=\Xmin^{c\epsilon}(cf)$ and $\Xmax^\epsilon(f)=\Xmax^{c\epsilon}(cf)$ for all  $c>0$ and $\epsilon>0$;
\item $\Xmin^\epsilon(f)=\Xmax^\epsilon(-f)$ for all $\epsilon>0$;
\end{enumerate}

Here we introduce a new notation ``$\Cap$''. For each $\alpha,\beta\in \mbbTP(X)$,  we say $\Xmin(\alpha)\Cap\Xmin(\beta)\neq\emptyset$  if for each $\epsilon>0$, $\Xmin^\epsilon(\alpha)\bigcap\Xmin^\epsilon(\beta)\neq\emptyset$, and say $\Xmin(\alpha)\Cap\Xmin(\beta)=\emptyset$  if there exists $\epsilon>0$ such that  $\Xmin^\epsilon(\alpha)\bigcap\Xmin^\epsilon(\beta)=\emptyset$. Accordingly,  we say $\Xmax(\alpha)\Cap\Xmax(\beta)\neq\emptyset$  if for each $\epsilon>0$, $\Xmax^\epsilon(\alpha)\bigcap\Xmax^\epsilon(\beta)\neq\emptyset$, and say $\Xmax(\alpha)\Cap\Xmax(\beta)=\emptyset$  if there exists $\epsilon>0$ such that  $\Xmax^\epsilon(\alpha)\bigcap\Xmax^\epsilon(\beta)=\emptyset$.

\begin{lemma} \label{L:XminXmax}
For $\alpha,\beta\in \mbbTP(X)$, if $\Xmin(\alpha)\Cap\Xmin(\beta)\neq\emptyset$, then $\Xmin(\alpha)\bigcap\Xmin(\beta)=\Xmin(\alpha+\beta)$, and if $\Xmax(\alpha)\Cap\Xmax(\beta)\neq\emptyset$, then $\Xmax(\alpha)\bigcap\Xmax(\beta)=\Xmax(\alpha+\beta)$. 
\end{lemma}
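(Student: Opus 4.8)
The plan is to prove the statement for minimizers; the maximizer assertion then follows immediately by applying the result to $-\alpha,-\beta$ and invoking the verifiable facts $\Xmax(f)=\Xmin(-f)$ and $\Xmax^\epsilon(f)=\Xmin^\epsilon(-f)$, together with $\Xmin\big(-(\alpha+\beta)\big)=\Xmax(\alpha+\beta)$. Indeed, under these identities the hypothesis $\Xmax(\alpha)\Cap\Xmax(\beta)\neq\emptyset$ translates into $\Xmin(-\alpha)\Cap\Xmin(-\beta)\neq\emptyset$, and the min-case conclusion $\Xmin(-\alpha)\cap\Xmin(-\beta)=\Xmin(-\alpha-\beta)$ translates back into $\Xmax(\alpha)\cap\Xmax(\beta)=\Xmax(\alpha+\beta)$. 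So I focus on the first assertion.

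First I would fix representatives and normalize. Choosing $f\in\alpha$ and $g\in\beta$ with $\inf(f)=\inf(g)=0$ (e.g. $f=\underline{f_0}$, $g=\underline{g_0}$ for arbitrary representatives $f_0,g_0$), we have $f\geq 0$ and $g\geq 0$ everywhere. Since $\Xmin$, $\Xmin^\epsilon$ and the class $\alpha+\beta$ are all insensitive to the choice of representative, it suffices to prove $\Xmin(f)\cap\Xmin(g)=\Xmin(f+g)$ for these normalized $f,g$, where $\Xmin(f)=f^{-1}(0)$, $\Xmin(g)=g^{-1}(0)$, and $\Xmin(f+g)=(f+g)^{-1}(\inf(f+g))$.

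The crux is to show $\inf(f+g)=0$. The inequality $\inf(f+g)\geq \inf(f)+\inf(g)=0$ is automatic from nonnegativity. For the reverse, this is exactly where the hypothesis $\Xmin(\alpha)\Cap\Xmin(\beta)\neq\emptyset$ enters: by definition it guarantees that for every $\epsilon>0$ the set $\Xmin^\epsilon(f)\cap\Xmin^\epsilon(g)$ is nonempty, so there is a point $x_\epsilon$ with $f(x_\epsilon)<\epsilon$ and $g(x_\epsilon)<\epsilon$, whence $(f+g)(x_\epsilon)<2\epsilon$. Letting $\epsilon\to 0$ gives $\inf(f+g)\leq 0$, and therefore $\inf(f+g)=0$. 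With this identity in hand the set equality is immediate: since $f,g\geq 0$, a point $x$ satisfies $f(x)+g(x)=0$ if and only if $f(x)=0$ and $g(x)=0$, so $(f+g)^{-1}(0)=f^{-1}(0)\cap g^{-1}(0)$, i.e. $\Xmin(f+g)=\Xmin(f)\cap\Xmin(g)$.

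I expect the only genuine content to lie in the infimum identity $\inf(f+g)=\inf(f)+\inf(g)$; this can fail without the $\Cap$ hypothesis, since on a noncompact $X$ the infima of $f$ and $g$ may be approached along disjoint ``escaping'' nets, forcing $\inf(f+g)>\inf(f)+\inf(g)$ and breaking the claimed equality. The main point is thus to confirm that the definition of $\Cap$ supplies precisely the simultaneous near-minimizers needed to close this gap; everything else reduces to a direct computation using nonnegativity of the normalized representatives.
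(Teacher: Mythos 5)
Your proof is correct: normalizing so that $\inf(f)=\inf(g)=0$, using the $\Cap$ hypothesis to force $\inf(f+g)=0$, and then reading off the set equality from nonnegativity is exactly the direct verification the paper intends (it states this lemma without proof, and the same infimum identity reappears in its proof of Lemma~\ref{L:SpeIneq}). The reduction of the maximizer case to the minimizer case via $\Xmax(f)=\Xmin(-f)$ and $\Xmax^\epsilon(f)=\Xmin^\epsilon(-f)$ is likewise the standard symmetry the paper uses throughout.
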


\begin{lemma} \label{L:XminXmaxTP0}
For $\alpha,\beta\in \mbbTP^0(X)$, 
\begin{enumerate}
\item $\Xmin(\alpha)\bigcap\Xmin(\beta)=\Xmin(\alpha+\beta)\neq\emptyset$ if and only if $\Xmin(\alpha)\Cap\Xmin(\beta)\neq\emptyset$;
\item $\Xmin(\alpha)\bigcap\Xmin(\beta)=\emptyset$ if and only if $\Xmin(\alpha)\Cap\Xmin(\beta)=\emptyset$;
\item $\Xmax(\alpha)\bigcap\Xmax(\beta)=\Xmax(\alpha+\beta)\neq\emptyset$ if and only if $\Xmax(\alpha)\Cap\Xmax(\beta)\neq\emptyset$;
\item $\Xmax(\alpha)\bigcap\Xmax(\beta)=\emptyset$ if and only if $\Xmax(\alpha)\Cap\Xmax(\beta)=\emptyset$.
\end{enumerate}
\end{lemma}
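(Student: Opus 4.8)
The plan is to collapse all four items into one core equivalence and then read them off. First I would fix representatives: since $\alpha,\beta\in\mbbTP^0(X)$, choose $f\in\alpha$, $g\in\beta$ in $BC^0(X)$ and normalize them by passing to $\underline{f},\underline{g}$, so that $f,g\ge 0$, $\inf(f)=\inf(g)=0$, both infima are attained, and hence $\Xmin(\alpha)=f^{-1}(0)$ and $\Xmin(\beta)=g^{-1}(0)$ are nonempty. Every quantity appearing in the statement ($\Xmin$, $\Xmax$, $\Xmin^\epsilon$, $\Xmax^\epsilon$ and the relation $\Cap$) depends only on the projective classes, so this normalization is harmless.

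The key elementary input is the sandwich $\{x:f(x)+g(x)<\epsilon\}\subseteq \Xmin^\epsilon(\alpha)\cap\Xmin^\epsilon(\beta)\subseteq\{x:f(x)+g(x)<2\epsilon\}$, which holds because $f,g\ge 0$ give $\Xmin^\epsilon(\alpha)=\{f<\epsilon\}$ and $\Xmin^\epsilon(\beta)=\{g<\epsilon\}$. Since ``$\{f+g<\delta\}\neq\emptyset$ for all $\delta>0$'' is just the statement $\inf(f+g)=0$, the two inclusions show at once that $\Xmin(\alpha)\Cap\Xmin(\beta)\neq\emptyset$ holds exactly when $\inf(f+g)=0$, and (its formal negation) $\Xmin(\alpha)\Cap\Xmin(\beta)=\emptyset$ holds exactly when $\inf(f+g)>0$. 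These alternatives are complementary and exhaustive straight from the definition of $\Cap$.

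Next I would identify the common minimizer. Because $BC^0(X)$ is a linear subspace, $\alpha+\beta\in\mbbTP^0(X)$, so $\inf(f+g)$ is attained; thus $\inf(f+g)=0$ if and only if $\Xmin(\alpha+\beta)=(f+g)^{-1}(0)\neq\emptyset$, and in that regime $(f+g)^{-1}(0)=f^{-1}(0)\cap g^{-1}(0)=\Xmin(\alpha)\cap\Xmin(\beta)$ by $f,g\ge 0$; the set equality $\Xmin(\alpha)\cap\Xmin(\beta)=\Xmin(\alpha+\beta)$ under $\Cap\neq\emptyset$ is also exactly Lemma~\ref{L:XminXmax}, which I would cite rather than reprove. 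Chaining these gives $\Xmin(\alpha)\Cap\Xmin(\beta)\neq\emptyset\iff\inf(f+g)=0\iff\Xmin(\alpha)\cap\Xmin(\beta)=\Xmin(\alpha+\beta)\neq\emptyset$, which is item (1); item (2) is its complement, since $\Xmin(\alpha)\cap\Xmin(\beta)=\emptyset$ forces $\Cap=\emptyset$ (else (1) would make the intersection nonempty) and conversely $\Cap=\emptyset$ gives $\inf(f+g)>0$, hence no common zero. Items (3) and (4) then follow by applying (1) and (2) to $-\alpha,-\beta\in\mbbTP^0(X)$, using the verifiable facts $\Xmax(f)=\Xmin(-f)$ and $\Xmax^\epsilon(f)=\Xmin^\epsilon(-f)$ together with $\Xmax(\alpha+\beta)=\Xmin(-\alpha-\beta)$.

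The step I expect to be the main obstacle is the attainment of $\inf(f+g)$: although $f$ and $g$ individually attain their infima, the infimum of the sum need not be attained for general bounded continuous functions, so the whole argument hinges on $\alpha+\beta$ remaining in $\mbbTP^0(X)$. Once that membership (equivalently, closure of $BC^0(X)$ under addition) is available, the equivalences above are purely formal bookkeeping around Lemma~\ref{L:XminXmax}; without it, $\inf(f+g)=0$ could fail to yield an honest common minimizer and the nonemptiness assertions in (1) and (3) would collapse. I would therefore isolate the attainment of $\inf(f+g)$ as the single genuinely analytic ingredient and make the $\mbbTP^0$ hypothesis carry exactly that weight.
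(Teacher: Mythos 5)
Your reduction of all four items to the single statement ``$\Xmin(\alpha)\Cap\Xmin(\beta)\neq\emptyset \iff \inf(f+g)=0$'' via the sandwich $\{f+g<\epsilon\}\subseteq\{f<\epsilon\}\cap\{g<\epsilon\}\subseteq\{f+g<2\epsilon\}$ is clean and correct, and the passage to (3),(4) by negation is fine. The paper itself offers no argument (its proof reads ``this can be easily verified by definition''), so there is no method to compare against; what matters is whether your one flagged ingredient --- that $\inf(f+g)$ is attained, i.e.\ that $\alpha+\beta$ stays in $\mbbTP^0(X)$ --- can actually be supplied. It cannot, in the stated generality: $BC^0(X)$ is \emph{not} closed under addition for a general topological space $X$. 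Take $X=\mbbN$ with the discrete topology, $f(1)=0$, $f(2)=10$, $f(n)=1/n$ for $n\geq 3$, and $g(1)=10$, $g(2)=0$, $g(n)=1/n$ for $n\geq 3$. Both attain their infimum ($0$) and supremum ($10$), so $[f],[g]\in\mbbTP^0(X)$, but $(f+g)(1)=(f+g)(2)=10$ and $(f+g)(n)=2/n$ for $n\geq3$, so $\inf(f+g)=0$ is not attained. Here $\Xmin([f])=\{1\}$ and $\Xmin([g])=\{2\}$ are disjoint, yet $\Xmin^\epsilon([f])\cap\Xmin^\epsilon([g])\supseteq\{n: n>1/\epsilon\}\neq\emptyset$ for every $\epsilon>0$, so $\Xmin([f])\Cap\Xmin([g])\neq\emptyset$. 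Thus items (1) and (2) fail outright for this pair, and the lemma as stated (together with the paper's assertion that $BC^0(X)$ is a linear subspace) is false without an extra hypothesis.

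So the gap you isolated is not a technicality you can defer: it is exactly where the statement breaks. Your argument does go through verbatim whenever attainment of $\inf(f+g)$ is guaranteed --- in particular when $X$ is compact (the case $\mbbTP(X)=\mbbTP^0(X)$ used in the application to metric graphs), or if one adds the hypothesis $\alpha+\beta\in\mbbTP^0(X)$ to the lemma. You should either restrict to that setting explicitly or record the counterexample; as written, the proposal proves a corrected version of the lemma rather than the lemma itself.
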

\begin{proof}
This can be easily verified by definition. 
\end{proof}

\begin{lemma} \label{L:SpeIneq}
Let $\alpha=[f],\beta=[g]\in \mbbTP(X)$. Then
\begin{enumerate}
\item $\underline{f+g}\leq \underline{f}+\underline{g}$ with equality holds if and only if $\Xmin(\alpha)\Cap\Xmin(\beta)\neq\emptyset$;
\item $\overline{f+g}\geq \overline{f}+\overline{g}$ with equality holds if and only if $\Xmax(\alpha)\Cap\Xmax(\beta)\neq\emptyset$. 
\end{enumerate}
\end{lemma}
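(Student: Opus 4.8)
The plan is to reduce part (1) to a single statement about infima, and then to obtain part (2) for free by the sign-flip duality already recorded in the excerpt. First I would rewrite both sides of part (1) in terms of $f+g$. Since $\underline{f}+\underline{g}=(f+g)-\inf(f)-\inf(g)$ while $\underline{f+g}=(f+g)-\inf(f+g)$, the difference
\[
(\underline{f}+\underline{g})-\underline{f+g}=\inf(f+g)-\inf(f)-\inf(g)
\]
is a \emph{constant} function on $X$. The pointwise bound $f(x)+g(x)\geq\inf(f)+\inf(g)$ valid for every $x$ yields the superadditivity $\inf(f+g)\geq\inf(f)+\inf(g)$, so this constant is nonnegative and the inequality $\underline{f+g}\leq\underline{f}+\underline{g}$ follows at once. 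Equality of the two functions holds exactly when the constant vanishes, i.e.\ when $\inf(f+g)=\inf(f)+\inf(g)$. Thus part (1) reduces to proving that this equality of infima is equivalent to $\Xmin(\alpha)\Cap\Xmin(\beta)\neq\emptyset$.

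For that equivalence I would argue entirely through the $\epsilon$-sublevel sets $\Xmin^\epsilon$, which is essential because $f,g$ lie in $BC(X)$ and need not attain their infima, so the literal minimizer sets $\Xmin(\alpha),\Xmin(\beta)$ may be empty; this is precisely why the $\Cap$ notation is phrased via $\Xmin^\epsilon$. For the forward direction, assuming $\Xmin(\alpha)\Cap\Xmin(\beta)\neq\emptyset$, for each $\epsilon>0$ I would pick $x$ with $f(x)<\inf(f)+\epsilon$ and $g(x)<\inf(g)+\epsilon$, whence $(f+g)(x)<\inf(f)+\inf(g)+2\epsilon$ and so $\inf(f+g)\leq\inf(f)+\inf(g)+2\epsilon$; letting $\epsilon\to 0$ and combining with superadditivity forces equality. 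For the converse, the clean point is nonnegativity: the functions $f-\inf(f)$ and $g-\inf(g)$ are both $\geq 0$, so whenever $(f+g)(x)<\inf(f+g)+\epsilon=\inf(f)+\inf(g)+\epsilon$, each summand $f(x)-\inf(f)$ and $g(x)-\inf(g)$ is itself $<\epsilon$. Hence any near-minimizer of $f+g$ is automatically a simultaneous near-minimizer, exhibiting a point of $\Xmin^\epsilon(\alpha)\cap\Xmin^\epsilon(\beta)$ for every $\epsilon>0$.

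Finally I would deduce part (2) from part (1) applied to $-f$ and $-g$, using the identities already listed in the excerpt: $-\overline{h}=\underline{-h}$ converts the superadditive statement $\overline{f+g}\geq\overline{f}+\overline{g}$ into the subadditive statement $\underline{(-f)+(-g)}\leq\underline{-f}+\underline{-g}$, and $\Xmin^\epsilon(-h)=\Xmax^\epsilon(h)$ turns $\Xmin(-\alpha)\Cap\Xmin(-\beta)\neq\emptyset$ into $\Xmax(\alpha)\Cap\Xmax(\beta)\neq\emptyset$. I expect the only genuine subtlety to be the converse half of the equality characterization, where one must resist appealing to actual minimizers and instead stay at the level of $\epsilon$-sublevel sets; the nonnegativity observation is exactly what makes even this step routine.
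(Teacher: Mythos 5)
Your proof is correct and follows essentially the same route as the paper: reduce the inequality and its equality case to $\inf(f+g)=\inf(f)+\inf(g)$, then identify that condition with $\Xmin(\alpha)\Cap\Xmin(\beta)\neq\emptyset$. The only differences are cosmetic — you carefully justify the equivalence with the $\Cap$ condition via the $\epsilon$-sublevel sets (which the paper merely asserts, and which is indeed the right level of care since infima need not be attained in $BC(X)$), and you obtain part (2) by negation rather than by the parallel argument with suprema.
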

\begin{proof}
For (1), $\underline{f+g}=f+g-\inf(f+g)\leq f+g-\inf(f)-\inf(g)=\underline{f}+\underline{g}$ with equality holds if and only if $\inf(f+g)=\inf(f)+\inf(g)$ if and only if $\Xmin(\alpha)\Cap\Xmin(\beta)\neq\emptyset$. 

For (2), $\overline{f+g}=f+g-\sup(f+g)\geq f+g-\sup(f)-\sup(g)=\overline{f}+\overline{g}$ with equality holds if and only if $\sup(f+g)=\sup(f)+\sup(g)$ if and only if $\Xmax(\alpha)\Cap\Xmax(\beta)\neq\emptyset$. 
\end{proof}

\begin{definition} \label{D:TropNorm}
The \emph{tropical norm}  on $\mbbTP(X)$ is a function $\Vert \cdot \Vert:\mbbTP(X)\to [0,\infty)$ defined by
$\Vert [f] \Vert:=\sup(f)-\inf(f)=\sup(\underline{f})$ for all $[f]\in\mbbTP(X)$. 
\end{definition}

\begin{lemma} \label{L:TropNorm}
The tropical norm is a norm, i.e., 
\begin{enumerate}
\item $\Vert c\alpha \Vert = |c| \Vert \alpha \Vert$ for all $c\in \mbbR$ and $\alpha\in \mbbTP(X)$. 
\item $\Vert \alpha \Vert = 0$ if and only if $\alpha=[0]$. 
\item $\Vert \alpha+\beta \Vert\leq \Vert \alpha\Vert +\Vert \beta \Vert$ for all $\alpha,\beta\in\mbbTP(X)$ where the equality holds if and only if $\Xmin(\alpha)\Cap\Xmin(\beta)\neq\emptyset$ and $\Xmax(\alpha)\Cap\Xmax(\beta)\neq\emptyset$.
\end{enumerate}
 
\end{lemma}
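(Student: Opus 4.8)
The plan is to verify the three norm axioms in turn, with the first two being routine and the equality clause of the triangle inequality being the only substantive point. I would also note at the outset that $\sup(f)-\inf(f)$ is unchanged when $f$ is replaced by $f+c$, so the quantity descends to a well-defined function on $\mbbTP(X)$.

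For homogeneity (1), I would write $\Vert c[f]\Vert=\sup(cf)-\inf(cf)$ and split on the sign of $c$. When $c\geq 0$ we have $\sup(cf)=c\sup(f)$ and $\inf(cf)=c\inf(f)$, giving $c(\sup(f)-\inf(f))=|c|\Vert[f]\Vert$; when $c<0$ the supremum and infimum swap, $\sup(cf)=c\inf(f)$ and $\inf(cf)=c\sup(f)$, and the sign flip produces the same factor $|c|$. For definiteness (2), I would observe that $\Vert[f]\Vert=\sup(f)-\inf(f)=0$ forces $\sup(f)=\inf(f)$, i.e. $f$ is constant, which is exactly the condition $[f]=[0]$; the converse is immediate.

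For the triangle inequality (3), the key observation is that the slack in the inequality splits cleanly into a supremum part and an infimum part, each governed by one of the two $\Cap$ conditions. Writing $\alpha=[f]$ and $\beta=[g]$, I would start from the elementary bounds $\sup(f+g)\leq\sup(f)+\sup(g)$ and $\inf(f+g)\geq\inf(f)+\inf(g)$, whose difference already gives $\Vert\alpha+\beta\Vert\leq\Vert\alpha\Vert+\Vert\beta\Vert$. I would then write the gap as
$$\bigl(\Vert\alpha\Vert+\Vert\beta\Vert\bigr)-\Vert\alpha+\beta\Vert=\bigl[\sup(f)+\sup(g)-\sup(f+g)\bigr]+\bigl[\inf(f+g)-\inf(f)-\inf(g)\bigr],$$
a sum of two nonnegative terms. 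Hence equality holds if and only if both terms vanish, i.e. $\sup(f+g)=\sup(f)+\sup(g)$ and $\inf(f+g)=\inf(f)+\inf(g)$ hold simultaneously.

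Finally I would identify each of these two equalities with the appropriate $\Cap$ condition by invoking Lemma~\ref{L:SpeIneq}: its part (2) gives $\sup(f+g)=\sup(f)+\sup(g)\iff\Xmax(\alpha)\Cap\Xmax(\beta)\neq\emptyset$ (the equality case of $\overline{f+g}\geq\overline{f}+\overline{g}$), and its part (1) gives $\inf(f+g)=\inf(f)+\inf(g)\iff\Xmin(\alpha)\Cap\Xmin(\beta)\neq\emptyset$ (the equality case of $\underline{f+g}\leq\underline{f}+\underline{g}$). Combining, equality in the triangle inequality holds exactly when both $\Cap$ conditions hold. I do not anticipate a genuine obstacle; the only points requiring care are matching the supremum condition to $\Xmax$ and the infimum condition to $\Xmin$ correctly, and letting Lemma~\ref{L:SpeIneq} absorb the subtlety of whether the extrema are attained, which is precisely why the statement is phrased via $\Cap$ rather than ordinary intersection.
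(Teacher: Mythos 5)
Your proposal is correct and follows essentially the same route as the paper's proof: the triangle inequality and its equality case are obtained by combining the bounds $\sup(f+g)\leq\sup(f)+\sup(g)$ and $\inf(f+g)\geq\inf(f)+\inf(g)$, with the two equality conditions identified with the $\Xmax$ and $\Xmin$ $\Cap$-conditions exactly as in Lemma~\ref{L:SpeIneq}. The only difference is that you spell out parts (1) and (2), which the paper dismisses as straightforward.
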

\begin{proof}
(1) and (2) are straightforward. For (3), let $\alpha=[f]$ and $\beta=[g]$. Then as in Lemma~\ref{L:SpeIneq}, $\inf(f+g)\geq\inf(f)+\inf(g)$ where equality holds if and only if $\Xmin(\alpha)\Cap\Xmin(\beta)\neq\emptyset$, and $\sup(f+g)\leq\sup(f)+\sup(g)$ where the equality holds if and only if  $\Xmax(\alpha)\Cap\Xmax(\beta)\neq\emptyset$. Therefore, 
$\Vert \alpha+\beta \Vert=\sup(f+g)-\inf(f+g)\leq \sup(f)+\sup(g)-\inf(f)-\inf(g)$ where the equality holds if and only if $\Xmin(\alpha)\Cap\Xmin(\beta)\neq\emptyset$ and $\Xmax(\alpha)\Cap\Xmax(\beta)\neq\emptyset$.
\end{proof}

As in convention, the tropical norm induces a metric $\rho$ on $\mbbTP(X)$ where $\rho(\alpha,\beta)=\Vert \alpha- \beta \Vert$. And in future discussions, we always assume that $\mbbTP(X)$ is equipped with this metric topology.

\begin{lemma} \label{L:NormComparison}
For $f\in BC(X)$, we have the following relation between $\Vert f\Vert_\infty$ and $\Vert[f]\Vert$:
\begin{enumerate}
\item $\Vert f \Vert_\infty =\Vert [f]\Vert$ if $\inf(f)=0$ or $\sup(f)=0$;
\item $\Vert f \Vert_\infty \geq \Vert [f]\Vert$ if $\inf(f)\geq0$ or $\sup(f)\leq 0$;
\item $\Vert f \Vert_\infty \leq \Vert [f]\Vert\leq 2 \Vert f \Vert_\infty$ if $\inf(f)\leq 0$ and $\sup(f)\geq 0$.
\end{enumerate}
\end{lemma}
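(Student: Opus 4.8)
The plan is to reduce everything to the two real numbers $a := \inf(f)$ and $b := \sup(f)$ (so that $a \le b$), and to exploit a single key identity for the uniform norm,
\[
\Vert f \Vert_\infty = \max(|a|,\,|b|),
\]
together with the fact that the tropical norm is already given by $\Vert [f]\Vert = \sup(f)-\inf(f) = b-a$ by Definition~\ref{D:TropNorm}. Once both norms are expressed in terms of $a$ and $b$, each of the three assertions collapses to an elementary comparison governed by the signs of $a$ and $b$.

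First I would establish the identity for $\Vert f \Vert_\infty$. Since $a \le f(x) \le b$ for every $x \in X$, we have $|f(x)| = \max(f(x),-f(x)) \le \max(b,-a)$, while values of $f$ approach both $a$ and $b$ arbitrarily closely, so $\sup_x |f(x)| = \max(b,-a)$, which equals $\max(|a|,|b|)$ after a short check in each sign regime. I would emphasize that this uses only infima and suprema, not their attainment, so it is valid for all $f \in BC(X)$ and not merely for $f \in BC^0(X)$.

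With these two formulas in hand, the three statements follow by separating cases on the signs of $a$ and $b$. For (1): if $a=0$ then $b \ge 0$ and both norms equal $b$, while if $b=0$ then $a \le 0$ and both equal $-a$. For (2): if $a \ge 0$ then $0 \le a \le b$ gives $\Vert f\Vert_\infty = b \ge b-a = \Vert[f]\Vert$, and the case $b \le 0$ is symmetric, yielding $\Vert f\Vert_\infty = -a \ge b-a$. For (3): the hypothesis $a \le 0 \le b$ makes both $-a$ and $b$ nonnegative, so from $\Vert[f]\Vert = b + (-a)$ and $\Vert f\Vert_\infty = \max(b,-a)$ one reads off $\max(b,-a) \le b+(-a) \le 2\max(b,-a)$, which is exactly $\Vert f\Vert_\infty \le \Vert[f]\Vert \le 2\Vert f\Vert_\infty$.

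There is no genuine obstacle in this lemma; it is a sign-bookkeeping exercise. The only point deserving a little care is the identity $\Vert f\Vert_\infty = \max(|\inf(f)|,|\sup(f)|)$ and the tracking of which endpoint realizes the uniform norm in each regime. After that step the remaining verifications are immediate, since every inequality reduces to the trivial facts that $\max(s,t) \le s+t \le 2\max(s,t)$ for $s,t \ge 0$ and that adding a number of one sign to $b-a$ controls it by $b$ or by $-a$.
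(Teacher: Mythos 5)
Your proof is correct, and since the paper dismisses this lemma with ``This lemma can be easily verified,'' your argument is exactly the standard verification the author had in mind: writing $\Vert f\Vert_\infty=\max(-\inf(f),\sup(f))$ and $\Vert[f]\Vert=\sup(f)-\inf(f)$ and comparing by sign cases. No issues.
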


\begin{proof}
This lemma can be easily verified. 
\end{proof}

\begin{proposition} \label{P:TPBanach}
The normed space $(\mbbTP(X),\Vert \cdot \Vert)$ is a real Banach space which is the completion of $(\mbbTP^0(X),\Vert \cdot \Vert)$.
\end{proposition}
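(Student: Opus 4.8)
The plan is to prove the two assertions in turn: first that $(\mbbTP(X),\Vert\cdot\Vert)$ is complete, and then that $\mbbTP^0(X)$ sits densely inside it, which together identify $\mbbTP(X)$ as the completion of $\mbbTP^0(X)$. Throughout I would exploit the quotient map $BC(X)\to\mbbTP(X)$, $f\mapsto[f]$, together with the elementary comparison $\Vert[h]\Vert=\sup(h)-\inf(h)\leq 2\Vert h\Vert_\infty$, which makes this map $2$-Lipschitz; in particular uniform convergence of representatives always descends to convergence in $\mbbTP(X)$.

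For completeness, let $\{[f_n]\}$ be a Cauchy sequence in $\mbbTP(X)$. The difficulty is that arbitrary representatives $f_n$ need not form a Cauchy sequence in $BC(X)$, since the tropical norm forgets additive constants. The key step is to pick a canonical representative of each class: I would replace $f_n$ by its normalization $\underline{f_n}=f_n-\inf(f_n)$, so that $\inf(\underline{f_n})=0$ and $\underline{f_n}\geq 0$ for every $n$. Writing $u=\underline{f_n}-\underline{f_m}$, the inequality $\underline{f_m}\geq 0$ gives $u\leq\underline{f_n}$, hence $\inf(u)\leq\inf(\underline{f_n})=0$; symmetrically $\underline{f_n}\geq 0$ gives $u\geq-\underline{f_m}$, hence $\sup(u)\geq\sup(-\underline{f_m})=-\inf(\underline{f_m})=0$. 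Thus $u$ satisfies the hypothesis $\inf(u)\leq 0\leq\sup(u)$ of Lemma~\ref{L:NormComparison}(3), yielding $\Vert u\Vert_\infty\leq\Vert[u]\Vert=\Vert[f_n]-[f_m]\Vert$. Since the right-hand side tends to $0$, the sequence $\{\underline{f_n}\}$ is Cauchy in the Banach space $(BC(X),\Vert\cdot\Vert_\infty)$ and therefore converges uniformly to some $g\in BC(X)$; applying the $2$-Lipschitz quotient map gives $[f_n]=[\underline{f_n}]\to[g]$ in $\mbbTP(X)$, establishing completeness.

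For the density of $\mbbTP^0(X)$ I would invoke Lemma~\ref{L:BCBC0}: given $[f]\in\mbbTP(X)$, choose $f_k\in BC^0(X)$ with $\Vert f_k-f\Vert_\infty\to 0$; then $[f_k]\in\mbbTP^0(X)$ and $\Vert[f_k]-[f]\Vert\leq 2\Vert f_k-f\Vert_\infty\to 0$. Combined with the completeness just proved, this shows that $\mbbTP(X)$ is the completion of $\mbbTP^0(X)$.

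The main obstacle is precisely the single step in the completeness argument where one must manufacture a genuinely Cauchy sequence of representatives in $BC(X)$ out of a sequence that is Cauchy only modulo constants; once the normalization $\underline{f_n}$ is chosen and Lemma~\ref{L:NormComparison}(3) is brought to bear, the remainder is routine. An alternative, more structural route is to recognize $\Vert\cdot\Vert$ as twice the quotient norm on $BC(X)/\mbbR$ and to cite the general fact that a quotient of a Banach space by a closed subspace is complete; but the hands-on argument above is self-contained and stays within the lemmas already established.
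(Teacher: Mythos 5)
Your proof is correct and follows essentially the same route as the paper: normalize each class by $\underline{f_n}=f_n-\inf(f_n)$, use Lemma~\ref{L:NormComparison} to transfer Cauchyness to $(BC(X),\Vert\cdot\Vert_\infty)$, pass the limit back through the $2$-Lipschitz quotient map, and get density of $\mbbTP^0(X)$ from Lemma~\ref{L:BCBC0}. The only difference is cosmetic — you spell out why $\inf(\underline{f_n}-\underline{f_m})\leq 0\leq\sup(\underline{f_n}-\underline{f_m})$, which the paper asserts without justification, and you note the unconditional bound $\Vert[h]\Vert\leq 2\Vert h\Vert_\infty$ in place of the paper's detour through $\inf(f)=\lim\inf(\underline{f_n})=0$.
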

\begin{proof}
We will use the inequalities in Lemma~\ref{L:NormComparison}. 

Let $\{[f_n]\}_n$ be a Cauchy sequence in $\mbbTP(X)$. Then given any $\epsilon>0$, there exists $N>0$ such that $\Vert [f_n]-[f_m]\Vert<\epsilon$ for all $m,n\geq N$. Note that $\Vert \underline{f_n}-\underline{f_m} \Vert_\infty\leq \Vert [f_n]-[f_m]\Vert$ since $\inf(\underline{f_n}-\underline{f_m})\leq 0$ and $\sup(\underline{f_n}-\underline{f_m})\geq 0$.  Therefore $\{\underline{f_n}\}_n$ is a Cauchy sequence in $BC(X)$ and converges to a function $f\in BC(X)$ since $(BC(X),\Vert \cdot \Vert_\infty)$ is complete. 

To show that $(\mbbTP(X),\Vert \cdot \Vert)$ is a Banach space, it remains to show that $[f]$ is the limit of $\{[f_n]\}_n$, i.e., $\Vert [f_n]-[f]\Vert\to 0$ as $n\to \infty$. Note that it can be easily shown that $\inf:BC(X)\to \mbbR$ is a continuous function and hence $\inf(f)=\lim\limits_{n\to \infty}\inf (\underline{f_n})=0$. It follows that $\Vert [f_n]-[f]\Vert \leq 2 \Vert \underline{f_n}-f \Vert_\infty\to 0$ as $n\to \infty$. 

Now to show that $(\mbbTP^0(X),\Vert \cdot \Vert)$ is dense in  $(\mbbTP(X),\Vert \cdot \Vert)$, we note that $BC^0(X)$ is dense in $BC(X)$ by Lemma~\ref{L:BCBC0}. For each $[f]\in\mbbTP(X)$, we choose a sequence $f_1,f_2,\cdots$ in $BC^0(X)$ converging to $f$ and claim that the sequence $[f_1],[f_2],\cdots$ in $\mbbTP^0(X)$  converges to $[f]$. This follows from the fact that $\Vert [f_n]-[f]\Vert$ is always bounded by $2\Vert f_n-f\Vert_\infty$. 

\end{proof}

\section{Tropical Convexity} \label{S:Tconvexity}

\subsection{Tropical Operations} \label{SS:TOper}

\begin{definition}
For $c\in \mbbR$ and elements $f,g\in \mbbR^X$, we define the following tropical operations on $\mbbR^X$:
\begin{enumerate}
\item \emph{lower tropical  addition} $f\minplus g :=\min(f,g)$,
\item \emph{upper tropical addition} $f\maxplus g :=\max(f,g)$,
\item \emph{tropical scalar multiplication} $c\odot f  :=c+f$,
\item \emph{tropical multiplication} $f\otimes g:=f+g$, (if $c$ is also considered as a constant function, then $c\otimes f = c\odot f$)
\item \emph{tropical division} $f\oslash g:=f-g$;
\item The negation $-f=0\oslash f$ of $f$ is also called the \emph{tropical inverse} of $f$. 
\end{enumerate}
\end{definition}

\begin{lemma}\label{L:TropOper}
The following are some properties of the above tropical operations.

\begin{enumerate}
\item All these tropical operations are closed in $BC(X)$.
\item $\minplus$, $\maxplus$ and $\otimes$ are all commutative.
\item $\minplus$ and  $\maxplus$ are idempotent, i.e., $f\minplus f=f$ and $f\maxplus f = f$.
\item If $f\leq g$ (i.e.,$ f(x)\leq g(x)$ for all $x\in X$), then $f\minplus g=f$ and $f\maxplus g=g$.
\item $f\minplus(f\maxplus g) = f\maxplus(f\minplus g)=f$.
\item $f\otimes(g\minplus h)=(f\otimes g)\minplus (f\otimes h)$.
\item $f\otimes(g\maxplus h)=(f\otimes g)\maxplus (f\otimes h)$.
\item $f\maxplus(g\minplus h)=(f\maxplus g)\minplus (f\maxplus h)$.
\item $f\minplus(g\maxplus h)=(f\minplus g)\maxplus (f\minplus h)$.
\item $(f\minplus g)\otimes (f\maxplus g)=f\otimes g$.
\item $(-f)\minplus(-g)=-(f\maxplus g)$.
\end{enumerate}

\end{lemma}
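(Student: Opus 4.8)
The plan is to reduce every one of the eleven assertions to an elementary identity about real numbers, exploiting the fact that each tropical operation is defined pointwise. Indeed, for $f,g,h\in BC(X)$ and $x\in X$ we have $(f\minplus g)(x)=\min(f(x),g(x))$, $(f\maxplus g)(x)=\max(f(x),g(x))$, $(f\otimes g)(x)=f(x)+g(x)$, and $(-f)(x)=-f(x)$; hence an identity such as $f\otimes(g\minplus h)=(f\otimes g)\minplus(f\otimes h)$ holds in $BC(X)$ precisely when the corresponding numerical identity $a+\min(b,c)=\min(a+b,a+c)$ holds for all $a,b,c\in\mbbR$. Thus, once closedness is established, parts (2)--(11) become a checklist of identities in $(\mbbR,\min,\max,+,-)$ to be verified at an arbitrary point $x$.

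First I would dispatch part (1). Each operation preserves continuity (the maps $(s,t)\mapsto\min(s,t)$, $(s,t)\mapsto\max(s,t)$, $(s,t)\mapsto s+t$, $(s,t)\mapsto s-t$ are continuous, and $s\mapsto c+s$ is continuous, so composing with continuous $f,g$ keeps the result continuous) and preserves boundedness (if $|f|,|g|\le M$ then $f\minplus g$, $f\maxplus g$, $f\pm g$, and $c+f$ are bounded by explicit constants). Hence all of $f\minplus g$, $f\maxplus g$, $c\odot f$, $f\otimes g$, $f\oslash g$, and $-f$ lie in $BC(X)$, which legitimizes the pointwise reasoning used for the remaining parts.

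Next I would run through the numerical identities. Commutativity (2), idempotency (3), and the comparison rule (4) are immediate from the definitions of $\min$, $\max$, and $+$. Part (5) is the lattice absorption law $\min(a,\max(a,b))=a=\max(a,\min(a,b))$, read off by comparing $a$ with $b$. Parts (6) and (7) are the translation-invariance of $\min$ and $\max$, i.e. $a+\min(b,c)=\min(a+b,a+c)$ and $a+\max(b,c)=\max(a+b,a+c)$, which hold because adding $a$ is an order isomorphism of $\mbbR$. Part (10) is the identity $\min(a,b)+\max(a,b)=a+b$, valid because $\{\min(a,b),\max(a,b)\}=\{a,b\}$ as a multiset. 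Part (11) records that negation reverses order, so $\min(-a,-b)=-\max(a,b)$.

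The only mildly substantive points are the distributive laws (8) and (9), namely $\max(a,\min(b,c))=\min(\max(a,b),\max(a,c))$ and $\min(a,\max(b,c))=\max(\min(a,b),\min(a,c))$; these express that $(\mbbR,\le)$ is a distributive lattice, and I would verify them by a short case analysis on the relative order of $b$ and $c$ (by symmetry one may assume $b\le c$, after which both sides collapse to a single term). I expect these two lattice identities to be the main obstacle, though ``obstacle'' is generous here: each reduces to essentially one case, and no case is difficult. With (8) and (9) in hand the lemma is complete.
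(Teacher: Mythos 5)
Your proposal is correct and follows essentially the same route as the paper: everything is checked pointwise, (10) and (11) reduce to $\min+\max=\mathrm{sum}$ and order-reversal of negation, and (8)--(9) are handled by a short case analysis on the relative order of the arguments (the paper writes out the three resulting cases explicitly, which is equivalent to your symmetry reduction). Your explicit treatment of closedness in (1) is slightly more detailed than the paper's ``straightforward to verify,'' but the argument is the same.
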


\begin{proof}
(1)-(7) are straightforward to verify. 

For (8) and (9), it can be verified that for each $x\in X$, 
$$f\maxplus(g\minplus h)(x)=(f\maxplus g)\minplus (f\maxplus h)(x)=
\begin{cases}
f & \mbox{if } g(x)\leq f(x)\ \mbox{or } h(x)\leq f(x) \\
g &\mbox{if } f(x)\leq g(x)\leq h(x) \\ 
h & \mbox{if } f(x)\leq h(x)\leq g(x) 
\end{cases}$$
and 
$$f\minplus(g\maxplus h)(x)=(f\minplus g)\maxplus (f\minplus h)(x)=
\begin{cases}
f & \mbox{if } g(x)\geq f(x)\ \mbox{or } h(x)\geq f(x) \\
g &\mbox{if } f(x)\geq g(x)\geq h(x) \\ 
h & \mbox{if } f(x)\geq h(x)\geq g(x).
\end{cases}$$

For (10), we have
$(f\minplus g)\otimes (f\maxplus g)=\min(f,g)+\max(f,g)=f+g=f\otimes g$. 

For (11), we have 
$(-f)\minplus(-g)=\min(-f,-g)=-\max(f,g)=-(f\maxplus g)$. 
\end{proof}

\subsection{Tropical Paths and Segments}

\begin{definition} \label{D:tpath}
For each $\alpha=[f]$ and $\beta=[g]$ in $\mbbTP(X)$, we define two types of \emph{tropical paths} from $\alpha$ to $\beta$. Let $d=\rho(\alpha,\beta)$ be the distance between $\alpha$ and $\beta$. 
\begin{enumerate}
\item The \emph{lower tropical path} from $\alpha$ to $\beta$ is an injective map $P_{(\alpha,\beta)}:[0,d]\to \mbbTP(X)$ given by $t\mapsto [\min(t,\underline{g-f})+f]$;
\item The \emph{upper tropical path} from $\alpha$ to $\beta$ is an injective map $P^{(\alpha,\beta)}:[0,d]\to \mbbTP(X)$ given by $t\mapsto [\max(-t,\overline{g-f})+f]$.
\item Both lower and upper tropical paths are \emph{tropical paths}.
\end{enumerate}
Respectively, we define two types of  \emph{tropical segments} (also called \emph{t-segments}) connecting $\alpha$ and $\beta$ as follows.
\begin{enumerate}
\item The \emph{lower tropical segment} $\lowertconv(\alpha,\beta)$ connecting $\alpha$ and $\beta$ is the image of $P_{(\alpha,\beta)}$ in $\mbbTP(X)$;
\item The \emph{upper tropical segment} $\uppertconv(\alpha,\beta)$ connecting $\alpha$ and $\beta$ is the image of $P^{(\alpha,\beta)}$ in $\mbbTP(X)$.
\item Both lower and upper tropical segments are called \emph{tropical segments}. 
\end{enumerate}
\end{definition}

\begin{remark} \label{R:TropPath}
Clearly, $P_{(\alpha,\beta)}(0)=P^{(\alpha,\beta)}(0)=\alpha$ and $P_{(\alpha,\beta)}(d)=P^{(\alpha,\beta)}(d)=\beta$ by definition. Tropical paths can be translated by any $\gamma\in \mbbTP(X)$ as follows: for each $t\in[0,d]$, $P_{(\alpha+\gamma,\beta+\gamma)}(t)=P_{(\alpha,\beta)}(t)+\gamma$ and $P^{(\alpha+\gamma,\beta+\gamma)}(t)=P^{(\alpha,\beta)}(t)+\gamma$. Furthermore, we can scale tropical paths as follows: for each $t\in[0,d]$ and $c\in[0,\infty)$, $P_{(c\alpha,c\beta)}(ct)=cP_{(\alpha,\beta)}(t)$ and $P^{(c\alpha,c\beta)}(ct)=cP^{(\alpha,\beta)}(t)$.
\end{remark}

\begin{remark}
If $\alpha,\beta\in \mbbTP^0(X)$, then the tropical segments  $\lowertconv(\alpha,\beta)$ and $\uppertconv(\alpha,\beta)$ are both contained in $\mbbTP^0(X)$. 
\end{remark}

\begin{lemma} \label{L:TropLinear}
For each $\alpha=[f]$ and $\beta=[g]$ in $\mbbTP(X)$, $\lowertconv(\alpha,\beta)=\{[a\odot f \minplus b\odot g]\mid a, b\in \mbbR\}$ and $\uppertconv(\alpha,\beta)=\{[a\odot f \maxplus b\odot g]\mid a, b\in \mbbR\}$. Moreover, $\lowertconv(\alpha,\beta)=\lowertconv(\beta,\alpha)$ and $\uppertconv(\alpha,\beta)=\uppertconv(\beta,\alpha)$.
\end{lemma}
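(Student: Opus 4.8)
The plan is to put both tropical segments into a single normal form as a one-parameter family of projectivized minima (resp.\ maxima), and then match that family against the image of the defining tropical path. Throughout I would exploit that $a\odot f = a+f$ and that projectivization ignores additive constants.

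First I would simplify the right-hand sets. Since $[a\odot f \minplus b\odot g]=[\min(a+f,b+g)]=[\min(f,(b-a)+g)]$, setting $c=b-a$ (which sweeps all of $\mbbR$) gives $\{[a\odot f \minplus b\odot g]\mid a,b\in\mbbR\}=\{[\min(f,c+g)]\mid c\in\mbbR\}$, and symmetrically $\{[a\odot f \maxplus b\odot g]\mid a,b\in\mbbR\}=\{[\max(f,c+g)]\mid c\in\mbbR\}$. Next I would rewrite the path in the same form. Writing $m=\inf(g-f)$ and $M=\sup(g-f)$, so that $\underline{g-f}+f=g-m$, I get $P_{(\alpha,\beta)}(t)=[\min(t+f,g-m)]=[\min(f,(-m-t)+g)]$. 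Using $d=\Vert[f-g]\Vert=M-m$, the parameter $c=-m-t$ runs over $[-M,-m]$ as $t$ runs over $[0,d]$, so the image of $P_{(\alpha,\beta)}$ is exactly $\{[\min(f,c+g)]\mid c\in[-M,-m]\}$.

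The one point that needs a genuine (if short) argument, rather than bookkeeping, is that enlarging the parameter range from $[-M,-m]$ to all of $\mbbR$ does not enlarge the set of projective classes — this is the crux of the equality. Here I would argue that for $c\geq -m=\sup(f-g)$ one has $c+g\geq f$ pointwise, so $\min(f,c+g)=f$ and $[\min(f,c+g)]=\alpha=P_{(\alpha,\beta)}(0)$; dually, for $c\leq -M=\inf(f-g)$ one has $\min(f,c+g)=c+g$, giving the class $\beta=P_{(\alpha,\beta)}(d)$. Hence every $c\in\mbbR$ already lands on the path, proving $\lowertconv(\alpha,\beta)=\{[\min(f,c+g)]\mid c\in\mbbR\}=\{[a\odot f\minplus b\odot g]\mid a,b\in\mbbR\}$.

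The upper case I would handle verbatim, replacing $\min$ by $\max$, $\underline{g-f}$ by $\overline{g-f}$ (so $\overline{g-f}+f=g-M$), and with the threshold roles of $-m$ and $-M$ interchanged; alternatively it follows from the lower case by applying the tropical inverse, using $-\overline{h}=\underline{-h}$ together with Lemma~\ref{L:TropOper}(11). Finally, the symmetry $\lowertconv(\alpha,\beta)=\lowertconv(\beta,\alpha)$ (and its upper analogue) is immediate once the characterization is in hand: by commutativity of $\minplus$ (Lemma~\ref{L:TropOper}(2)) the set $\{[a\odot f\minplus b\odot g]\mid a,b\in\mbbR\}$ is manifestly invariant under swapping $(f,a)\leftrightarrow(g,b)$. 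I expect no serious obstacle beyond keeping the signs of $m$, $M$, $t$, and $c$ consistent across the four computations.
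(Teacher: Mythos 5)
Your proof is correct and follows essentially the same route as the paper's: both reduce the two-parameter family to a one-parameter family (you normalize to $[\min(f,c+g)]$, the paper to $[c\odot f\minplus g]$), identify the path's image with the parameter interval $[-M,-m]$ (the paper's $[u,v]$ after the sign flip), and observe that parameters outside that interval saturate to the endpoints $\alpha$ and $\beta$. The endpoint-saturation argument and the symmetry conclusion match the paper's in substance, so there is nothing to add.
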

\begin{proof}
Let $u=\inf(g-f)$ and $v=\sup(g-f)$. Note that $c\odot f\minplus g = c\odot f$ if and only if $c\odot f\maxplus g = g$ if and only if $c\leq u$, and $c\odot f\minplus g = g$ if and only if $c\odot f\maxplus g = c\odot f$ if and only if $c\geq v$. Thus $\{[a\odot f \minplus b\odot g]\mid a, b\in \mbbR\}=\{[c\odot f \minplus g]\mid c\in [u,v]\}$ and $\{[a\odot f \maxplus b\odot g]\mid a, b\in \mbbR\}=\{[c\odot f \maxplus g]\mid c\in [u,v]\}$. 

On the other hand, $\lowertconv(\alpha,\beta)=\{[\min(t,\underline{g-f})+f]\mid t\in[0,v-u]\}$ and $\uppertconv(\alpha,\beta)=\{[\max(-t,\overline{g-f})+f]\mid t\in[0,v-u] \}$. 
Now $\min(t,\underline{g-f})+f=\min(t,g-f-u)+f=\min(t+f,g-u)=t\odot f \minplus (-u)\odot g \sim (t+u)\odot f\minplus g$ and $\max(-t,\overline{g-f})+f=\max(-t,g-f-v)+f=\max(f-t,g-v)=(-t)\odot f \maxplus (-v)\odot g\sim (v-t)\odot f \maxplus g$. Since $t\in[0,v-u]$, we have $t+u,v-t\in[u,v]$. Thus we get  $\lowertconv(\alpha,\beta)=\{[c\odot f \minplus g]\mid c\in [u,v]\}=\{[a\odot f \minplus b\odot g]\mid a, b\in \mbbR\}$ and $\uppertconv(\alpha,\beta)=\{[c\odot f \maxplus g]\mid c\in [u,v]\}=\{[a\odot f \maxplus b\odot g]\mid a, b\in \mbbR\}$. The commutativity of $\lowertconv$ and  $\uppertconv$ also follows easily. 
\end{proof}

\begin{remark}
Lemma~\ref{L:TropLinear} actually says that the lower (respectively upper) tropical segment between $\alpha=[g]$ and $\beta=[g]$ is the projectivization of the lower (respectively upper) tropical linear space spanned by $f$ and $g$.  
\end{remark}

\begin{lemma} \label{L:neg_tseg}
For $\alpha,\beta$ in $\mbbTP(X)$ with $d=\rho(\alpha,\beta)$, we have $-P_{(\alpha,\beta)}(t)=P^{(-\alpha,-\beta)}(t)$ for all $t\in[0,d]$. Moreover, $-\lowertconv(\alpha,\beta)=\uppertconv(-\alpha,-\beta)$. In other words, the tropical inverse of a lower tropical segment is an upper tropical segment and vice versa. 
\end{lemma}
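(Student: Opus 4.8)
The plan is to first establish the pointwise path identity $-P_{(\alpha,\beta)}(t)=P^{(-\alpha,-\beta)}(t)$ by directly unwinding the two defining formulas, and then obtain the segment identity $-\lowertconv(\alpha,\beta)=\uppertconv(-\alpha,-\beta)$ by passing to images. Write $\alpha=[f]$ and $\beta=[g]$, so that $-\alpha=[-f]$ and $-\beta=[-g]$. Before comparing the paths I would check that they share a common domain: since $\rho(-\alpha,-\beta)=\Vert[-f+g]\Vert=\Vert[g-f]\Vert=\Vert[f-g]\Vert=\rho(\alpha,\beta)=d$, both $P_{(\alpha,\beta)}$ and $P^{(-\alpha,-\beta)}$ are maps $[0,d]\to\mbbTP(X)$, so a pointwise comparison is meaningful.

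The heart of the argument is a short computation. By definition $-P_{(\alpha,\beta)}(t)=[-\min(t,\underline{g-f})-f]$. Pulling the minus sign inside the minimum turns it into a maximum, namely $-\min(t,\underline{g-f})=\max(-t,-\underline{g-f})$, and the normalization identity $-\overline{h}=\underline{-h}$ from the list of easily verifiable facts (applied as $-\underline{h}=\overline{-h}$ with $h=g-f$) gives $-\underline{g-f}=\overline{f-g}$. Hence $-P_{(\alpha,\beta)}(t)=[\max(-t,\overline{f-g})-f]$. On the other side, substituting $-f$ for $f$ and $-g$ for $g$ in the upper-path formula yields $P^{(-\alpha,-\beta)}(t)=[\max(-t,\overline{(-g)-(-f)})-f]=[\max(-t,\overline{f-g})-f]$, which is exactly the same class. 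This proves the first assertion for every $t\in[0,d]$.

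For the moreover part, I would simply take images: $-\lowertconv(\alpha,\beta)=\{-P_{(\alpha,\beta)}(t)\mid t\in[0,d]\}=\{P^{(-\alpha,-\beta)}(t)\mid t\in[0,d]\}=\uppertconv(-\alpha,-\beta)$, where the middle equality is the pointwise identity just proved and the outer equalities are the definitions of the segments as path images. The symmetric statement (that the tropical inverse of an upper segment is a lower segment) follows by replacing $\alpha,\beta$ with $-\alpha,-\beta$ and using $-(-\gamma)=\gamma$. As an alternative route for this part, one could instead invoke Lemma~\ref{L:TropLinear} together with property (11) of Lemma~\ref{L:TropOper}, since $-(a\odot f\minplus b\odot g)=(-a)\odot(-f)\maxplus(-b)\odot(-g)$ and $-a,-b$ range over all of $\mbbR$ as $a,b$ do.

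The computation is entirely routine; the only point demanding care is the interaction of negation with the two different projective normalizations $\underline{\,\cdot\,}$ and $\overline{\,\cdot\,}$, i.e., the fact that negating a function swaps the roles of $\inf$ and $\sup$ and hence of the two normalizations. Once the identity $-\underline{h}=\overline{-h}$ is recorded, there is no genuine obstacle.
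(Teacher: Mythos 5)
Your proof is correct and follows essentially the same route as the paper's: negate the lower-path formula, convert $-\min$ to $\max$ and $-\underline{g-f}$ to $\overline{f-g}$, and match it against the upper-path formula for $-\alpha,-\beta$, then pass to images for the segment statement. The added check that $\rho(-\alpha,-\beta)=\rho(\alpha,\beta)$ so the domains agree is a small, harmless refinement the paper leaves implicit.
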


\begin{proof}
Suppose  $\alpha=[f]$ and $\beta=[g]$. Then $-P_{(\alpha,\beta)}(t)=-[\min(t,\underline{g-f})+f]=[-\min(t,\underline{g-f})-f]=[\max(-t,\overline{(-g-(-f))})+(-f)]=P^{(-\alpha,-\beta)}(t)$. And it follows $-\lowertconv(\alpha,\beta)=\uppertconv(-\alpha,-\beta)$. 
\end{proof}

We will also use the following notations to represent tropical segments which are respectively called closed, open and half-closed half-open (upper or lower) tropical segments as in the convention what we call the  intervals on the real line:
 \begin{align*}
 \underline{[\alpha,\beta]}&=\lowertconv(\alpha,\beta),\  \overline{[\alpha,\beta]}=\uppertconv(\alpha,\beta),\\
  \underline{(\alpha,\beta)}&=\lowertconv(\alpha,\beta)\setminus\{\alpha,\beta\},\  \overline{(\alpha,\beta)}=\uppertconv(\alpha,\beta)\setminus\{\alpha,\beta\}, \\
 \underline{(\alpha,\beta]}&=\lowertconv(\alpha,\beta)\setminus\{\alpha\},\  \overline{(\alpha,\beta]}=\uppertconv(\alpha,\beta)\setminus\{\alpha\}, \\ 
 \underline{[\alpha,\beta)}&=\lowertconv(\alpha,\beta)\setminus\{\beta\},\  \overline{[\alpha,\beta)}=\uppertconv(\alpha,\beta)\setminus\{\beta\}.\\
\end{align*}
Note that unless otherwise specified, when saying tropical segments we mean the closed ones by default.

\begin{proposition} \label{P:TropSeg}
Let $\alpha,\beta$ be elements in $\mbbTP(X)$ with $d=\rho(\alpha,\beta)$. We summarize some properties of tropical paths and segments as follows. 
\begin{enumerate}
\item For each $\alpha', \beta'$ in $\underline{[\alpha,\beta]}$ (respectively in $\overline{[\alpha,\beta]}$), we have  $\underline{[\alpha',\beta']} \subseteq \underline{[\alpha,\beta]}$ (respectively $\overline{[\alpha',\beta']} \subseteq \overline{[\alpha,\beta]}$). 
\item $P_{(\alpha,\beta)}(t)=P_{(\beta,\alpha)}(d-t)$ (respectively $P^{(\alpha,\beta)}(t)=P^{(\beta,\alpha)}(d-t)$) for all $t\in[0,d]$.
\item   $P_{(\alpha,\beta)}$ (respectively $P^{(\alpha,\beta)}$) is an isometry from $[0,d]$ to $\underline{[\alpha,\beta]}$ (respectively $\overline{[\alpha,\beta]}$). 
\item $\underline{[\alpha,\beta]}$ and $\overline{[\alpha,\beta]}$ are compact (and thus closed and bounded) subsets of $\mbbTP(X)$. 
\item The following are equivalent:
\begin{enumerate}
\item $\gamma\in \underline{[\alpha,\beta]}$ (respectively $\gamma\in \overline{[\alpha,\beta]}$);
\item $\underline{[\alpha,\beta]}=\underline{[\alpha,\gamma]}\bigcup \underline{[\gamma,\beta]}$ 
\\(respectively $\overline{[\alpha,\beta]}=\overline{[\alpha,\gamma]}\bigcup \overline{[\gamma,\beta]}$);
\item $\Xmin(\alpha-\gamma)\bigcup\Xmin(\beta-\gamma)=X$
\\(respectively $\Xmax(\alpha-\gamma)\bigcup\Xmax(\beta-\gamma)=X$).
\end{enumerate}
\item The intersection of any two lower (respectively upper) tropical segments is a lower (respectively upper) tropical segment. 
\item For two lower tropical segments $\underline{[\alpha_1,\beta_1]}$ and $\underline{[\alpha_2,\beta_2]}$, if $\beta_1\in \underline{(\alpha_2,\beta_2]}$ and $\alpha_2\in \underline{[\alpha_1,\beta_1)}$, then $\underline{[\alpha_1,\beta_1]}\bigcup\underline{[\alpha_2,\beta_2]}=\underline{[\alpha_1,\beta_2]}$.  Respectively, for two upper tropical segments $\overline{[\alpha_1,\beta_1]}$ and $\overline{[\alpha_2,\beta_2]}$, if $\beta_1\in \overline{(\alpha_2,\beta_2]}$ and $\alpha_2\in \overline{[\alpha_1,\beta_1)}$, then $\overline{[\alpha_1,\beta_1]}\bigcup\overline{[\alpha_2,\beta_2]}=\overline{[\alpha_1,\beta_2]}$.

\end{enumerate}

\end{proposition}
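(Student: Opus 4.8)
The plan is to establish each item for lower tropical segments and then transfer to the upper case for free via the negation duality of Lemma~\ref{L:neg_tseg}: since $\gamma\mapsto-\gamma$ is an isometry of $(\mbbTP(X),\rho)$ that carries $\lowertconv(\alpha,\beta)$ onto $\uppertconv(-\alpha,-\beta)$, every metric and set-theoretic assertion about lower segments dualizes verbatim, so I would record this reduction once and thereafter argue only the lower versions. Items (2)--(4) are then direct. For (2) I would rewrite both sides through Lemma~\ref{L:TropLinear} as $P_{(\alpha,\beta)}(t)=[(t+u)\odot f\minplus g]$ with $u=\inf(g-f)$ and $d=\sup(g-f)-u$, and check that $P_{(\beta,\alpha)}(d-t)$ equals the same class after a constant shift. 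For (3), setting $h=\underline{g-f}$, which ranges over $[0,d]$, the difference $P_{(\alpha,\beta)}(t)-P_{(\alpha,\beta)}(s)=[\min(t,h)-\min(s,h)]$ has (for $s\le t$) infimum $0$ and supremum $t-s$ as $h$ varies, so its tropical norm is exactly $|t-s|$; this yields the isometry and reconfirms injectivity. Item (4) is then immediate, as $\lowertconv(\alpha,\beta)$ is the continuous image of the compact interval $[0,d]$.

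For (1) I would apply Lemma~\ref{L:TropLinear} twice: a point of $\lowertconv(\alpha',\beta')$ is a tropical combination $a\odot f'\minplus b\odot g'$ of representatives of $\alpha',\beta'$, each of which is itself a tropical combination of $f,g$; by the distributivity recorded in Lemma~\ref{L:TropOper} a tropical combination of tropical combinations of $f,g$ simplifies to the form $c\odot f\minplus c'\odot g$, hence lies in $\lowertconv(\alpha,\beta)$. Item (5) is the heart. For (a)$\Leftrightarrow$(c) I would pick a representative $e$ of $\gamma$ normalized so that $\inf(f-e)=0$, giving $\Xmin(\alpha-\gamma)=\{f=e\}$ and $\Xmin(\beta-\gamma)=\{e=g+q\}$ with $q=-\inf(g-e)$; since $e\le f$ and $e\le g+q$ hold everywhere, condition (c) forces $e=\min(f,g+q)$ pointwise, which by Lemma~\ref{L:TropLinear} is precisely $\gamma\in\lowertconv(\alpha,\beta)$, and conversely $e=\min(f,g+q)$ makes the two minimizer sets cover $X$. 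For (a)$\Leftrightarrow$(b), the inclusion $\supseteq$ in (b) is exactly (1), while $\subseteq$ follows from the computation that restricting the path to a subinterval yields again a lower tropical path: writing $\gamma=P_{(\alpha,\beta)}(t_0)$ one checks $P_{(\alpha,\gamma)}=P_{(\alpha,\beta)}|_{[0,t_0]}$ and $P_{(\gamma,\beta)}=P_{(\alpha,\beta)}|_{[t_0,d]}$, so the two subsegments exhaust $\lowertconv(\alpha,\beta)$; and (b)$\Rightarrow$(a) is trivial since $\gamma\in\lowertconv(\alpha,\gamma)$.

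With (1), (3), (4), (5) available, item (6) becomes structural. The intersection $I$ of two lower segments is compact by (4), and for any $\gamma,\gamma'\in I$, applying (1) inside each of the two segments gives $\lowertconv(\gamma,\gamma')\subseteq I$. Transporting $I$ through the isometry $P_{(\alpha_1,\beta_1)}$ of (3) turns it into a closed, order-convex subset of the interval $[0,d_1]$, hence a closed subinterval, whose image is a lower tropical segment (reducing to a point, or to $\emptyset$, in the degenerate cases).

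Finally (7) is where I expect the genuine work. Using the criterion (5)(c), the two hypotheses read $\Xmin(\alpha_1-\alpha_2)\bigcup\Xmin(\beta_1-\alpha_2)=X$ and $\Xmin(\alpha_2-\beta_1)\bigcup\Xmin(\beta_2-\beta_1)=X$, and the whole statement reduces to the single claim $\beta_1\in\lowertconv(\alpha_1,\beta_2)$. Granting it, (5)(b) gives $\lowertconv(\alpha_1,\beta_2)=\lowertconv(\alpha_1,\beta_1)\bigcup\lowertconv(\beta_1,\beta_2)$, the last piece lies in $\lowertconv(\alpha_2,\beta_2)$ by (1), and the reverse inclusion $\lowertconv(\alpha_1,\beta_1)\bigcup\lowertconv(\alpha_2,\beta_2)\subseteq\lowertconv(\alpha_1,\beta_2)$ follows once the symmetric claim $\alpha_2\in\lowertconv(\alpha_1,\beta_2)$ is also secured; under the relabeling $\alpha_1\leftrightarrow\beta_2$, $\alpha_2\leftrightarrow\beta_1$ the two claims coincide, so one computation suffices. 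To prove $\beta_1\in\lowertconv(\alpha_1,\beta_2)$ I would write $\alpha_1=[f]$, $\beta_1=[g]$, solve the first hypothesis as $\alpha_2=[\min(f,g+q_1)]$ as in (5)(c), substitute into the second hypothesis, and verify that $\Xmin(\alpha_1-\beta_1)\bigcup\Xmin(\beta_2-\beta_1)=X$. The delicate part is coordinating the several independent constant normalizations so that the minimizer sets line up, while correctly handling the boundary cases concealed in the half-open conditions $\beta_1\in\lowertconv(\alpha_2,\beta_2)\setminus\{\alpha_2\}$ and $\alpha_2\in\lowertconv(\alpha_1,\beta_1)\setminus\{\beta_1\}$; this bookkeeping is the main obstacle.
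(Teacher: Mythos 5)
Your treatment of items (1)--(6) is sound and runs essentially parallel to the paper's, with a few presentational differences that all check out: you prove (1) by applying Lemma~\ref{L:TropLinear} twice and simplifying a tropical combination of tropical combinations (the paper instead reparametrizes the path and shows $P_{(\alpha',\beta')}(t)=P_{(\alpha,\beta)}(t+t_1)$, a formula you in any case need for the exhaustion step in (5)(b)); your proof of (5)(a)$\Leftrightarrow$(c) via the normalized representative $e\le\min(f,g+q)$ with equality forced pointwise is a clean equivalent of the paper's computation; and your transport of the intersection in (6) through the isometry of (3) to a closed order-convex subset of $[0,d_1]$ is actually more explicit than the paper's one-line appeal to compactness.

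The genuine gap is in (7), and you have flagged it yourself: you reduce correctly to the single claim $\alpha_2\in\underline{[\alpha_1,\beta_2]}$ (with $\beta_1\in\underline{[\alpha_1,\beta_2]}$ following by the relabeling symmetry), but you then propose to ``solve'' the hypotheses for representatives and coordinate the constant normalizations, and you leave that bookkeeping as the main obstacle. No such bookkeeping is needed, because of one observation you are missing: if $\beta_1\in\underline{(\alpha_2,\beta_2]}$, i.e.\ $\beta_1=P_{(\alpha_2,\beta_2)}(t)$ with $t>0$, then $\beta_1-\alpha_2=[\min(t,\underline{g_2-f_2})]$ and, since $t>0$, this function attains its infimum $0$ exactly where $\underline{g_2-f_2}$ does; hence $\Xmin(\beta_1-\alpha_2)=\Xmin(\beta_2-\alpha_2)$. (This is the same invariance of minimizer sets along the open part of a segment that the paper records at the start of the proof of Proposition~\ref{P:working}.) With that in hand the verification is a two-line substitution into your own criterion (5)(c): the hypothesis $\alpha_2\in\underline{[\alpha_1,\beta_1]}$ gives $\Xmin(\alpha_1-\alpha_2)\bigcup\Xmin(\beta_1-\alpha_2)=X$, replacing $\Xmin(\beta_1-\alpha_2)$ by $\Xmin(\beta_2-\alpha_2)$ gives $\Xmin(\alpha_1-\alpha_2)\bigcup\Xmin(\beta_2-\alpha_2)=X$, and (5)(c) yields $\alpha_2\in\underline{[\alpha_1,\beta_2]}$. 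Note also that your chosen normalization ($\inf(f-e)=0$, so $\Xmin(\alpha-\gamma)=\{f=e\}$) silently assumes the infimum is attained, which can fail in $\mbbTP(X)\setminus\mbbTP^0(X)$; working with $\Xmin$ as the (possibly empty) level set $f^{-1}(\inf f)$ as the paper does, the argument above goes through without attainment, whereas a representative-chasing argument would have to be careful on this point.
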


\begin{proof}
\begin{enumerate}
\item Let $\alpha=[f]$ and $\beta=[g]$. For $\alpha',\beta'\in \underline{[\alpha,\beta]}$, we may assume that  $\alpha'=P_{(\alpha,\beta)}(t_1)=[f']$ and $\beta'=P_{(\alpha,\beta)}(t_2)=[g']$ where $f'=\min(t_1,\underline{g-f})+f$ and  $g'=\min(t_2,\underline{g-f})+f$ for $0\leq t_1\leq t_2\leq d$. Note that $g'-f'=\underline{g'-f'}=\min(t_2,\underline{g-f})-\min(t_1,\underline{g-f})$ and $\rho(\alpha',\beta')=\Vert g'-f'\Vert = \max(g'-f')=t_2-t_1$. Therefore, for $0\leq t\leq t_2-t_1$,
\begin{align*}
P_{(\alpha',\beta')}(t)&=[\min(t,g'-f')+f]=[\min(t+\min(t_1,\underline{g-f}),\min(t_2,\underline{g-f}))+f]\\
&=[\min(t+t_1,\underline{g-f})+f]=P_{(\alpha,\beta)}(t+t_1)
\end{align*}
 which implies $\underline{[\alpha',\beta']} \subseteq \underline{[\alpha,\beta]}$. An analogous argument holds for $\overline{[\alpha',\beta']} \subseteq \overline{[\alpha,\beta]}$.
 \item For $t\in[0,d]$, 
 \begin{align*}
 & P_{(\alpha,\beta)}(t)=[\min(t,\underline{g-f})+f]=[\min(t+f,\underline{g-f}+f]=[\min(t+f,g-\min(g-f)]  \\
 &=[\min(t+f,d+g+\min(f-g)]=[\min(d-t,f-g-\min(f-g))+g]=P_{(\beta,\alpha)}(d-t).
 \end{align*}
 An analogous argument holds for $P^{(\alpha,\beta)}(t)=P^{(\beta,\alpha)}(d-t)$. 
\item Following the argument in (1), $\rho(P_{(\alpha,\beta)}(t_1),P_{(\alpha,\beta)}(t_2))=\rho(P^{(\alpha,\beta)}(t_1),P^{(\alpha,\beta)}(t_2))=|t_2-t_1|$ for all $0\leq t_1,t_2\leq d$ and thus $\underline{[\alpha,\beta]}$ and $\overline{[\alpha,\beta]}$ are both isometric to the interval $[0,d]$. 
\item It follows from (3) straightforwardly. 

\item Here we just show it for $\underline{[\alpha,\beta]}$ and the case for $\overline{[\alpha,\beta]}$ follows from a similar argument. 
 The equivalence of (a) and (b) is clear by (1). Let $\alpha=[f]$, $\beta=[g]$ and $\gamma = [h]$. 
 Now suppose $\gamma\in \underline{[\alpha,\beta]}$. We may let $h=\min(t,\underline{g-f})+f$ for some $t\in[0,d]$. Then $\Xmin(\alpha-\gamma)=\Xmin(f-h)=\Xmin(-\min(t,\underline{g-f}))=\{x\in X\mid \underline{g-f}(x)\geq t\}$ and $\Xmin(\beta-\gamma)=\Xmin(g-h)=\Xmin(g-f-\min(t,\underline{g-f}))=\Xmin(\max(t,\underline{g-f}))=\{x\in X\mid \underline{g-f}(x)\leq t\}$. Therefore  $\Xmin(\alpha-\gamma)\bigcup \Xmin(\beta-\gamma) =X$. 
 
 Conversely, suppose $\Xmin(g-h)\bigcup \Xmin(f-h)=\Xmin(g-h)\bigcup \Xmax(h-f) =X$. Let $t=\Vert h-f\Vert$. Then $\underline{h-f}=\min(t,\underline{g-f})$ and thus $\gamma=[h]=[\min(t,\underline{g-f}))+f]=P_{(\alpha,\beta)}(t)$. 
 
 \item Let $T_1$ and $T_2$ be two lower (respectively upper)  tropical segments  with $T$ being their intersection. Then by (1), whenever $\alpha,\beta\in T$, we must have $\underline{[\alpha,\beta]}\subseteq T$ (respectively $\overline{[\alpha,\beta]}\subseteq T$). Since $T$ is compact, $T$ must be a  lower (respectively upper)  tropical segment. 
 
 \item Again here we only show the case for lower tropical segments while the case for $\overline{[\alpha,\beta]}$ follows analogously. Actually we just need to show $\beta_1,\alpha_2\in \underline{[\alpha_1,\beta_2]}$ and the statement will follow from (1). Suppose $\rho(\alpha_1,\beta_1)=d_1$, $\rho(\alpha_2,\beta_2)=d_2$. Note that $\Xmin(\beta_1-\alpha_2)=\Xmin(\beta_2-\alpha_2)$ since $\beta_1\in \underline{(\alpha_2,\beta_2]}$. Then $\Xmin(\alpha_1-\alpha_2)\bigcup\Xmin(\beta_2-\alpha_2)=\Xmin(\alpha_1-\alpha_2)\bigcup\Xmin(\beta_1-\alpha_2)=X$ since $\alpha_2\in\underline{[\alpha_1,\beta_1]}$. Thus $\alpha_2\in \underline{[\alpha_1,\beta_2]}$. Analogously we can show that $\beta_1\in \underline{[\alpha_1,\beta_2]}$.
\end{enumerate}
 
\end{proof}

\subsection{Tropical Convexity and Several Types of  Independence on Tropical Projective Spaces}

\begin{definition} \label{D:TropConv}
A subset $T$ of $\mbbTP(X)$ is said to be \emph{lower tropically convex}  (respectively \emph{upper tropically convex} ) if for every $\alpha,\beta\in T$, the whole tropical segment $\underline{[\alpha,\beta]}$ (respectively $\overline{[\alpha,\beta]}$) connecting $\alpha$ and $\beta$ is contained in $T$. 
\end{definition}

\begin{remark}
By Proposition~\ref{P:TropSeg}(1), all (closed, open and half-closed half-open) lower and upper tropical segments are lower and upper tropically convex respectively. 
\end{remark}

The following lemmas follow from Definition~\ref{D:TropConv} directly. 

\begin{lemma}
If $T$ is lower or upper tropically convex, then $T$ is connected. 
\end{lemma}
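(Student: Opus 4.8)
The plan is to prove the stronger statement that $T$ is path-connected, which immediately yields connectedness. The only nontrivial ingredient needed is that tropical segments are genuine paths, and this is already supplied by the earlier material. First I would recall that by Proposition~\ref{P:TropSeg}(3), the lower tropical path $P_{(\alpha,\beta)}$ (respectively the upper tropical path $P^{(\alpha,\beta)}$) is an isometry from the interval $[0,d]$ onto the segment $\underline{[\alpha,\beta]}$ (respectively $\overline{[\alpha,\beta]}$), where $d=\rho(\alpha,\beta)$. In particular each such path is continuous, so every tropical segment is the continuous image of a closed interval and is itself path-connected.

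Next I would dispose of the trivial case: if $T$ is empty it is vacuously connected, so assume $T\neq\emptyset$ and fix a basepoint $\alpha_0\in T$. Suppose $T$ is lower tropically convex. For an arbitrary $\beta\in T$, the defining property in Definition~\ref{D:TropConv} gives $\underline{[\alpha_0,\beta]}\subseteq T$, and the path $P_{(\alpha_0,\beta)}$ then provides a continuous arc lying entirely inside $T$ joining $\alpha_0$ to $\beta$. Since $\beta$ was arbitrary, every point of $T$ is joined to $\alpha_0$ by a path in $T$, so $T$ is path-connected and hence connected. The upper case is verbatim the same argument with $P^{(\alpha_0,\beta)}$ in place of $P_{(\alpha_0,\beta)}$ and $\overline{[\alpha_0,\beta]}$ in place of $\underline{[\alpha_0,\beta]}$.

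There is essentially no obstacle here: this is the tropical analogue of the classical fact that a convex set is connected, and convexity has been defined precisely so as to make the "segment" joining any two points available. The one point requiring a little care is that one must not merely know $\underline{[\alpha,\beta]}\subseteq T$ as a set inclusion but that this segment is traversed by a \emph{continuous} map; this continuity is exactly the content of the isometry statement in Proposition~\ref{P:TropSeg}(3), so invoking that result is the crux of the (very short) argument.
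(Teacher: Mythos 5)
Your proof is correct and is exactly the argument the paper has in mind: the paper states this lemma without proof, remarking only that it follows directly from Definition~\ref{D:TropConv}, and the path-connectedness argument via the continuity (indeed isometry) of $P_{(\alpha_0,\beta)}$ from Proposition~\ref{P:TropSeg}(3) is the standard way to fill in that claim. No issues.
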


\begin{lemma}
$\mbbTP^0(X)$ is both lower and upper tropically convex. 
\end{lemma}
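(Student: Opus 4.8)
The plan is to unwind the statement through Definition~\ref{D:TropConv}. That definition says $\mbbTP^0(X)$ is lower (respectively upper) tropically convex exactly when, for every pair $\alpha=[f]$, $\beta=[g]$ in $\mbbTP^0(X)$, the entire segment $\lowertconv(\alpha,\beta)$ (respectively $\uppertconv(\alpha,\beta)$) is contained in $\mbbTP^0(X)$. This is precisely the content of the remark stated just before Lemma~\ref{L:TropLinear}, so once that remark is in hand the lemma is immediate; the substance therefore lies entirely in showing that tropical segments between points of $\mbbTP^0(X)$ do not leave $\mbbTP^0(X)$.

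To prove that, I would invoke the algebraic description of segments from Lemma~\ref{L:TropLinear}: every point of $\lowertconv(\alpha,\beta)$ has the form $[a\odot f\minplus b\odot g]=[\min(a+f,\,b+g)]$ and every point of $\uppertconv(\alpha,\beta)$ has the form $[a\odot f\maxplus b\odot g]=[\max(a+f,\,b+g)]$ for $a,b\in\mbbR$. Since $f,g\in BC^0(X)$ and adding a constant does not affect attainment of the infimum or supremum, both $a+f$ and $b+g$ again lie in $BC^0(X)$. Thus the whole claim reduces to the single assertion that $BC^0(X)$ is closed under $\min$ and under $\max$, i.e. that $\min(\phi,\psi)$ and $\max(\phi,\psi)$ belong to $BC^0(X)$ whenever $\phi,\psi\in BC^0(X)$.

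Here one half is routine: $\min(\phi,\psi)$ attains its infimum because
\[
\inf\min(\phi,\psi)=\min(\inf\phi,\inf\psi),
\]
and this common value is realized at any point where whichever of $\phi,\psi$ has the smaller infimum attains it (such a point exists since $\phi,\psi\in BC^0(X)$); dually $\max(\phi,\psi)$ attains its supremum. The step I expect to be the main obstacle is the complementary attainment — that $\min(\phi,\psi)$ also attains its \emph{supremum} and $\max(\phi,\psi)$ also attains its \emph{infimum}. This is where care is genuinely needed: one has only the inequality $\sup\min(\phi,\psi)\le\min(\sup\phi,\sup\psi)$, and the separate attainment of the extrema of $\phi$ and $\psi$ does not by itself force the supremum of $\min(\phi,\psi)$ to be realized rather than merely approached. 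It is automatic when $X$ is compact, where $BC^0(X)=BC(X)=C(X)$, so I would isolate this attainment statement as the key lemma and verify continuity and boundedness of $\min(\phi,\psi)$ and $\max(\phi,\psi)$, which are immediate, only afterwards.
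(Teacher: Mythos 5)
Your reduction is correct as far as it goes: by Lemma~\ref{L:TropLinear} every point of a tropical segment between $[f]$ and $[g]$ has the form $[\min(a+f,b+g)]$ or $[\max(a+f,b+g)]$, so the lemma is equivalent to $BC^0(X)$ being closed under pointwise $\min$ and $\max$ up to constants, and your argument that $\min(\phi,\psi)$ attains its infimum (dually, that $\max(\phi,\psi)$ attains its supremum) is sound. But your proof stops exactly at the step you flag as ``the key lemma'': you never show that $\min(\phi,\psi)$ attains its \emph{supremum}, so the proposal is incomplete. The paper offers no help here --- it lists this lemma among those that ``follow from Definition~\ref{D:TropConv} directly,'' with no argument given.

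Worse, the deferred step is false for non-compact $X$, so the gap cannot be closed. Take $X=\{1,2,3,\dots\}$ with the discrete topology, and define $\phi(1)=1$, $\phi(2k)=1-1/k$, $\phi(2k+1)=0$, and $\psi(1)=0$, $\psi(2k)=1-1/k$, $\psi(2k+1)=1$ for $k\ge 1$. Both functions are bounded, continuous, and attain their infimum $0$ and supremum $1$, so $[\phi],[\psi]\in\mbbTP^0(X)$. But $\min(\phi,\psi)$ takes only the values $0$ and $1-1/k$, so its supremum $1$ is not attained, and no constant translate of it attains its supremum either; hence $[\min(\phi,\psi)]=[0\odot\phi\minplus 0\odot\psi]$ lies in $\lowertconv([\phi],[\psi])$ (concretely it equals $P_{([\phi],[\psi])}(1)$) but not in $\mbbTP^0(X)$. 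The same example shows $\phi+\psi$ does not attain its supremum, so $BC^0(X)$ is not even closed under addition. The statement is therefore not provable as written; it holds trivially when $X$ is compact, since then $BC^0(X)=BC(X)$ and $\mbbTP^0(X)=\mbbTP(X)$. Your instinct to isolate the supremum-attainment claim as the crux was exactly right --- that is the precise point at which the lemma fails.
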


\begin{lemma}
The intersection of an arbitrary collection of lower or upper tropical convex sets is lower or upper tropically convex respectively. 
\end{lemma}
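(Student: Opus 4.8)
The plan is to prove this directly from Definition~\ref{D:TropConv}, mirroring the classical fact that an arbitrary intersection of convex sets is convex. I would treat only the lower case in detail, since the upper case is obtained verbatim by replacing every lower tropical segment $\underline{[\alpha,\beta]}$ by the corresponding upper one $\overline{[\alpha,\beta]}$; the two arguments are formally identical.

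First I would fix an arbitrary index set $I$ and a collection $\{T_i\}_{i\in I}$ of lower tropically convex subsets of $\mbbTP(X)$, and set $T=\bigcap_{i\in I}T_i$. If $T$ has at most one element the claim is vacuous, so I may assume I can choose two points $\alpha,\beta\in T$. The key step is the elementary observation that belonging to an intersection forces belonging to every member: from $\alpha,\beta\in T$ we get $\alpha,\beta\in T_i$ for each $i\in I$.

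Next I would invoke the hypothesis that each $T_i$ is lower tropically convex. By Definition~\ref{D:TropConv} this yields $\underline{[\alpha,\beta]}\subseteq T_i$ for every $i\in I$. Since this inclusion holds for all indices simultaneously, I conclude $\underline{[\alpha,\beta]}\subseteq\bigcap_{i\in I}T_i=T$. As $\alpha,\beta\in T$ were arbitrary, $T$ satisfies the defining condition and is therefore lower tropically convex, which completes the argument.

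There is no genuine obstacle here: the statement is a purely set-theoretic consequence of the definition, and the entire proof amounts to unwinding what membership in an intersection means. The only point meriting any care is the bookkeeping of quantifiers — making sure the segment inclusion is seen to hold \emph{uniformly} across the whole (possibly infinite) family before passing to the intersection — together with the routine remark that the lower and upper cases run in parallel.
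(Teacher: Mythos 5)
Your proof is correct and is exactly the routine argument the paper has in mind — the paper states this lemma without proof, noting only that it follows directly from Definition~\ref{D:TropConv}, and your unwinding of membership in an intersection is the intended justification. No gaps.
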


\begin{definition} \label{D:TropIndep}
Let $S$ be a subset of $\mbbTP(X)$. 
\begin{enumerate}
\item The \emph{lower tropical convex hull $\lowertconv(S)$} (respectively \emph{upper tropical convex hull $\uppertconv(S)$}) generated by $S$ is the intersection of all lower (respectively upper) tropically convex subsets of $\mbbTP(X)$ containing $S$, and we say $S$ is a \emph{generating set} of $\lowertconv(S)$ (respectively  $\uppertconv(S)$). Clearly, $S$ is lower tropically convex if and only if $S=\lowertconv(S)$, and  $S$ is upper tropically convex if and only if $S=\uppertconv(S)$. 
 
\item We say an (upper or lower) tropical convex hull is \emph{finitely generated} if it can be generated by a finite set. We also call a finitely generated (upper or lower) tropical convex hull an (upper or lower respectively) \emph{tropical polytope}. 
\item  
\begin{enumerate}
\item If $x\notin \lowertconv(S\setminus\{x\})$ (respectively $x\notin \uppertconv(S\setminus\{x\})$) for every $x\in S$, then we say $S$ is \emph{lower (tropically) weakly independent} (respectively \emph{upper (tropically) weakly independent}). 
\item If $\lowertconv(S_1)\bigcap \lowertconv(S_2)= \emptyset$ (respectively $\uppertconv(S_1)\bigcap \uppertconv(S_2)= \emptyset$) for each  partition $\{S_1,S_2\}$ of $S$ with $S_1,S_2\neq \emptyset$,  then we say $S$ is \emph{lower Gondran-Minoux independent} (respectively \emph{upper Gondran-Minoux independent}). 
\end{enumerate}

\end{enumerate}
\end{definition}

\begin{lemma}  \label{L:OpTropHull}
We may do translation, dilation and tropical inversion of  tropical convex hulls as follows:
\begin{enumerate}
\item $\alpha+\lowertconv(S)=\lowertconv(\alpha+S)$ and $\alpha+\uppertconv(S)=\lowertconv(\alpha+S)$ for any $\alpha\in\mbbTP(X)$;
\item $c\cdot \lowertconv(S)=\lowertconv(c\cdot S)$ and $c\cdot \uppertconv(S)=\lowertconv(c\cdot S)$ for any $c\geq 0$; 
\item $-\lowertconv(S)=\uppertconv(-S)$ and  $-\uppertconv(S)=\lowertconv(-S)$. 
\end{enumerate}

\end{lemma}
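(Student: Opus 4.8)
The plan is to derive all three parts from a single \emph{transport principle}: if $\phi\colon\mbbTP(X)\to\mbbTP(X)$ is a bijection that carries each lower tropical segment $\lowertconv(\alpha,\beta)$ onto the lower tropical segment $\lowertconv(\phi(\alpha),\phi(\beta))$, then $\phi$ restricts to a bijection between lower tropically convex sets, and consequently $\phi(\lowertconv(S))=\lowertconv(\phi(S))$ for every $S\subseteq\mbbTP(X)$; the statement holds verbatim for upper convexity, and there is a type-swapping variant for bijections that send lower segments to upper segments. First I would record this principle. Convexity-preservation is immediate from Definition~\ref{D:TropConv}: if $T$ is lower tropically convex and $\phi(\alpha),\phi(\beta)\in\phi(T)$, then $\lowertconv(\phi(\alpha),\phi(\beta))=\phi(\lowertconv(\alpha,\beta))\subseteq\phi(T)$, so $\phi(T)$ is lower tropically convex; applying the same reasoning to $\phi^{-1}$ shows both $\phi$ and $\phi^{-1}$ preserve lower tropical convexity. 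Since a bijection commutes with intersections, $\phi(\lowertconv(S))=\bigcap_T\phi(T)$ where $T$ ranges over lower tropically convex sets containing $S$; and as $T$ ranges over these, $\phi(T)$ ranges exactly over the lower tropically convex sets containing $\phi(S)$ (surjectivity of this correspondence uses $\phi^{-1}$), whence the intersection is $\lowertconv(\phi(S))$.

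With the principle in hand, each part reduces to checking its segment hypothesis. For (1), translation $\phi(\beta)=\alpha+\beta$ is a bijection with inverse the translation by $-\alpha$, and Remark~\ref{R:TropPath} gives $P_{(\alpha+\beta_1,\alpha+\beta_2)}=P_{(\beta_1,\beta_2)}+\alpha$ together with the analogous identity for upper paths, so translation carries $\lowertconv(\beta_1,\beta_2)$ onto $\lowertconv(\alpha+\beta_1,\alpha+\beta_2)$ and likewise for upper segments; the principle then yields both identities in (1). For (2), scaling $\phi(\beta)=c\beta$ with $c>0$ is a bijection with inverse scaling by $1/c$, and the scaling formula $P_{(c\beta_1,c\beta_2)}(ct)=cP_{(\beta_1,\beta_2)}(t)$ from Remark~\ref{R:TropPath} shows it carries lower (resp.\ upper) segments onto lower (resp.\ upper) segments; applying the principle proves (2) for $c>0$.

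For (3) I would use the type-swapping variant. By Lemma~\ref{L:neg_tseg} the negation $\phi(\beta)=-\beta$ is an involution of $\mbbTP(X)$ satisfying $-\lowertconv(\alpha,\beta)=\uppertconv(-\alpha,-\beta)$, so it sends lower tropical segments to upper tropical segments and conversely. Hence if $T$ is lower tropically convex then $-T$ is upper tropically convex, and negation restricts to a bijection between the lower and the upper tropically convex sets; repeating the intersection argument with this correspondence gives $-\lowertconv(S)=\uppertconv(-S)$, and since $\phi$ is its own inverse, $-\uppertconv(S)=\lowertconv(-S)$.

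The genuinely routine content is the segment bookkeeping, which is supplied directly by Remark~\ref{R:TropPath} and Lemma~\ref{L:neg_tseg}. The one point deserving care is the bijective correspondence between the two families of convex sets inside the transport principle: one must invoke $\phi^{-1}$ (of the same type for translation and scaling, of the opposite type for negation) to see that every convex set containing the image arises as $\phi(T)$ for some convex $T$ containing $S$, which is precisely what forces the two intersections to agree. The only case falling outside the general scheme is the degeneracy $c=0$ in (2), where $\phi$ is no longer injective; there I would simply observe that both sides collapse to $\{[0]\}$ when $S\neq\emptyset$.
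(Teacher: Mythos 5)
Your proof is correct and follows essentially the same route as the paper, which simply cites Remark~\ref{R:TropPath} and Lemma~\ref{L:neg_tseg}; your ``transport principle'' is the natural way to make that one-line argument explicit, and your handling of the degenerate case $c=0$ is a sensible extra precaution. No gaps.
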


\begin{proof}
This follows from Remark~\ref{R:TropPath} and Lemma~\ref{L:neg_tseg}. 
\end{proof}

For $V\subseteq BC(X)$, we say $\hminplus(V):=\{(c_1\odot f_1)\minplus\cdots\minplus (c_m\odot f_m)\mid m\in\mbbN, c_i\in \mbbR,f_i\in V\}$ is the \emph{lower tropical torus} spanned by $V$ and $\hmaxplus(V):=\{(c_1\odot f_1)\maxplus\cdots\maxplus (c_m\odot f_m)\mid m\in\mbbN, c_i\in \mbbR,f_i\in V\}$ is the \emph{upper tropical torus} spanned by $V$. By abuse of notation, for $S\subseteq\mbbTP(X)$, we also write $\hminplus(S):=\hminplus(V_S)$ and $\hmaxplus(S):=\hmaxplus(V_S)$ where $V_S=\{f\in BC(X)\mid [f]\in S\}$. Note that Lemma~\ref{L:TropLinear} essentially says $\lowertconv(\alpha,\beta)=[\hminplus(\{\alpha,\beta\})]$ and $\uppertconv(\alpha,\beta)=[\hmaxplus(\{\alpha,\beta\})]$. In the following theorem, we show that this is generally true for all $S\subseteq\mbbTP(X)$.

\begin{theorem} \label{T:TconvTlinear}
For any $S\subseteq\mbbTP(X)$, $\lowertconv(S)=[\hminplus(S)]$ and $\uppertconv(S)=[\hmaxplus(S)]$. Moreover, $\uppertconv(\lowertconv(S))=\lowertconv(\uppertconv(S))$ which is both lower and upper tropically convex. 
\end{theorem}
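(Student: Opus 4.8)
The plan is to establish the three assertions in turn, deferring the genuinely delicate work to the last one, which I will reduce to a distributive-lattice normal-form argument.

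\textbf{The two torus identities.} For $\lowertconv(S)=[\hminplus(S)]$ I would prove both inclusions. For $\lowertconv(S)\subseteq[\hminplus(S)]$ it suffices, by minimality of $\lowertconv(S)$, to check that $[\hminplus(S)]$ is lower tropically convex and contains $S$. Containment is immediate, since each $f\in V_S$ equals $0\odot f\in\hminplus(S)$. Convexity follows once I observe that $\hminplus(S)$ is closed under $\odot$ and $\minplus$ (a $\minplus$-combination of $\minplus$-combinations is again one, and $c\odot$ distributes into $\minplus$): then for $\alpha=[f],\beta=[g]$ with $f,g\in\hminplus(S)$, Lemma~\ref{L:TropLinear} gives $\lowertconv(\alpha,\beta)=\{[a\odot f\minplus b\odot g]\mid a,b\in\mbbR\}\subseteq[\hminplus(S)]$. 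For the reverse inclusion I would induct on the number $m$ of terms in $h=(c_1\odot f_1)\minplus\cdots\minplus(c_m\odot f_m)$: the base case is $[h]=[f_1]\in S$, and the inductive step exhibits $[h]$ as the point $[h'\minplus(c_m\odot f_m)]$ of the segment $\lowertconv([h'],[f_m])$ (Lemma~\ref{L:TropLinear} with $a=b=0$), which lies in $\lowertconv(S)$ by convexity and the inductive hypothesis. The identity $\uppertconv(S)=[\hmaxplus(S)]$ is verbatim with $\minplus,\maxplus$ interchanged.

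\textbf{The commutation identity.} For the third assertion I would first record that $\hminplus(S)$ is closed under adding constants, so that $V_{\lowertconv(S)}=\hminplus(S)$; combined with the second identity this gives $\uppertconv(\lowertconv(S))=[\hmaxplus(\hminplus(S))]$, and symmetrically $\lowertconv(\uppertconv(S))=[\hminplus(\hmaxplus(S))]$. Writing $W=\hmaxplus(\hminplus(S))$, its members are the ``max-of-min'' functions $\max_j\min_i(c_{ji}+f_{ji})$ with $[f_{ji}]\in S$, while those of $W'=\hminplus(\hmaxplus(S))$ are the ``min-of-max'' functions; the crux is $W=W'$. To show $W\subseteq W'$ I would convert each max-of-min into a min-of-max by repeatedly applying the distributive law Lemma~\ref{L:TropOper}(8), which pushes a $\maxplus$ inside a $\minplus$; since all indexing sets are finite, this yields the standard identity $\max_j\min_{i\in I_j}a_{ji}=\min_\phi\max_j a_{j,\phi(j)}$ ranging over choice functions $\phi$, exhibiting the element as a finite min of finite maxes of atoms, i.e.\ a member of $W'$. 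The reverse inclusion $W'\subseteq W$ is symmetric via Lemma~\ref{L:TropOper}(9). Finally, to see $[W]$ is both lower and upper tropically convex I would verify that $W$ is closed under $\odot$, $\minplus$, and $\maxplus$: closure under $\maxplus$ and $\odot$ is transparent from the max-of-min shape, while closure under $\minplus$ uses the same distributive identity (min distributes over max, Lemma~\ref{L:TropOper}(9)); then Lemma~\ref{L:TropLinear} applies exactly as in the first part to give convexity under both operations.

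\textbf{Main obstacle.} The two torus identities are routine, the only care being the bookkeeping that $\hminplus(S)$ is genuinely closed under the relevant tropical operations. The real work is the normal-form conversion $W=W'$: I must ensure that the passage between disjunctive (max-of-min) and conjunctive (min-of-max) forms terminates and remains within \emph{finite} combinations. This is precisely where the finiteness built into the definitions of $\hminplus$ and $\hmaxplus$ and the two distributive laws of Lemma~\ref{L:TropOper} are indispensable, and I expect this distributivity-and-finiteness step to be the only genuinely delicate point of the proof.
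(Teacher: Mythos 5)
Your proposal is correct and follows essentially the same route as the paper: both inclusions of the torus identities are obtained via Lemma~\ref{L:TropLinear} together with closure of $\hminplus(S)$ (resp.\ $\hmaxplus(S)$) under the tropical operations, and the commutation identity is reduced to the finite min/max distributive normal-form identity coming from Lemma~\ref{L:TropOper}(8) and (9), exactly as in the paper. The only cosmetic difference is that you verify bi-convexity of $W$ by checking closure under all three operations, whereas the paper gets it for free from the set being simultaneously a lower hull and an upper hull.
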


\begin{proof}
To show that $\lowertconv(S)=[\hminplus(S)]$, first we note that $[\hminplus(S)]\subseteq \lowertconv(S)$, i.e.,  $[f_1\minplus \cdots \minplus f_m]\in \lowertconv(S)$ for all $m\in\mbbN$ and $[f_1],\cdots, [f_m]\in S$. But this can be derived by applying Lemma~\ref{L:TropLinear} inductively on $m$. 
More precisely, suppose $[f_1\minplus \cdots \minplus f_m]\in \lowertconv(S)$ is true for all $[f_1],\cdots, [f_m]\in S$. Then since $\lowertconv(S)$ is lower tropically convex, $[f_1\minplus \cdots \minplus f_m\minplus f_{m+1}]\in \lowertconv(S)$ must also be true for all $[f_1],\cdots, [f_m],[f_{m+1}]\in S$ by Lemma~\ref{L:TropLinear}. It remains to show that $[\hminplus(S)]$ itself is lower tropically convex. Consider $\alpha=[f_1\minplus \cdots \minplus f_m]$ and $\beta=[g_1\minplus \cdots \minplus g_n]\in [\hminplus(S)]$ where $[f_1],\cdots, [f_m],[g_1],\cdots [g_n]\in S$. We have  $\lowertconv(\alpha,\beta)=[\hminplus(\{\alpha,\beta\})]=\{(a\odot f_1\minplus \cdots \minplus a\odot f_m) \minplus (b\odot g_1\minplus \cdots \minplus b\odot g_n)\mid a,b\in \mbbR\}\subseteq [\hminplus(S)]$. 

Using an analogous argument, we can also show that $\uppertconv(S)=[\hmaxplus(S)]$. 

Now let us show that $\uppertconv(\lowertconv(S))=\lowertconv(\uppertconv(S))$. By the above results, an element of $\uppertconv(\lowertconv(S))$ can be written as $\alpha=[(f_{11}\minplus \cdots \minplus f_{1n_1}])\maxplus\cdots \maxplus (f_{m1}\minplus \cdots \minplus f_{mn_m}])$ for some $m,n_1,\cdots, n_m\in\mbbN$ and $[f_{ij}]\in S$. Using Lemma~\ref{L:TropOper}(8), $\alpha$ can also be written as $\alpha=\minplus_{1\leq i_j\leq n_j,1\leq j\leq m}(f_{1i_1}\maxplus\cdots\maxplus f_{mi_m})$ which lies in $\lowertconv(\uppertconv(S))$. Therefore, $\uppertconv(\lowertconv(S))\subseteq\lowertconv(\uppertconv(S))$. Analogously, using Lemma~\ref{L:TropOper}(9), we can show that $[(f_{11}\maxplus \cdots \maxplus f_{1n_1}])\minplus\cdots \minplus (f_{m1}\maxplus \cdots \maxplus f_{mn_m}])=\maxplus_{1\leq i_j\leq n_j,1\leq j\leq m}(f_{1i_1}\minplus\cdots\minplus f_{mi_m})$  which implies $\lowertconv(\uppertconv(S))\subseteq\uppertconv(\lowertconv(S))$.

\end{proof}

\begin{remark}
We will write $\loweruppertconv(S):=\lowertconv(\uppertconv(S))=\uppertconv(\lowertconv(S))$. 
\end{remark}

\begin{corollary} \label{C:LocalFinite}
Let $T$ be a lower (respectively upper) tropical convex hull generated by $S$. For each $x\in T$, there exists a finite subset $S_x$ of $S$ such that $x$ is in the lower (respectively upper) tropical convex hull generated by $S_x$. 
\end{corollary}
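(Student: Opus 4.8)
The plan is to read the result off directly from the characterization of tropical convex hulls established in Theorem~\ref{T:TconvTlinear}. That theorem identifies $\lowertconv(S)$ with $[\hminplus(S)]$ and $\uppertconv(S)$ with $[\hmaxplus(S)]$, and the decisive point is that, by the very definition of the lower tropical torus $\hminplus(V)$, each of its elements is a \emph{finite} lower tropical combination $(c_1\odot f_1)\minplus\cdots\minplus(c_m\odot f_m)$. Thus the finiteness asserted in the corollary is already encoded in the description of the hull, and no limiting or compactness argument is needed.

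Concretely, I would first treat the lower case; the upper case is entirely symmetric, replacing $\minplus$ by $\maxplus$ and $\hminplus$ by $\hmaxplus$ and invoking the $\uppertconv$ half of Theorem~\ref{T:TconvTlinear}. Given $x\in T=\lowertconv(S)$, Theorem~\ref{T:TconvTlinear} gives $x\in[\hminplus(S)]$, so I may write $x=[(c_1\odot f_1)\minplus\cdots\minplus(c_m\odot f_m)]$ for some $m\in\mbbN$, scalars $c_i\in\mbbR$, and functions $f_i\in BC(X)$ with $[f_i]\in S$. I would then set $S_x:=\{[f_1],\dots,[f_m]\}$, a finite subset of $S$. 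Since the chosen representative uses only the $f_i$ with $[f_i]\in S_x$, it lies in $\hminplus(V_{S_x})$, and hence $x\in[\hminplus(S_x)]=\lowertconv(S_x)$, again by Theorem~\ref{T:TconvTlinear}. This is exactly the claim.

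I do not anticipate any genuine obstacle: once the hull is realized as the projectivized tropical torus, the statement reduces to the bookkeeping observation that each element depends on only finitely many generators. The only point worth verifying is that the definition of $\hminplus$ is phrased with a finite index $m\in\mbbN$ rather than as an infimum over infinitely many terms, which it is, so the representative of $x$ automatically involves only finitely many elements of $S$.
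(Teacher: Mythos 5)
Your proof is correct and follows exactly the route the paper intends: the paper's own proof simply says the corollary is ``essentially a restatement of Theorem~\ref{T:TconvTlinear},'' and your argument spells out precisely why — each element of $[\hminplus(S)]$ (resp.\ $[\hmaxplus(S)]$) is by definition a finite tropical combination of generators, so one may take $S_x$ to be the finitely many generators appearing in any such representative.
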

\begin{proof}
This is essentially a restatement of Theorem~\ref{T:TconvTlinear}. 
\end{proof}

\begin{remark} \label{R:TropIndep}
There are several different notions of ``linear'' independence defined over tropical semirings \cite{AGG09}. 

As studied in \cite{CR79}, in max-plus linear algebra, a family $V$ of vectors in $\mbbR_{\max}^n$ is said to be \emph{weakly independent} if no vector in $V$ is a max-plus linear combination of the others. Then by Theorem~\ref{T:TconvTlinear}, this corresponds exactly to our definition of upper tropical weak independence in the context of tropical projective spaces in Definition~\ref{D:TropIndep}(3a). 

Another notion of  linear independence in max-plus linear algebra is the Gondran-Minous independence \cite{GM84} which says that a family $V$ of vectors in $\mbbR_{\max}^n$ is Gondran-Minoux independent if for any partition $\{V_1,V_2\}$ of $V$, the intersection of the max-plus linear spaces generated by $V_1$ and $V_2$ is trivial. Again, by Theorem~\ref{T:TconvTlinear}, this corresponds to Definition~\ref{D:TropIndep}(3b) in the context of tropical projective spaces. 

In tropical geometry, there is another notion of linear independence, which is usually called tropical independence \cite{RST05}. This notion of tropical independence was applied to linear systems on metric graphs  by Jensen and Payne   in their tropical proofs of the Gieseker-Petri Theorem \cite{JP14} and the maximal rank conjecture for quadrics \cite{JP16}. More precisely, $\{f_1,\cdots,f_n\}\subseteq BC(X)$ (or $\{[f_1],\cdots,[f_n]\}\subseteq \mbbTP(X)$) is said to be \emph{lower tropically dependent} (respectively \emph{upper tropically dependent}) if there are real numbers $c_1,\cdots,c_n$ such that the minimum $\min\{f_1(x)+c_1,\cdots,f_n(x)+c_n\}$ (respectively the maximum $\max\{f_1(x)+c_1,\cdots,f_n(x)+c_n\}$) occurs at least twice at every point $x\in X$.  Otherwise, $\{f_1,\cdots,f_n\}\subseteq BC(X)$ (or $\{[f_1],\cdots,[f_n]\}\subseteq \mbbTP(X)$) is said to be \emph{lower tropically independent} (respectively \emph{upper tropically independent}). 

Gondran-Minoux independence is clearly a stronger notion than weak independence, while tropical independence is even stronger.  Actually suppose $\{[f_1],\cdots,[f_n]\}\subseteq \mbbTP(X)$  is lower Gondran-Minoux dependent  (the case of upper Gondran-Minoux independence follows analogously). Then without loss of generality, we may assume that there exists $$[f]\in \lowertconv(\{[f_1],\cdots,[f_m]\})\bigcap \lowertconv(\{[f_{m+1}],\cdots,[f_n]\})$$ for some $1\leq m \leq n-1$. Then by Theorem~\ref{T:TconvTlinear}, this means that 
\begin{align*}
f&=(c_1\odot f_1)\minplus\cdots\minplus (c_m\odot f_m)=\min\{f_1+c_1,\cdots,f_m+c_m\} \\
 &=(c_{m+1}\odot f_{m+1})\minplus\cdots\minplus (c_n\odot f_n)=\min\{f_{m+1}+c_{m+1},\cdots,f_n+c_n\} \\
 &=(c_1\odot f_1)\minplus\cdots\minplus (c_n\odot f_n) = \min\{f_1+c_1,\cdots,f_n+c_n\}
\end{align*}
for some $c_1,\cdots, c_n\in\mbbR$. Therefore, for all $x\in X$, the minimum of $\min\{f_1(x)+c_1,\cdots,f_n(x)+c_n\}$ is taken at both $f_i(x)+c_i$ and $f_j(x)+c_j$ for some $1\leq i\leq m$ and $m+1\leq j \leq n$. As a result, this means that $\{[f_1],\cdots,[f_n]\}\subseteq \mbbTP(X)$  is lower tropically dependent. \qed
\end{remark}

\begin{example} \label{E:tconv}
Let $X$ be the finite set $\{1,2,3\}$ with discrete topology. Then $BC(X)=BC^0(X) = \mbbR^3$ and $\mbbTP(X) = \mbbTP^0(X)=\mbbR^2$. In particular,  an element $f$ in $BC(X)$ can be written as $f=(x_1,x_2,x_3)=x_1\ve_1+x_2\ve_2+x_3\ve_3$ with $x_1,x_2,x_3\in\mbbR$, and correspondingly the element $[f]$ in $\mbbTP(X)$ can be written as $[f]=(x_1:x_2:x_3)$ under the tropical projective coordinates. Note that in tropical projective coordinates, $(x_1:x_2:x_3)=(y_1:y_2:y_3)$ if and only if $x_1-y_1=x_2-y_2=x_3-y_3$. Therefore, $[f]=(x_1:x_2:x_3)=(x_1-x_3:x_2-x_3:0)$ and we also write $[f]=(x_1-x_3, x_2-x_3)=(x_1-x_3)\ve_1+(x_1-x_3)\ve_2$. In this way, elements in the $x_1x_2$-plane is in one-to-one correspondence to elements in $\mbbTP(X)$, i.e., the point $(x_1,x_2)$ in the $x_1x_2$-plane represents $(x_1:x_2:0)$ in $\mbbTP(X)$. 

Let $\alpha=(x^\alpha_1:x^\alpha_2:0)=(x^\alpha_1,x^\alpha_2)$ and $\beta=(x^\beta_1:x^\beta_2:0)=(x^\beta_1,x^\beta_2)$ be two elements in $\mbbTP(X)$ respectively. Then by definition, the distance $\rho(\alpha,\beta)= \max(x^\beta_1-x^\alpha_1, x^\beta_2-x^\alpha_2,0)-\min(x^\beta_1-x^\alpha_1, x^\beta_2-x^\alpha_2,0)$. Depending on the relative positions of $\alpha$ and $\beta$, $\rho(\alpha,\beta)$ and the  tropical paths from $\alpha$ to $\beta$ can be written as follows:
\begin{enumerate}
\item $x^\beta_1 - x^\alpha_1\geq x^\beta_2 - x^\alpha_2\geq 0$: 
\begin{align*}
\rho(\alpha,\beta) &=x^\beta_1 - x^\alpha_1 \\
P_{(\alpha,\beta)}(t) &= 
\begin{cases}
(x^\alpha_1+t,x^\alpha_2+t) & \mbox{if } 0\leq t\leq x^\beta_2 - x^\alpha_2 \\
(x^\alpha_1+t,x^\beta_2) &\mbox{if } x^\beta_2 - x^\alpha_2\leq t \leq x^\beta_1 - x^\alpha_1
\end{cases} \\
P^{(\alpha,\beta)}(t) &= 
\begin{cases}
(x^\alpha_1+t,x^\alpha_2) & \mbox{if } 0\leq t\leq (x^\beta_1 - x^\alpha_1)-(x^\beta_2 - x^\alpha_2) \\
(x^\alpha_1+t,x^\beta_2-(x^\beta_1-x^\alpha_1)+t) &\mbox{if } (x^\beta_1 - x^\alpha_1)-(x^\beta_2 - x^\alpha_2) \leq t \leq x^\beta_1 - x^\alpha_1
\end{cases}
\end{align*}

\item $x^\beta_2 - x^\alpha_2\geq x^\beta_1 - x^\alpha_1\geq 0$:
\begin{align*}
\rho(\alpha,\beta) &=x^\beta_2 - x^\alpha_2 \\
P_{(\alpha,\beta)}(t) &= 
\begin{cases}
(x^\alpha_1+t,x^\alpha_2+t) & \mbox{if } 0\leq t\leq x^\beta_1 - x^\alpha_1 \\
(x^\beta_1,x^\alpha_2+t) &\mbox{if } x^\beta_1 - x^\alpha_1\leq t \leq x^\beta_2 - x^\alpha_2
\end{cases} \\
P^{(\alpha,\beta)}(t) &= 
\begin{cases}
(x^\alpha_1,x^\alpha_2+t) & \mbox{if } 0\leq t\leq (x^\beta_2 - x^\alpha_2)-(x^\beta_1 - x^\alpha_1) \\
(x^\beta_1-(x^\beta_2 - x^\alpha_2)+t,x^\alpha_2+t) &\mbox{if } (x^\beta_2 - x^\alpha_2)-(x^\beta_1 - x^\alpha_1) \leq t \leq x^\beta_2 - x^\alpha_2
\end{cases}
\end{align*}
\item $x^\beta_1 - x^\alpha_1\leq 0$ and $x^\beta_2 - x^\alpha_2\geq 0$:
\begin{align*}
\rho(\alpha,\beta) &=(x^\beta_2 - x^\alpha_2)-(x^\beta_1 - x^\alpha_1) \\
P_{(\alpha,\beta)}(t) &= 
\begin{cases}
(x^\alpha_1-t,x^\alpha_2) & \mbox{if } 0\leq t\leq -(x^\beta_1 - x^\alpha_1) \\
(x^\beta_1,x^\alpha_2+(x^\beta_1 - x^\alpha_1)+t) &\mbox{if } -(x^\beta_1 - x^\alpha_1) \leq t \leq (x^\beta_2 - x^\alpha_2)-(x^\beta_1 - x^\alpha_1)
\end{cases} \\
P^{(\alpha,\beta)}(t) &= 
\begin{cases}
(x^\alpha_1,x^\alpha_2+t) & \mbox{if } 0\leq t\leq x^\beta_2 - x^\alpha_2 \\
(x^\alpha_1+(x^\beta_2 - x^\alpha_2)-t,x^\beta_2) &\mbox{if } x^\beta_2 - x^\alpha_2 \leq t \leq (x^\beta_2 - x^\alpha_2)-(x^\beta_1 - x^\alpha_1)
\end{cases}
\end{align*}

\item $x^\beta_1 - x^\alpha_1\leq x^\beta_2 - x^\alpha_2\leq 0$:
\begin{align*}
\rho(\alpha,\beta) &=-(x^\beta_1 - x^\alpha_1) \\
P_{(\alpha,\beta)}(t) &= 
\begin{cases}
(x^\alpha_1-t,x^\alpha_2) & \mbox{if } 0\leq t\leq (x^\beta_2 - x^\alpha_2) -(x^\beta_1 - x^\alpha_1)\\
(x^\alpha_1-t,x^\beta_2-(x^\beta_1-x^\alpha_1)-t) &\mbox{if } (x^\beta_2 - x^\alpha_2) -(x^\beta_1 - x^\alpha_1) \leq t \leq -(x^\beta_1 - x^\alpha_1)
\end{cases} \\
P^{(\alpha,\beta)}(t) &= 
\begin{cases}
(x^\alpha_1-t,x^\alpha_2-t) & \mbox{if } 0\leq t\leq -(x^\beta_2 - x^\alpha_2) \\
(x^\alpha_1-t,x^\beta_2) &\mbox{if } -(x^\beta_2 - x^\alpha_2)\leq t \leq -(x^\beta_1 - x^\alpha_1)
\end{cases} 
\end{align*}

\item $x^\beta_2- x^\alpha_2\leq x^\beta_1 - x^\alpha_1\leq 0$:
\begin{align*}
\rho(\alpha,\beta) &=-(x^\beta_2 - x^\alpha_2) \\
P_{(\alpha,\beta)}(t) &= 
\begin{cases}
(x^\alpha_1,x^\alpha_2-t) & \mbox{if } 0\leq t\leq (x^\beta_1 - x^\alpha_1) -(x^\beta_2 - x^\alpha_2)\\
(x^\beta_1-(x^\beta_2 - x^\alpha_2)-t,x^\alpha_2-t) &\mbox{if } (x^\beta_1 - x^\alpha_1) -(x^\beta_2 - x^\alpha_2) \leq t \leq -(x^\beta_2 - x^\alpha_2)
\end{cases}\\
P^{(\alpha,\beta)}(t) &= 
\begin{cases}
(x^\alpha_1-t,x^\alpha_2-t) & \mbox{if } 0\leq t\leq -(x^\beta_1 - x^\alpha_1) \\
(x^\beta_1,x^\alpha_2-t) &\mbox{if } -(x^\beta_1 - x^\alpha_1)\leq t \leq -(x^\beta_2 - x^\alpha_2)
\end{cases} 
\end{align*}

\item $x^\beta_1 - x^\alpha_1\geq 0$ and $x^\beta_2 - x^\alpha_2\leq 0$:
\begin{align*}
\rho(\alpha,\beta) &=(x^\beta_1 - x^\alpha_1) -(x^\beta_2 - x^\alpha_2)\\
P_{(\alpha,\beta)}(t) &= 
\begin{cases}
(x^\alpha_1,x^\alpha_2-t) & \mbox{if } 0\leq t\leq -(x^\beta_2 - x^\alpha_2) \\
(x^\alpha_1+(x^\beta_2 - x^\alpha_2)+t,x^\beta_2) &\mbox{if } -(x^\beta_2 - x^\alpha_2) \leq t \leq (x^\beta_1 - x^\alpha_1)-(x^\beta_2 - x^\alpha_2)
\end{cases}\\
P^{(\alpha,\beta)}(t) &= 
\begin{cases}
(x^\alpha_1+t,x^\alpha_2) & \mbox{if } 0\leq t\leq x^\beta_1 - x^\alpha_1 \\
(x^\beta_1,x^\alpha_2+(x^\beta_1 - x^\alpha_1)-t) &\mbox{if } x^\beta_1 - x^\alpha_1 \leq t \leq (x^\beta_1 - x^\alpha_1)-(x^\beta_2 - x^\alpha_2)
\end{cases} 
\end{align*}
\end{enumerate}

\begin{figure}
\centering
\begin{tikzpicture}[scale=0.9]
\begin{scope}[shift={(0,14.5)}] 
\draw (0,4) node[anchor=east] {(a)};

\begin{scope}[shift={(1,0)}] 

\draw (3,3.5) node[draw, anchor=south] {Lower Tropical Segments};

\draw[->,line width=0.8pt] (0,0) -- (5,0) node[right] {$x_1$};
\draw[->,line width=0.8pt] (0,0) -- (0,4) node[left] {$x_2$};

\coordinate (origin) at (2.5,2);

\def\len{1}
\def\shift{0.5}

\coordinate (A) at ($(origin)+(0,\len)$);
\coordinate (B) at ($(origin)+(\len,0)$);
\coordinate (C) at ($(origin)+(-\len,-\len)$);

\coordinate (A1) at ($(A)+(\shift,0)$);
\coordinate (B1) at ($(B)+(\shift,0)$);
\coordinate (O1) at ($(origin)+(\shift,0)$);

\coordinate (B2) at ($(B)+(\shift,-\shift)$);
\coordinate (C2) at ($(C)+(\shift,-\shift)$);
\coordinate (O2) at ($(origin)+(\shift,-\shift)$);

\draw [line width=1.2pt] (A) -- (origin) -- (C);
\draw [line width=1.2pt] (A1) -- (O1) --   (B1);
\draw [line width=1.2pt] (B2) -- (O2) --  (C2);

\fill [blue] (A) circle (3pt);
\fill [blue] (B1) circle (3pt);
\fill [blue] (C) circle (3pt);
\fill [blue] (A1) circle (3pt);
\fill [blue] (B2) circle (3pt);
\fill [blue] (C2) circle (3pt);

\draw [blue] (A) node[anchor=east] { $\alpha_1$};
\draw [blue] (A1) node[anchor=west] { $\alpha_2$};
\draw [blue] (B1) node[anchor=west] { $\beta_1$};
\draw [blue] (B2) node[anchor=west] { $\beta_2$};
\draw [blue] (C2) node[anchor=east] { $\gamma_1$};
\draw [blue] (C) node[anchor=east] { $\gamma_2$};

\end{scope}

\begin{scope}[shift={(9,0)}] 

\draw (3,3.5) node[draw, anchor=south] {Upper Tropical Segments};

\draw[->,line width=0.8pt] (0,0) -- (5,0) node[right] {$x_1$};
\draw[->,line width=0.8pt] (0,0) -- (0,4) node[left] {$x_2$};

\coordinate (origin) at (2.5,1.5);

\def\len{1}
\def\shift{0.5}

\coordinate (A) at ($(origin)+(-\len,0)$);
\coordinate (B) at ($(origin)+(\len,\len)$);
\coordinate (C) at ($(origin)+(0,-\len)$);

\coordinate (A1) at ($(A)+(0,\shift)$);
\coordinate (B1) at ($(B)+(0,\shift)$);
\coordinate (O1) at ($(origin)+(0,\shift)$);

\coordinate (B2) at ($(B)+(\shift,0)$);
\coordinate (C2) at ($(C)+(\shift,0)$);
\coordinate (O2) at ($(origin)+(\shift,0)$);

\draw [line width=1.2pt] (A) -- (origin) -- (C);
\draw [line width=1.2pt] (A1) -- (O1) --   (B1);
\draw [line width=1.2pt] (B2) -- (O2) --  (C2);

\fill [blue] (A) circle (3pt);
\fill [blue] (B1) circle (3pt);
\fill [blue] (C) circle (3pt);
\fill [blue] (A1) circle (3pt);
\fill [blue] (B2) circle (3pt);
\fill [blue] (C2) circle (3pt);

\draw [blue] (A) node[anchor=east] { $\alpha_3$};
\draw [blue] (A1) node[anchor=east] { $\alpha_4$};
\draw [blue] (B1) node[anchor=west] { $\beta_3$};
\draw [blue] (B2) node[anchor=west] { $\beta_4$};
\draw [blue] (C2) node[anchor=west] { $\gamma_3$};
\draw [blue] (C) node[anchor=east] { $\gamma_4$};

\end{scope}

\end{scope}


\begin{scope}[shift={(0,5)}]  
\draw (0,9) node[anchor=east] {(b)};

\draw (9,8) node[anchor=south] {\large $\lowertconv(\{A_i,B_i,C_i\})$};

\draw (9,3.3) node[anchor=south] {\large $\uppertconv(\{A_i,B_i,C_i\})$};

\draw [line width=0.5pt] (0,-0.1) --   (16.8,-0.1) -- (16.8,9) -- (0,9) -- (0,-0.1);
\draw [line width=0.5pt] (0,4.3) --   (16.8,4.3);


\begin{scope}[shift={(0,4.5)}] 
\coordinate (origin) at (2,2);
\def\len{1.5}

\coordinate (A1) at ($(origin)+(0,\len)$);
\coordinate (B1) at ($(origin)+(\len,0)$);
\coordinate (C1) at ($(origin)+(-\len,-\len)$);

\draw [line width=1.2pt] (origin) --   (A1);
\draw [line width=1.2pt] (origin) --   (B1);
\draw [line width=1.2pt] (origin) --   (C1);

\fill [blue] (A1) circle (3pt);
\fill [blue] (B1) circle (3pt);
\fill [blue] (C1) circle (3pt);

\draw [blue] (A1) node[anchor=east] { $A_1$};
\draw [blue] (B1) node[anchor=north] {$B_1$};
\draw [blue] (C1) node[anchor=north] { $C_1$};
\end{scope}

\begin{scope}[shift={(0,0)}] 
\coordinate (origin) at (2,2);
\def\len{1.5}

\coordinate (A1) at ($(origin)+(0,\len)$);
\coordinate (B1) at ($(origin)+(\len,0)$);
\coordinate (C1) at ($(origin)+(-\len,-\len)$);
\coordinate (a1) at ($(origin)+(\len,\len)$);
\coordinate (b1) at ($(origin)+(0,-\len)$);
\coordinate (c1) at ($(origin)+(-\len,0)$);

\fill [black!30,opacity=1]  (A1)--(a1)--(B1)--(b1)--(C1)--(c1);
\draw [line width=1.2pt]  (A1)--(a1)--(B1)--(b1)--(C1)--(c1)--(A1);

\fill [blue] (A1) circle (3pt);
\fill [blue] (B1) circle (3pt);
\fill [blue] (C1) circle (3pt);

\draw [blue] (A1) node[anchor=south] { $A_1$};
\draw [blue] (B1) node[anchor=west] {$B_1$};
\draw [blue] (C1) node[anchor=north] { $C_1$};

\end{scope}


\begin{scope}[shift={(3,4.5)}] 
\coordinate (origin) at (2,2.5);

\def\len{1.5}
\def\del{1}

\coordinate (A2) at ($(origin)+(0,\len-\del)$);
\coordinate (B2) at ($(origin)+(\len,0)$);
\coordinate (C2) at ($(origin)+(\del-\len,-\len)$);
\coordinate (b2) at ($(origin)+(\del,0)$);
\coordinate (c2) at ($(origin)+(0,-\del)$);

\fill [black!30,opacity=1] (origin) --   (b2) -- (c2) -- (origin);

\draw [line width=1.2pt] (c2) --   (A2);
\draw [line width=1.2pt] (origin) --   (B2);
\draw [line width=1.2pt] (C2) --   (b2);

\fill [blue] (A2) circle (3pt);
\fill [blue] (B2) circle (3pt);
\fill [blue] (C2) circle (3pt);

\draw [blue] (A2) node[anchor=south] { $A_2$};
\draw [blue] (B2) node[anchor=south] {$B_2$};
\draw [blue] (C2) node[anchor=north] { $C_2$};

\end{scope}

\begin{scope}[shift={(3,0)}] 
\coordinate (origin) at (2,2.5);

\def\len{1.5}
\def\del{1}

\coordinate (A2) at ($(origin)+(0,\len-\del)$);
\coordinate (B2) at ($(origin)+(\len,0)$);
\coordinate (C2) at ($(origin)+(\del-\len,-\len)$);
\coordinate (a2) at ($(origin)+(\len,\len-\del)$);
\coordinate (b2) at ($(origin)+(0,-\len)$);
\coordinate (c2) at ($(origin)+(\del-\len,0)$);

\fill [black!30,opacity=1]  (A2)--(a2)--(B2)--(b2)--(C2)--(c2);
\draw [line width=1.2pt]  (A2)--(a2)--(B2)--(b2)--(C2)--(c2)--(A2);

\fill [blue] (A2) circle (3pt);
\fill [blue] (B2) circle (3pt);
\fill [blue] (C2) circle (3pt);

\draw [blue] (A2) node[anchor=south] { $A_2$};
\draw [blue]  (B2) node[anchor=west] {$B_2$};
\draw [blue] (C2) node[anchor=north] { $C_2$};
\end{scope}


\begin{scope}[shift={(6,4.5)}] 
\coordinate (origin) at (1.7,2.5);

\def\len{1.5}

\coordinate (A3) at (origin);
\coordinate (B3) at ($(origin)+(\len,0)$);
\coordinate (C3) at ($(origin)+(0,-\len)$);

\fill [black!30,opacity=1] (A3) --   (B3) -- (C3) -- (A3);

\draw [line width=1.2pt] (A3) --   (B3) -- (C3) -- (A3);

\fill [blue] (A3) circle (3pt);
\fill [blue] (B3) circle (3pt);
\fill [blue] (C3) circle (3pt);

\draw [blue] (A3) node[anchor=south] { $A_3$};
\draw [blue] (B3) node[anchor=south] {$B_3$};
\draw [blue] (C3) node[anchor=north] { $C_3$};

\end{scope}

\begin{scope}[shift={(6,0)}] 
\coordinate (origin) at (1.7,2.5);

\def\len{1.5}

\coordinate (A3) at (origin);
\coordinate (B3) at ($(origin)+(\len,0)$);
\coordinate (C3) at ($(origin)+(0,-\len)$);

\fill [black!30,opacity=1] (A3) --   (B3) -- (C3) -- (A3);

\draw [line width=1.2pt] (A3) --   (B3) -- (C3) -- (A3);

\fill [blue] (A3) circle (3pt);
\fill [blue] (B3) circle (3pt);
\fill [blue] (C3) circle (3pt);

\draw [blue] (A3) node[anchor=south] { $A_3$};
\draw [blue] (B3) node[anchor=south] {$B_3$};
\draw [blue] (C3) node[anchor=north] { $C_3$};

\end{scope}

\begin{scope}[shift={(9,4.5)}] 
\coordinate (origin) at (1.9,2.5);

\def\len{1.5}
\def\del{1}

\coordinate (A4) at ($(origin)+(\del-\len,0)$);
\coordinate (B4) at ($(origin)+(\len,\len-\del)$);
\coordinate (C4) at ($(origin)+(0,-\len)$);
\coordinate (a4) at ($(origin)+(0,\len-\del)$);
\coordinate (b4) at ($(origin)+(\len,0)$);
\coordinate (c4) at ($(origin)+(\del-\len,-\len)$);

\fill [black!30,opacity=1]  (A4)--(a4)--(B4)--(b4)--(C4)--(c4);
\draw [line width=1.2pt]  (A4)--(a4)--(B4)--(b4)--(C4)--(c4)--(A4);

\fill [blue] (A4) circle (3pt);
\fill [blue] (B4) circle (3pt);
\fill [blue] (C4) circle (3pt);

\draw [blue] (A4) node[anchor=south east] { $A_4$};
\draw [blue] (B4) node[anchor=south] {$B_4$};
\draw [blue] (C4) node[anchor=north] { $C_4$};

\end{scope}

\begin{scope}[shift={(9,0)}] 
\coordinate (origin) at (1.9,2.5);

\def\len{1.5}
\def\del{1}

\coordinate (A4) at ($(origin)+(\del-\len,0)$);
\coordinate (B4) at ($(origin)+(\len,\len-\del)$);
\coordinate (C4) at ($(origin)+(0,-\len)$);
\coordinate (b4) at ($(origin)+(\del,0)$);
\coordinate (c4) at ($(origin)+(0,-\del)$);

\fill [black!30,opacity=1] (origin) --   (b4) -- (c4) -- (origin);

\draw [line width=1.2pt] (A4) --   (b4);
\draw [line width=1.2pt] (origin) --   (C4);
\draw [line width=1.2pt] (c4) --   (B4);

\fill [blue] (A4) circle (3pt);
\fill [blue] (B4) circle (3pt);
\fill [blue] (C4) circle (3pt);

\draw [blue] (A4) node[anchor=south] { $A_4$};
\draw [blue] (B4) node[anchor=south] {$B_4$};
\draw [blue] (C4) node[anchor=north] { $C_4$};

\end{scope}

\begin{scope}[shift={(12,4.5)}] 
\coordinate (origin) at (2.5,2);

\def\len{1.5}

\coordinate (A5) at ($(origin)+(-\len,0)$);
\coordinate (B5) at ($(origin)+(\len,\len)$);
\coordinate (C5) at ($(origin)+(0,-\len)$);
\coordinate (a5) at ($(origin)+(0,\len)$);
\coordinate (b5) at ($(origin)+(\len,0)$);
\coordinate (c5) at ($(origin)+(-\len,-\len)$);

\fill [black!30,opacity=1]  (A5)--(a5)--(B5)--(b5)--(C5)--(c5);
\draw [line width=1.2pt]  (A5)--(a5)--(B5)--(b5)--(C5)--(c5)--(A5);

\fill [blue] (A5) circle (3pt);
\fill [blue] (B5) circle (3pt);
\fill [blue] (C5) circle (3pt);

\draw [blue] (A5) node[anchor=east] { $A_5$};
\draw [blue] (B5) node[anchor=west] {$B_5$};
\draw [blue] (C5) node[anchor=north] { $C_5$};

\end{scope}

\begin{scope}[shift={(12,0)}] 
\coordinate (origin) at (2.5,2);

\def\len{1.5}

\coordinate (A5) at ($(origin)+(-\len,0)$);
\coordinate (B5) at ($(origin)+(\len,\len)$);
\coordinate (C5) at ($(origin)+(0,-\len)$);

\draw [line width=1.2pt] (origin) --   (A5);
\draw [line width=1.2pt] (origin) --   (B5);
\draw [line width=1.2pt] (origin) --   (C5);

\fill [blue] (A5) circle (3pt);
\fill [blue] (B5) circle (3pt);
\fill [blue] (C5) circle (3pt);

\draw [blue] (A5) node[anchor=south] {$A_5$};
\draw [blue] (B5) node[anchor=north west] {$B_5$};
\draw [blue] (C5) node[anchor=west] { $C_5$};

\end{scope}

\end{scope}


\begin{scope}[shift={(0,0)}]  

\draw (0,4) node[anchor=east] {(c)};
\begin{scope}[shift={(0,0)}]   	
    	
\coordinate (origin) at (2,2);

\def\radius{1.5}

\draw [blue, line width=2pt]  (origin) circle (\radius);

\draw ($(origin)+(0,1.8)$) node[anchor=south] {\large $S$};
\end{scope}

\begin{scope}[shift={(4,0)}] 

\coordinate (origin) at (2,2);

\def\radius{1.5}

\coordinate (N) at ($(origin)+(0,\radius)$);
\coordinate (S) at ($(origin)+(0,-\radius)$);
\coordinate (E) at ($(origin)+(\radius,0)$);
\coordinate (W) at ($(origin)+(-\radius,0)$);

\coordinate (NE) at ($(origin)+(\radius,\radius)$);
\coordinate (SW) at ($(origin)+(-\radius,-\radius)$);
\coordinate (Nw) at ($(origin)+({(1-sqrt(2))*\radius},\radius)$);
\coordinate (nW) at ($(origin)+(-\radius,{(sqrt(2)-1)*\radius})$);
\coordinate (nw) at ($(origin)+({-(sqrt(2)/2)*\radius},{(sqrt(2)/2)*\radius})$);
\coordinate (Se) at ($(origin)+({(sqrt(2)-1)*\radius},-\radius)$);
\coordinate (sE) at ($(origin)+(\radius,{(1-sqrt(2))*\radius})$);
\coordinate (se) at ($(origin)+({(sqrt(2)/2)*\radius},{-(sqrt(2)/2)*\radius})$);

\fill [black!30,opacity=1] (S) --   (SW) -- (W) -- (S);
\fill [black!30,opacity=1] (E) --   (sE) -- (se) -- (E);
\fill [black!30,opacity=1] (N) --   (Nw) -- (nw) -- (N);
\fill [black!30,opacity=1] (origin) circle (\radius);

\draw [line width=1.2pt] (S) --   (SW) -- (W);
\draw [line width=1.2pt] (E) --   (sE) -- (se);
\draw [line width=1.2pt] (N) --   (Nw) -- (nw);
\draw [blue, line width=1.2pt]  (origin) circle (\radius);

\draw ($(origin)+(0,1.8)$) node[anchor=south] {\large $\lowertconv(S)$};
\end{scope}


\begin{scope}[shift={(8,0)}] 

\coordinate (origin) at (2,2);

\def\radius{1.5}

\coordinate (N) at ($(origin)+(0,\radius)$);
\coordinate (S) at ($(origin)+(0,-\radius)$);
\coordinate (E) at ($(origin)+(\radius,0)$);
\coordinate (W) at ($(origin)+(-\radius,0)$);

\coordinate (NE) at ($(origin)+(\radius,\radius)$);
\coordinate (SW) at ($(origin)+(-\radius,-\radius)$);
\coordinate (Nw) at ($(origin)+({(1-sqrt(2))*\radius},\radius)$);
\coordinate (nW) at ($(origin)+(-\radius,{(sqrt(2)-1)*\radius})$);
\coordinate (nw) at ($(origin)+({-(sqrt(2)/2)*\radius},{(sqrt(2)/2)*\radius})$);
\coordinate (Se) at ($(origin)+({(sqrt(2)-1)*\radius},-\radius)$);
\coordinate (sE) at ($(origin)+(\radius,{(1-sqrt(2))*\radius})$);
\coordinate (se) at ($(origin)+({(sqrt(2)/2)*\radius},{-(sqrt(2)/2)*\radius})$);

\fill [black!30,opacity=1] (N) --   (NE) -- (E) -- (N);
\fill [black!30,opacity=1] (W) --   (nW) -- (nw) -- (W);
\fill [black!30,opacity=1] (S) --   (Se) -- (se) -- (S);
\fill [black!30,opacity=1] (origin) circle (\radius);

\draw [line width=1.2pt] (N) --   (NE) -- (E);
\draw [line width=1.2pt] (W) --   (nW) -- (nw);
\draw [line width=1.2pt] (S) --   (Se) -- (se);
\draw [blue, line width=1.2pt]  (origin) circle (\radius);

\draw ($(origin)+(0,1.8)$) node[anchor=south] {\large $\uppertconv(S)$};
\end{scope}

\begin{scope}[shift={(12,0)}] 

\coordinate (origin) at (2,2);

\def\radius{1.5}

\coordinate (N) at ($(origin)+(0,\radius)$);
\coordinate (S) at ($(origin)+(0,-\radius)$);
\coordinate (E) at ($(origin)+(\radius,0)$);
\coordinate (W) at ($(origin)+(-\radius,0)$);

\coordinate (NE) at ($(origin)+(\radius,\radius)$);
\coordinate (SW) at ($(origin)+(-\radius,-\radius)$);
\coordinate (Nw) at ($(origin)+({(1-sqrt(2))*\radius},\radius)$);
\coordinate (nW) at ($(origin)+(-\radius,{(sqrt(2)-1)*\radius})$);
\coordinate (nw) at ($(origin)+({-(sqrt(2)/2)*\radius},{(sqrt(2)/2)*\radius})$);
\coordinate (Se) at ($(origin)+({(sqrt(2)-1)*\radius},-\radius)$);
\coordinate (sE) at ($(origin)+(\radius,{(1-sqrt(2))*\radius})$);
\coordinate (se) at ($(origin)+({(sqrt(2)/2)*\radius},{-(sqrt(2)/2)*\radius})$);

\fill [black!30,opacity=1] (NE) --   (Nw) -- (nW) -- (SW) -- (Se) -- (sE) -- (NE);
\fill [black!30,opacity=1] (origin) circle (\radius);

\draw [line width=1.2pt] (NE) --   (Nw) -- (nW) -- (SW) -- (Se) -- (sE) -- (NE);
\draw [blue,line width=1.2pt]  (origin) circle (\radius);

\draw ($(origin)+(0,1.8)$) node[anchor=south] {\large $\loweruppertconv(S)$};
\end{scope}
\end{scope}
\end{tikzpicture}
\caption{(a) Some examples of lower and upper tropical segments. (b) Tropical convex hulls generated by three points $A_i$, $B_i$ and $C_i$ with different relative positions for $i$ being $1$ through $5$. (c) Tropical convex hulls generated by a circle $S$.} \label{F:tconv}
\end{figure}
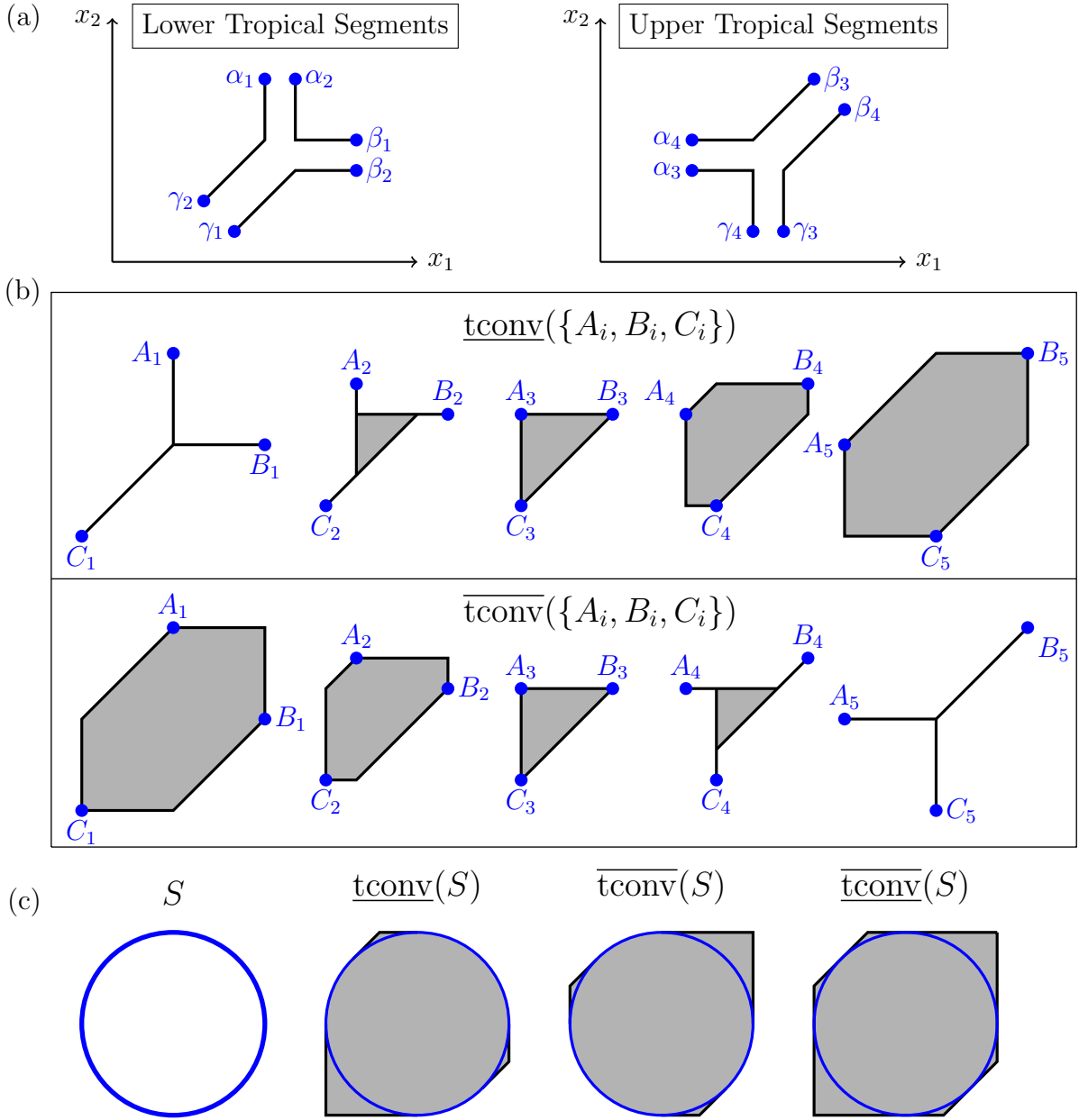

Figure~\ref{F:tconv}(a) shows some lower and upper tropical segments in $\mbbTP(X)$ represented in the $x_1x_2$-plane. In particular, for the lower tropical paths on the left panel, the lower tropical paths from $\gamma_1$ to $\beta_2$, from $\gamma_2$ to $\alpha_1$, from $\beta_1$ to $\alpha_2$, from $\beta_2$ to $\gamma_1$, from $\alpha_1$ to $\gamma_2$ and from $\alpha_2$ to $\beta_1$ correspond to Case (1)-(6) above respectively; for the upper tropical paths on the left panel, the lower tropical paths from $\alpha_4$ to $\beta_3$, from $\gamma_3$ to $\beta_4$, from $\gamma_4$ to $\alpha_3$, from $\beta_3$ to $\alpha_4$, from $\beta_4$ to $\gamma_3$ and from $\alpha_3$ to $\gamma_4$ correspond to Case (1)-(6) above respectively.

Figure~\ref{F:tconv}(b) shows the upper and lower tropical polytopes generated by  the triples $\{A_i,B_i,C_i\}$  for $i=1,\cdots,5$. Note that the fact that these sets are (lower or upper) tropically convex can be easily verified by showing that the (lower or upper) tropical segments connecting any two points in one of the sets is fully contained in that set. More specifically, we suppose 
\begin{align*}
& A_1 = (0,1),\ B_1=(1,0),\ C_1=(-1,-1);\quad & A_2 = (0,1),\ B_2=(1,\frac{2}{3}),\ C_2=(-\frac{1}{3},-\frac{1}{3}); \\
& A_3 = (0,1),\ B_3=(1,1),\ C_3=(0,0);\quad & A_4 = (-\frac{1}{3},\frac{2}{3}),\ B_4=(1,1),\ C_4=(0,-\frac{1}{3}); \\
& A_5 = (-1,0),\ B_5=(1,1),\ C_5=(0,-1).
\end{align*}
There are a few observations about these sets worth mentioning here:
\begin{enumerate}
\item All these sets are compact subsets of $\mbbR^2$ under the tropical metric topology which can be verified straightforwardly in this case. Actually in Section~\ref{S:Compact}, we show that in general all tropical polytopes are compact (Corollary~\ref{C:Polytope}). 
\item Depending on the relative positions of $A_i$, $B_i$ and $C_i$, the tropical polytopes can be purely $1$-dimensional ($\lowertconv(\{A_1,B_1,C_1\})$ and $\uppertconv(\{A_5,B_5,C_5\})$), purely $2$-dimensional ($\lowertconv(\{A_3,B_3,C_3\})$, $\lowertconv(\{A_4,B_4,C_4\})$, $\lowertconv(\{A_5,B_5,C_5\})$, $\uppertconv(\{A_1,B_1,C_1\})$, $\uppertconv(\{A_2,B_2,C_2\})$ and $\uppertconv(\{A_3,B_3,C_3\})$) and not of pure dimension ($\lowertconv(\{A_2,B_2,C_2\})$ and $\uppertconv(\{A_4,B_4,C_4\})$). 
\item Note that $\lowertconv(\{A_3,B_3,C_3\})=\uppertconv(\{A_3,B_3,C_3\})$, $\lowertconv(\{A_4,B_4,C_4\})=\uppertconv(\{A_2,B_2,C_2\})$, and 
 $\lowertconv(\{A_5,B_5,C_5\})=\uppertconv(\{A_1,B_1,C_1\})$ which are all both lower and upper tropically convex. Therefore, we conclude that 
 \begin{align*}
 \loweruppertconv(\{A_3,B_3,C_3\})=\lowertconv(\{A_3,B_3,C_3\})=\uppertconv(\{A_3,B_3,C_3\}) \\
\loweruppertconv(\{A_4,B_4,C_4\})=\loweruppertconv(\{A_2,B_2,C_2\})=\lowertconv(\{A_4,B_4,C_4\})=\uppertconv(\{A_2,B_2,C_2\}) \\
\loweruppertconv(\{A_5,B_5,C_5\})=\loweruppertconv(\{A_1,B_1,C_1\})=\lowertconv(\{A_5,B_5,C_5\})=\uppertconv(\{A_1,B_1,C_1\}).
 \end{align*}
\end{enumerate}

Figure~\ref{F:tconv}(c) shows a circle $S$ and the tropical convex hulls $\lowertconv(S)$, $\uppertconv(S)$ and $\loweruppertconv(S)$ generated by $S$. Note $\loweruppertconv(S)$ is a hexagon. In this case, $S$ is an infinite compact set and $\lowertconv(S)$, $\uppertconv(S)$ and $\loweruppertconv(S)$ are all compact. It is also generally true that the closed tropical convex hulls generated by a compact set is also compact (Theorem~\ref{T:TropMazur}), which is the tropical version of Mazur's theorem proved in Section~\ref{S:Compact}. 

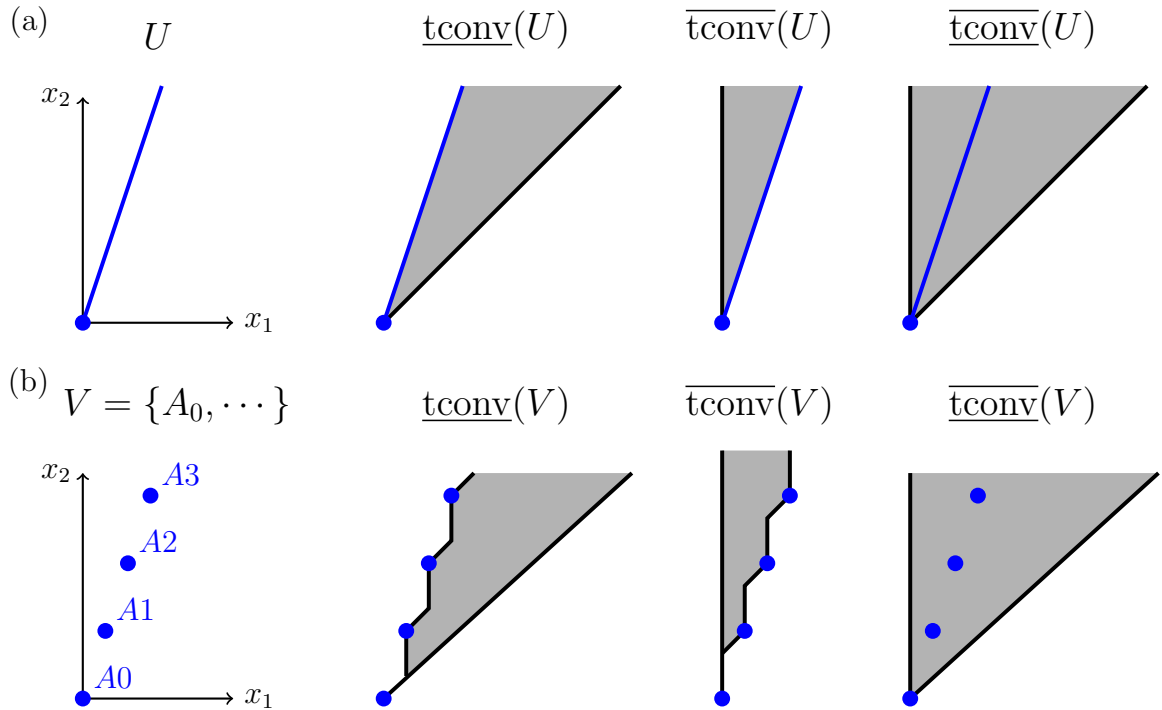
\begin{figure}
\centering
\begin{tikzpicture}

\begin{scope} 
\draw (-0.3,4) node[anchor=east] {(a)};
\begin{scope}

\draw[->,line width=0.8pt] (0,0) -- (2,0) node[right] {$x_1$};
\draw[->,line width=0.8pt] (0,0) -- (0,3) node[left] {$x_2$};

\def\x{3.5*0.3};
\def\y{3.5*0.9};

\fill [blue] (0,0) circle (3pt);

\draw  [blue,line width=1.5pt]  (0,0) --  (\x,\y);

\draw ($(1,3.5)$) node[anchor=south] {\large $U$};
	
\end{scope}

\begin{scope}[shift={(4,0)}] 

\def\x{3.5*0.3};
\def\y{3.5*0.9};

\fill [black!30,opacity=1] (0,0) -- (\x,\y) -- (\y,\y) -- cycle;

\draw  [line width=1.5pt] (0,0) --  (\y,\y);

\fill [blue] (0,0) circle (3pt);

\draw  [blue,line width=1.5pt]  (0,0) --  (\x,\y);

\draw ($(1.5,3.5)$) node[anchor=south] {\large $\lowertconv(U)$};
	
\end{scope}

\begin{scope}[shift={(8.5,0)}]

\def\x{3.5*0.3};
\def\y{3.5*0.9};

\fill [black!30,opacity=1] (0,0) -- (0,\y) -- (\x,\y) -- cycle;

\draw  [line width=1.5pt] (0,0) --  (0,\y);

\fill [blue] (0,0) circle (3pt);

\draw  [blue,line width=1.5pt]  (0,0) --  (\x,\y);

\draw ($(0.5,3.5)$) node[anchor=south] {\large $\uppertconv(U)$};
	
\end{scope}

\begin{scope}[shift={(11,0)}] 

\def\x{3.5*0.3};
\def\y{3.5*0.9};

\fill [black!30,opacity=1] (0,0) -- (\y,\y) -- (0,\y) -- cycle;

\draw  [line width=1.5pt] (0,\y) -- (0,0) --  (\y,\y);

\fill [blue] (0,0) circle (3pt);

\draw  [blue,line width=1.5pt]  (0,0) --  (\x,\y);

\draw ($(1.5,3.5)$) node[anchor=south] {\large $\loweruppertconv(U)$};
\end{scope}
\end{scope}

\begin{scope}[shift={(0,-5)}] 
\draw (-0.3,4.2) node[anchor=east] {(b)};
\begin{scope}

\draw[->,line width=0.8pt] (0,0) -- (2,0) node[right] {$x_1$};
\draw[->,line width=0.8pt] (0,0) -- (0,3) node[left] {$x_2$};

\def\genx{0.3};
\def\geny{0.9};
\def\n{3};
\def\N{6}

\coordinate (gen) at (\genx,\geny);
\foreach \i in {0,1,...,\n} 
	\coordinate (gen\i) at ($(\i*\genx,\i*\geny)$);

\foreach \i in {0,1,...,\n} 
	\fill [blue] (gen\i) circle (3pt);
	
\foreach \i in {0,1,...,\n} 
	\draw [blue] (gen\i) node[anchor=south west] {$A\i$};
	
\draw ($(1.25,3.5)$) node[anchor=south] {\large $V=\{A_0,\cdots\}$};
	
\end{scope}

\begin{scope}[shift={(4,0)}] 

\def\genx{0.3};
\def\geny{0.9};
\def\n{3};
\def\N{6}

\coordinate (gen) at (\genx,\geny);
\foreach \i in {0,1,...,\n} 
	\coordinate (gen\i) at ($(\i*\genx,\i*\geny)$);

\foreach \i in {0,...,\n} 
	\coordinate (mid\i) at ($(gen\i)+(\genx,\genx)$);
	
	\coordinate (V) at ($(\n*\geny+\geny-\genx,\n*\geny+\genx)$);
	
	\fill [black!30,opacity=1] (mid0) node{} \foreach \i in {1,...,\n} {--(gen\i)--(mid\i) node{}} --(V) -- cycle;

\draw  [line width=1.5pt] (mid0) node{} \foreach \i in {1,...,\N}{ -- \ifodd\i ++(0,\geny-\genx) \else ++(\genx,\genx) \fi node{}};
\draw  [line width=1.5pt] (0,0) --  (V);

\foreach \i in {0,1,...,\n} 
	\fill [blue] (gen\i) circle (3pt);
	
\draw ($(1.5,3.5)$) node[anchor=south] {\large $\lowertconv(V)$};

\end{scope}

\begin{scope}[shift={(8.5,0)}] 
\def\genx{0.3};
\def\geny{0.9};
\def\n{3};
\def\N{6}

\coordinate (gen) at (\genx,\geny);
\foreach \i in {0,1,...,\n} 
	\coordinate (gen\i) at ($(\i*\genx,\i*\geny)$);

\foreach \i in {0,...,\n} 
	\coordinate (mid\i) at ($(gen\i)+(0,\geny-\genx)$);
	
	\coordinate (V) at ($(0,\n*\geny+\geny-\genx)$);
	
	\fill [black!30,opacity=1] (mid0) node{} \foreach \i in {1,...,\n} {--(gen\i)--(mid\i) node{}} --(V) -- cycle;

\draw  [line width=1.5pt] (mid0) node{} \foreach \i in {1,...,\N}{ -- \ifodd\i ++(\genx,\genx) \else ++(0,\geny-\genx) \fi node{}};
\draw  [line width=1.5pt] (0,0) --  (V);

\foreach \i in {0,1,...,\n} 
	\fill [blue] (gen\i) circle (3pt);
	
\draw ($(0.5,3.5)$) node[anchor=south] {\large $\uppertconv(V)$};
\end{scope}

\begin{scope}[shift={(11,0)}] 

\def\genx{0.3};
\def\geny{0.9};
\def\n{3};
\def\N{6}

\coordinate (gen) at (\genx,\geny);
\foreach \i in {0,1,...,\n} 
	\coordinate (gen\i) at ($(\i*\genx,\i*\geny)$);
	
\coordinate (V1) at ($(\n*\geny+\geny-\genx,\n*\geny+\genx)$);
\coordinate (V2) at ($(0,\n*\geny+\genx)$);
	
	\fill [black!30,opacity=1] (0,0) -- (V1) --(V2) -- cycle;

\draw  [line width=1.5pt] (V2) -- (0,0) --  (V1);

\foreach \i in {0,1,...,\n} 
	\fill [blue] (gen\i) circle (3pt);

\draw ($(1.5,3.5)$) node[anchor=south] {\large $\loweruppertconv(V)$};
\end{scope}

\end{scope}
\end{tikzpicture}

\caption{Examples of non-compact tropical convex sets: (a)  Tropical convex hulls generated by the ray $x_2=3x_1$ in the first quadrant.  (b) Tropical convex hulls generated by points $(x_1,3x_1)$ for $x_1=0,1, \cdots$.} \label{F:tconv2}
\end{figure}

Now let us consider some tropical convex hulls generated by noncompact sets. 
Figure~\ref{F:tconv2}(a) shows the tropical convex hulls generated by the ray $U$ defined by  $x_2=3x_1$ with $x_1\geq 0$.  Note that for any  two points $\alpha$ and $\beta$ in $U$, the lower tropical segment $\underline{[\alpha,\beta]}$ is of type $\underline{[\alpha_1,\gamma_2]}$ in Figure~\ref{F:tconv}(a) and the upper tropical segment $\overline{[\alpha,\beta]}$ is of type $\overline{[\beta_4,\gamma_3]}$ in Figure~\ref{F:tconv}(a). For comparison, Figure~\ref{F:tconv2}(b) shows the tropical convex hulls generated by the countable set $V$ defined by  $x_2=3x_1$ with $x_1=0,1,\cdots$.

\qed
\end{example}

\section{$B^p$-Pseudonorms and Tropical Projections}    \label{S:Bpseudonorm}

\subsection{The Definition of $B^p$-Pseudonorms}
Now suppose the underlying space $X$ is a locally compact Hausdorff space equipped with a finite nontrivial Borel measure $\mu$. To specify the measure $\mu$, we also write  $\mbbTP(X)$ as  $\mbbTP(X,\mu)$. Note that all functions in $BC(X)$ are $\mu$-measurable and $\mu$-integrable.  Then we can define some pseudonorms and pseudometrics on $\mbbTP(X,\mu)$ (here ``pseudo'' means not necessarily symmetric). 

\begin{definition} \label{D:pseudonorm}
For a given $1\leq p \leq\infty$, the \emph{$\underline{p}$-pseudonorm} or \emph{$\underline{B}^p$-pseudonorm} on $\mbbTP(X,\mu)$ is a function: $\llfloor\cdot\rrfloor_p:\mbbTP(X,\mu)\to[0,\infty)$ defined by $\llfloor[f]\rrfloor_p:=\Vert \underline{f}\Vert_p$ where $\Vert \cdot\Vert_p$ is the $p$-norm on $BC(X)$. More precisely, for all $[f]\in \mbbTP(X,\mu)$, when $1\leq p <\infty$, $\llfloor[f]\rrfloor_p=\left(\int_X (\underline{f})^pd\mu\right)^{1/p}=\left(\int_X (f-\inf(f))^pd\mu\right)^{1/p}$, and the $\infty$-pseudonorm $\llfloor\cdot\rrfloor_\infty$ on $\mbbTP(X,\mu)$ is defined as $\llfloor[f]\rrfloor_\infty=\lim\limits_{p\to\infty}\llfloor[f]\rrfloor_p$. We also define the \emph{$\overline{p}$-pseudonorm} or  \emph{$\overline{B}^p$-pseudonorm} on $\mbbTP(X,\mu)$ as $\llceil[f]\rrceil_p=\Vert \overline{f}\Vert_p=\Vert -\overline{f}\Vert_p=\left(\int_X (-\overline{f})^pd\mu\right)^{1/p}=\left(\int_X (\sup(f)-f)^pd\mu\right)^{1/p}$ with $p\in[1,\infty)$ and $\llceil[f]\rrceil_\infty=\lim\limits_{p\to\infty}\llceil[f]\rrceil_p$. Both $\underline{p}$-pseudonorm and $\overline{p}$-pseudonorm are called \emph{$p$-pseudonorms} or \emph{$B^p$-pseudonorms}. All $B^p$-pseudonorms are also simply called \emph{$B$-pseudonorms}.
\end{definition}

\subsection{Null Sets}
\begin{lemma} \label{L:null}
For $p\in[1,\infty)$, $\llfloor[f]\rrfloor_p=0$ if and only if $\underline{f}=0$ almost everywhere, and $\llceil[f]\rrceil_p=0$ if and only if $\overline{f}=0$ almost everywhere. 
\end{lemma}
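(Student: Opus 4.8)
The plan is to recognize both pseudonorms as $L^p$-norms of nonnegative functions and invoke the standard measure-theoretic fact that the $L^p$-norm of a nonnegative measurable function vanishes precisely when the function is zero almost everywhere. Indeed, by definition $\underline{f}=f-\inf(f)\geq 0$ and $-\overline{f}=\sup(f)-f\geq 0$ everywhere on $X$, and both are $\mu$-measurable since $f\in BC(X)$ is continuous while $\inf(f),\sup(f)$ are constants. Thus $\llfloor[f]\rrfloor_p^p=\int_X(\underline{f})^p\,d\mu$ and $\llceil[f]\rrceil_p^p=\int_X(-\overline{f})^p\,d\mu$ are integrals of nonnegative integrands, and the whole lemma reduces to a definiteness statement for such integrals.

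First I would establish the elementary claim: for a nonnegative $\mu$-measurable function $g$ on $X$ and $p\in[1,\infty)$, one has $\int_X g^p\,d\mu=0$ if and only if $g=0$ $\mu$-almost everywhere. The reverse implication is immediate, since $g=0$ a.e. forces $g^p=0$ a.e. and the integral of a function vanishing a.e. is zero. For the forward implication I would exhaust the set $\{g>0\}$ by the level sets $E_n=\{x\in X\mid g(x)>1/n\}$ for $n\in\mbbN$. On $E_n$ we have $g^p>1/n^p$, so the Markov-type bound $\int_X g^p\,d\mu\geq \tfrac{1}{n^p}\mu(E_n)$ combined with $\int_X g^p\,d\mu=0$ yields $\mu(E_n)=0$. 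Since $\{g>0\}=\bigcup_n E_n$ is a countable union of null sets, it is itself null, so $g=0$ a.e.

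Then I would apply this claim with $g=\underline{f}$ to get the first equivalence: $\llfloor[f]\rrfloor_p=0$ iff $\int_X(\underline{f})^p\,d\mu=0$ iff $\underline{f}=0$ a.e. For the second equivalence there are two routes; the cleanest is to apply the claim again with $g=-\overline{f}$, giving $\llceil[f]\rrceil_p=0$ iff $-\overline{f}=0$ a.e. iff $\overline{f}=0$ a.e. Alternatively, I could reduce the second statement to the first via the basic identity $-\overline{f}=\underline{-f}$, so that $\llceil[f]\rrceil_p=\Vert-\overline{f}\Vert_p=\Vert\underline{-f}\Vert_p=\llfloor[-f]\rrfloor_p$ and the equivalence transfers verbatim.

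There is no genuine obstacle here, as the statement is essentially the definiteness of the $L^p$-seminorm on the cone of nonnegative functions. The only point demanding the slightest care is the forward direction of the elementary claim, where one must pass from ``the integral vanishes'' to ``the function vanishes a.e.'' through the level-set exhaustion rather than pointwise reasoning; and one should record explicitly that $\underline{f}$ and $-\overline{f}$ are honestly nonnegative, since this positivity is exactly what licenses the Markov bound.
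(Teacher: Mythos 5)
Your proposal is correct and takes essentially the same route as the paper: both reduce the lemma to the standard fact that a nonnegative measurable function with vanishing integral is zero almost everywhere, applied to $(\underline{f})^p$ and $(-\overline{f})^p$. The only difference is that you spell out the level-set proof of that standard fact, which the paper simply cites.
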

\begin{proof}
Recall that for a non-negative $\mu$-measurable function $F$, $\int_X F d\mu=0$ if and only if $F=0$ almost everywhere (i.e., $\mu(\{x\in X\mid F(x)\neq 0\})=0$). By letting $F=(\underline{f})^p$ and $F=(-\overline{f})^p$ respectively, the statement follows. 
\end{proof}

We have the following notations of null sets:
\begin{enumerate}
\item $\underline{\mcalN}(X,\mu):=\{[f]\in \mbbTP(X,\mu)\mid \underline{f}=0\ \text{almost everywhere}\}$;
\item $\overline{\mcalN}(X,\mu)=\{[f]\in \mbbTP(X,\mu)\mid \overline{f}=0\ \text{almost everywhere}\}$;
\item $\mcalN(X,\mu)=\{[f]\in \mbbTP(X,\mu)\mid f\ \text{is equal to a constant almost everywhere}\}$.
\end{enumerate}
The sets will also be simply denoted as $\underline{\mcalN}$, $\overline{\mcalN}$ and $\mcalN$ respectively when $X$ and $\mu$ are presumed. 

\begin{lemma} \label{L:null2}
The following are some basic properties of $\underline{\mcalN}$, $\overline{\mcalN}$ and $\mcalN$.
\begin{enumerate}
\item $[f]\in\underline{\mcalN}$ if and only if $\llfloor[f]\rrfloor_p=0$ for some $p\in[1,\infty)$ if and only if $\llfloor[f]\rrfloor_p=0$ for all $p\in[1,\infty)$.
\item $[f]\in\overline{\mcalN}$ if and only if $\llceil[f]\rrceil_p=0$ for some $p\in[1,\infty)$ if and only if $\llceil[f]\rrceil_p=0$ for all $p\in[1,\infty)$.
\item $\underline{\mcalN}=-\overline{\mcalN}$.
\item $\underline{\mcalN}\bigcap\overline{\mcalN}=\{[0]\}$. 
\item $\underline{\mcalN}$ (respectively $\overline{\mcalN}$) is a positive cone, i.e., $\underline{\mcalN}\bigcap(-\underline{\mcalN}) =\{[0]\}$ (respectively  $\overline{\mcalN}\bigcap(-\overline{\mcalN}) =\{[0]\}$) and if $\alpha,\beta\in\underline{\mcalN}$ and $a,b\geq 0$, $a\alpha+b\beta\in\underline{\mcalN}$ (respectively $a\alpha+b\beta\in\overline{\mcalN}$).
\item $\mcalN$ is a linear subspace of $\mbbTP(X,\mu)$ spanned by $\underline{\mcalN}(X,\mu)\bigcup\overline{\mcalN}(X,\mu)$. 
\item $\mcalN=\{[0]\}$ if and only if $\underline{\mcalN}=\{[0]\}$ if and only if $\overline{\mcalN}=\{[0]\}$.
\end{enumerate}
\end{lemma}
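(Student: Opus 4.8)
The plan is to dispatch the seven claims in the order listed, leaning almost entirely on Lemma~\ref{L:null} and the elementary identity $-\overline{f}=\underline{-f}$, with the only substantive work concentrated in part (6). For (1) and (2), I would observe that the defining condition ``$\underline{f}=0$ almost everywhere'' (respectively ``$\overline{f}=0$ almost everywhere'') makes no reference to $p$; Lemma~\ref{L:null} identifies this condition with the vanishing of $\llfloor[f]\rrfloor_p$ (respectively $\llceil[f]\rrceil_p$) for each fixed $p\in[1,\infty)$, so the equivalence ``for some $p$'' $\Leftrightarrow$ ``for all $p$'' is immediate. For (3), since $-[f]=[-f]$ and $-\overline{f}=\underline{-f}$, the condition $\overline{f}=0$ a.e.\ is equivalent to $\underline{-f}=0$ a.e., i.e.\ $[-f]\in\underline{\mcalN}$; pushing this through the map $\alpha\mapsto-\alpha$ yields $\underline{\mcalN}=-\overline{\mcalN}$. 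For (4), if both $\underline{f}=0$ and $\overline{f}=0$ almost everywhere, then $f=\inf(f)$ and $f=\sup(f)$ on a common full-measure set, which is nonempty because $\mu$ is nontrivial; hence $\inf(f)=\sup(f)$, and since $\inf(f)\le f\le\sup(f)$ holds pointwise everywhere, $f$ is genuinely constant, giving $[f]=[0]$.

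For (5), the trivial-intersection statement $\underline{\mcalN}\cap(-\underline{\mcalN})=\{[0]\}$ follows by combining (3), which gives $-\underline{\mcalN}=\overline{\mcalN}$, with (4). For closure under nonnegative combinations I would take $\alpha=[f],\beta=[g]\in\underline{\mcalN}$ and $a,b\ge 0$: since $f=\inf(f)$ and $g=\inf(g)$ almost everywhere, the function $af+bg$ equals the constant $m:=a\inf(f)+b\inf(g)$ almost everywhere; the pointwise bounds $af+bg\ge m$ force $\inf(af+bg)\ge m$, while the existence of a point of equality (again using nontriviality of $\mu$) forces $\inf(af+bg)=m$, so $\underline{af+bg}=0$ a.e.\ and $a\alpha+b\beta\in\underline{\mcalN}$. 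The statement for $\overline{\mcalN}$ is symmetric via (3).

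The core of the lemma is (6), and it is here that I expect the one real obstacle. That $\mcalN$ is a linear subspace is routine: constancy almost everywhere is preserved under real linear combinations. The inclusions $\underline{\mcalN}\subseteq\mcalN$ and $\overline{\mcalN}\subseteq\mcalN$ are also immediate, so $\operatorname{span}(\underline{\mcalN}\cup\overline{\mcalN})\subseteq\mcalN$. The reverse inclusion is the delicate point, because an $[f]\in\mcalN$ with $f=c$ almost everywhere need lie in \emph{neither} $\underline{\mcalN}$ nor $\overline{\mcalN}$: if $f$ dips below $c$ and rises above $c$ on null sets, then $\inf(f)<c<\sup(f)$, so both $\underline{f}$ and $\overline{f}$ are nonzero almost everywhere. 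I would resolve this with the decomposition coming from the pointwise identity $\min(f,c)+\max(f,c)=f+c$ (cf.\ Lemma~\ref{L:TropOper}(10)). Setting $g=\max(f,c)$ and $h=\min(f,c)$, one checks that $g\ge c$ with $g=c$ a.e., so $\inf(g)=c$ and $[g]\in\underline{\mcalN}$; dually $h\le c$ with $h=c$ a.e., so $\sup(h)=c$ and $[h]\in\overline{\mcalN}$. Since $[f]=[g]+[h]$ in $\mbbTP(X,\mu)$ (the excess constant $-c$ being absorbed by projectivization), this exhibits $[f]$ in the span and completes (6).

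Finally, (7) follows from the structure already assembled: the inclusions $\underline{\mcalN},\overline{\mcalN}\subseteq\mcalN$ give one direction, (3) shows $\underline{\mcalN}=\{[0]\}\Leftrightarrow\overline{\mcalN}=\{[0]\}$, and (6) then forces $\mcalN=\operatorname{span}(\underline{\mcalN}\cup\overline{\mcalN})=\{[0]\}$ once both cones are trivial. The single recurring subtlety throughout is the use of the nontriviality of $\mu$ to pass from ``equal to a constant almost everywhere'' to genuine statements about $\inf(f)$ and $\sup(f)$, which are taken over all of $X$ rather than up to null sets.
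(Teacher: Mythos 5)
Your proof is correct and follows essentially the same route as the paper's: parts (1)--(3) via Lemma~\ref{L:null} and the identity $-\overline{f}=\underline{-f}$, and part (6) via the identical decomposition $g=\max(f,c)$, $h=\min(f,c)$. Your treatment of (4) and (5) is somewhat more explicit than the paper's (which dismisses them as nearly immediate), but the underlying arguments coincide.
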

\begin{proof}
(1) and (2) follows from Lemma~\ref{L:null} directly. 

For (3), we observe that  $[f]\in\underline{\mcalN}$ if and only if $\llfloor[f]\rrfloor_1=\Vert \underline{f}\Vert_1=0$ if and only if $\llceil-[f]\rrceil_1=\Vert \overline{-f}\Vert_1=0$ if and only if $-[f]\in\overline{\mcalN}$.

(4) follows from the fact that $[f]=[0]$ if and only if $\underline{f}=0$ almost everywhere and $\overline{f}=0$.  

Using (3)  and (4), (5) can be easily verified by  definition. 

For (6), it is straightforward to verify that  $\mcalN$ is a linear subspace containing both $\underline{\mcalN}(X,\mu)$ and $\overline{\mcalN}(X,\mu)$. It remains to show that each $[f]\in \mcalN$ can be written as $[f]=[g]+[h]$ where $[g]\in \underline{\mcalN}(X,\mu)$ and $[h]\in \overline{\mcalN}(X,\mu)$. By definition, we know that $f=c$ for some constant $c$ almost everywhere. We let $g=\max(f,c)$ and $h=\min(f,c)$. Then it is clear that $[f]=[g]+[h]$, $[g]\in \underline{\mcalN}(X,\mu)$ is and $[h]\in \overline{\mcalN}(X,\mu)$. 

(7) follows from (3), (4) and (6).

\end{proof}

We say that  $\mcalN$ is trivial if  $\mcalN=\{[0]\}$. 

\begin{lemma}
If the measure of every nonempty open subset of $X$ is nonzero, then $\mcalN$ is trivial. 
\end{lemma}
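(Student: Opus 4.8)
The plan is to prove the contrapositive-flavored statement directly, by exhibiting the exceptional set of a representative as an open set and invoking continuity. Recall that the zero element $[0]$ of $\mbbTP(X,\mu)$ is the class of all constant functions, so $\mcalN=\{[0]\}$ is equivalent to saying that whenever a function $f\in BC(X)$ equals a constant almost everywhere, it in fact equals that constant everywhere.

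First I would take an arbitrary $[f]\in\mcalN$. By the definition of $\mcalN$ there is a constant $c\in\mbbR$ with $f=c$ $\mu$-almost everywhere; that is, the set $U:=\{x\in X\mid f(x)\neq c\}$ satisfies $\mu(U)=0$. The key observation is that $U=f^{-1}(\mbbR\setminus\{c\})$, and since $f$ is continuous and $\mbbR\setminus\{c\}$ is open in $\mbbR$, the set $U$ is open in $X$. Now I would apply the hypothesis: every nonempty open subset of $X$ has positive measure. If $U$ were nonempty it would have $\mu(U)>0$, contradicting $\mu(U)=0$. Hence $U=\emptyset$, which means $f(x)=c$ for every $x\in X$, so $f$ is a genuine constant function and $[f]=[0]$. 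Since $[f]$ was arbitrary in $\mcalN$, we conclude $\mcalN=\{[0]\}$, i.e. $\mcalN$ is trivial.

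There is essentially no hard step here: the whole argument rests on the single point that continuity upgrades the measure-theoretic statement ``$f=c$ almost everywhere'' to the topological statement ``$f=c$ everywhere'' once every nonempty open set is charged by $\mu$. If one prefers, the same conclusion can be packaged through Lemma~\ref{L:null2}(7), reducing $\mcalN=\{[0]\}$ to showing $\underline{\mcalN}=\{[0]\}$ and running the identical openness argument on the set $\{x\mid \underline{f}(x)\neq 0\}$; but the direct argument above already yields the statement with no genuine obstacle to overcome, so I would simply present it as written.
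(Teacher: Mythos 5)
Your proof is correct and uses the same key idea as the paper: continuity makes the exceptional set open, and the hypothesis then forces it to be empty. The paper runs this argument on $\underline{\mcalN}$ and concludes via Lemma~\ref{L:null2}(7), while you apply it directly to $\mcalN$ — a cosmetic difference only.
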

\begin{proof}
If $[f]\in \underline{\mcalN}$, then  $\mu(\{x\in X\mid \underline{f}(x)\neq 0\})=0$ by definition. Therefore $\{x\in X\mid \underline{f}(x)\neq 0\}=\emptyset$ and thus $[f]=[0]$. By Lemma~\ref{L:null2}(7), $\mcalN$ is trivial. 
\end{proof}

\subsection{Basic Properties of $B^p$-Pseudonorms}

\begin{proposition} \label{P:BnormProperty}
We summarize some properties of the $B^p$-pseudonorms as follows.
\begin{enumerate}

\item $\llceil\alpha\rrceil_p=\llfloor-\alpha\rrfloor_p$.
\item For $1\leq p <\infty$, $\llfloor\alpha\rrfloor_p \leq \mu(X)^{1/p} \Vert \alpha\Vert$ and $\llceil\alpha\rrceil_p \leq \mu(X)^{1/p} \Vert \alpha\Vert$. 
\item The $\underline{p}$-pseudonorms and $\overline{p}$-pseudonorms are continuous functions.
\item Fixing $\alpha\in\mbbTP(X)$, the functions $\mu(X)^{-1/p}\llfloor\alpha\rrfloor_p$ and $\mu(X)^{-1/p}\llceil\alpha\rrceil_p$ are nondecreasing with respect to $p\in [1,\infty]$.
\item $\llfloor\alpha\rrfloor_1+\llceil\alpha\rrceil_1=\llfloor\alpha\rrfloor_1+\llfloor -\alpha\rrfloor_1=\llceil\alpha\rrceil_1+\llceil-\alpha\rrceil_1=\mu(X)\cdot\Vert \alpha \Vert$.
\item $\llfloor\alpha\rrfloor_\infty=\llceil\alpha\rrceil_\infty=\Vert \alpha \Vert$.
\item $\llfloor c\alpha\rrfloor_p=c\llfloor \alpha\rrfloor_p$ and $\llceil c\alpha\rrceil_p=c\llceil \alpha\rrceil_p$ when $c>0$. 
\item For $p\in[1,\infty)$,  $\alpha=[f]$ and $\beta=[g]$, if $\underline{f}\leq\underline{g}$ almost everywhere, then $ \llfloor \alpha\rrfloor_p \leq \llfloor \beta\rrfloor_p$, and  if $\overline{f}\geq\overline{g}$ almost everywhere, then $ \llceil \alpha\rrfloor_p \leq \llceil \beta\rrceil_p$.
\item (The triangle inequalities) For $1\leq p \leq\infty$, $\llfloor \alpha+\beta\rrfloor_p \leq \llfloor \alpha\rrfloor_p +\llfloor \beta\rrfloor_p$ and $\llceil \alpha+\beta\rrceil_p \leq \llceil \alpha\rrceil_p +\llceil \beta\rrceil_p$.

\item $\llfloor \alpha+\beta\rrfloor_1 = \llfloor \alpha\rrfloor_1 +\llfloor \beta\rrfloor_1$ if and only if 
$ \Xmin(\alpha)\Cap \Xmin(\beta) \neq\emptyset$.
\item $\llceil \alpha+\beta\rrceil_1 = \llceil \alpha\rrceil_1 +\llceil \beta\rrceil_1$ if and only if 
$ \Xmax(\alpha)\Cap \Xmax(\beta) \neq\emptyset$.

\item $\Vert \alpha+\beta \Vert= \Vert \alpha\Vert +\Vert \beta \Vert$ if and only if  $\llfloor \alpha+\beta\rrfloor_1 = \llfloor \alpha\rrfloor_1 +\llfloor \beta\rrfloor_1$ and $\llceil \alpha+\beta\rrceil_1 = \llceil \alpha\rrceil_1 +\llceil \beta\rrceil_1$. 

\item For any $p\in[1,\infty)$, $\alpha\in\mbbTP(X)$, $\beta_1\in\underline{\mcalN}$ and $\beta_2\in\overline{\mcalN}$, if $\Xmin(\alpha)\Cap  \Xmin(\beta_1)\neq \emptyset$ and $ \Xmax(\alpha)\Cap \Xmax(\beta_2) \neq\emptyset$, then $\llfloor \alpha+\beta_1\rrfloor_p = \llfloor \alpha\rrfloor_p$ and $\llceil \alpha+\beta_2\rrceil_p = \llceil \alpha\rrceil_p$.

\item[For (14) and (15), we suppose that $\mcalN(X,\mu)$ is trivial.] 
\item For $p\in(1,\infty)$, $\llfloor \alpha+\beta\rrfloor_p = \llfloor \alpha\rrfloor_p +\llfloor \beta\rrfloor_p$ if and only if  $\llceil \alpha+\beta\rrceil_p = \llceil \alpha\rrceil_p +\llceil \beta\rrceil_p$ if and only if either $\alpha=[0]$ or $\beta=c\alpha$ for some $c\geq0$.

\item For any $p\in[1,\infty)$, $\alpha=[f]$ and $\beta=[g]$, if $\underline{f}\lneqq\underline{g}$, then $ \llfloor \alpha\rrfloor_p < \llfloor \beta\rrfloor_p$, and  if $\overline{f}\gneqq\overline{g}$, then $ \llceil \alpha\rrfloor_p < \llfloor \beta\rrceil_p$.

\end{enumerate}
\end{proposition}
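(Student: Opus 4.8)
The plan is to upgrade the non-strict monotonicity of item (8) to a strict one, using in an essential way the standing hypothesis that $\mcalN(X,\mu)$ is trivial. I will prove the lower statement and deduce the upper one (which should read $\llceil\alpha\rrceil_p<\llceil\beta\rrceil_p$) by the duality $\llceil\cdot\rrceil_p=\llfloor-\cdot\rrfloor_p$ of item (1).

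First I would reduce to comparing $p$-th powers: for $1\le p<\infty$ it suffices to show
\[\int_X(\underline{g})^p\,d\mu-\int_X(\underline{f})^p\,d\mu>0.\]
Since $\underline{f}\le\underline{g}$ everywhere and both are nonnegative, the integrand $(\underline{g})^p-(\underline{f})^p$ is nonnegative (this already re-proves the weak inequality of item (8)), so everything comes down to producing a set of positive measure on which it is strictly positive.

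The crux is the set $U:=\{x\in X\mid \underline{f}(x)<\underline{g}(x)\}$. It is open because $\underline{f}$ and $\underline{g}$ are continuous (being representatives of elements of $BC(X)$), and it is nonempty because $\underline{f}\lneqq\underline{g}$ provides a point $x_0$ with $\underline{f}(x_0)<\underline{g}(x_0)$. I claim $\mu(U)>0$. If $U=X$ this is clear since $\mu$ is nontrivial. Otherwise the function $h:=\underline{g}-\underline{f}$ is a nonnegative continuous function that vanishes at every point of $X\setminus U$, so $\inf(h)=0$ and hence $\underline{h}=h$; were $\mu(U)=0$, then $h$ would vanish almost everywhere, so $[h]$ would be a nonzero element of $\underline{\mcalN}$, contradicting the triviality of $\mcalN$ (equivalently $\underline{\mcalN}=\{[0]\}$ by Lemma~\ref{L:null2}(7)). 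With $\mu(U)>0$ in hand, the integrand $(\underline{g})^p-(\underline{f})^p$ is strictly positive throughout $U$ (as $\underline{g}>\underline{f}\ge 0$ there), so
\[\llfloor\beta\rrfloor_p^{\,p}-\llfloor\alpha\rrfloor_p^{\,p}=\int_X\big[(\underline{g})^p-(\underline{f})^p\big]\,d\mu\ge\int_U\big[(\underline{g})^p-(\underline{f})^p\big]\,d\mu>0,\]
which yields $\llfloor\alpha\rrfloor_p<\llfloor\beta\rrfloor_p$.

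For the upper statement I would use $-\overline{f}=\underline{-f}$ together with item (1). The hypothesis $\overline{f}\gneqq\overline{g}$ says $\overline{f}\ge\overline{g}$ everywhere with strict inequality somewhere, which is exactly $\underline{-f}\lneqq\underline{-g}$; applying the lower case to $-\alpha$ and $-\beta$ gives $\llceil\alpha\rrceil_p=\llfloor-\alpha\rrfloor_p<\llfloor-\beta\rrfloor_p=\llceil\beta\rrceil_p$. The one genuinely non-formal point, and the place where the hypotheses are indispensable, is the passage from a single point of strict inequality to a positive-measure set: this is exactly where continuity of the representatives and the triviality of $\mcalN$ enter, and a measure whose support misses $x_0$ shows the conclusion can fail without the latter.
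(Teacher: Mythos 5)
Your argument is correct, and it reaches the strict inequality by a different mechanism than the paper does. The paper first observes that $\underline{g-f}=\underline{g}-\underline{f}$, then uses the pointwise superadditivity $(\underline{f})^p+(\underline{g-f})^p\le(\underline{f}+\underline{g-f})^p$ to get the quantitative lower bound $\llfloor\alpha\rrfloor_p^p+\llfloor\beta-\alpha\rrfloor_p^p\le\llfloor\beta\rrfloor_p^p$, and then invokes Lemma~\ref{L:null2} (via triviality of $\mcalN$ and $\beta-\alpha\neq[0]$) to conclude $\llfloor\beta-\alpha\rrfloor_p>0$; you instead bound the gap $\llfloor\beta\rrfloor_p^p-\llfloor\alpha\rrfloor_p^p$ directly by integrating $(\underline{g})^p-(\underline{f})^p$ over the set $U=\{\underline{f}<\underline{g}\}$ and showing $\mu(U)>0$. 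The two routes use the triviality of $\mcalN$ in exactly the same essential place (passing from ``strict somewhere'' to ``strict on a set of positive measure''), but yours unpacks Lemma~\ref{L:null}/\ref{L:null2} by hand while the paper just cites it, and the paper's detour buys the reverse-Minkowski-type estimate $\llfloor\beta\rrfloor_p^p\ge\llfloor\alpha\rrfloor_p^p+\llfloor\beta-\alpha\rrfloor_p^p$ as a byproduct. Two minor remarks: your case split on whether $U=X$ is unnecessary, since $\inf(\underline{g}-\underline{f})=0$ holds automatically (otherwise $\inf(\underline{g})>0$); and the openness of $U$ plays no role in your argument --- measurability suffices. Your reduction of the upper statement to the lower one via $\llceil\cdot\rrceil_p=\llfloor-\cdot\rrfloor_p$ is also fine and is essentially what the paper does ``analogously.''
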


\begin{proof}
Let $\alpha=[f]$. 

For (1), we have $\llceil\alpha\rrceil_p=\llceil[f]\rrceil_p=\Vert \overline{f}\Vert_p=\Vert -\overline{f}\Vert_p=\Vert \underline{-f}\Vert_p=\llfloor[-f]\rrfloor_p=\llfloor-\alpha\rrfloor_p$.

For (2), $\llfloor\alpha\rrfloor_p=\left(\int_X (\underline{f})^pd\mu\right)^{1/p}\leq \left(\int_X (\max(f)-\min(f))^pd\mu\right)^{1/p}=\left(\mu(X)\Vert \alpha \Vert^p\right)^{1/p}=\mu(X)^{1/p} \Vert \alpha\Vert$. Moreover, $\llceil \alpha\rrceil =\llfloor -\alpha \rrfloor\leq\mu(X)^{1/p} \Vert -\alpha\Vert = \mu(X)^{1/p} \Vert \alpha\Vert$. 

(3) follows from (2) directly. 

For (4), we know from Jensen's inequality that $\mu(X)^{-1/p}\Vert F\Vert_p$ is non-decreasing for any function $F$. Then (4) follows by letting $F$ be $\underline{f}$ and $\overline{f}$ respectively. 

For (5), note that $\underline{f}-\overline{f}$ is a constant function of value $\max(f)-\min(f)=\Vert f \Vert$. Therefore $\llfloor\alpha\rrfloor_1+\llceil\alpha\rrceil_1=\int_X (f-\min(f))d\mu + \int_X (\max(f)-f)d\mu=\int_X(\max(f)-\min(f))d\mu=\mu(X)\cdot\Vert \alpha \Vert$.

For (6), $\llfloor[f]\rrfloor_\infty=\lim\limits_{p\to\infty}\llfloor[f]\rrfloor_p=\lim\limits_{p\to\infty}\Vert \underline{f}\Vert_p=\Vert\underline{f}\Vert_\infty=\Vert [f]\Vert$. Analogously, $\llceil [f] \rrceil_\infty = \llfloor[-f]\rrfloor_\infty=\Vert [-f] \Vert=\Vert [f] \Vert$. 

(7) and (8) follow from the definition of pseudonorms directly.

Now let $\alpha=[f]$ and $\beta=[g]$. 

For (9), we have $\llfloor \alpha+\beta\rrfloor_p=\Vert \underline{f+g}\Vert_p\leq\Vert \underline{f}+\underline{g}\Vert_p\leq \Vert \underline{f}\Vert_p+\Vert \underline{g}\Vert_p=\llfloor \alpha\rrfloor_p+\llfloor\beta\rrfloor_p$ and $\llceil \alpha+\beta\rrceil_p = \llfloor -\alpha-\beta\rrfloor_p \leq  \llfloor -\alpha\rrfloor_p +\llfloor -\beta\rrfloor_p=\llceil \alpha\rrceil_p +\llceil \beta\rrceil_p$.

For (10), note that  $\underline{f+g}+(\inf(f+g)-\inf(f)-\inf(g))=\underline{f}+\underline{g}$ where $\inf(f+g)-\inf(f)-\inf(g)\geq0$. Therefore, 
$\llfloor \alpha+\beta\rrfloor_1 + (\inf(f+g)-\inf(f)-\inf(g))\mu(X)= \llfloor \alpha\rrfloor_1 +\llfloor\beta\rrfloor_1$.  As in Lemma~\ref{L:SpeIneq}, the statement follows from the fact that $\inf(f+g)-\inf(f)-\inf(g)=0$ if and only if  $ \Xmin(\alpha)\Cap \Xmin(\beta) \neq\emptyset$. Moreover, (11) can be derived by replacing $\alpha$ and $\beta$ with $-\alpha$ and $-\beta$ respectively in the above argument. 

(12) follows from (5), (10) and (11) directly. 

For (13), suppose $\alpha=[f]$, $\beta_1=[g_1]$ and $\beta_2=[g_2]$. Let $Y_1=\Xmin(g_1)$ and $Y_2=\Xmax(g_2)$. Then $\mu(Y_1)=\mu(Y_2)=\mu(X)$ and thus for any non-negative $\mu$-measurable function $F$, $\int_X F d\mu=\int_{Y_1} F d\mu=\int_{Y_2} F d\mu$. Since $\Xmin(\alpha)\Cap  \Xmin(\beta_1)\neq \emptyset$ and $ \Xmax(\alpha)\Cap \Xmax(\beta_2) \neq\emptyset$, we have $\underline{f+g_1}=\underline{f}+\underline{g_1}$ and $\overline{f+g_2}=\overline{f}+\overline{g_2}$. Therefore, 
\begin{align*}
\llfloor \alpha+\beta_1\rrfloor_p &= \left(\int_X (\underline{f+g_1})^pd\mu\right)^{1/p}= \left(\int_X (\underline{f}+\underline{g_1})^pd\mu\right)^{1/p} = \\
&=  \left(\int_{Y_1} (\underline{f}+\underline{g_1})^pd\mu\right)^{1/p}=\left(\int_{Y_1} (\underline{f})^pd\mu\right)^{1/p} =\left(\int_X(\underline{f})^pd\mu\right)^{1/p}=\llfloor \alpha\rrfloor_p
\end{align*} and 
\begin{align*}
\llceil \alpha+\beta_1\rrceil_p &= \left(\int_X (-\overline{f+g_1})^pd\mu\right)^{1/p}= \left(\int_X (-\overline{f}-\overline{g_1})^pd\mu\right)^{1/p} = \\
&=  \left(\int_{Y_2} (-\overline{f}-\overline{g_1})^pd\mu\right)^{1/p}=\left(\int_{Y_2} (-\overline{f})^pd\mu\right)^{1/p} =\left(\int_X(-\overline{f})^pd\mu\right)^{1/p}=\llceil \alpha\rrceil_p.
\end{align*} 

For (14), clearly if either $\alpha=[0]$ or $\beta=c\alpha$ for some $c\geq0$, then $\llfloor \alpha+\beta\rrfloor_p = \llfloor \alpha\rrfloor_p +\llfloor \beta\rrfloor_p$ and $\llceil \alpha+\beta\rrceil_p = \llceil \alpha\rrceil_p +\llceil \beta\rrceil_p$ by definition of pseudonorms. Now suppose $\llfloor \alpha+\beta\rrfloor_p = \llfloor \alpha\rrfloor_p +\llfloor \beta\rrfloor_p$. Then $\Vert \underline{f+g}\Vert_p=\Vert\underline{f}\Vert_p+\Vert\underline{g}\Vert_p$ and we must have  $\Vert \underline{f}+\underline{g}\Vert_p=\Vert\underline{f}\Vert_p+\Vert\underline{g}\Vert_p$ since $\Vert \underline{f+g}\Vert_p\leq \Vert \underline{f}+\underline{g}\Vert_p\leq \Vert\underline{f}\Vert_p+\Vert\underline{g}\Vert_p$. Recall that by Minkowski inequality, $\Vert \underline{f}+\underline{g}\Vert_p\leq\Vert\underline{f}\Vert_p+\Vert\underline{g}\Vert_p$ with equality for $1< p <\infty$ if and only if either $\underline{f}=0$ or $\underline{g}=c\underline{f}$ for some $c\geq0$. It follows that  either $\alpha=[0]$ or $\beta=c\alpha$ for some $c\geq0$. For the case $\llceil \alpha+\beta\rrceil_p = \llceil \alpha\rrceil_p +\llceil \beta\rrceil_p$, a similar argument applies. 

(15) is a special case of (8) where the inequalities are strict. Suppose $\underline{f}\lneqq\underline{g}$. Then it is easy to see that in this case $\underline{g-f}=\underline{g}-\underline{f}\gneqq 0$.  Since $\mcalN(X,\mu)$ is trivial and $\beta-\alpha\neq[0]$, we know that  $\llfloor \beta-\alpha\rrfloor_p> 0$ for all $p\in[1,\infty)$. Now 
$ \llfloor \alpha\rrfloor_p^p+ \llfloor \beta-\alpha\rrfloor_p^p = \int_X( (\underline{f})^p+ (\underline{g-f})^p)d\mu\leq  \int_X( (\underline{f}+\underline{g-f})^p)d\mu= \int_X(\underline{g})^pd\mu= \llfloor \beta\rrfloor_p^p$ and thus  $\llfloor \alpha\rrfloor_p< \llfloor \beta\rrfloor_p$. 

Analogously, suppose $\overline{f}\gneqq\overline{g}$  which implies $\overline{g-f}=\overline{g}-\overline{f}\lneqq 0$. Again, $\llceil \beta-\alpha\rrceil_p> 0$ for all $p\in[1,\infty)$ and we get 
$ \llceil \alpha\rrceil_p^p+ \llceil \beta-\alpha\rrceil_p^p = \int_X( (-\overline{f})^p+ (-\overline{g-f})^p)d\mu \leq \int_X( (-\overline{f}-\overline{g-f})^p)d\mu= \int_X(-\overline{g})^pd\mu= \llceil \beta\rrceil_p^p$ which means $\llceil \alpha\rrceil_p < \llceil \beta\rrceil_p$.

\end{proof}

\subsection{The Main Theorem of Tropical Projections}
For the rest of the paper, we assume  that $X$ is a locally compact Hausdorff space, $\mu$ is a Borel measure on $X$ such that $\mu(X)\in(0,\infty)$ and $\mcalN(X,\mu)$ is trivial. 

\begin{theorem} \label{T:main}
 For a compact subset $T$ of $\mbbTP(X)$ and an arbitrary element $\gamma$ in $\mbbTP(X)$, consider  the following real-valued  functions on $T$ defined by $\gamma$ using the $p$-pseudonorms with respect to $\mu$:

\begin{enumerate}
\item $\Theta^{(T,\gamma)}_\infty:T \to [0,\infty)$ given by $\alpha\mapsto\Vert\alpha-\gamma\Vert$,
\item $\lowerTheta^{(T,\gamma)}_p:T \to [0,\infty)$ with $p\in[1,\infty)$ given by  $\alpha\mapsto\llfloor \alpha-\gamma\rrfloor_p$ and 
\item $\upperTheta^{(T,\gamma)}_p:T \to [0,\infty)$ with $p\in[1,\infty)$ given by  $\alpha\mapsto\llceil \alpha-\gamma\rrceil_p$. 
\end{enumerate}
We have the following conclusions:

\begin{enumerate}
\item Suppose $T$ is lower tropically convex. For each element $\gamma\in \mbbTP(X)$, there is a unique element $\lowerpi_T(\gamma)$ called the lower tropical projection of $\gamma$ to $T$ which minimizes $\lowerTheta^{(T,\gamma)}_p$  for all $p\in[1,\infty)$. Moreover, the minimizer of $\Theta^{(T,\gamma)}_\infty$ is compact and lower tropically convex which contains $\lowerpi_T(\gamma)$. 
\item Suppose $T$ is upper tropically convex. For each element $\gamma\in \mbbTP(X)$, there is a unique element $\upperpi_T(\gamma)$ called the upper tropical projection of $\gamma$ to $T$ which minimizes $\upperTheta^{(T,\gamma)}_p$  for all $p\in[1,\infty)$. Moreover, the minimizer of $\Theta^{(T,\gamma)}_\infty$ is compact and upper tropically convex which contains $\upperpi_T(\gamma)$. 
\item Suppose $T$ is both lower and upper tropically convex. Then for each element $\gamma\in \mbbTP(X)$, the minimizer of $\Theta^{(T,\gamma)}_\infty$  is also both lower and upper tropically convex. In addition, $\lowerpi_T(\gamma)=\upperpi_T(\gamma)$ if and only if the minimizer of $\Theta^{(T,\gamma)}_\infty$ is identical to the singleton $\{\lowerpi_T(\gamma)\}=\{\upperpi_T(\gamma)\}$.
\end{enumerate}
\end{theorem}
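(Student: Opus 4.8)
The plan is to parametrize the elements of $T$ near $\gamma$ by their \emph{canonical representatives}: for $\alpha\in T$ set $\phi_\alpha:=\underline{f-h}$, where $[f]=\alpha$ and $[h]=\gamma$, so that $\phi_\alpha\ge 0$, $\inf\phi_\alpha=0$, and $\lowerTheta^{(T,\gamma)}_p(\alpha)=\Vert\phi_\alpha\Vert_p$. First I would dispatch existence: $T$ is compact and each of $\Theta^{(T,\gamma)}_\infty,\lowerTheta^{(T,\gamma)}_p,\upperTheta^{(T,\gamma)}_p$ is continuous (the tropical norm is a norm, the $B^p$-pseudonorms are continuous by Proposition~\ref{P:BnormProperty}(3), and translation by $-\gamma$ is continuous), so a minimizer of each exists. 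I would also record that statement (2) reduces to statement (1): applying (1) to $-T$ and $-\gamma$ and using $\llceil\alpha-\gamma\rrceil_p=\llfloor-(\alpha-\gamma)\rrfloor_p$ (Proposition~\ref{P:BnormProperty}(1)), $-\uppertconv(S)=\lowertconv(-S)$ (Lemma~\ref{L:OpTropHull}(3)), and the negation-invariance of $\Vert\cdot\Vert$, identifies $\upperpi_T(\gamma)$ with $-\lowerpi_{-T}(-\gamma)$ and transports the entire conclusion. So it suffices to treat (1).

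The heart is a monotonicity observation which I would isolate first. Given $\alpha_1,\alpha_2\in T$, the point $\beta$ with $\phi_\beta=\min(\phi_{\alpha_1},\phi_{\alpha_2})$ lies on $\underline{[\alpha_1,\alpha_2]}$: indeed $\beta-\gamma=[\phi_{\alpha_1}\minplus\phi_{\alpha_2}]$ is the $a=b=0$ member of the description of $\lowertconv(\alpha_1-\gamma,\alpha_2-\gamma)$ in Lemma~\ref{L:TropLinear}, translation invariance (Lemma~\ref{L:OpTropHull}(1)) then puts $\beta$ on $\underline{[\alpha_1,\alpha_2]}\subseteq T$, and $\inf\min(\phi_{\alpha_1},\phi_{\alpha_2})=0$ shows $\min(\phi_{\alpha_1},\phi_{\alpha_2})$ is already canonical, so $\phi_\beta\le\phi_{\alpha_i}$ pointwise. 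By Proposition~\ref{P:BnormProperty}(8) this yields $\lowerTheta_p(\beta)\le\lowerTheta_p(\alpha_i)$, and by the strict version Proposition~\ref{P:BnormProperty}(15) (whose strictness relies on the standing triviality of $\mcalN$) equality forces $\phi_\beta=\phi_{\alpha_i}$, i.e.\ $\beta=\alpha_i$. Uniqueness for a fixed $p\in[1,\infty)$ is then immediate: if $\alpha_1,\alpha_2$ both minimize $\lowerTheta_p$, then $\beta$ minimizes it with $\phi_\beta\le\phi_{\alpha_1},\phi_{\alpha_2}$, whence $\beta=\alpha_1=\alpha_2$. Independence of $p$ follows by the same device: for minimizers $\pi_p,\pi_q$ of exponents $p,q$, the point $\beta$ built from them minimizes both exponents, so $\beta=\pi_p$ and $\beta=\pi_q$; this common value is $\lowerpi_T(\gamma)$.

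Next I would study $\minimizer:=\{\alpha\in T:\Theta^{(T,\gamma)}_\infty(\alpha)=r_0\}$, where $r_0=\min_{\alpha\in T}\Vert\alpha-\gamma\Vert$. I claim every closed tropical ball $\{\beta:\Vert\beta-\gamma\Vert\le r\}$ is both lower and upper tropically convex; this is a short case check bounding $\sup$ and $\inf$ of a segment point $\min(c+g_1,g_2)$ (resp.\ $\max(c+g_1,g_2)$) by $\min(c+\sup g_1,\sup g_2)-\min(c+\inf g_1,\inf g_2)\le r$, using $\sup g_i-\inf g_i\le r$. Granting this, $\minimizer=T\cap\{\Vert\beta-\gamma\Vert\le r_0\}$ is an intersection of lower tropically convex sets, hence lower tropically convex, and is closed in the compact $T$, hence compact; if $T$ is additionally upper tropically convex the same description makes $\minimizer$ upper tropically convex too, which is the first assertion of (3). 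That $\lowerpi_T(\gamma)\in\minimizer$ I would prove by contradiction: if some $\alpha^*\in T$ had $\Vert\alpha^*-\gamma\Vert<\Vert\lowerpi_T(\gamma)-\gamma\Vert$, the $\beta$ formed from $\lowerpi_T(\gamma)$ and $\alpha^*$ would again minimize every $\lowerTheta_p$, hence equal $\lowerpi_T(\gamma)$, forcing $\phi_{\lowerpi_T(\gamma)}\le\phi_{\alpha^*}$ and thus $\sup\phi_{\lowerpi_T(\gamma)}\le\sup\phi_{\alpha^*}$, a contradiction.

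Finally, for the equivalence in (3) I would exploit the exact identity $\llfloor\eta\rrfloor_1+\llceil\eta\rrceil_1=\mu(X)\Vert\eta\Vert$ of Proposition~\ref{P:BnormProperty}(5) with $\eta=\alpha-\gamma$. If $\minimizer=\{\lowerpi_T(\gamma)\}$, then since the upper analogue of the previous paragraph places $\upperpi_T(\gamma)$ in $\minimizer$, we get $\upperpi_T(\gamma)=\lowerpi_T(\gamma)$. Conversely, if $\pi:=\lowerpi_T(\gamma)=\upperpi_T(\gamma)$ and $\alpha\in\minimizer\setminus\{\pi\}$, uniqueness of $\pi$ as both the lower and the upper projection gives $\llfloor\alpha-\gamma\rrfloor_1>\llfloor\pi-\gamma\rrfloor_1$ and $\llceil\alpha-\gamma\rrceil_1>\llceil\pi-\gamma\rrceil_1$; summing and applying the identity at $\alpha$ and at $\pi$, both of sup-distance $r_0$, yields $\mu(X)r_0>\mu(X)r_0$, a contradiction, so $\minimizer=\{\pi\}$. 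The main obstacle throughout is the bookkeeping of representatives: everything hinges on passing to the canonical $\phi_\alpha$ so that the lower tropical sum becomes an honest pointwise minimum with infimum still $0$, after which Proposition~\ref{P:BnormProperty}(8),(15) and triviality of $\mcalN$ do the work; the only genuinely computational point is the convexity of the tropical balls.
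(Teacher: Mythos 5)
Your proposal is correct, but it reaches the conclusion by a genuinely different mechanism than the paper. The paper's engine is Proposition~\ref{P:working}: along each lower tropical path $P_{(\alpha,\beta)}$ the function $t\mapsto\llfloor P_{(\alpha,\beta)}(t)-\gamma\rrfloor_p$ is either strictly increasing on all of $[0,d]$ or strictly decreasing at $t=0$, according to whether $\Xmin(\beta-\alpha)\Cap\Xmin(\alpha-\gamma)$ is nonempty; a minimizer forces the increasing case against every $\beta\in T$, and since that dichotomy is independent of $p$ the uniqueness and $p$-independence follow, with the $\Theta_\infty$-minimizer's convexity coming from the locally-constant case of the same dichotomy. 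You instead exploit the lattice structure: the meet $\beta$ with $\phi_\beta=\min(\phi_{\alpha_1},\phi_{\alpha_2})$ lies on $\underline{[\alpha_1,\alpha_2]}$ and is dominated pointwise by both canonical representatives, so the monotonicity statements Proposition~\ref{P:BnormProperty}(8) and (15) (the latter using triviality of $\mcalN$) immediately give uniqueness, $p$-independence, and membership of $\lowerpi_T(\gamma)$ in the $\Theta_\infty$-minimizer; you then get the convexity of that minimizer from a direct verification that tropical balls are tropically convex (correctly reproved from scratch, which matters — the paper's ball lemma is itself deduced from Theorem~\ref{T:main} and would be circular here), and the singleton equivalence in (3) from the exact identity $\llfloor\eta\rrfloor_1+\llceil\eta\rrceil_1=\mu(X)\Vert\eta\Vert$ rather than from the additivity criterion for $\Vert\cdot\Vert$. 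Your route is shorter and more elementary for the theorem itself; note in passing that your meet argument actually shows $\phi_{\lowerpi_T(\gamma)}\le\phi_\alpha$ pointwise for every $\alpha\in T$, which is equivalent to the set-theoretic criteria of Corollary~\ref{C:CritTropProj}, so nothing downstream is lost — but the paper's path-monotonicity proposition is what it reuses verbatim for those criteria and for later results, which is what its longer detour buys.
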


\begin{proof}
We will use Proposition~\ref{P:working}. Recall that a closed subset of a complete metric space is complete. Moreover, since $T$ is compact and the $p$-pseudonorms are continuous, the minimizers of $\Theta^{(T,\gamma)}_\infty$, $\lowerTheta^{(T,\gamma)}_p$ and $\upperTheta^{(T,\gamma)}_p$ are nonempty. 
\begin{enumerate}
\item Choose an element $\alpha$ from the nonempty minimizer of $\lowerTheta^{(T,\gamma)}_p$ for some $p\in[1,\infty)$. We first need to show that the minimizer of $\lowerTheta^{(T,\gamma)}_p$  is actually the singleton $\{\alpha\}$. For each element $\beta$ in $T$, consider the lower tropical path $P_{(\alpha,\beta)}$ from  $\alpha$ to $\beta$. Note that since $T$ is lower tropically convex, the whole segment $\underline{[\alpha,\beta]}$ is contained in $T$. By Case (1) and (2) of Proposition~\ref{P:working}, $\lowereta_p(t)=\llfloor P_{(\alpha,\beta)}(t)-\gamma\rrfloor_p=\lowerTheta^{(T,\gamma)}_p(P_{(\alpha,\beta)}(t))$ is either strictly increasing or strictly decreasing at $t=0$. But since $\alpha=P_{(\alpha,\beta)}(0)$ minimizes $\lowerTheta^{(T,\gamma)}_p$, only Case (1) can happen and $\lowereta_p(t)$ must be strictly increasing. This means that the minimizer of $\lowerTheta^{(T,\gamma)}_p$  is exactly the singleton $\{\alpha\}$. Moreover, for all $p\in[1,\infty)$, the minimizers of $\lowerTheta^{(T,\gamma)}_p$  are all identical to $\{\alpha\}$ and we can just let $\lowerpi_T(\gamma)=\alpha$. This is because the condition $\Xmin(\beta-\alpha)\Cap \Xmin(\alpha-\gamma) \neq\emptyset$ for Case (1) is independent of $p$. For the same reason, $\lowerpi_T(\gamma)$ minimizes $\Theta^{(T,\gamma)}_\infty$. 

Let $T_{\min}$ be the minimizer $\Theta^{(T,\gamma)}_\infty$.  Then $T_{\min}$ is compact since $T_{\min}$ is a closed subset of the compact set $T$. To show that $T_{\min}$  is lower tropically convex, we choose two elements $\alpha$ and $\beta$ from $T_{\min}$. Then also by case (1) and (2) of Proposition~\ref{P:working}, the function $\lowereta_\infty(t)=\llfloor P_{(\alpha,\beta)}(t)-\gamma\rrfloor_\infty=\Vert P_{(\alpha,\beta)}(t)-\gamma \Vert=\Theta^{(T,\gamma)}_\infty(P_{(\alpha,\beta)}(t))$ must be a constant function with value being the minimum of $\Theta^{(T,\gamma)}_\infty$. This means that the whole segment $\underline{[\alpha,\beta]}$ is contained in $T_{\min}$. Hence  $T_{\min}$ is lower tropically convex. 

\item It follows from a proof analogous to the above proof of (1) while instead Case (3) and (4) of Proposition~\ref{P:working} are employed and functions  $\uppereta_p(t)=\upperTheta^{(T,\gamma)}_p(P^{(\alpha,\beta)}(t))$ and $\uppereta_\infty(t)=\Theta^{(T,\gamma)}_\infty(P^{(\alpha,\beta)}(t))$ are considered. Again,  we let $\upperpi_T(\gamma)=\alpha$ and only Case (3) can happen for all $p\in[1,\infty]$. 

\item It is clear from (1) and (2) that  the minimizer of $\Theta^{(T,\gamma)}_\infty$  must also be both lower and upper tropically convex when $T$ is both lower and upper tropically convex. It remains to show that if $\lowerpi_T(\gamma)=\upperpi_T(\gamma)=\alpha$, then the  minimizer of $\Theta^{(T,\gamma)}_\infty$  must also be the singleton $\{\alpha\}$. Actually as in the above arguments for (1) and (2), we know that  Case (1) and (3) of Proposition~\ref{P:working} will happen simultaneously for all $\beta\in T$. Then by Lemma~\ref{L:TropNorm}, 
$$\Vert\beta-\gamma\Vert=\Vert\beta-\alpha\Vert + \Vert\alpha-\gamma\Vert$$ since $\Xmin(\beta-\alpha)\Cap \Xmin(\alpha-\gamma) \neq\emptyset$ and $\Xmax(\beta-\alpha)\Cap \Xmax(\alpha-\gamma) \neq\emptyset$. If $\beta\neq\alpha$, then $\Theta^{(T,\gamma)}_\infty(\beta)=\Vert\beta-\gamma\Vert>\Vert\alpha-\gamma\Vert=\Theta^{(T,\gamma)}_\infty(\alpha)$. This  means that the minimum of $\Theta^{(T,\gamma)}_\infty$ is $\Vert\alpha-\gamma\Vert$  and the minimizer of $\Theta^{(T,\gamma)}_\infty$ is  the singleton $\{\alpha\}$. 
\end{enumerate} 
\end{proof}

\begin{remark}
Accordingly,  $\lowerpi_T$ and $\upperpi_T$ can be considered as  maps from $\mbbTP(X)$ to $T$ which are called \emph{lower and upper tropical projections} respectively.
\end{remark}

\begin{remark} \label{R:NoCompact}
The existence of $\lowerpi_T(\gamma)$ (or $\upperpi_T(\gamma)$)  in Theorem~\ref{T:main} is guaranteed by the compactness of $T$. If this compactness condition is withdrawn, as long as we know  the minimizer of $\lowerTheta^{(T,\gamma)}_p$  when $T$ is upper tropically convex (or $\upperTheta^{(T,\gamma)}_p$ when $T$ is lower tropically convex) is nonempty for some $p\in[1,\infty)$,  the existence and uniqueness of  $\lowerpi_T(\gamma)$ (or $\upperpi_T(\gamma)$ respectively) are still guaranteed. A conjecture is that we may only assume $T$ to be a closed instead of compact subset of $\mbbTP(X)$ to make the theorem hold. 
\end{remark}

\begin{proposition} \label{P:working}
Let $\alpha,\beta$ be distinct elements in  $\mbbTP(X)$ such that $\rho(\alpha,\beta)=d$.  For $p\in[1,\infty]$ and $\gamma\in\mbbTP(X)$, consider the functions $\lowereta_p(t)=\llfloor P_{(\alpha,\beta)}(t)-\gamma\rrfloor_p$ and $\uppereta_p(t)=\llceil P^{(\alpha,\beta)}(t)-\gamma\rrceil_p$ for $t\in[0,d]$. Then we have the following cases:
\begin{enumerate}
\item $\Xmin(\beta-\alpha)\Cap \Xmin(\alpha-\gamma) \neq\emptyset$: In this case, $\lowereta_\infty(t)$ is non-decreasing and $\lowereta_p(t)$ with $p\in[1,\infty)$ is strictly increasing for $t\in[0,d]$. Moreover, for $t\in[0,d]$, $\lowereta_1(t)=\llfloor P_{(\alpha,\beta)}(t)-\alpha\rrfloor_1+\llfloor \alpha-\gamma\rrfloor_1$. 

\item $\Xmin(\beta-\alpha)\Cap\Xmin(\alpha-\gamma)=\emptyset$: In this case, $\lowereta_\infty(t)$ is non-increasing at $t=0$ and $\lowereta_p(t)$ with $p\in[1,\infty)$ is strictly decreasing at $t=0$. 

\item $\Xmax(\beta-\alpha)\Cap \Xmax(\alpha-\gamma) \neq\emptyset$: In this case, $\uppereta_\infty(t)$ is non-decreasing and $\uppereta_p(t)$ with $p\in[1,\infty)$ is strictly increasing for $t\in[0,d]$. Moreover, for $t\in[0,d]$, $\uppereta_1(t)=\llceil P^{(\alpha,\beta)}(t)-\alpha\rrceil_1+\llceil \alpha-\gamma\rrceil_1$.

\item $ \Xmax(\beta-\alpha)\Cap\Xmax(\alpha-\gamma)=\emptyset$: In this case, $\uppereta_\infty(t)$ is non-increasing at $t=0$ and $\uppereta_p(t)$ with $p\in[1,\infty)$ is strictly decreasing at $t=0$. 
\end{enumerate}
\end{proposition}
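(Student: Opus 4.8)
The plan is to reduce everything to explicit pointwise formulas for the normalized representatives and then read off monotonicity from how $p$-norms behave under pointwise domination. Write $\alpha=[f]$, $\beta=[g]$, $\gamma=[h]$, and set $\phi:=\underline{g-f}$ (so $\phi\geq 0$, $\inf(\phi)=0$, $\sup(\phi)=d$) and $\psi:=f-h$, a representative of $\alpha-\gamma$. By Definition~\ref{D:tpath} a representative of $P_{(\alpha,\beta)}(t)-\gamma$ is $F_t:=\min(t,\phi)+\psi$, so that $\lowereta_p(t)=\Vert\underline{F_t}\Vert_p$. First I would record that since $\mcalN(X,\mu)$ is trivial, every nonempty open subset of $X$ has positive measure (otherwise a continuous bump supported in such a set would be a nonzero element of $\mcalN$); in particular the open sets $\{\phi>t\}$ (nonempty for $t<d$) and $\{\phi<t\}$ (nonempty for $t>0$) have positive measure, which is exactly what is needed for strictness.

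The crux is computing $\inf(F_t)$ in each case. In Case (1) the hypothesis $\Xmin(\beta-\alpha)\Cap\Xmin(\alpha-\gamma)\neq\emptyset$ furnishes, for every $\epsilon>0$, a point where $\phi<\epsilon$ and $\psi<\inf(\psi)+\epsilon$ simultaneously; since $\min(t,\phi)\leq\phi$ there while $F_t\geq\inf(\psi)$ everywhere, this forces $\inf(F_t)=\inf(\psi)$ for all $t$, whence $\underline{F_t}=\min(t,\phi)+\underline{\psi}$. This is pointwise nondecreasing in $t$, giving $\lowereta_\infty$ nondecreasing, and strictly larger on the positive-measure set $\{\phi>t\}$ for $t<d$, so Proposition~\ref{P:BnormProperty}(15) yields strict monotonicity of $\lowereta_p$ for $p<\infty$. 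The additive identity $\lowereta_1(t)=\llfloor P_{(\alpha,\beta)}(t)-\alpha\rrfloor_1+\llfloor\alpha-\gamma\rrfloor_1$ then follows from Proposition~\ref{P:BnormProperty}(10), since $P_{(\alpha,\beta)}(t)-\alpha=[\min(t,\phi)]$ shares its approximate minimizers with $\beta-\alpha$.

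In Case (2) the hypothesis provides $\epsilon_0>0$ separating the approximate-minimizer sets: wherever $\psi<\inf(\psi)+\epsilon_0$ one has $\phi\geq\epsilon_0$. I would then show that for $0\leq t\leq\epsilon_0$ one has $\inf(F_t)=\inf(\psi)+t$: on the near-minimizing set of $\psi$ we get $\min(t,\phi)=t$ so $F_t=t+\psi$, while off it $F_t\geq\inf(\psi)+\epsilon_0\geq\inf(\psi)+t$. Consequently $\underline{F_t}=\underline{\psi}-(t-\phi)^{+}$, which is pointwise nonincreasing in $t$ and strictly smaller on the positive-measure set $\{\phi<t\}$ for $t>0$; this gives $\lowereta_\infty$ nonincreasing at $t=0$ and, via Proposition~\ref{P:BnormProperty}(15), $\lowereta_p$ strictly decreasing at $t=0$ for $p<\infty$.

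Finally, Cases (3) and (4) I would obtain from (1) and (2) by tropical-inversion symmetry rather than by repeating the computation. Using Proposition~\ref{P:BnormProperty}(1) and Lemma~\ref{L:neg_tseg} (which gives $-P^{(\alpha,\beta)}(t)=P_{(-\alpha,-\beta)}(t)$), one checks $\uppereta_p(t)=\llceil P^{(\alpha,\beta)}(t)-\gamma\rrceil_p=\llfloor P_{(-\alpha,-\beta)}(t)-(-\gamma)\rrfloor_p$, i.e. the upper $\eta$ for $(\alpha,\beta,\gamma)$ equals the lower $\eta$ for $(-\alpha,-\beta,-\gamma)$; and the hypotheses match because $\Xmax(\cdot)=\Xmin(-\cdot)$ (and its $\epsilon$-version) converts the $\Xmax$-conditions into the corresponding $\Xmin$-conditions for the negated data. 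Applying Cases (1) and (2) to $(-\alpha,-\beta,-\gamma)$ then delivers (3) and (4), including the additive formula. I expect the main obstacle to be precisely the two infimum computations: justifying $\inf(F_t)=\inf(\psi)$ and $\inf(F_t)=\inf(\psi)+t$ directly from the $\Cap$ conditions when the extrema need not be attained, so that one must argue throughout with the $\epsilon$-approximate minimizer sets $\Xmin^\epsilon$ rather than with genuine minimizers.
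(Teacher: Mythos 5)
Your proposal is correct and follows essentially the same route as the paper's proof: compute the normalized representative of $P_{(\alpha,\beta)}(t)-\gamma$ explicitly in each case (the paper gets $\underline{F_t}=\min(t,\underline{g-f})+\underline{f-h}$ in Case (1) via Lemma~\ref{L:SpeIneq} and $\min(0,\underline{g-f}-t)+\underline{f-h}$ in Case (2) via the positive gap $\delta=\inf(\underline{g-f}+\underline{f-h})$, where you use the separation parameter $\epsilon_0$ and a direct $\epsilon$-approximate-minimizer argument to the same effect), then deduce monotonicity pointwise, strictness from Proposition~\ref{P:BnormProperty}(15), the additive identity from Proposition~\ref{P:BnormProperty}(10), and Cases (3)--(4) by tropical inversion. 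The only inessential difference is your preliminary remark that triviality of $\mcalN$ forces nonempty open sets to have positive measure, which is not needed since Proposition~\ref{P:BnormProperty}(15) already encapsulates the strictness under the standing assumption.
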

\begin{remark}
We say a function $f(t)$ is non-decreasing (resp. non-increasing, strictly increasing, strictly decreasing, or locally constant) at $t_0$ if there exists $\delta>0$ such that $f(t)$ is non-decreasing (resp. non-increasing, strictly increasing, strictly decreasing, or constant) on $[t_0,t_0+\delta]$. 
\end{remark}
\begin{proof}
Let $\alpha=[f]$, $\beta=[g]$ and $\gamma=[h]$. Recall that by definition, $P_{(\alpha,\beta)}(t)= [\min(t,\underline{g-f})+f]$ and $P^{(\alpha,\beta)}(t)= [\max(-t,\overline{g-f})+f]$ for $t\in[0,d]$. Then $P_{(\alpha,\beta)}(t)-\alpha=[\min(t,\underline{g-f})]$, $P^{(\alpha,\beta)}(t)-\alpha=[\max(-t,\overline{g-f})]$, $P_{(\alpha,\beta)}(t)-\gamma=[\min(t,\underline{g-f})+f-h]$ and $P^{(\alpha,\beta)}(t)-\gamma=[\max(-t,\overline{g-f})+f-h]$. Moreover, for $t\in(0,d]$, $\Xmin(P_{(\alpha,\beta)}(t)-\alpha)=\Xmin(\min(t,\underline{g-f}))=\Xmin(\beta-\alpha)=\Xmin(g-f)$ and  $\Xmax(P^{(\alpha,\beta)}(t)-\alpha)=\Xmax(\max(-t,\overline{g-f}))=\Xmax(\beta-\alpha)=\Xmax(g-f)$. 

For (1), suppose $\Xmin(\beta-\alpha)\Cap \Xmin(\alpha-\gamma) \neq\emptyset$. Then for all $t\in(0,d]$, 
   $\Xmin(P_{(\alpha,\beta)}(t)-\alpha)\Cap\Xmin(\alpha-\gamma)=\Xmin(\beta-\alpha)\Cap\Xmin(\alpha-\gamma)\neq \emptyset$. 

Moreover,  $P_{(\alpha,\beta)}(t)-\gamma=[\min(t,\underline{g-f})+f-h]$ and  in this case, $\underline{\min(t,\underline{g-f})+f-h}=\min(t,\underline{g-f})+\underline{f-h}$.

Now $\lowereta_\infty(t)=\Vert [\min(t,\underline{g-f})+\underline{f-h}]\Vert=\sup(\min(t,\underline{g-f})+\underline{f-h})$ is clearly non-decreasing for $t\in[0,d]$. In addition, as in Proposition~\ref{P:BnormProperty}(10), this implies that for $t\in[0,d]$, $\lowereta_1(t)=\llfloor P_{(\alpha,\beta)}(t)-\gamma\rrfloor_1=\llfloor P_{(\alpha,\beta)}(t)-\alpha\rrfloor_1+\llfloor \alpha-\gamma\rrfloor_1$. 

To show that  $\lowereta_p(t)$ with $p\in[1,\infty)$ is strictly increasing for $t\in[0,d]$,  consider  $t_1,t_2\in[0,d]$ such that $t_1<t_2$ and we claim that $\lowereta_p(t_1)=\llfloor [\min(t_1,\underline{g-f})+\underline{f-h}]\rrfloor_p<\lowereta_p(t_2)=\llfloor [\min(t_2,\underline{g-f})+\underline{f-h}]\rrfloor_p$. Note that $\min(t_1,\underline{g-f})+\underline{f-h}\lneqq \min(t_2,\underline{g-f})+\underline{f-h}$ and the claim follows from Proposition~\ref {P:BnormProperty}(15). 

For (2), suppose $ \Xmin(\beta-\alpha)\Cap\Xmin(\alpha-\gamma)=\emptyset$. In this case, we note that $\beta-\gamma=[g-h]=[\underline{g-f}+\underline{f-h}]$ and $\underline{g-f}+\underline{f-h}=\underline{g-h}+\delta$ where $\delta$ must be strictly larger than $0$. Again recall that  $P_{(\alpha,\beta)}(t)-\gamma=[\min(t,\underline{g-f})+f-h]$ and in this case we claim that  $\underline{\min(t,\underline{g-f})+f-h}=\min(0,\underline{g-f}-t)+\underline{f-h}=\min(\underline{f-h},\underline{g-f}+\underline{f-h}-t)$ for all $t\in[0,\delta]$.  Actually this is implied by  $\min (\underline{g-f}+\underline{f-h}-t)=\min(\underline{g-h}+\delta-t)\geq 0$ for $t\in[0,\delta]$. 

Therefore $\lowereta_\infty(t)=\Vert [\min(0,\underline{g-f}-t)+\underline{f-h}]\Vert=\sup(\min(0,\underline{g-f}-t)+\underline{f-h})$ is clearly non-increasing for $t\in[0,\delta]$. Now consider  $t_1,t_2\in[0,\delta]$ such that $t_1<t_2$. We claim $\lowereta_p(t_1)=\llfloor [\min(0,\underline{g-f}-t_1)+\underline{f-h}]\rrfloor_p<\lowereta_p(t_2)=\llfloor [\min(0,\underline{g-f}-t_2)+\underline{f-h}]\rrfloor_p$. Note that $\min(0,\underline{g-f}-t_1)+\underline{f-h}\gneqq\min(0,\underline{g-f}-t_2)+\underline{f-h}$ and again  the claim follows from Proposition~\ref {P:BnormProperty}(15). 
    
For (3) and (4), we let $\alpha=-\alpha'$, $\beta=-\beta'$ and $\gamma=-\gamma'$. 
Then $\Xmax(\beta-\alpha)=\Xmin(\beta'-\alpha')$, $\Xmax(\alpha-\gamma)=\Xmin(\alpha'-\gamma')$, $\Xmax(\beta-\gamma)=\Xmin(\beta'-\gamma')$,  $P^{(\alpha,\beta)}(t)=-P_{(\alpha',\beta')}(t)$, $\uppereta_p(t)=\llceil P^{(\alpha,\beta)}(t)-\gamma\rrceil_p=\llceil -P_{(\alpha',\beta')}(t)+\gamma'\rrceil_p=\llfloor P_{(\alpha',\beta')}(t)-\gamma'\rrceil_p$.  By replacing $\alpha$, $\beta$ and $\gamma$ with $\alpha'$, $\beta'$ and $\gamma'$ respectively in (1) and (2), we can derive (3) and (4) respectively. 
    
\end{proof}

We have the following corollary of Proposition~\ref{P:working} which summarize some concrete criteria of lower and upper tropical projections stated in Theorem~\ref{T:main}.

\begin{corollary}
[\textbf{Criteria for Tropical Projections}] \label{C:CritTropProj}
Let $T$ be a subset (not necessarily compact) of $\mbbTP(X)$. Let $\gamma\in\mbbTP(X)$ and $\alpha\in T$. 
\begin{enumerate}[(a)]
\item 
If $T$ is lower tropically convex, then the following are equivalent:
\begin{enumerate}[(1)]
\item $\alpha=\lowerpi_T(\gamma)$.
\item For every $p\in[1,\infty)$ and every $\beta\in T$, the function $\lowereta_p(t)=\llfloor P_{(\alpha,\beta)}(t)-\gamma\rrfloor_p$ is strictly increasing for $t\in[0,\rho(\alpha,\beta)]$.
\item For every $p\in[1,\infty)$ and every $\beta\in T$, the function $\lowereta_p(t)=\llfloor P_{(\alpha,\beta)}(t)-\gamma\rrfloor_p$ is strictly increasing at $t=0$.
\item For every $\beta\in T$, $\llfloor \beta-\gamma\rrfloor_1=\llfloor \beta-\alpha\rrfloor_1+\llfloor \alpha-\gamma\rrfloor_1$.
\item For every $\beta\in T$, $\Xmin(\beta-\alpha)\Cap\Xmin(\alpha-\gamma)\neq\emptyset$.
\item For every $\beta\in T$ such that  $\beta\neq \alpha$, $\Xmin(\alpha-\beta)\Cap\Xmin(\beta-\gamma)=\emptyset$.
\end{enumerate}

\item 
If $T$ is upper tropically convex, then the following are equivalent:
\begin{enumerate}[(1)]
\item $\alpha=\upperpi_T(\gamma)$.
\item For every $p\in[1,\infty)$ and every $\beta\in T$, the function $\uppereta_p(t)=\llfloor P_{(\alpha,\beta)}(t)-\gamma\rrfloor_p$ is strictly increasing for $t\in[0,\rho(\alpha,\beta)]$.
\item For every $p\in[1,\infty)$ and every $\beta\in T$, the function $\uppereta_p(t)=\llfloor P_{(\alpha,\beta)}(t)-\gamma\rrfloor_p$ is strictly increasing at $t=0$.
\item For every $\beta\in T$, $\llceil \beta-\gamma\rrceil_1=\llceil \beta-\alpha\rrceil_1+\llceil \alpha-\gamma\rrceil_1$.
\item For every $\beta\in T$, $\Xmax(\beta-\alpha)\Cap\Xmax(\alpha-\gamma)\neq\emptyset$.
\item For every $\beta\in T$ such that  $\beta\neq \alpha$, $\Xmax(\alpha-\beta)\Cap\Xmax(\beta-\gamma)=\emptyset$.
\end{enumerate}
\item The lower and upper tropical projections are independent to the Borel measure $\mu$  on $X$ as long as $\mu(X)\in(0,\infty)$ and $\mcalN(X,\mu)$ is trivial. 
\end{enumerate}
\end{corollary}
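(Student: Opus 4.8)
The plan is to read everything off Proposition~\ref{P:working}, which already analyzes how a $B$-pseudonorm behaves along a single lower tropical path, together with the dichotomy built into the definition of $\Cap$: for a fixed $\beta\in T$ exactly one of Case~(1) ($\Xmin(\beta-\alpha)\Cap\Xmin(\alpha-\gamma)\neq\emptyset$) and Case~(2) ($\Xmin(\beta-\alpha)\Cap\Xmin(\alpha-\gamma)=\emptyset$) occurs. For part (a) I would first establish the block of equivalences $(2)\Leftrightarrow(3)\Leftrightarrow(4)\Leftrightarrow(5)$, none of which mentions $\lowerpi_T$. Indeed, $(5)\Rightarrow(2)$ is Case~(1) of Proposition~\ref{P:working} (strict monotonicity on all of $[0,\rho(\alpha,\beta)]$), $(2)\Rightarrow(3)$ is trivial, and $(3)\Rightarrow(5)$ is the contrapositive of Case~(2): if $(5)$ failed for some $\beta$, the corresponding $\lowereta_p$ would be strictly decreasing at $t=0$. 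The equivalence $(4)\Leftrightarrow(5)$ is immediate from Proposition~\ref{P:BnormProperty}(10) applied to $\beta-\gamma=(\beta-\alpha)+(\alpha-\gamma)$.

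Next I would connect this block to $(1)$. The direction $(5)\Rightarrow(1)$ needs no compactness: Case~(1) gives $\llfloor\beta-\gamma\rrfloor_p=\lowereta_p(\rho(\alpha,\beta))>\lowereta_p(0)=\llfloor\alpha-\gamma\rrfloor_p$ for every $\beta\neq\alpha$ and every $p\in[1,\infty)$, so $\alpha$ is literally the unique global minimizer of each $\lowerTheta^{(T,\gamma)}_p$, i.e.\ $\alpha=\lowerpi_T(\gamma)$. Conversely $(1)\Rightarrow(5)$ is the minimality argument from the proof of Theorem~\ref{T:main}: if $(5)$ failed, Case~(2) would produce a point of $\underline{[\alpha,\beta]}\subseteq T$ strictly closer to $\gamma$, contradicting that $\alpha$ minimizes.

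The only genuinely global condition is $(6)$, and this is where I expect the real work, because \emph{per pair} $(6)$ is strictly weaker than $(5)$: one can exhibit $\alpha,\beta,\gamma$ for which $\Xmin(\alpha-\beta)\Cap\Xmin(\beta-\gamma)=\emptyset$ (so $(6)$ holds at this $\beta$) while $\Xmin(\beta-\alpha)\Cap\Xmin(\alpha-\gamma)=\emptyset$ as well (so $(5)$ fails at it). Thus the equivalence must exploit that $(6)$ is asserted for all $\beta$. For $(1)\Rightarrow(6)$ I argue by contradiction: if $\Xmin(\alpha-\beta)\Cap\Xmin(\beta-\gamma)\neq\emptyset$ for some $\beta\neq\alpha$, Case~(1) applied to the reverse path $P_{(\beta,\alpha)}$ yields $\llfloor\alpha-\gamma\rrfloor_p>\llfloor\beta-\gamma\rrfloor_p$, contradicting minimality of $\alpha$. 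For $(6)\Rightarrow(5)$ I would argue by contradiction using continuity (Proposition~\ref{P:BnormProperty}(3)) and lower convexity of $T$: if $(5)$ failed at some $\beta_0$, Case~(2) makes $\lowereta_p$ strictly decreasing at $0$ along $P_{(\alpha,\beta_0)}$, so on a small compact interval $[0,\delta]$ the continuous function $\lowereta_p$ attains its minimum at some $s^*\in(0,\delta]$; applying $(6)$ at the intermediate point $P_{(\alpha,\beta_0)}(s^*)\in T$ (Proposition~\ref{P:TropSeg}(1) places the sub-segment inside $T$, and (2)--(3) give the reparametrization $u\mapsto P_{(\alpha,\beta_0)}(s^*-u)$ of the path from $P_{(\alpha,\beta_0)}(s^*)$ back to $\alpha$) forces a strictly smaller value at a parameter just below $s^*$, contradicting minimality of $s^*$. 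A quicker route, when $\lowerpi_T(\gamma)$ is already known to exist (automatic for compact $T$ by Theorem~\ref{T:main}), is $(6)\Rightarrow(1)$: writing $\alpha^*=\lowerpi_T(\gamma)$ and feeding the already-proved $(1)\Rightarrow(5)$ for $\alpha^*$ the test point $\beta=\alpha$ gives $\Xmin(\alpha-\alpha^*)\Cap\Xmin(\alpha^*-\gamma)\neq\emptyset$, which directly contradicts $(6)$ at $\beta=\alpha^*$ unless $\alpha=\alpha^*$.

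Part (b) then follows from (a) by the tropical-inversion duality: by Lemma~\ref{L:neg_tseg} and Proposition~\ref{P:BnormProperty}(1), replacing $(\alpha,\beta,\gamma,T)$ by $(-\alpha,-\beta,-\gamma,-T)$ interchanges $\Xmin$ with $\Xmax$, lower segments with upper segments, and $\llfloor\cdot\rrfloor_p$ with $\llceil\cdot\rrceil_p$, so that $\upperpi_T(\gamma)=-\lowerpi_{-T}(-\gamma)$ and each clause of (b) is the image of the corresponding clause of (a); this is exactly the device used to deduce Cases~(3)--(4) from (1)--(2) inside Proposition~\ref{P:working}. Finally, part (c) is essentially free once (a) and (b) are in hand: conditions $(5)$ and $(6)$ single out $\lowerpi_T(\gamma)$ (resp.\ $\upperpi_T(\gamma)$) purely through the sets $\Xmin$, $\Xmax$ and the relation $\Cap$, with no reference whatsoever to $\mu$. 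Since for each admissible measure (with $\mu(X)\in(0,\infty)$ and $\mcalN(X,\mu)$ trivial) the projection exists and is unique, and since it is characterized by a measure-free condition, the element $\lowerpi_T(\gamma)$ must coincide for all such $\mu$, and likewise for $\upperpi_T(\gamma)$. I expect the implication $(6)\Rightarrow(5)$ to be the main obstacle: unlike every other arrow it is not a pointwise consequence of Proposition~\ref{P:working} but a genuinely global statement, and the clean self-contained route requires marrying the purely local monotonicity of Case~(2) with continuity, the compactness of a one-dimensional parameter interval, and the stability of lower convexity under passage to sub-segments.
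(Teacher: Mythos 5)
Your proof is correct and follows the paper's route: the paper's entire proof of this corollary is the single sentence that (a) and (b) ``easily follow from Proposition~\ref{P:working}'' together with the observation for (c) that the set-theoretical criteria (a5), (a6), (b5), (b6) make no reference to $\mu$. Your treatment of condition (6) --- the one clause that does not reduce pointwise to a single case of Proposition~\ref{P:working}, since per pair it is genuinely weaker than (5) --- is a sound elaboration of a step the paper leaves implicit, and both of your routes for it (the compactness-free descent argument along a sub-segment, and the shortcut of applying the already-established $(1)\Leftrightarrow(5)$ at $\lowerpi_T(\gamma)$ and testing (6) at $\beta=\lowerpi_T(\gamma)$) work.
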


\begin{proof}
All the criteria in (a) and (b) easily follow from Proposition~\ref{P:working}. For (c), we note that the set-theoretical criteria (a5), (a6), (b5) and (b6) actually do not depend on the underlying measure $\mu$ itself.
\end{proof}

\begin{remark} \label{R:TropProjMeas}
In Theorem~\ref{T:main}, we see that minimizers of all the functions defined with respect to the $\underline{p}$-pseudonorms and $\overline{p}$-pseudonorms for all $p\in[1,\infty)$ coincide as the upper or lower  tropical projections respectively. However, one may note that the $\underline{p}$-pseudonorms and $\overline{p}$-pseudonorms with $p\in[1,\infty)$ depend on the underlying measure $\mu$ on $X$. On the other hand, Corollary~\ref{C:CritTropProj}(c) says that actually we can expect more: the tropical projections are so intrinsic that they are even independent of the measure $\mu$ on $X$. 
\end{remark}

\begin{example} \label{E:bpseudonorm}
\begin{figure}
\centering
\includegraphics[scale=1]{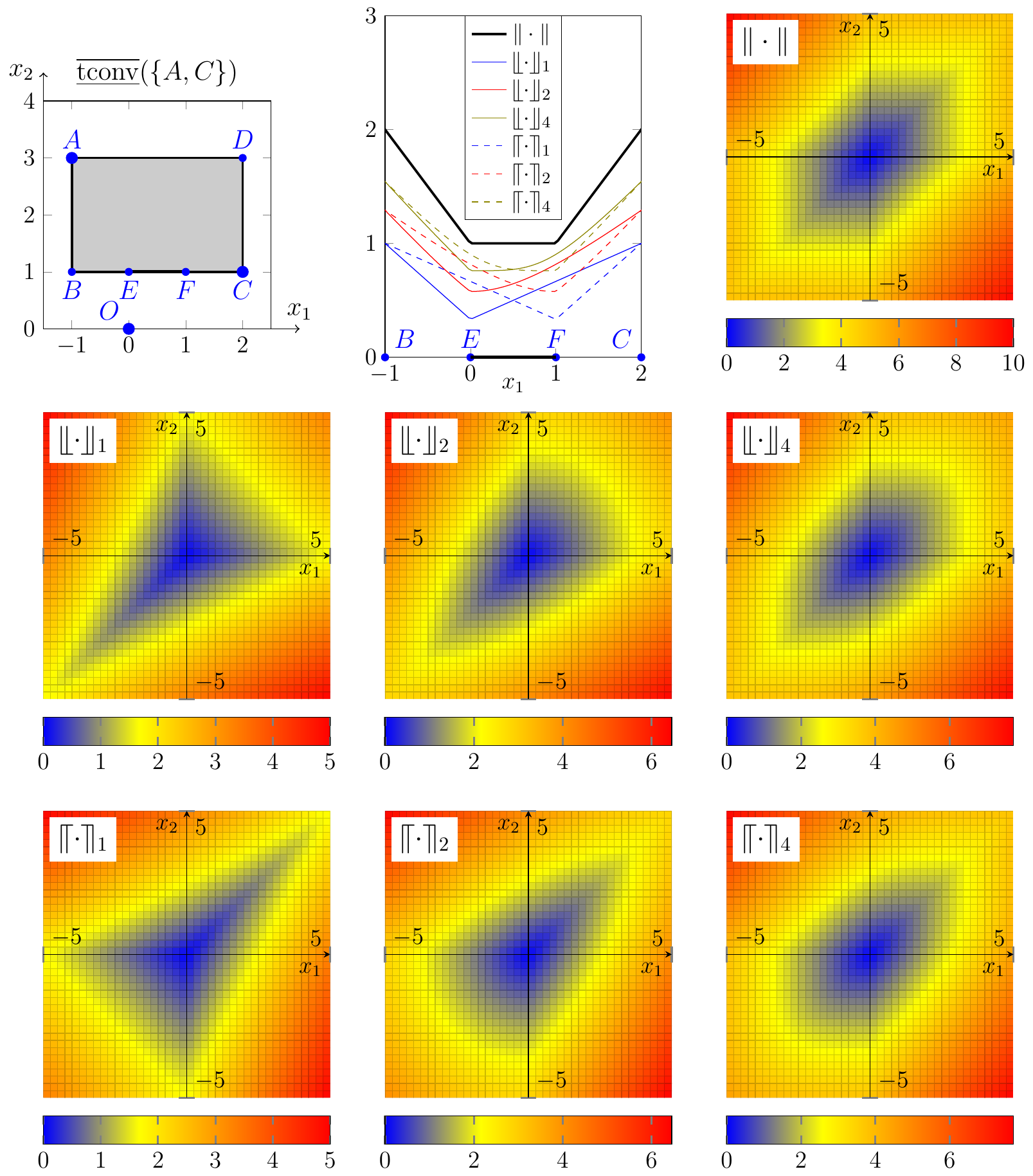}
\caption{Tropical norm and some $B^p$-pseudonorms on $\mbbTP(\{\ve_1,\ve_2,\ve_3\})$ represented by the $x_1x_2$-plane (as in Example~\ref{E:tconv}).} \label{F:bpseudonorm}
\end{figure}

Consider $\mbbTP(X)$ with $X=\{\ve_1,\ve_2,\ve_3\}$ which is represented by the $x_1x_2$-plane as in Example~\ref{E:tconv}. We associate $X$ with a measure $\mu$ such that $\mu(\ve_i)\in(0,\infty)$ for $i=1,2,3$. 
For $\alpha=(x_1,x_2)$ and $p\in[1,\infty)$, we have 
\begin{align*}
\Vert\alpha\Vert &= 
\begin{cases}
\max(x_1,x_2) & \mbox{if } \min(x_1,x_2)\geq 0 \\
x_2-x_1 & \mbox{if } x_1\leq 0 \leq x_2 \\
x_1-x_2 & \mbox{if } x_2\leq 0 \leq x_1 \\
-x_1 & \mbox{if } x_1\leq x_2 \leq 0 \\
-x_2 & \mbox{if } x_2\leq x_1 \leq 0 
\end{cases} \\
\llfloor\alpha\rrfloor_p &= 
\begin{cases}
(\mu(\ve_1)x_1^p+\mu(\ve_2)x_2^p)^{1/p} & \mbox{if } \min(x_1,x_2)\geq 0 \\
(\mu(\ve_2)(x_2-x_1)^p+\mu(\ve_3)(-x_1)^p)^{1/p} & \mbox{if } \min(x_2,0)\geq x_1 \\
(\mu(\ve_1)(x_1-x_2)^p+\mu(\ve_3)(-x_2)^p)^{1/p} & \mbox{if } \min(x_1,0)\geq x_2 
\end{cases} \\
\llceil\alpha\rrceil_p &= 
\begin{cases}
(\mu(\ve_1)(-x_1)^p+\mu(\ve_2)(-x_2)^p)^{1/p} & \mbox{if } \max(x_1,x_2)\leq 0 \\
(\mu(\ve_2)(x_1-x_2)^p+\mu(\ve_3)(x_1)^p)^{1/p} & \mbox{if } \max(x_2,0)\leq x_1 \\
(\mu(\ve_1)(x_2-x_1)^p+\mu(\ve_3)(x_2)^p)^{1/p} & \mbox{if } \max(x_1,0)\leq x_2 
\end{cases} \\
\end{align*}
by definition of the tropical norm and $B^p$-pseudonorms (Definition~\ref{D:pseudonorm}). In particular, Figure~\ref{F:bpseudonorm} shows the 2D plots of the functions $\Vert (x_1,x_2) \Vert$, $\llfloor (x_1,x_2) \rrfloor_1$, $\llfloor (x_1,x_2) \rrfloor_2$, $\llfloor (x_1,x_2) \rrfloor_4$, $\llceil (x_1,x_2) \rrceil_1$, $\llceil (x_1,x_2) \rrceil_2$ and $\llceil (x_1,x_2) \rrceil_4$ for $x_1\in[-5,5]$ and $x_2\in[-5,5]$ where $\mu(\ve_1)=\mu(\ve_2)=\mu(\ve_3)=1/3$. Moreover, in the first subfigure, we let $O=(0,0)$, $A=(-1,3)$, $B=(-1,1)$, $C=(2,1)$, $D=(2,3)$, $E=(0,1)$ and $F=(1,1)$. Let $T$ be the rectangle with vertices $A$, $B$, $C$ and $D$. Then it can be easily verified that $T=\loweruppertconv(\{A,C\})$ which is compact and both lower and upper tropically convex. Then $\Theta^{(T,O)}_\infty(\alpha) = \Vert\alpha\Vert$, $\lowerTheta^{(T,O)}_p(\alpha) =\llfloor \alpha \rrfloor_p$ and  $\upperTheta^{(T,O)}_p(\alpha) =\llceil \alpha \rrceil_p$. In the second subfigure, we plot the curves of $\Vert \alpha\Vert$, $\llfloor  \alpha \rrfloor_1$, $\llfloor  \alpha\rrfloor_2$, $\llfloor  \alpha \rrfloor_4$, $\llceil  \alpha \rrceil_1$, $\llceil  \alpha \rrceil_2$ and $\llceil  \alpha \rrceil_4$ for all $\alpha$ in the segment $BC$. We note that the minimizer of $\Theta^{(T,O)}_\infty$ is the segment $EF$ which is also compact and both lower and upper tropically convex, the minimizers of $\lowerTheta^{(T,O)}_1$, $\lowerTheta^{(T,O)}_2$ and $\lowerTheta^{(T,O)}_4$ are all identical to the singleton $\{E\}$, and  the minimizers of $\upperTheta^{(T,O)}_1$, $\upperTheta^{(T,O)}_2$ and $\upperTheta^{(T,O)}_4$ are all identical to the singleton $\{f\}$. Actually, Theorem~\ref{T:main} tells us that for all $p\in[1,\infty)$, the minimizers of $\lowerTheta^{(T,O)}_p$ are identical to a singleton which must be $\{E\}$, and the minimizers of $\upperTheta^{(T,O)}_p$ are identical to a singleton which must be $\{F\}$. Therefore, the lower tropical projection $\lowerpi_T(O)$ and upper tropical projection $\upperpi_T(O)$ of $O$ to $T$ are $E$ and $F$ respectively. 
\end{example}

\subsection{Basic Properties of Tropical Projections}

The following proposition shows  some basic properties of tropical projection maps.

\begin{proposition} \label{P:TropProj}
 Let $T$ be compact subsets of $\mbbTP(X)$. 
\begin{enumerate}
\item If $T$  is  lower tropically convex(respectively upper tropically convex), then $\alpha_0+cT$ is lower tropically convex(respectively upper tropically convex) for each $\alpha_0\in\mbbTP(X)$ and $c>0$, and $\lowerpi_{\alpha_0+cT}(\alpha_0+c\alpha)=\alpha_0+c\lowerpi_T(\alpha)$ (respectively $\upperpi_{\alpha_0+cT}(\alpha_0+c\alpha)=\alpha_0+c\upperpi_T(\alpha)$) for all $\alpha\in\mbbTP(X)$. 
\item If $T$  is  lower tropically convex (which is equivalent to say $-T$ upper tropically convex), then $\upperpi_{-T}(-\alpha)=-\lowerpi_T(\alpha)$ for all $\alpha\in\mbbTP(X)$. 
\item If  $T$  is lower tropically convex (respectively upper tropically convex), then $\lowerpi_T(\alpha)=\alpha$ (respectively $\upperpi_T(\alpha)=\alpha$) for all $\gamma\in T$. 
\item If $T$  is lower tropically convex (respectively upper tropically convex), then for each $\alpha\in\mbbTP(X)$ and each $\beta\in \underline{[\alpha,\lowerpi_T(\alpha)]}$ (respectively $\beta\in \overline{[\alpha,\upperpi_T(\alpha)]}$), we have $\lowerpi_T(\beta)=\lowerpi_T(\alpha)$ (respectively $\upperpi_T(\beta)=\upperpi_T(\alpha)$). 
\item For each $\alpha,\beta\in \mbbTP(X)$ and $T$ being lower tropically convex, $\rho(\lowerpi_T(\alpha), \lowerpi_T(\beta))\leq\rho(\alpha,\beta)$ and the equality holds if and only if $\alpha=\upperpi_L(\lowerpi_T(\alpha))$ and $\beta=\upperpi_L(\lowerpi_T(\beta))$ where $L=\overline{[\alpha,\beta]}$. Accordingly, for each $\alpha,\beta\in \mbbTP(X)$ and $T$ being upper tropically convex, $\rho(\upperpi_T(\alpha), \upperpi_T(\beta))\leq\rho(\alpha,\beta)$ and the equality holds if and only if $\alpha=\lowerpi_L(\upperpi_T(\alpha))$ and $\beta=\lowerpi_L(\upperpi_T(\beta))$ where $L=\underline{[\alpha,\beta]}$. 
\item For $\alpha,\beta,\gamma\in\mbbTP(X)$, if $T$ is lower tropically convex and $\rho(\lowerpi_T(\alpha), \lowerpi_T(\beta))=\rho(\alpha,\beta)=\rho(\alpha,\gamma)+\rho(\gamma,\beta)$, then $\rho(\alpha,\gamma)= \rho(\lowerpi_T(\alpha), \lowerpi_T(\gamma))$  and $\rho(\gamma,\beta)= \rho(\lowerpi_T(\gamma), \lowerpi_T(\beta))$;  if $T$ is upper tropically convex and $\rho(\upperpi_T(\alpha), \upperpi_T(\beta))=\rho(\alpha,\beta)=\rho(\alpha,\gamma)+\rho(\gamma,\beta)$, then $\rho(\alpha,\gamma)= \rho(\upperpi_T(\alpha), \upperpi_T(\gamma))$  and $\rho(\gamma,\beta)= \rho(\upperpi_T(\gamma), \upperpi_T(\beta))$. 

\end{enumerate}

\end{proposition}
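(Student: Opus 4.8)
The plan is to derive the statement purely from the non-expansiveness of the lower tropical projection established in part (5), combined with the triangle inequality for the metric $\rho$, through a squeeze argument. I would first treat the lower tropically convex case and then observe that the upper case is formally identical.

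First I would record the two Lipschitz bounds coming from Proposition~\ref{P:TropProj}(5): since $T$ is lower tropically convex, $\lowerpi_T$ is non-expansive, so
\[
\rho(\lowerpi_T(\alpha),\lowerpi_T(\gamma))\leq\rho(\alpha,\gamma)\quad\text{and}\quad\rho(\lowerpi_T(\gamma),\lowerpi_T(\beta))\leq\rho(\gamma,\beta).
\]
Next I would chain these with the triangle inequality in the metric space $(\mbbTP(X),\rho)$ applied to the three points $\lowerpi_T(\alpha),\lowerpi_T(\gamma),\lowerpi_T(\beta)$:
\[
\rho(\alpha,\beta)=\rho(\lowerpi_T(\alpha),\lowerpi_T(\beta))\leq\rho(\lowerpi_T(\alpha),\lowerpi_T(\gamma))+\rho(\lowerpi_T(\gamma),\lowerpi_T(\beta))\leq\rho(\alpha,\gamma)+\rho(\gamma,\beta)=\rho(\alpha,\beta),
\]
where the first equality is the hypothesis $\rho(\lowerpi_T(\alpha),\lowerpi_T(\beta))=\rho(\alpha,\beta)$ and the final equality is the hypothesis $\rho(\alpha,\beta)=\rho(\alpha,\gamma)+\rho(\gamma,\beta)$.

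Since the chain opens and closes with the same value $\rho(\alpha,\beta)$, every inequality in it is forced to be an equality. In particular the sum $\rho(\lowerpi_T(\alpha),\lowerpi_T(\gamma))+\rho(\lowerpi_T(\gamma),\lowerpi_T(\beta))$ equals $\rho(\alpha,\gamma)+\rho(\gamma,\beta)$; combining this with the two individual $\leq$ bounds above, each bound must hold with equality, yielding $\rho(\alpha,\gamma)=\rho(\lowerpi_T(\alpha),\lowerpi_T(\gamma))$ and $\rho(\gamma,\beta)=\rho(\lowerpi_T(\gamma),\lowerpi_T(\beta))$, which is the claim. The statement for an upper tropically convex $T$ follows verbatim after replacing $\lowerpi_T$ by $\upperpi_T$ and invoking the upper half of part (5).

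There is essentially no genuine obstacle here: the content is the standard metric fact that a non-expansive map which preserves the distance between two endpoints must also preserve the distances along any geodesic decomposition joining them. The only points requiring care are the direction of the inequalities—arranging that the two hypotheses enter as the outer equalities so that the squeeze actually closes—and citing the plain non-expansiveness from part (5) rather than its sharper equality criterion, which is not needed for this argument.
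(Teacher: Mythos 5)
Your argument for item (6) is correct and is essentially identical to the paper's own proof of that item: both chain the hypothesis $\rho(\lowerpi_T(\alpha),\lowerpi_T(\beta))=\rho(\alpha,\beta)$ with the triangle inequality and the non-expansiveness from item (5), then squeeze to force equality in each intermediate step. For that one item there is nothing to add.

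The genuine gap is one of coverage: the statement is a six-part proposition, and your proposal proves only item (6), taking item (5) as an established input and saying nothing about items (1)--(4). These are not formalities. In the paper, (1)--(3) follow from the translation, scaling and negation behaviour of tropical segments and pseudonorms (Lemma~\ref{L:OpTropHull}, Proposition~\ref{P:BnormProperty}(1),(7)) combined with the additive characterization of tropical projections in Corollary~\ref{C:CritTropProj}(a4),(b4); item (4) needs the set-theoretic criterion (a5),(b5) together with the inclusion $\Xmin(\lowerpi_T(\alpha)-\alpha)\subseteq\Xmin(\lowerpi_T(\alpha)-\beta)$ for $\beta$ on the segment $\underline{[\alpha,\lowerpi_T(\alpha)]}$; and item (5), which you invoke as a black box, is itself the most substantial part --- its proof is a computation with the $1$-pseudonorm identities $\llfloor\cdot\rrfloor_1+\llceil\cdot\rrceil_1=\mu(X)\Vert\cdot\Vert$ and the equality case must be characterized in terms of upper projections onto $\overline{[\alpha,\beta]}$, which is part of the claim being proved. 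As a proof of the proposition as stated, the proposal is therefore incomplete; as a proof of item (6) alone, it matches the paper.
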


\begin{remark}
The existence of $\lowerpi_{T}(\alpha)$, and $\upperpi_{T}(\alpha)$ in the above proposition  is not guaranteed for all $\alpha
\in \mbbTP(X)$ if we don't assume the compactness of $T$. As in Remark~\ref{R:NoCompact} for Theorem~\ref{T:main}, if the compactness assumption in the above proposition is withdrawn, the statements still hold if whenever $\lowerpi_{T}(\cdot)$ or $\upperpi_{T}(\cdot)$ is mentioned, its existence is assumed. 

\end{remark}

\begin{proof}
(1) follows from Lemma~\ref{L:OpTropHull}(1) and (2), Proposition~\ref{P:BnormProperty}(7), Corollary~\ref{C:CritTropProj}~(a4) and (b4)  easily. 
(2) follows from Lemma~\ref{L:OpTropHull}(3), Proposition~\ref{P:BnormProperty}(1) and Corollary~\ref{C:CritTropProj}~(a4) and (b4) easily. 
(3) is also clear by Corollary~\ref{C:CritTropProj}~(a4) and (b4). 

For (4), we note that if $T$  is lower tropically convex (respectively upper tropically convex), then for $\beta\in \underline{[\alpha,\lowerpi_T(\alpha)]}$ (respectively $\beta\in \overline{[\alpha,\upperpi_T(\alpha)]}$), we have $\Xmin(\lowerpi_T(\alpha)-\alpha)\subseteq \Xmin(\lowerpi_T(\alpha)-\beta)$ (respectively $\Xmax(\upperpi_T(\alpha)-\alpha)\subseteq \Xmax(\upperpi_T(\alpha)-\beta)$).  Therefore,  $\lowerpi_T(\beta)=\lowerpi_T(\alpha)$ (respectively $\upperpi_T(\beta)=\upperpi_T(\alpha)$) by Corollary~\ref{C:CritTropProj}~(a5) (respectively (b5)). 

For (5), we use Proposition~\ref{P:BnormProperty}(1)(5)(9), Corollary~\ref{C:CritTropProj}~(a4) and (b4), and see that when $T$ is lower tropically convex, 

\begin{align*}
&\rho(\lowerpi_T(\alpha), \lowerpi_T(\beta)) \mu(X) \\
&=\llfloor \lowerpi_T(\alpha)-\lowerpi_T(\beta)\rrfloor_1 + \llfloor \lowerpi_T(\beta)-\lowerpi_T(\alpha)\rrfloor_1\\
&= (\llfloor \lowerpi_T(\alpha)-\beta\rrfloor_1- \llfloor \lowerpi_T(\beta)-\beta\rrfloor_1)+(\llfloor \lowerpi_T(\beta)-\alpha\rrfloor_1- \llfloor \lowerpi_T(\alpha)-\alpha\rrfloor_1) \\
&= (\llfloor \lowerpi_T(\alpha)-\beta\rrfloor_1-\llfloor \lowerpi_T(\alpha)-\alpha\rrfloor_1)+(\llfloor \lowerpi_T(\beta)-\alpha\rrfloor_1-\llfloor \lowerpi_T(\beta)-\beta\rrfloor_1) \\
&\leq  \llfloor \alpha-\beta\rrfloor_1+\llfloor \beta-\alpha\rrfloor_1\\
&=\rho(\alpha, \beta) \mu(X).
\end{align*}
with equality holds if and only if $\llfloor \lowerpi_T(\alpha)-\beta\rrfloor_1=\llfloor \lowerpi_T(\alpha)-\alpha\rrfloor_1+\llfloor \alpha-\beta\rrfloor_1$ and $\llfloor \lowerpi_T(\beta)-\alpha\rrfloor_1=\llfloor \lowerpi_T(\beta)-\beta\rrfloor_1+\llfloor \beta-\alpha\rrfloor_1$ if and only if $\llceil \beta- \lowerpi_T(\alpha)\rrceil_1=\llceil \alpha-\lowerpi_T(\alpha)\rrceil_1+\llceil \beta-\alpha\rrceil_1$ and $\llceil \alpha-\lowerpi_T(\beta)\rrceil_1=\llceil \beta-\lowerpi_T(\beta)\rrceil_1+\llceil \alpha-\beta\rrceil_1$  if and only if $\alpha$ is the upper tropical projection of $\lowerpi_T(\alpha)$ to $\overline{[\alpha,\beta]}$ and $\beta$ is the upper tropical projection of $\lowerpi_T(\beta)$ to $\overline{[\alpha,\beta]}$. Accordingly, we can prove the case when $T$ is upper tropically convex. 

For (6), we have 
$\rho(\alpha,\beta)=\rho(\lowerpi_T(\alpha), \lowerpi_T(\beta))\leq\rho(\lowerpi_T(\alpha), \lowerpi_T(\gamma))+\rho(\lowerpi_T(\gamma), \lowerpi_T(\beta))\leq \rho(\alpha,\gamma)+\rho(\gamma,\beta)=\rho(\alpha,\beta)$ which implies $\rho(\alpha,\gamma)= \rho(\lowerpi_T(\alpha), \lowerpi_T(\gamma))$  and $\rho(\gamma,\beta)= \rho(\lowerpi_T(\gamma), \lowerpi_T(\beta))$. The case when $T$ is upper tropically convex can be proved analogously.
\end{proof}

\begin{proposition} \label{P:SeqTropProj}
Let $T$ and $T'$ be compact lower tropical convex (respectively upper tropical convex) subsets of $\mbbTP(X)$ such that  $T'\subseteq T$. Then for each $\alpha\in \mbbTP(X)$, $\lowerpi_{T'}(\alpha)=\lowerpi_{T'}(\lowerpi_T(\alpha))$ (respectively $\upperpi_{T'}(\alpha)=\upperpi_{T'}(\upperpi_T(\alpha))$). 
\end{proposition}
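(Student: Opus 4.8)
The plan is to reduce the whole statement to the additivity criterion for tropical projections recorded in Corollary~\ref{C:CritTropProj}, which converts the assertion ``$\alpha$ is a lower tropical projection'' into a genuine numerical identity of $\underline{B}^1$-pseudonorms. I will carry out the argument for the lower case; the upper case is verbatim the same after replacing $\lowerpi$, $\llfloor\cdot\rrfloor_1$ and criterion (a4) by $\upperpi$, $\llceil\cdot\rrceil_1$ and (b4).

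First I would fix $\gamma\in\mbbTP(X)$ and set $\beta:=\lowerpi_T(\gamma)$ and $\alpha':=\lowerpi_{T'}(\gamma)$; both exist by Theorem~\ref{T:main}, since $T$ and $T'$ are compact and lower tropically convex, and so does $\lowerpi_{T'}(\beta)$. The argument then rests on three applications of Corollary~\ref{C:CritTropProj}(a), using the equivalence of conditions $(1)$ and $(4)$. From $\alpha'=\lowerpi_{T'}(\gamma)$ I get, for every $\delta\in T'$, the identity $\llfloor\delta-\gamma\rrfloor_1=\llfloor\delta-\alpha'\rrfloor_1+\llfloor\alpha'-\gamma\rrfloor_1$ (call it (I)). From $\beta=\lowerpi_T(\gamma)$, applied at the particular point $\alpha'\in T'\subseteq T$, I get $\llfloor\alpha'-\gamma\rrfloor_1=\llfloor\alpha'-\beta\rrfloor_1+\llfloor\beta-\gamma\rrfloor_1$ (call it (II)); and applied at an arbitrary $\delta\in T'\subseteq T$, I get $\llfloor\delta-\gamma\rrfloor_1=\llfloor\delta-\beta\rrfloor_1+\llfloor\beta-\gamma\rrfloor_1$ (call it (III)). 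Here it is essential that $T'\subseteq T$, so that both $\alpha'$ and every competitor $\delta$ lie in the larger set $T$ and the projection identity for $\lowerpi_T$ is legitimately available at those points.

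Next, for each $\delta\in T'$ I would solve (III) for $\llfloor\delta-\beta\rrfloor_1=\llfloor\delta-\gamma\rrfloor_1-\llfloor\beta-\gamma\rrfloor_1$, substitute (I) for $\llfloor\delta-\gamma\rrfloor_1$, and then use (II) in the form $\llfloor\alpha'-\gamma\rrfloor_1-\llfloor\beta-\gamma\rrfloor_1=\llfloor\alpha'-\beta\rrfloor_1$. The $\llfloor\beta-\gamma\rrfloor_1$ terms cancel, leaving $\llfloor\delta-\beta\rrfloor_1=\llfloor\delta-\alpha'\rrfloor_1+\llfloor\alpha'-\beta\rrfloor_1$ for every $\delta\in T'$. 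This is exactly condition (a4) certifying, via the reverse direction $(4)\Rightarrow(1)$ of Corollary~\ref{C:CritTropProj}(a) applied to the compact lower tropically convex set $T'$ and the source point $\beta$, that $\alpha'=\lowerpi_{T'}(\beta)$. Combined with $\alpha'=\lowerpi_{T'}(\gamma)$, this yields $\lowerpi_{T'}(\gamma)=\lowerpi_{T'}(\lowerpi_T(\gamma))$, which is the claim.

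I do not expect a serious obstacle once one commits to the $\underline{B}^1$-formulation: the whole proof is a short linear cancellation among real numbers. The only point requiring care is the bookkeeping of which set each element belongs to, precisely so that the two ``$\beta$-identities'' (II) and (III) are applicable. Had I instead used the set-theoretic criterion (a5), the analogous telescoping could be run through Lemma~\ref{L:XminXmax}, but one would then have to track the $\epsilon$-neighborhood meaning of $\Cap$, so the pseudonorm route is the cleaner one to write down.
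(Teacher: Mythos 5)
Your proof is correct and takes essentially the same route as the paper: both arguments reduce the claim to the additivity criterion of Corollary~\ref{C:CritTropProj}(a4) and telescope three $\underline{B}^1$-pseudonorm identities. The only cosmetic difference is the direction of verification — the paper certifies that $\lowerpi_{T'}(\lowerpi_T(\alpha))$ satisfies the criterion for being $\lowerpi_{T'}(\alpha)$, whereas you certify that $\lowerpi_{T'}(\gamma)$ satisfies the criterion for being $\lowerpi_{T'}(\lowerpi_T(\gamma))$; by uniqueness of the projection these are equivalent.
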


\begin{proof}
We will only prove the case of lower tropical convexity while the case of upper tropical convexity can be proved analogously. 

To prove $\lowerpi_{T'}(\alpha)=\lowerpi_{T'}(\lowerpi_T(\alpha))$, it suffices to show that $\llfloor \beta-\alpha \rrfloor_1 = \llfloor \beta-\lowerpi_{T'}(\lowerpi_T(\alpha)) \rrfloor_1 +\llfloor \lowerpi_{T'}(\lowerpi_T(\alpha))-\alpha \rrfloor_1 $ for every $\beta\in T'$ by Corollary~\ref{C:CritTropProj}(a4).

Actually, by applying Corollary~\ref{C:CritTropProj}(a4) to $T$ with respect to $\alpha$, we get
$\llfloor \gamma-\alpha \rrfloor_1 = \llfloor \gamma-\lowerpi_T(\alpha) \rrfloor_1 +\llfloor\lowerpi_T(\alpha)-\alpha \rrfloor_1 $ for every $\gamma\in T$, and in particular $\llfloor \lowerpi_{T'}(\lowerpi_T(\alpha))-\alpha \rrfloor_1 = \llfloor \lowerpi_{T'}(\lowerpi_T(\alpha))-\lowerpi_T(\alpha) \rrfloor_1 +\llfloor\lowerpi_T(\alpha)-\alpha \rrfloor_1 $

Now applying Corollary~\ref{C:CritTropProj}(a4) to $T'$  with respect to $\lowerpi_T(\alpha)$, we get 
$\llfloor \beta-\lowerpi_T(\alpha)\rrfloor_1 = \llfloor \beta-\lowerpi_{T'}(\lowerpi_T(\alpha)) \rrfloor_1 +\llfloor\lowerpi_{T'}(\lowerpi_T(\alpha))-\lowerpi_T(\alpha)\rrfloor_1 $ for every $\beta\in T'$.

Therefore,
\begin{align*}
& \llfloor \beta-\alpha \rrfloor_1 =\llfloor \beta-\lowerpi_T(\alpha)\rrfloor_1+\llfloor \lowerpi_T(\alpha)-\alpha\rrfloor_1 \\
&= \llfloor \beta-\lowerpi_{T'}(\lowerpi_T(\alpha)) \rrfloor_1 +\llfloor\lowerpi_{T'}(\lowerpi_T(\alpha))-\lowerpi_T(\alpha)\rrfloor_1+\llfloor\lowerpi_T(\alpha)-\alpha \rrfloor_1 \\
&= \llfloor \beta-\lowerpi_{T'}(\lowerpi_T(\alpha)) \rrfloor_1 +\llfloor \lowerpi_{T'}(\lowerpi_T(\alpha))-\alpha \rrfloor_1 
\end{align*}
for every $\beta\in T'$, which means that $\lowerpi_{T'}(\lowerpi_T(\alpha))$ is exactly the  lower tropical projection of $\alpha$ in $T'$ as claimed.

\end{proof}

For some initial results of tropical projections, we consider the tropical projections to tropical segments and have the following lemmas.

\begin{lemma} \label{L:TropProjSegment}
Let $\beta_1,\beta_2,\gamma$ be elements in $\mbbTP(X)$ and let $\alpha=\lowerpi_{\underline{[\beta_1,\beta_2]}}(\gamma)$ (respectively $\alpha=\upperpi_{\overline{[\beta_1,\beta_2]}}(\gamma)$). Then 
for all $\beta\in\underline{[\beta_1,\beta_2]}$,  $\Xmin(\beta-\gamma)\subseteq \Xmin(\alpha-\gamma)=\Xmin(\beta_1-\gamma)\bigcup\Xmin(\beta_2-\gamma)$ (respectively for all $\beta\in\overline{[\beta_1,\beta_2]}$,  $\Xmax(\beta-\gamma)\subseteq \Xmax(\alpha-\gamma)=\Xmax(\beta_1-\gamma)\bigcup\Xmax(\beta_2-\gamma)$). 
\end{lemma}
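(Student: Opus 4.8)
The plan is to read the entire statement off the purely set-theoretic criterion for the lower tropical projection in Corollary~\ref{C:CritTropProj}(a5), combined with the additivity of minimizers supplied by Lemma~\ref{L:XminXmax}. I would treat only the lower case in detail; the upper case follows verbatim after replacing (a5) by (b5), $\Xmin$ by $\Xmax$, and the first assertion of Lemma~\ref{L:XminXmax} by its second (alternatively, one negates everything and invokes Lemma~\ref{L:neg_tseg} together with Proposition~\ref{P:TropProj}(2)). Note first that $\underline{[\beta_1,\beta_2]}$ is compact and lower tropically convex by Proposition~\ref{P:TropSeg}, so $\alpha=\lowerpi_{\underline{[\beta_1,\beta_2]}}(\gamma)$ is well defined and the criterion applies.

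The single computational input is the following. By Corollary~\ref{C:CritTropProj}(a5), the projection $\alpha$ satisfies $\Xmin(\beta-\alpha)\Cap\Xmin(\alpha-\gamma)\neq\emptyset$ for every $\beta\in\underline{[\beta_1,\beta_2]}$. For any such $\beta$, Lemma~\ref{L:XminXmax} upgrades this non-vacuity of the $\Cap$-intersection to the genuine set identity
\begin{equation*}
\Xmin(\beta-\alpha)\bigcap\Xmin(\alpha-\gamma)=\Xmin\bigl((\beta-\alpha)+(\alpha-\gamma)\bigr)=\Xmin(\beta-\gamma).
\end{equation*}
The containment assertion is now immediate: the displayed identity exhibits $\Xmin(\beta-\gamma)$ as a subset of $\Xmin(\alpha-\gamma)$ for every $\beta$ on the segment, which is exactly what is claimed.

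For the equality I would specialize this identity to the two endpoints, getting $\Xmin(\beta_i-\alpha)\bigcap\Xmin(\alpha-\gamma)=\Xmin(\beta_i-\gamma)$ for $i=1,2$, and then take the union of the two equations. Distributing $\Xmin(\alpha-\gamma)$ over the union on the left yields
\begin{equation*}
\Xmin(\alpha-\gamma)\bigcap\bigl(\Xmin(\beta_1-\alpha)\bigcup\Xmin(\beta_2-\alpha)\bigr)=\Xmin(\beta_1-\gamma)\bigcup\Xmin(\beta_2-\gamma).
\end{equation*}
Since $\alpha\in\underline{[\beta_1,\beta_2]}$, the equivalence (a)$\Leftrightarrow$(c) of Proposition~\ref{P:TropSeg}(5), applied with $\alpha$ as the distinguished interior point, gives $\Xmin(\beta_1-\alpha)\bigcup\Xmin(\beta_2-\alpha)=X$; hence the left-hand side collapses to $\Xmin(\alpha-\gamma)$ and the asserted equality drops out.

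The one place that demands care — and the step I expect to be the genuine obstacle rather than the formal manipulations above — is the bookkeeping between the symbol $\Cap$ and an honest intersection when $\gamma,\beta_1,\beta_2$ lie in $\mbbTP(X)\setminus\mbbTP^0(X)$, so that the relevant infima need not be attained and some $\Xmin(\cdot)$ may be empty. Both Lemma~\ref{L:XminXmax} and Proposition~\ref{P:TropSeg}(5) are already phrased to cover this situation, so I would simply verify that each invocation is legitimate — in particular that the hypothesis of Lemma~\ref{L:XminXmax} is exactly the $\Cap$-statement furnished by (a5) and not a stronger attainment assumption. No new estimate beyond the machinery of Section~\ref{S:PreAna} is required.
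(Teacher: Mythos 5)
Your proposal is correct and follows essentially the same route as the paper's own proof: both derive $\Xmin(\beta-\gamma)=\Xmin(\beta-\alpha)\bigcap\Xmin(\alpha-\gamma)$ from Corollary~\ref{C:CritTropProj} together with Lemma~\ref{L:XminXmax}, then use Proposition~\ref{P:TropSeg}(5) to write $\Xmin(\beta_1-\alpha)\bigcup\Xmin(\beta_2-\alpha)=X$ and distribute the intersection over this union. Your closing remark about verifying the $\Cap$ hypothesis is sensible bookkeeping but introduces nothing beyond what the cited lemmas already cover.
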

\begin{proof}
We will prove the case of lower tropical convexity while the case of upper tropical convexity can be proved analogously. 
Since $\alpha=\lowerpi_{\underline{[\beta_1,\beta_2]}}(\gamma)$, we have 
$$\Xmin(\beta_1-\gamma)=\Xmin(\beta_1-\alpha)\bigcap\Xmin(\alpha-\gamma),$$
$$\Xmin(\beta_2-\gamma)=\Xmin(\beta_2-\alpha)\bigcap\Xmin(\alpha-\gamma),$$
$$\Xmin(\beta-\gamma)=\Xmin(\beta-\alpha)\bigcap\Xmin(\alpha-\gamma)$$
by Corollary~\ref{C:CritTropProj} and Lemma~\ref{L:XminXmax}. 

In addition, $\Xmin(\beta_1-\alpha)\bigcup \Xmin(\beta_2-\alpha)=X$ by Proposition~\ref{P:TropSeg}(5) and therefore 
\begin{align*}
\Xmin(\beta-\gamma)&=\Xmin(\beta-\alpha)\bigcap\Xmin(\alpha-\gamma)\subseteq\Xmin(\alpha-\gamma) \\
&=(\Xmin(\beta_1-\alpha)\bigcap\Xmin(\alpha-\gamma))\bigcup(\Xmin(\beta_2-\alpha)\bigcap\Xmin(\alpha-\gamma)) \\
&=\Xmin(\beta_1-\gamma)\bigcup \Xmin(\beta_2-\gamma).
\end{align*}

\end{proof}

\begin{lemma} \label{L:SegSum}
Let $\alpha,\beta,\gamma$ be elements in $\mbbTP(X)$. Then $\rho(\alpha,\gamma)=\rho(\alpha,\beta)+\rho(\beta,\gamma)$ if and only if $\beta=\lowerpi_{\underline{[\alpha,\beta]}}(\gamma)=\lowerpi_{\underline{[\beta,\gamma]}}(\alpha)$ if and only if $\beta=\upperpi_{\overline{[\alpha,\beta]}}(\gamma)=\upperpi_{\overline{[\beta,\gamma]}}(\alpha)$. 
\end{lemma}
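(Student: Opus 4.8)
The plan is to reduce all three assertions to one common pair of set-theoretic conditions. Setting $\delta_1=\alpha-\beta$ and $\delta_2=\beta-\gamma$, so that $\delta_1+\delta_2=\alpha-\gamma$, I first observe
\[
\rho(\alpha,\gamma)=\Vert\delta_1+\delta_2\Vert\leq\Vert\delta_1\Vert+\Vert\delta_2\Vert=\rho(\alpha,\beta)+\rho(\beta,\gamma).
\]
By the equality case of the triangle inequality in Lemma~\ref{L:TropNorm}(3), the relation $\rho(\alpha,\gamma)=\rho(\alpha,\beta)+\rho(\beta,\gamma)$ holds if and only if both
\[
\Xmin(\alpha-\beta)\Cap\Xmin(\beta-\gamma)\neq\emptyset\quad\text{and}\quad\Xmax(\alpha-\beta)\Cap\Xmax(\beta-\gamma)\neq\emptyset;
\]
I call this pair of conditions $(\ast)$. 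Thus the distance identity $(\mathrm{A})$ is equivalent to $(\ast)$. I may assume $\alpha\neq\beta$ and $\beta\neq\gamma$, since if any two of the three points coincide all the asserted equivalences are immediate.

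Next I characterize each of the four projection-to-a-segment conditions by a single clause of $(\ast)$. Every tropical segment is compact (Proposition~\ref{P:TropSeg}(4)) and tropically convex, so the relevant projections exist and are unique by Theorem~\ref{T:main}, and by Proposition~\ref{P:TropSeg}(3) the segment is the isometric image of a single tropical path issuing from the candidate endpoint $\beta$. For $\beta=\lowerpi_{\underline{[\alpha,\beta]}}(\gamma)$ I apply Proposition~\ref{P:working} to the lower path $P_{(\beta,\alpha)}$: the function $t\mapsto\llfloor P_{(\beta,\alpha)}(t)-\gamma\rrfloor_p$ is strictly increasing on all of $[0,\rho(\alpha,\beta)]$ (Case~(1)) exactly when $\Xmin(\alpha-\beta)\Cap\Xmin(\beta-\gamma)\neq\emptyset$, and strictly decreasing at $t=0$ (Case~(2)) otherwise. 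Since the minimizer of $\lowerTheta_p$ over the segment corresponds to the minimizer of this path-function, $\beta$ is the projection precisely when Case~(1) occurs; hence $\beta=\lowerpi_{\underline{[\alpha,\beta]}}(\gamma)\iff\Xmin(\alpha-\beta)\Cap\Xmin(\beta-\gamma)\neq\emptyset$. The identical argument along $P_{(\beta,\gamma)}$ gives $\beta=\lowerpi_{\underline{[\beta,\gamma]}}(\alpha)\iff\Xmin(\gamma-\beta)\Cap\Xmin(\beta-\alpha)\neq\emptyset$, and the upper analogues (Cases~(3)--(4) of Proposition~\ref{P:working}) give $\beta=\upperpi_{\overline{[\alpha,\beta]}}(\gamma)\iff\Xmax(\alpha-\beta)\Cap\Xmax(\beta-\gamma)\neq\emptyset$ and $\beta=\upperpi_{\overline{[\beta,\gamma]}}(\alpha)\iff\Xmax(\gamma-\beta)\Cap\Xmax(\beta-\alpha)\neq\emptyset$.

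It then remains to rewrite the two ``reversed'' clauses using the tropical-inverse identities $\Xmin(-\eta)=\Xmax(\eta)$ and $\Xmin^\epsilon(-\eta)=\Xmax^\epsilon(\eta)$ recorded in Section~\ref{S:PreAna}; because the symbol $\Cap$ is defined through the $\Xmin^\epsilon,\Xmax^\epsilon$, these identities respect it. They yield $\Xmin(\gamma-\beta)\Cap\Xmin(\beta-\alpha)\neq\emptyset\iff\Xmax(\beta-\gamma)\Cap\Xmax(\alpha-\beta)\neq\emptyset$ and $\Xmax(\gamma-\beta)\Cap\Xmax(\beta-\alpha)\neq\emptyset\iff\Xmin(\beta-\gamma)\Cap\Xmin(\alpha-\beta)\neq\emptyset$. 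Consequently the conjunction $(\mathrm{B})$ defining $\beta=\lowerpi_{\underline{[\alpha,\beta]}}(\gamma)=\lowerpi_{\underline{[\beta,\gamma]}}(\alpha)$ is exactly $(\ast)$, and the conjunction $(\mathrm{C})$ defining $\beta=\upperpi_{\overline{[\alpha,\beta]}}(\gamma)=\upperpi_{\overline{[\beta,\gamma]}}(\alpha)$ is also exactly $(\ast)$. Combined with the first paragraph, $(\mathrm{A})$, $(\mathrm{B})$, $(\mathrm{C})$ are all equivalent to $(\ast)$ and hence to one another.

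I expect the main obstacle to be bookkeeping rather than any deep idea. Two points need care: first, when invoking Proposition~\ref{P:working} one must correctly match endpoints and path orientation, so that the candidate projection $\beta$ really is the basepoint $P_{(\beta,\cdot)}(0)$ and the two cases correspond to the stated $\Cap$ conditions; second, one must track the simultaneous interchange of $\Xmin$ with $\Xmax$ and the argument reversal $\eta\mapsto-\eta$, verifying that the equivalences survive at the level of the fuzzy intersection $\Cap$ and not merely for ordinary set intersections. The one spot requiring a short genuine argument, rather than a bare citation, is the claim that ``the endpoint is the minimizer if and only if the path-function is increasing'', which rests jointly on the uniqueness of the tropical projection (Theorem~\ref{T:main}) and the monotonicity dichotomy of Proposition~\ref{P:working}.
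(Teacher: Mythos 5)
Your proof is correct and follows essentially the same route as the paper: both reduce the three statements to one common pair of equivalent conditions via the equality case of the triangle inequality together with the projection criteria of Corollary~\ref{C:CritTropProj}, the only cosmetic difference being that you work with the set-theoretic clauses (a5)/(b5) (the $\Cap$ conditions) while the paper uses the equivalent $B^1$-pseudonorm clauses (a4)/(b4).
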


\begin{proof}
By Proposition~\ref{P:BnormProperty}, $\rho(\alpha,\gamma)=\rho(\alpha,\beta)+\rho(\beta,\gamma)$ if and only if 
$\llfloor \alpha-\gamma\rrfloor_1 =\llfloor \alpha-\beta\rrfloor_1+\llfloor \beta-\gamma \rrfloor_1 $ and 
$\llfloor \gamma-\alpha\rrfloor_1 =\llfloor \gamma-\beta\rrfloor_1+\llfloor \beta-\alpha \rrfloor_1 $
if and only if
$\llceil \alpha-\gamma\rrceil_1 =\llceil \alpha-\beta\rrceil_1+\llceil \beta-\gamma \rrceil_1 $ and 
$\llceil \gamma-\alpha\rrceil_1 =\llceil \gamma-\beta\rrceil_1+\llceil \beta-\alpha \rrceil_1 $. 

We then note that 
\begin{enumerate}
\item $\llfloor \alpha-\gamma\rrfloor_1 =\llfloor \alpha-\beta\rrfloor_1+\llfloor \beta-\gamma \rrfloor_1 $ if and only if $\beta=\lowerpi_{\underline{[\alpha,\beta]}}(\gamma)$;
\item $\llfloor \gamma-\alpha\rrfloor_1 =\llfloor \gamma-\beta\rrfloor_1+\llfloor \beta-\alpha \rrfloor_1 $ if and only if $\beta=\lowerpi_{\underline{[\beta,\gamma]}}(\alpha)$;
\item $\llceil \alpha-\gamma\rrceil_1 =\llceil \alpha-\beta\rrceil_1+\llceil \beta-\gamma \rrceil_1 $ if and only if $\beta = \upperpi_{\overline{[\alpha,\beta]}}(\gamma)$;
\item $\llceil \gamma-\alpha\rrceil_1 =\llceil \gamma-\beta\rrceil_1+\llceil \beta-\alpha \rrceil_1 $ if and only if $\beta = \upperpi_{\overline{[\beta,\gamma]}}(\alpha)$.
\end{enumerate}
\end{proof}

\subsection{Balls and Tropical Convex Functions}
For any $\alpha\in\mbbTP(X)$, we say that  $\mcalB(\alpha,r):=\{\beta\in \mbbTP(X)\mid \rho(\alpha,\beta)\leq r\}$ and $\mcalB^0(\alpha,r):=\{\beta\in \mbbTP(X)\mid \rho(\alpha,\beta)< r\}$ are respectively the \emph{closed ball} and \emph{open ball}  of radius $r$ centered at $\alpha$. Moreover, for a function $f$ on $\mbbTP(X)$ and a function $g$ on a subset $S$ of $\mbbTP(X)$, we write $\mcalL_{*r}(f):=\{\alpha\in\mbbTP(X)\mid f(\alpha)*r\}$ and $\mcalL^S_{*r}(g):=\{\alpha\in S\mid g(\alpha)*r\}$ where $*$ can be $=$, $<$, $\leq$, $>$ or $\geq$. Note that in this sense, $\mcalB(\alpha,r)=\mcalL_{\leq r}(\rho(\alpha,\cdot))=\mcalL_{\leq r}(\Vert \cdot -\alpha\Vert)$ and $\mcalB^0(\alpha,r)=\mcalL_{< r}(\rho(\alpha,\cdot))=\mcalL_{< r}(\Vert \cdot-\alpha\Vert)$. 

\begin{lemma}
Let $\alpha\in\mbbTP(X)$, $r\geq 0$ and $p\in[1,\infty]$. 
\begin{enumerate}
\item $\mcalL_{\leq r}(\llfloor \cdot-\alpha\rrfloor_p)$ and $\mcalL_{< r}(\llfloor \cdot-\alpha\rrfloor_p)$ are lower tropically convex.
\item $\mcalL_{\leq r}(\llceil \cdot-\alpha\rrceil_p)$ and $\mcalL_{< r}(\llceil \cdot-\alpha\rrceil_p)$ are upper tropically convex.
\item $\mcalB(\alpha,r)$ and $\mcalB^0(\alpha,r)$ are both lower and upper tropically convex. 
\end{enumerate}
 \end{lemma}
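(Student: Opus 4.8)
The plan is to prove part (1) directly and then deduce parts (2) and (3) by duality. The decisive ingredients are the explicit description of a lower tropical segment from Lemma~\ref{L:TropLinear} and the monotonicity of the $\underline{p}$-pseudonorm with respect to the pointwise order on nonnegative representatives (Proposition~\ref{P:BnormProperty}(8) for $p\in[1,\infty)$, together with the trivial sup-monotonicity for $p=\infty$ via Proposition~\ref{P:BnormProperty}(6)).

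\textbf{Reduction and parametrization.} First I would reduce to the case $\alpha=[0]$. Since $\beta$ lies in $\mcalL_{\leq r}(\llfloor\cdot-\alpha\rrfloor_p)$ exactly when $\beta-\alpha$ lies in $\mcalL_{\leq r}(\llfloor\cdot\rrfloor_p)$, one has $\mcalL_{\leq r}(\llfloor\cdot-\alpha\rrfloor_p)=\alpha+\mcalL_{\leq r}(\llfloor\cdot\rrfloor_p)$, and translation by $\alpha$ carries lower tropical segments to lower tropical segments (Remark~\ref{R:TropPath}, or Lemma~\ref{L:OpTropHull}(1)); hence it suffices to treat $\mcalL_{\leq r}(\llfloor\cdot\rrfloor_p)$ and $\mcalL_{<r}(\llfloor\cdot\rrfloor_p)$. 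Fixing $\gamma_1=[f_1]$ and $\gamma_2=[f_2]$ in the sublevel set, I would pass to the normalized representatives $\underline{f_1},\underline{f_2}$, each nonnegative with infimum $0$. By Lemma~\ref{L:TropLinear}, after absorbing a constant, every point of $\underline{[\gamma_1,\gamma_2]}$ has the form $[h_c]$ with $h_c=\min(c+\underline{f_1},\underline{f_2})$ for some $c\in\mbbR$.

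\textbf{The key comparison.} The heart of the matter is to bound $\llfloor[h_c]\rrfloor_p$ by $\max(\llfloor\gamma_1\rrfloor_p,\llfloor\gamma_2\rrfloor_p)$. Using the elementary identity $\inf(\min(a_1,a_2))=\min(\inf a_1,\inf a_2)$, one computes $\inf h_c=\min(c,0)$. When $c\geq0$ this gives $\underline{h_c}=h_c\leq\underline{f_2}$ pointwise, while when $c\leq0$ it gives $\underline{h_c}=h_c-c=\min(\underline{f_1},\underline{f_2}+|c|)\leq\underline{f_1}$ pointwise. In either case $0\leq\underline{h_c}$ is dominated pointwise by $\underline{f_1}$ or by $\underline{f_2}$, so monotonicity of $\Vert\cdot\Vert_p$ on nonnegative functions yields $\llfloor[h_c]\rrfloor_p\leq\llfloor\gamma_i\rrfloor_p$ for the relevant $i$. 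This bound is $\leq r$ in the closed case and $<r$ in the open case, establishing part (1) for both $\mcalL_{\leq r}$ and $\mcalL_{<r}$, uniformly in $p\in[1,\infty]$.

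\textbf{Duality and the ball.} For part (2) I would use $\llceil\beta-\alpha\rrceil_p=\llfloor(-\beta)-(-\alpha)\rrfloor_p$ (Proposition~\ref{P:BnormProperty}(1)), which shows that each upper sublevel set of $\llceil\cdot-\alpha\rrceil_p$ is the image under negation of the corresponding lower sublevel set of $\llfloor\cdot-(-\alpha)\rrfloor_p$; since negation turns a lower tropically convex set into an upper tropically convex one (Lemma~\ref{L:neg_tseg}), part (2) follows from part (1). Part (3) is then immediate: by Proposition~\ref{P:BnormProperty}(6), $\Vert\cdot\Vert=\llfloor\cdot\rrfloor_\infty=\llceil\cdot\rrceil_\infty$, so $\mcalB(\alpha,r)$ and $\mcalB^0(\alpha,r)$ are simultaneously sublevel sets of $\llfloor\cdot-\alpha\rrfloor_\infty$ and of $\llceil\cdot-\alpha\rrceil_\infty$, hence both lower and upper tropically convex by parts (1) and (2) at $p=\infty$. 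The argument is essentially routine once the normalized representative is chosen; the only point demanding care is the computation of $\inf h_c$ and the accompanying case split on the sign of $c$, which is precisely what makes the pointwise domination by $\underline{f_1}$ or $\underline{f_2}$ visible.
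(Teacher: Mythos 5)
Your proof is correct, but it takes a genuinely different route from the paper's. The paper proves part (1) by letting $\beta_0=\lowerpi_{\underline{[\beta_1,\beta_2]}}(\alpha)$ be the tropical projection of $\alpha$ onto the segment and invoking Theorem~\ref{T:main} and Proposition~\ref{P:working}: along each of the two sub-segments $\underline{[\beta_0,\beta_1]}$ and $\underline{[\beta_0,\beta_2]}$ the function $\llfloor\cdot-\alpha\rrfloor_p$ is monotone increasing away from $\beta_0$, so its value anywhere on $\underline{[\beta_1,\beta_2]}$ is at most $\max(\llfloor\beta_1-\alpha\rrfloor_p,\llfloor\beta_2-\alpha\rrfloor_p)$; part (2) is then done ``analogously'' and part (3) exactly as you do it. You reach the same quasi-convexity bound by a direct computation: parametrizing the segment via Lemma~\ref{L:TropLinear} as $h_c=\min(c+\underline{f_1},\underline{f_2})$, computing $\inf(h_c)=\min(c,0)$, and observing that the normalized representative $\underline{h_c}$ is pointwise dominated by $\underline{f_1}$ or $\underline{f_2}$ according to the sign of $c$, so Proposition~\ref{P:BnormProperty}(8) finishes the job. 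Your argument is more elementary and self-contained --- it bypasses the projection machinery entirely (and in particular never needs the existence/uniqueness content of Theorem~\ref{T:main}), and your treatment of part (2) by negation duality via Proposition~\ref{P:BnormProperty}(1) and Lemma~\ref{L:neg_tseg} is arguably cleaner than the paper's ``analogous argument.'' What the paper's route buys in exchange is extra structural information for free: the pseudonorm restricted to a segment is not merely quasi-convex but has a single minimum at the tropical projection of $\alpha$, with strict monotonicity on either side for $p\in[1,\infty)$. Both proofs are sound; the only step in yours that needed care --- the identity $\inf(\min(a_1,a_2))=\min(\inf a_1,\inf a_2)$ and the resulting case split on the sign of $c$ --- is handled correctly.
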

 
 \begin{proof}
 Let $\beta_1$ and $\beta_2$ be elements in $\mbbTP(X)$. Let $\beta_0=\lowerpi_{\underline{[\beta_1,\beta_2]}}(\alpha)$ Then by Theorem~\ref{T:main} and Proposition~\ref{P:working}, for all $\beta\in\underline{[\beta_0,\beta_1]}$, $\llfloor \beta_0-\alpha\rrfloor_p\leq \llfloor \beta-\alpha\rrfloor_p\ \leq \llfloor \beta_1-\alpha\rrfloor_p$, and for all $\beta\in\underline{[\beta_0,\beta_2]}$, $\llfloor \beta_0-\alpha\rrfloor_p\leq \llfloor \beta-\alpha\rrfloor_p\ \leq \llfloor \beta_2-\alpha\rrfloor_p$. In sum, 
 $\llfloor \beta-\alpha\rrfloor_p\leq \max(\llfloor \beta_1-\alpha\rrfloor_p,\llfloor \beta_2-\alpha\rrfloor_p)$. Then (1) follows from Definition~\ref{D:TropConv}. 
 
 Using an analogous argument with respect to the case of upper tropical convexity, we can derive (2). 
 
 For (3), we note that $\mcalB(\alpha,r)=\mcalL_{\leq r}(\Vert \cdot -\alpha\Vert)=\mcalL_{\leq r}(\llfloor \cdot -\alpha\rrfloor_\infty)=\mcalL_{\leq r}(\llceil \cdot -\alpha\rrceil_\infty)$ (Proposition~\ref{P:BnormProperty}(6)). Then (3) follows from (1) and (2). 
  \end{proof}

\begin{example}

\begin{figure}
\centering
\includegraphics[scale=1]{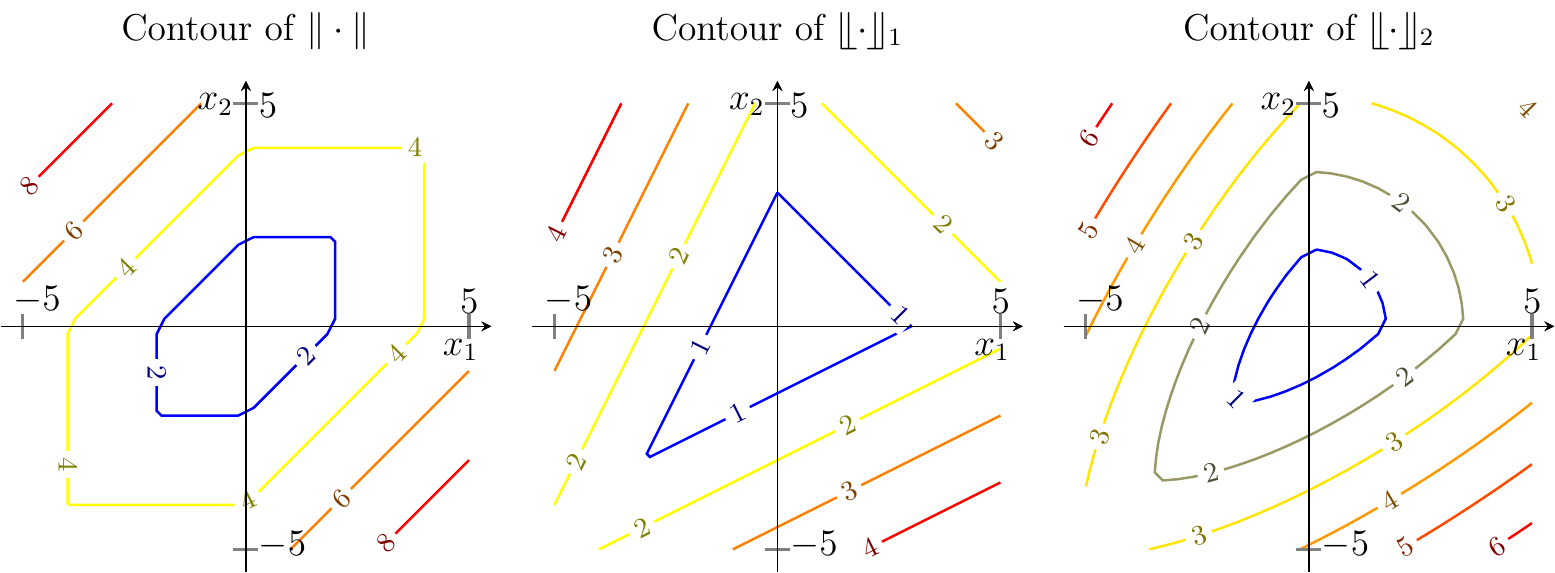}
 \caption{Contour lines of the tropical norm $\Vert\cdot\Vert$ and the  $B$-pseudonorms $\llfloor\cdot \rrfloor_1$ and $\llfloor\cdot \rrfloor_2$ on $\mbbTP(\{\ve_1,\ve_2,\ve_3\})$ represented by the $x_1x_2$-plane (as in Example~\ref{E:tconv} and Example~\ref{E:bpseudonorm}).} \label{F:contour}
 \end{figure}
 
 Figure~\ref{F:contour} shows some contour lines of $\Vert\cdot\Vert$, $\llfloor\cdot \rrfloor_1$ and $\llfloor\cdot \rrfloor_2$  under the same assumption of Example~\ref{E:bpseudonorm}. We observe that the contour lines of $\Vert\cdot\Vert$ are hexagons which enclose the balls  centered at the origin, the contour lines of $\llfloor\cdot \rrfloor_1$ are triangles which enclose the sub-level sets $\mcalL_{\leq r}(\llfloor \cdot\rrfloor_1)$ for different $r$'s, and the contour lines of $\llfloor\cdot \rrfloor_2$ are arrowhead-shaped curves which enclose the sub-level sets $\mcalL_{\leq r}(\llfloor \cdot\rrfloor_2)$ for different $r$'s. 
 
\end{example}

\begin{definition} \label{D:TropConvFunc}
Let $T$ be a lower tropically convex subset (respectively upper tropically convex subset) of $\mbbTP(X)$. Then we say a function $f$ on $T$ is \emph{lower tropically convex} (respectively \emph{upper tropically convex}) on $T$ if for each $\alpha,\beta\in T$, we have $f(P_{(\alpha,\beta)}(td))\leq tf(\alpha)+(1-t)f(\beta)$ (respectively $f(P^{(\alpha,\beta)}(td))\leq tf(\alpha)+(1-t)f(\beta)$) where $d=\rho(\alpha,\beta)$. 
\end{definition}

\begin{lemma}
If $T$ is a lower tropically convex subset   (respectively upper tropically convex subset) of $\mbbTP(X)$ and $f$ is a lower tropically convex  (respectively upper tropically convex) function on $T$, then $\mcalL^T_{\leq r}(f)$ and $\mcalL^T_{< r}(f)$ are lower tropically convex (respectively upper tropically convex). 
\end{lemma}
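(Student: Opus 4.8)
The plan is to imitate the classical fact that sublevel sets of a convex function are convex, transcribing it verbatim into the tropical setting. I will carry out the lower case in full; the upper case is obtained by the identical argument with the upper tropical segment $\overline{[\alpha,\beta]}$ and the path $P^{(\alpha,\beta)}$ in place of $\underline{[\alpha,\beta]}$ and $P_{(\alpha,\beta)}$, and with ``upper tropically convex'' substituted for ``lower tropically convex'' throughout.

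First I would fix two elements $\alpha,\beta\in\mcalL^T_{\leq r}(f)$ and an arbitrary point $\gamma$ of the lower tropical segment $\underline{[\alpha,\beta]}$; the goal is to show $\gamma\in\mcalL^T_{\leq r}(f)$, i.e.\ that $\gamma\in T$ and $f(\gamma)\leq r$. Membership in $T$ is immediate: since $\mcalL^T_{\leq r}(f)\subseteq T$ and $T$ is lower tropically convex, Definition~\ref{D:TropConv} gives $\underline{[\alpha,\beta]}\subseteq T$, so in particular $\gamma\in T$ and $f(\gamma)$ is defined.

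Next I would parametrize the segment. Writing $d=\rho(\alpha,\beta)$, Proposition~\ref{P:TropSeg}(3) asserts that $P_{(\alpha,\beta)}$ is an isometry of $[0,d]$ onto $\underline{[\alpha,\beta]}$, so $\gamma=P_{(\alpha,\beta)}(td)$ for some $t\in[0,1]$ (when $d=0$ we have $\alpha=\beta$ and there is nothing to prove). Definition~\ref{D:TropConvFunc} then yields
\[
f(\gamma)=f\bigl(P_{(\alpha,\beta)}(td)\bigr)\leq t\,f(\alpha)+(1-t)\,f(\beta).
\]
Since $t,1-t\geq 0$ and $t+(1-t)=1$, the right-hand side is a convex combination of $f(\alpha)$ and $f(\beta)$, hence bounded above by $\max\bigl(f(\alpha),f(\beta)\bigr)\leq r$. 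Therefore $f(\gamma)\leq r$, so $\gamma\in\mcalL^T_{\leq r}(f)$, which proves that $\mcalL^T_{\leq r}(f)$ is lower tropically convex. The strict sublevel set $\mcalL^T_{< r}(f)$ is handled identically: if $f(\alpha)<r$ and $f(\beta)<r$ then $\max\bigl(f(\alpha),f(\beta)\bigr)<r$, and the same chain of inequalities gives $f(\gamma)<r$, so the strict case follows from the very same estimate.

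Finally, a word on difficulty: there is essentially no obstacle here, as the statement is the tropical transcription of a textbook lemma. The only points demanding care are bookkeeping ones, namely verifying that every point of $\underline{[\alpha,\beta]}$ genuinely has the form $P_{(\alpha,\beta)}(td)$ with $t\in[0,1]$ (which is exactly the surjectivity built into the isometry of Proposition~\ref{P:TropSeg}(3)), and reducing the convex-combination bound to $\max\bigl(f(\alpha),f(\beta)\bigr)$ so that the weak and strict cases are dispatched uniformly. No appeal to the $B$-pseudonorms, the measure $\mu$, or the projection machinery of Theorem~\ref{T:main} is required.
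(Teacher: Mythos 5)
Your proof is correct and is exactly the argument the paper has in mind — the paper's own proof is just the remark that the claim "can be easily verified from Definition~\ref{D:TropConv} and Definition~\ref{D:TropConvFunc}," and you have filled in precisely that verification (segment lies in $T$ by tropical convexity of $T$, then bound $f$ along the path by $\max(f(\alpha),f(\beta))$). No issues.
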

\begin{proof}
This can be easily verified from Definition~\ref{D:TropConv} and Definition~\ref{D:TropConvFunc}. 
\end{proof}

\section{Tropical Retracts and the Contractibiity of Tropical Convex Sets}  \label{S:TropRetract}
\begin{definition}
Let $W\subseteq\mbbTP(X)$ be lower (respectively upper)   tropically convex. Let $T$ be a compact lower (respectively upper) tropical convex subset of $W$. Then we say a strong deformation retraction $h:[0,1]\times W \rightarrow W$ of $W$ onto $T$ is a \emph{lower (respectively upper) tropical retraction} if at each $t\in[0,1]$, the set $h(t,W)$ is lower (respectively upper) tropically convex. In this sense, we say $T$ is a \emph{lower (respectively upper) tropical retract} of $W$.
\end{definition}

For $\gamma\in\mbbTP(X)$ and $S\subseteq\mbbTP(X)$, we write $\rho(\gamma,S):=\inf_{\beta\in S}\rho(\gamma,\beta)$. 

\begin{theorem} \label{T:TropRetract}
For each lower   (respectively upper)   tropically convex set $W\subseteq\mbbTP(X)$ and each compact lower (respectively upper) tropical convex subset  $T$ of $W$, there exists a lower   (respectively upper)  tropical retraction of $W$ onto $T$.
\end{theorem}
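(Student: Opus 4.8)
The plan is to build the retraction by sliding every point of $W$ toward its lower tropical projection onto $T$ along the corresponding lower tropical path. Since $T$ is compact, Theorem~\ref{T:main} guarantees that $\lowerpi_T(\alpha)$ exists for every $\alpha\in\mbbTP(X)$ and that it realizes the distance, so $\rho(\alpha,T)=\rho(\alpha,\lowerpi_T(\alpha))$. I therefore define, for $t\in[0,1]$ and $\alpha\in W$,
\[
 h(t,\alpha):=P_{(\alpha,\lowerpi_T(\alpha))}\bigl(t\,\rho(\alpha,T)\bigr),
\]
the point on the lower tropical segment $\underline{[\alpha,\lowerpi_T(\alpha)]}$ at distance $t\,\rho(\alpha,T)$ from $\alpha$. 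Because $\alpha,\lowerpi_T(\alpha)\in W$ and $W$ is lower tropically convex, this segment lies in $W$, so $h$ maps $[0,1]\times W$ into $W$. Choosing representatives $f$ of $\alpha$ and $g$ of $\lowerpi_T(\alpha)$ normalized by $g\ge f$ and $\inf(g-f)=0$, one has the explicit description $h(t,\alpha)=[\min(f+t\,\rho(\alpha,T),g)]$, which will be convenient throughout.

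First I would verify the strong-deformation-retraction axioms. By Remark~\ref{R:TropPath}, $h(0,\alpha)=\alpha$ and $h(1,\alpha)=\lowerpi_T(\alpha)\in T$; since $\lowerpi_T$ fixes $T$ pointwise (Proposition~\ref{P:TropProj}(3)), we get $h(t,\tau)=\tau$ for all $\tau\in T$ and all $t$, and $h(1,W)=\lowerpi_T(W)=T$. The reparametrization identity $h(s,h(t,\alpha))=h(t+s(1-t),\alpha)$, which follows from Proposition~\ref{P:TropProj}(4) (the projection is constant along $\underline{[\alpha,\lowerpi_T(\alpha)]}$) together with the segment-gluing in Proposition~\ref{P:TropSeg}, confirms that $h$ is a genuine deformation consistent across time. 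For continuity I would use that $\lowerpi_T$ is nonexpansive (Proposition~\ref{P:TropProj}(5)), so $\alpha\mapsto\lowerpi_T(\alpha)$ and $\alpha\mapsto\rho(\alpha,T)$ are continuous, and that each path is an isometry (Proposition~\ref{P:TropSeg}(3)), giving $\rho(h(t,\alpha),h(t',\alpha))=|t-t'|\,\rho(\alpha,T)$. Since the normalized representatives $f,g$ above can be chosen to depend continuously on $\alpha$ in $BC(X)$ (the map $\alpha\mapsto\underline{f}$ is $1$-Lipschitz, as in the proof of Proposition~\ref{P:TPBanach}), joint continuity then follows from a triangle-inequality split and uniform convergence of the clipped functions $\min(f+t\,\rho(\alpha,T),g)$.

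The substance of the theorem is that each slice $W_t:=h(t,W)$ is lower tropically convex, and this is where I expect the real work to be. The key structural input is again Proposition~\ref{P:TropProj}(4): every point $h(t,\alpha)$ lies on $\underline{[\alpha,\lowerpi_T(\alpha)]}$, whence $\lowerpi_T(h(t,\alpha))=\lowerpi_T(\alpha)$ and $\rho(h(t,\alpha),T)=(1-t)\rho(\alpha,T)$. To prove convexity I would take $\beta_i=h(t,\alpha_i)$ for $i=1,2$ and a point $\gamma\in\underline{[\beta_1,\beta_2]}$, and exhibit a preimage of $\gamma$ under $h(t,\cdot)$; the natural candidate is the image under $h(t,\cdot)$ of the corresponding point of $\underline{[\alpha_1,\alpha_2]}\subseteq W$, written out through the tropical-combination formula of Lemma~\ref{L:TropLinear}. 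Checking that this candidate projects correctly and actually lands on $\gamma$ is the delicate computation, and I would carry it out with the purely set-theoretic criterion Corollary~\ref{C:CritTropProj}(a5)--(a6) together with the monotonicity dichotomy of Proposition~\ref{P:working}.

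I expect the main obstacle to be precisely the interaction between $\lowerpi_T$ and lower tropical segments: unlike Euclidean projection, $\lowerpi_T$ need not carry $\underline{[\alpha_1,\alpha_2]}$ into $\underline{[\lowerpi_T(\alpha_1),\lowerpi_T(\alpha_2)]}$, and the extension of a retraction path \emph{away} from $T$ may fail to be unique, so $h(t,\cdot)$ need not be injective and cannot simply be inverted. The plan is therefore to avoid inverting $h(t,\cdot)$ and instead argue entirely set-theoretically: one shows directly that each $\gamma$ on the segment joining two slice points satisfies the minimizer identity of Corollary~\ref{C:CritTropProj}(a5) with respect to $\lowerpi_T(\gamma)$ and has $\rho(\gamma,T)$ of the correct proportion $(1-t)$, which is exactly what places $\gamma$ in $W_t$. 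Finally, the upper tropical case is entirely parallel: replacing $\lowerpi_T$, $P_{(\cdot,\cdot)}$, $\underline{[\cdot,\cdot]}$ by $\upperpi_T$, $P^{(\cdot,\cdot)}$, $\overline{[\cdot,\cdot]}$ and invoking the ``upper'' halves of the cited results, with the symmetry $\llceil\alpha\rrceil_p=\llfloor-\alpha\rrfloor_p$ and Lemma~\ref{L:neg_tseg} reducing it formally to the lower case.
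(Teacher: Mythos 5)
Your construction is not the paper's, and unfortunately the difference is fatal: the proportional-speed homotopy $h(t,\alpha)=P_{(\alpha,\lowerpi_T(\alpha))}(t\,\rho(\alpha,T))$ does \emph{not} in general have lower tropically convex time-slices, so it is not a lower tropical retraction. Here is a concrete counterexample in $\mbbTP(\{1,2,3\})$, written in the $x_1x_2$-plane coordinates of Example~\ref{E:tconv}. Let $T=\underline{[(0,0),(-2,0)]}=\{(-s,0):0\le s\le 2\}$ (compact and lower tropically convex) and let $W=\lowertconv\bigl(T\cup\{(1,1),(1,-1)\}\bigr)$. One checks from Corollary~\ref{C:CritTropProj}(a5) that $\lowerpi_T((1,1))=\lowerpi_T((1,-1))=(0,0)$ with $\rho((1,1),T)=1$ and $\rho((1,-1),T)=2$, so your map gives $h(\tfrac12,(1,1))=(\tfrac12,\tfrac12)$ and $h(\tfrac12,(1,-1))=(0,-1)$, both in $W_{1/2}$. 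The lower tropical segment $\underline{[(\tfrac12,\tfrac12),(0,-1)]}$ contains $\gamma=(\tfrac12,-\tfrac12)$. If $\gamma=h(\tfrac12,\alpha)$ for some $\alpha\in W$, then Proposition~\ref{P:TropProj}(4) forces $\lowerpi_T(\alpha)=\lowerpi_T(\gamma)=(0,0)$, the isometry of the path forces $\rho(\alpha,T)=2$ and $\rho(\alpha,\gamma)=1$, and the condition $\gamma\in\underline{[\alpha,(0,0)]}$ (Proposition~\ref{P:TropSeg}(5)) pins $\alpha$ down uniquely to $(\tfrac32,-\tfrac12)$. But $(\tfrac32,-\tfrac12)\notin W$: by Theorem~\ref{T:CriterionTropIndep} membership would require $\bigcup_i\Xmin(\beta_i-\alpha)=X$ over generators $\beta_i$ of $W$, yet the third coordinate lies in $\Xmin(\beta-\alpha)$ only if $\beta_1\ge\tfrac32$, which no generator satisfies. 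Hence $\gamma\notin W_{1/2}$ and $W_{1/2}$ is not lower tropically convex. This is exactly the obstruction you flagged (``the extension of a retraction path away from $T$ may fail to be unique''), but the problem is worse than non-uniqueness: the required preimage is unique and simply fails to lie in $W$, so no set-theoretic argument can place $\gamma$ in $W_t$.

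The paper's proof avoids this by \emph{staggering} the schedule rather than moving all points proportionally: fixing a decreasing homeomorphism $\phi:[0,\kappa]\to[0,1]$ with $\kappa=\sup_{\gamma\in W}\rho(\gamma,T)$, a point $\gamma$ stays put until time $\phi(\rho(\gamma,T))$ and then slides along $\underline{[\gamma,\lowerpi_T(\gamma)]}$ so that at time $t$ it sits at distance exactly $\min(\rho(\gamma,T),\phi^{-1}(t))$ from $T$. With that choice the time-$t$ slice is precisely the sublevel set $\{\gamma\in W\mid\rho(\gamma,T)\le\phi^{-1}(t)\}$, whose lower tropical convexity follows in two lines from Proposition~\ref{P:DistIneq}(2) (in the example above the sublevel set at level $1$ does contain $(\tfrac12,-\tfrac12)$, since $\rho((\tfrac12,-\tfrac12),T)=1$). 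If you want to salvage your write-up, replace the proportional parametrization by this distance-threshold one; the verification of the retraction axioms goes through as you describe, and the remaining work is the joint continuity of $h$, for which the paper gives a case analysis yielding $\rho(h(t,\gamma_n),h(t,\gamma))\le2\rho(\gamma_n,\gamma)$ via Proposition~\ref{P:TropProj}(5) and Proposition~\ref{P:DistIneq}.
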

\begin{proof}
Here we give a proof for the case of lower tropical convexity, while a proof for the case of upper tropical convexity can be derived analogously. 

We will explicitly construct such a tropical retraction $h:[0,1]\times W \rightarrow W$. In particular, for each $\gamma\in W$, we want $h(0,\gamma)=\gamma$ and $h(1,\gamma)=\lowerpi_T(\gamma)$. 

Note that since $T$ is compact and lower tropically convex, we must have  $\rho(\gamma,T)=\min_{\beta\in T}\rho(\gamma,\beta)=\rho(\gamma,\lowerpi_T(\gamma))$ by Theorem~\ref{T:main}. Let $\kappa=\sup_{\gamma\in W}\rho(\gamma,T)$. Note that  $T$ must be bounded since it is compact, and $\kappa=\infty$ when $W$ is not bounded. 

Let $\phi:[0,\kappa]\to [0,1]$ be any continuous monotonically decreasing function such that $\phi(0)=1$ and $\phi(\kappa)=0$ (we allow $\kappa$ to be $\infty$). Then we define $h$ in the following way:

$$ h(t, \gamma) = 
  \begin{cases}
    \gamma, & \text{for } 0 \leq t < \phi(\rho(\gamma,T))\\
    P_{(\gamma,\lowerpi_T(\gamma))}(\rho(\gamma,T)-\phi^{-1}(t)), & \text{for } \phi(\rho(\gamma,T)) \leq t \leq 1.
  \end{cases}$$
  
 In other words, at $t=\phi(\rho(\gamma,T))$, the point at $\gamma$ starts to retract towards  $\lowerpi_T(\gamma)$ along $\underline{[\gamma,\lowerpi_T(\gamma)]}$ and at $t=1$, it hits $\lowerpi_T(\gamma)$. It is clear that $h(t,\gamma)=\gamma$ for all $t\in[0,1]$ and $\gamma\in T$. Now, to show $h$ is actually a lower tropical retraction of $W$ onto $T$, it remains to show that $h(t,W)$ is lower tropically convex for all $t\in[0,1]$ and $h$ is continuous. 
 
 To show that $h(t,W)$ is lower tropically convex, we note that the distance $\rho(h(t, \gamma),T)$ from $h(t, \gamma)$ to $T$
 is at most $\phi^{-1}(t)$. More precisely, $\rho(h(t, \gamma),T)=\phi^{-1}(t)$ when  $t\geq \phi(\rho(\gamma,T)) $ and $\rho(h(t, \gamma),T)=\rho(\gamma,T)$ when $t\leq \phi(\rho(\gamma,T)) $. Therefore, $h(t,W)$ is identical to the sublevel set $\{\gamma\in W\mid \rho(\gamma,T) \leq \phi^{-1}(t)\}$ of $\rho(\cdot,T)$. Then for each $\gamma_1,\gamma_2\in h(t,W)$, we have $\rho(\gamma_1,T)\leq \phi^{-1}(t)$ and $\rho(\gamma_2,T)\leq \phi^{-1}(t)$. To prove that $h(t,W)$ is lower tropically convex, it remains to show that for each $\gamma\in \underline{[\gamma_1,\gamma_2]}$, $\rho(\gamma,T)\leq \phi^{-1}(t)$. Let $\alpha_1=\lowerpi_T(\gamma_1)$ and $\alpha_2=\lowerpi_T(\gamma_2)$. Let $\alpha=\lowerpi_{\underline{[\alpha_1,\alpha_2]}}(\gamma)$. Then by Proposition~\ref{P:DistIneq}(2), 
 $$\rho(\gamma,T)\leq\rho(\gamma,\alpha)\leq\max(\rho(\gamma_1,\alpha_1),\rho(\gamma_2,\alpha_2))=\max(\rho(\gamma_1,T),\rho(\gamma_2,T))\leq \phi^{-1}(t).$$
 
 Now let us show that $h$ is continuous, i.e., $h(t_n,\gamma_n)\to h(t,\gamma)$ as $t_n\to t$ and $\gamma_n\to \gamma$. 
 By the triangle  inequality, 
 $$\rho(h(t_n,\gamma_n),h(t,\gamma))\leq\rho(h(t_n,\gamma_n),h(t_n,\gamma))+\rho(h(t_n,\gamma),h(t,\gamma)).$$
 
 We first note that $\rho(h(t_n,\gamma),h(t,\gamma))\leq \rho(\gamma,T)|\phi^{-1}(t)-\phi^{-1}(t_n)|$. Since $\phi$ and $\phi^{-1}$ are continuous, $\rho(h(t_n,\gamma),h(t,\gamma))\to 0$ as $t_n\to t$. Write $\gamma'_n=h(t_n,\gamma_n)$ and $\gamma'=h(t_n,\gamma)$. We then claim that $\rho(\gamma'_n,\gamma')\leq 2\rho(\gamma_n,\gamma)$ which is sufficient to guarantee  the continuity of $h$. 
 
Let $\alpha_n = \lowerpi_T(\gamma_n)=\lowerpi_T(\gamma'_n)$ and  $\alpha = \lowerpi_T(\gamma)=\lowerpi_T(\gamma')$. Then $\rho(\gamma_n,\alpha_n)=\rho(\gamma_n,T)$ and $\rho(\gamma,\alpha)=\rho(\gamma_n, T)$. Note that $\rho(\alpha_n,\alpha)\leq \rho(\gamma_n,\gamma)$ by Proposition~\ref{P:TropProj}(5).

Case (a): $\phi^{-1}(t) \geq \max(\rho(\gamma_n,T),\rho(\gamma,T))$. In this case, $\gamma'_n=\gamma_n$, $\gamma'=\gamma$, and thus $\rho(\gamma'_n,\gamma')=\rho(\gamma_n,\gamma)$.

Case (b): $ \min(\rho(\gamma_n,T),\rho(\gamma,T))\leq \phi^{-1}(t) \leq \max(\rho(\gamma_n,T),\rho(\gamma,T))$. Without loss of generality, we may assume 
$\rho(\gamma_n,T)\leq\rho(\gamma,T)$ which means $\rho(\gamma_n,\alpha_n)=\rho(\gamma_n,T)\leq\phi^{-1}(t) \leq\rho(\gamma,T)=\rho(\gamma,\alpha)$. Then $\gamma'_n=\gamma_n$ and $\rho(\gamma',\alpha)=\rho(\gamma',T)=\phi^{-1}(t)$. 

Let $\gamma'' =\lowerpi_{\underline{[\gamma,\alpha]}}(\gamma_n)$ and $\alpha'' =\lowerpi_{\underline{[\gamma,\alpha]}}(\alpha_n)$.  Depending on the relative positions of $\gamma'$ and $\gamma''$ in $\underline{[\gamma,\alpha]}$, there are two subcases:

Subcase (b1): $\gamma''\in\underline{[\alpha,\gamma']}$. Then $\rho(\gamma'_n,\gamma')=\rho(\gamma_n,\gamma')\leq \rho(\gamma_n,\gamma)$. 

Subcase (b2): $\gamma'\in\underline{[\alpha,\gamma'']}$. Note that $\rho(\alpha'',\alpha)\leq \rho(\alpha_n,\alpha)$, $\rho(\gamma'',\alpha'')\leq \rho(\gamma'_n,\alpha_n)$ (Proposition~\ref{P:TropProj}(5)) and $\rho(\gamma_n,\alpha_n)\leq\phi^{-1}(t) =\rho(\gamma',\alpha)\leq\rho(\gamma'',\alpha)$. 
Therefore,
\begin{align*}
\rho(\gamma'',\gamma') &=\rho(\gamma'',\alpha)-\rho(\gamma',\alpha)\leq(\rho(\gamma'',\alpha'')+\rho(\alpha'',\alpha))-\rho(\gamma',\alpha) \\
&\leq(\rho(\gamma'_n,\alpha_n)+\rho(\alpha_n,\alpha))-\rho(\gamma',\alpha)\leq \rho(\alpha_n,\alpha)\leq\rho(\gamma_n,\gamma).
\end{align*}
In addition, we have $\rho(\gamma_n,\gamma'')\leq\rho(\gamma_n,\gamma).$
It follows $$\rho(\gamma'_n,\gamma')=\rho(\gamma_n,\gamma')\leq\rho(\gamma_n,\gamma'')+\rho(\gamma'',\gamma')\leq 2\cdot\rho(\gamma_n,\gamma).$$

Case (c): $\phi^{-1}(t) \leq \min(\rho(\gamma_n,T),\rho(\gamma,T))$. Then $\rho(\gamma'_n,\alpha_n)=\rho(\gamma'_n,T) =\rho(\gamma',T)=\rho(\gamma',\alpha)=\phi^{-1}(t)$.

Let $\gamma''=\lowerpi_{\underline{[\gamma,\alpha]}}(\gamma'_n)$ and $\gamma''_n=\lowerpi_{\underline{[\gamma_n,\alpha_n]}}(\gamma')$. Depending on the relative positions of $\gamma'$ and $\gamma''$ in $\underline{[\gamma,\alpha]}$ and of the relative positions of $\gamma'_n$ and $\gamma''_n$ in $\underline{[\gamma_n,\alpha_n]}$, there are four subcases:

Subcase (c1): $\gamma''\in \underline{[\gamma',\alpha]}$ and $\gamma''_n\in \underline{[\gamma'_n,\alpha_n]}$. Then by Proposition~\ref{P:DistIneq}(1), 
$\rho(\gamma'_n,\gamma')\leq \rho(\gamma_n,\gamma)$.

Subcase (c2):  $\gamma''\in \underline{[\gamma',\gamma]}$ and $\gamma''_n\in \underline{[\gamma'_n,\gamma_n]}$. Then again by Proposition~\ref{P:DistIneq}(1), $\rho(\gamma'_n,\gamma')\leq \rho(\alpha_n,\alpha)\leq \rho(\gamma_n,\gamma)$.

Subcase (c3): $\gamma''\in \underline{[\gamma',\gamma]}$ and $\gamma''_n\in \underline{[\gamma'_n,\alpha_n]}$. Analyzing as in Subcase (b2) by introducing $\alpha'' =\lowerpi_{\underline{[\gamma,\alpha]}}(\alpha_n)$, we can derive $\rho(\gamma'',\gamma')\leq  \rho(\alpha_n,\alpha)\leq\rho(\gamma_n,\gamma)$. Moreover, $\rho(\gamma'_n,\gamma'')\leq \max(\rho(\alpha_n,\alpha),\rho(\gamma_n,\gamma))=\rho(\gamma_n,\gamma)$ by Proposition~\ref{P:DistIneq}(2). Therefore,
$\rho(\gamma'_n,\gamma')\leq\rho(\gamma'_n,\gamma'')+\rho(\gamma'',\gamma')\leq 2\cdot\rho(\gamma_n,\gamma)$.

Subcase (c4): $\gamma''\in \underline{[\gamma',\alpha]}$ and $\gamma''_n\in \underline{[\gamma'_n,\gamma_n]}$. Analogous to the analysis in Subcase (c3), we get $\rho(\gamma'_n,\gamma')\leq\rho(\gamma'_n,\gamma''_n)+\rho(\gamma''_n,\gamma')\leq 2\cdot\rho(\gamma_n,\gamma)$.

\end{proof}

\begin{corollary} \label{C:TropContr}
All lower or upper tropical convex sets are contractible. 
\end{corollary}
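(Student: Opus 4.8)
The plan is to deduce the corollary immediately from Theorem~\ref{T:TropRetract} by retracting onto a single point. Recall that a \emph{contraction} of a space $W$ to a point $p$ is a homotopy $H\colon [0,1]\times W\to W$ with $H(0,\cdot)=\mathrm{id}_W$ and $H(1,\cdot)\equiv p$; a strong deformation retraction of $W$ onto the singleton $\{p\}$ is exactly such an $H$ (with the extra property that $p$ is held fixed throughout). So it suffices to check that a single point of $W$ furnishes an admissible target $T$ in Theorem~\ref{T:TropRetract}.

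First I would dispose of the empty set (vacuously, or by the convention that contractibility is asserted only for nonempty sets). Assuming $W\neq\emptyset$, I would pick any $p\in W$ and set $T=\{p\}$. The set $T$ is compact, being finite. Moreover $T$ is both lower and upper tropically convex: the only pair of points one can choose from $T$ is $(p,p)$, and since $\rho(p,p)=0$, Definition~\ref{D:tpath} gives $\underline{[p,p]}=\overline{[p,p]}=\{p\}\subseteq T$, so the defining condition of Definition~\ref{D:TropConv} holds trivially. Hence $T$ is a compact lower (respectively upper) tropical convex subset of $W$ whenever $W$ is lower (respectively upper) tropically convex.

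Next I would invoke Theorem~\ref{T:TropRetract} with this $T$: it produces a lower (respectively upper) tropical retraction $h\colon[0,1]\times W\to W$, that is, a strong deformation retraction of $W$ onto $T=\{p\}$. In particular $h(0,\cdot)=\mathrm{id}_W$ and $h(1,W)=\{p\}$, so $h(1,\cdot)$ is the constant map at $p$. This is precisely a contraction of $W$ to $p$, and therefore $W$ is contractible.

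I do not expect any genuine obstacle here: the entire analytic difficulty---constructing a continuous homotopy through tropically convex slices---has already been absorbed into the proof of Theorem~\ref{T:TropRetract}. The only points requiring a one-line verification are that a singleton is compact and tropically convex in both senses, and that a strong deformation retraction onto a point is the same thing as a contraction; both are immediate.
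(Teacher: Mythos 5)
Your proposal is correct and is essentially the paper's own argument: the paper's proof is the one-liner ``Apply Theorem~\ref{T:TropRetract} while letting $W$ be a singleton,'' which is evidently a slip for letting the \emph{target} $T$ be a singleton, exactly as you do. Your verifications that a singleton is compact and tropically convex in both senses, and that a strong deformation retraction onto a point is a contraction, fill in the intended details with no gaps.
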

\begin{proof}
Apply Theorem~\ref{T:TropRetract} while letting $W$ be a singleton. 
\end{proof}

\begin{proposition} \label{P:DistIneq}

For $\alpha_1,\alpha_2,\gamma_1,\gamma_2\in \mbbTP(X)$, consider $\beta_1\in \underline{[\alpha_1,\gamma_1]}$ and $\beta_2\in \underline{[\alpha_2,\gamma_2]}$ (respectively consider $\beta_1\in \overline{[\alpha_1,\gamma_1]}$ and $\beta_2\in \overline{[\alpha_2,\gamma_2]}$).  

\begin{enumerate}
\item Let $\alpha'_1=\lowerpi_{\underline{[\alpha_1,\gamma_1]}}(\beta_2)$ and $\alpha'_2=\lowerpi_{ \underline{[\alpha_2,\gamma_2]}}(\beta_1)$.  If $\alpha'_1\in \underline{[\alpha_1,\beta_1]}$ and $\alpha'_2\in \underline{[\alpha_2,\beta_2]}$, then $\rho(\beta_1,\beta_2)\leq \rho(\gamma'_1,\gamma'_2)$ for all $\gamma'_1\in \underline{[\beta_1,\gamma_1]}$ and $\gamma'_2\in \underline{[\beta_2,\gamma_2]}$. 

(Respectively, let $\alpha'_1=\upperpi_{\overline{[\alpha_1,\gamma_1]}}(\beta_2)$ and $\alpha'_2=\upperpi_{ \overline{[\alpha_2,\gamma_2]}}(\beta_1)$.  If $\alpha'_1\in \overline{[\alpha_1,\beta_1]}$ and $\alpha'_2\in \overline{[\alpha_2,\beta_2]}$, then $\rho(\beta_1,\beta_2)\leq \rho(\gamma'_1,\gamma'_2)$ for all $\gamma'_1\in \overline{[\beta_1,\gamma_1]}$ and $\gamma'_2\in \overline{[\beta_2,\gamma_2]}$.)

\item If $\beta_1=\lowerpi_{\underline{[\alpha_1,\gamma_1]}}(\beta_2)$, then $\rho(\beta_1,\beta_2)\leq \max(\rho(\alpha_1,\alpha_2),\rho(\gamma_1,\gamma_2))$. If in addition $\alpha_1=\alpha_2$, then $\rho(\beta_1,\beta_2)\leq \rho(\gamma'_1,\gamma'_2)$ for all $\gamma'_1\in \underline{[\beta_1,\gamma_1]}$ and $\gamma'_2\in \underline{[\beta_2,\gamma_2]}$. 

(Respectively, if $\beta_1=\upperpi_{\overline{[\alpha_1,\gamma_1]}}(\beta_2)$, then $\rho(\beta_1,\beta_2)\leq \max(\rho(\alpha_1,\alpha_2),\rho(\gamma_1,\gamma_2))$. If in addition $\alpha_1=\alpha_2$, then $\rho(\beta_1,\beta_2)\leq \rho(\gamma'_1,\gamma'_2)$ for all $\gamma'_1\in \overline{[\beta_1,\gamma_1]}$ and $\gamma'_2\in \overline{[\beta_2,\gamma_2]}$.)

\item If  $\alpha_1=\alpha_2=\alpha$ and $\rho(\alpha,\beta_1)=\rho(\alpha,\beta_2)$, then $\rho(\beta_1,\beta_2)\leq \rho(\gamma'_1,\gamma'_2)$ for all $\gamma'_1\in \underline{[\beta_1,\gamma_1]}$ and $\gamma'_2\in \underline{[\beta_2,\gamma_2]}$. 

(Respectively, if  $\alpha_1=\alpha_2=\alpha$ and $\rho(\alpha,\beta_1)=\rho(\alpha,\beta_2)$, then $\rho(\beta_1,\beta_2)\leq \rho(\gamma'_1,\gamma'_2)$ for all $\gamma'_1\in \overline{[\beta_1,\gamma_1]}$ and $\gamma'_2\in \overline{[\beta_2,\gamma_2]}$.)

\end{enumerate}

\end{proposition}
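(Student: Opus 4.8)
The plan is to establish the three lower-tropical assertions and obtain each upper-tropical (``respectively'') counterpart for free by applying the lower version to the negated configuration: tropical inversion $\eta\mapsto-\eta$ interchanges $\underline{[\cdot,\cdot]}$ with $\overline{[\cdot,\cdot]}$ (Lemma~\ref{L:neg_tseg}) and $\lowerpi$ with $\upperpi$ (Proposition~\ref{P:TropProj}(2)) while preserving $\rho$. I would treat statement (1) as the core and deduce statement (3) and the second assertion of (2) from it.

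For (1), write $\alpha_i=[a_i]$, $\gamma_i=[g_i]$, $\beta_i=[b_i]$. I would first convert the two hypotheses into set-theoretic form. From $\alpha'_1=\lowerpi_{\underline{[\alpha_1,\gamma_1]}}(\beta_2)\in\underline{[\alpha_1,\beta_1]}$, Proposition~\ref{P:SeqTropProj} (applied to $\underline{[\beta_1,\gamma_1]}\subseteq\underline{[\alpha_1,\gamma_1]}$) together with Lemma~\ref{L:SegSum} gives $\beta_1=\lowerpi_{\underline{[\beta_1,\gamma_1]}}(\beta_2)$; since $\beta_1$ is an endpoint, Corollary~\ref{C:CritTropProj}(a5) with $\beta=\gamma_1$ yields $\Xmin(\gamma_1-\beta_1)\Cap\Xmin(\beta_1-\beta_2)\neq\emptyset$, and symmetrically $\Xmin(\gamma_2-\beta_2)\Cap\Xmin(\beta_2-\beta_1)\neq\emptyset$. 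Then I would parametrize $\gamma'_1=[\min(u,\underline{g_1-b_1})+b_1]$ and $\gamma'_2=[\min(v,\underline{g_2-b_2})+b_2]$ with $u,v\ge0$, so that in these representatives $\gamma'_1-\gamma'_2=(b_1-b_2)+\min(u,\underline{g_1-b_1})-\min(v,\underline{g_2-b_2})$ with the last two summands nonnegative. Using $\rho(\eta,\zeta)=\sup_{x,y\in X}\big[(h-k)(x)-(h-k)(y)\big]$ for $\eta=[h],\zeta=[k]$, for each $\epsilon>0$ I would pick $x_\epsilon\in\Xmin^\epsilon(\beta_2-\beta_1)\cap\Xmin^\epsilon(\gamma_2-\beta_2)$ and $y_\epsilon\in\Xmin^\epsilon(\beta_1-\beta_2)\cap\Xmin^\epsilon(\gamma_1-\beta_1)$, nonempty by the two $\Cap$-conditions. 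At $x_\epsilon$ the summand $\min(v,\underline{g_2-b_2})$ is below $\epsilon$ while $b_1-b_2$ is within $\epsilon$ of its supremum; at $y_\epsilon$ the summand $\min(u,\underline{g_1-b_1})$ is below $\epsilon$ while $b_1-b_2$ is within $\epsilon$ of its infimum. Hence $(\gamma'_1-\gamma'_2)(x_\epsilon)-(\gamma'_1-\gamma'_2)(y_\epsilon)>\rho(\beta_1,\beta_2)-4\epsilon$, and letting $\epsilon\to0$ gives $\rho(\gamma'_1,\gamma'_2)\ge\rho(\beta_1,\beta_2)$.

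The reductions to (1) are short. For (3), with $\alpha_1=\alpha_2=\alpha$ and $\rho(\alpha,\beta_1)=\rho(\alpha,\beta_2)$, I would verify the hypotheses of (1): since $\alpha$ lies in each segment, non-expansiveness of $\lowerpi$ (Proposition~\ref{P:TropProj}(5)) gives $\rho(\alpha,\alpha'_1)\le\rho(\alpha,\beta_2)=\rho(\alpha,\beta_1)$, and because $\underline{[\alpha,\gamma_1]}$ is isometric to an interval with endpoint $\alpha$ (Proposition~\ref{P:TropSeg}(3)) this forces $\alpha'_1\in\underline{[\alpha,\beta_1]}$, and symmetrically $\alpha'_2\in\underline{[\alpha,\beta_2]}$. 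For the second assertion of (2) the hypothesis $\beta_1=\lowerpi_{\underline{[\alpha,\gamma_1]}}(\beta_2)$ makes $\alpha'_1=\beta_1$ automatic, while $\rho(\alpha,\alpha'_2)\le\rho(\alpha,\beta_1)\le\rho(\alpha,\beta_2)$ (two applications of Proposition~\ref{P:TropProj}(5)) places $\alpha'_2\in\underline{[\alpha,\beta_2]}$; in both cases (1) applies verbatim.

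I expect the genuinely different piece to be the first assertion of (2), the upper bound $\rho(\beta_1,\beta_2)\le\max(\rho(\alpha_1,\alpha_2),\rho(\gamma_1,\gamma_2))$. Since $\beta_1=\lowerpi_{\underline{[\alpha_1,\gamma_1]}}(\beta_2)$ minimizes $\rho(\cdot,\beta_2)$ over the segment (Theorem~\ref{T:main}), it suffices to exhibit a single point $\delta\in\underline{[\alpha_1,\gamma_1]}$ with $\rho(\delta,\beta_2)\le\max(\rho(\alpha_1,\alpha_2),\rho(\gamma_1,\gamma_2))$, equivalently to show that $\beta_2\mapsto\rho(\beta_2,\underline{[\alpha_1,\gamma_1]})$ is quasiconvex along $\underline{[\alpha_2,\gamma_2]}$. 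This is the hard part: the naive ``same tropical-parameter'' comparison point can strictly overshoot the bound (on small examples $X=\{1,2,3\}$ it can be twice too large while the true projection attains the bound), so $\delta$ must instead be built from the endpoint projections $p=\lowerpi_{\underline{[\alpha_1,\gamma_1]}}(\alpha_2)$ and $q=\lowerpi_{\underline{[\alpha_1,\gamma_1]}}(\gamma_2)$ and the interval structure of $\underline{[p,q]}$, after which the estimate should reduce to the pointwise $1$-Lipschitz behaviour of $\minplus$ together with a short case analysis on which branches of $b_1$ and $b_2$ are active.
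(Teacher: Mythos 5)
Your treatment of part (1) is correct and takes a genuinely different route from the paper: you convert the two hypotheses into the set-theoretic conditions $\Xmin(\gamma_1-\beta_1)\Cap\Xmin(\beta_1-\beta_2)\neq\emptyset$ and $\Xmin(\gamma_2-\beta_2)\Cap\Xmin(\beta_2-\beta_1)\neq\emptyset$ and then run a pointwise estimate at $\epsilon$-approximate minimizers, whereas the paper stays entirely at the level of the $\llfloor\cdot\rrfloor_1$-pseudonorm, chaining the additivity identities of Corollary~\ref{C:CritTropProj}(a4) through $\alpha'_1$ and $\alpha'_2$ and finishing with the triangle inequality and Proposition~\ref{P:BnormProperty}(5). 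Both arguments are sound; yours is more hands-on, the paper's is shorter and reuses the projection criteria verbatim. Your derivations of part (3) and of the second assertion of (2) via Proposition~\ref{P:TropProj}(5) coincide with the paper's.

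The genuine gap is the first assertion of (2), the bound $\rho(\beta_1,\beta_2)\leq\max(\rho(\alpha_1,\alpha_2),\rho(\gamma_1,\gamma_2))$ for general $\alpha_1\neq\alpha_2$, which you explicitly leave as ``the hard part'' with only a sketched strategy (building a comparison point $\delta$ from the endpoint projections). No new construction is needed: this assertion is a two-case consequence of part (1). Since $\underline{[\alpha_2,\gamma_2]}=\underline{[\alpha_2,\beta_2]}\bigcup\underline{[\beta_2,\gamma_2]}$ by Proposition~\ref{P:TropSeg}(5), the point $\alpha'_2=\lowerpi_{\underline{[\alpha_2,\gamma_2]}}(\beta_1)$ lies in at least one of the two halves. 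If $\alpha'_2\in\underline{[\alpha_2,\beta_2]}$, the hypotheses of (1) hold as stated (the hypothesis $\beta_1=\lowerpi_{\underline{[\alpha_1,\gamma_1]}}(\beta_2)$ makes $\alpha'_1=\beta_1\in\underline{[\alpha_1,\beta_1]}$ automatic), and taking $\gamma'_i=\gamma_i$ gives $\rho(\beta_1,\beta_2)\leq\rho(\gamma_1,\gamma_2)$. If instead $\alpha'_2\in\underline{[\beta_2,\gamma_2]}$, apply (1) with the roles of the $\alpha_i$ and the $\gamma_i$ exchanged (the segments and the projections $\alpha'_1=\beta_1$, $\alpha'_2$ are unchanged under reversing orientation), and taking the endpoints $\alpha_1,\alpha_2$ as the test points gives $\rho(\beta_1,\beta_2)\leq\rho(\alpha_1,\alpha_2)$. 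Either way the stated maximum bounds $\rho(\beta_1,\beta_2)$. Your proposed detour through a quasiconvexity statement for $\rho(\cdot,\underline{[\alpha_1,\gamma_1]})$ along $\underline{[\alpha_2,\gamma_2]}$ is unnecessary and, as written, unproven.
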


\begin{proof}
Here we give proofs for the cases of lower tropical convexity. The proofs for upper tropical convexity cases can be derived analogously. 

For (1), based on the relative locations of the points, we have the following equalities by  Corollary~\ref{C:CritTropProj} 

\begin{align*}
&\llfloor \gamma'_1-\beta_2\rrfloor_1 \\
&=\llfloor \gamma'_1-\alpha'_1\rrfloor_1+\llfloor \alpha'_1-\beta_2\rrfloor_1 \\
&=\llfloor \gamma'_1-\beta_1\rrfloor_1+\llfloor \beta_1-\alpha'_1\rrfloor_1+\llfloor \alpha'_1-\beta_2\rrfloor_1 \\
&=\llfloor \gamma'_1-\beta_1\rrfloor_1+\llfloor \beta_1-\beta_2\rrfloor_1,
\end{align*}

and analogously $ \llfloor \gamma'_2-\beta_1\rrfloor_1=\llfloor \gamma'_2-\beta_2\rrfloor_1+\llfloor \beta_2-\beta_1\rrfloor_1$.

Therefore,
\begin{align*}
\rho(\beta_1,\beta_2)\mu(X) &= \llfloor \beta_1-\beta_2\rrfloor_1+\llfloor \beta_2-\beta_1\rrfloor_1 \\
&(\llfloor \gamma'_1-\beta_2\rrfloor_1-\llfloor \gamma'_1-\beta_1\rrfloor_1)+(\llfloor \gamma'_2-\beta_1\rrfloor_1-\llfloor \gamma'_2-\beta_2\rrfloor_1) \\
&=(\llfloor \gamma'_1-\beta_2\rrfloor_1-\llfloor \gamma'_2-\beta_2\rrfloor_1)+(\llfloor \gamma'_2-\beta_1\rrfloor_1-\llfloor \gamma'_1-\beta_1\rrfloor_1) \\
&\leq \llfloor \gamma'_1-\gamma'_2\rrfloor_1 +\llfloor \gamma'_2-\gamma'_1\rrfloor_1 \\
&=\rho(\gamma'_1,\gamma'_2)\mu(X).
\end{align*}

For (2), we want to apply (1). Note that $\alpha'_1=\beta_1=\lowerpi_{\underline{[\alpha_1,\gamma_1]}}(\beta_2)$. Thus by (1), if $\alpha'_2=\lowerpi_{ \underline{[\alpha_2,\gamma_2]}}(\beta_1)$ lies in $\underline{[\alpha_2,\beta_2]}$, then $\rho(\beta_1,\beta_2)\leq \rho(\gamma_1,\gamma_2)$, and if 
$\alpha'_2$ lies in $\underline{[\gamma_2,\beta_2]}$, then $\rho(\beta_1,\beta_2)\leq \rho(\alpha_1,\alpha_2)$.  This means $\rho(\beta_1,\beta_2)\leq \max(\rho(\alpha_1,\alpha_2),\rho(\gamma_1,\gamma_2))$. 

Now if in addition $\alpha=\alpha_1=\alpha_2$, then we claim that $\alpha'_2=\lowerpi_{ \underline{[\alpha_2,\gamma_2]}}(\beta_1)$ lies in $\underline{[\alpha_2,\beta_2]}$ which means that $\rho(\beta_1,\beta_2)\leq \rho(\gamma'_1,\gamma'_2)$ for all $\gamma'_1\in \underline{[\beta_1,\gamma_1]}$ and $\gamma'_2\in \underline{[\beta_2,\gamma_2]}$ by (1). Actually using  Proposition~\ref{P:TropProj}(5) twice, we can derive $\rho(\alpha,\alpha'_2)\leq \rho(\alpha,\beta_1) \leq \rho(\alpha,\beta_2)$. 

For (3), we can derive $\rho(\alpha,\alpha'_1)\leq \rho(\alpha,\beta_2)=\rho(\alpha,\beta_1)$ and $\rho(\alpha,\alpha'_2)\leq \rho(\alpha,\beta_1)=\rho(\alpha,\beta_2)$ by using Proposition~\ref{P:TropProj}(5).   Again,  we can apply (1).
\end{proof}

\section{Compact Tropical Convex Sets} \label{S:Compact}

\subsection{A Construction of Compact Tropical Convex Sets}
\begin{theorem}\label{T:construction}
Let $T,T'\subseteq\mbbTP(X)$ be lower (respectively upper) tropical convex set. Then $\lowertconv(T\bigcup T')=\bigcup_{\alpha\in T,\alpha'\in T'}\lowertconv(\alpha,\alpha')$ (respectively $\uppertconv(T\bigcup T')=\bigcup_{\alpha\in T,\alpha'\in T'}\uppertconv(\alpha,\alpha')$). If $T$ and $T'$ are compact in addition, then $\lowertconv(T\bigcup T')$ (respectively $\uppertconv(T\bigcup T')$) is compact and each $\beta\in\lowertconv(T\bigcup T')$ (respectively $\beta\in\uppertconv(T\bigcup T')$) lies on the tropical segment $\underline{[\lowerpi_T(\beta),\lowerpi_{T'}(\beta)]}$ (respectively $\overline{[\upperpi_T(\beta),\upperpi_{T'}(\beta)]}$).
\end{theorem}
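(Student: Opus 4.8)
The plan is to establish the three assertions in turn, treating only the lower case in detail; the upper case follows by the dual argument (or by tropical inversion via Lemma~\ref{L:neg_tseg} and Lemma~\ref{L:OpTropHull}). Throughout I assume $T$ and $T'$ nonempty, since otherwise the claimed union is empty while $\lowertconv(T\cup T')$ need not be.

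For the set equality, the inclusion $\bigcup_{\alpha\in T,\alpha'\in T'}\lowertconv(\alpha,\alpha')\subseteq\lowertconv(T\cup T')$ is immediate because each endpoint lies in $T\cup T'$. For the reverse inclusion I will show that $U:=\bigcup_{\alpha\in T,\alpha'\in T'}\lowertconv(\alpha,\alpha')$ is itself lower tropically convex and contains $T\cup T'$; by minimality it then contains $\lowertconv(T\cup T')$. Containment of $T\cup T'$ is clear after fixing any base point in the other set. For convexity, take $\beta_1\in\lowertconv(\alpha_1,\alpha_1')$ and $\beta_2\in\lowertconv(\alpha_2,\alpha_2')$. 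By Lemma~\ref{L:TropLinear} every point of $\lowertconv(\beta_1,\beta_2)$ has the form $[c\odot\beta_1\minplus d\odot\beta_2]$; writing each $\beta_i$ as a $\minplus$-combination of representatives of $\alpha_i,\alpha_i'$ and regrouping the four resulting terms, I obtain $[g\minplus g']$ where $[g]$ is a $\minplus$-combination of the $\alpha$-terms (hence in $T$ by lower convexity) and $[g']$ a $\minplus$-combination of the $\alpha'$-terms (hence in $T'$). Since $[g\minplus g']\in\lowertconv([g],[g'])$ by Lemma~\ref{L:TropLinear}, the point lies in $U$. This crux uses only the torus description of Theorem~\ref{T:TconvTlinear}.

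For compactness, I will realize $U$ as a continuous image of a compact set. Define $\Psi\colon T\times T'\times[0,1]\to\mbbTP(X)$ by $\Psi(\alpha,\alpha',s)=P_{(\alpha,\alpha')}\big(s\,\rho(\alpha,\alpha')\big)$; its image is exactly $U$. Since the domain is compact, it suffices to prove $\Psi$ continuous, and since $\mbbTP(X)$ is metric (Proposition~\ref{P:TPBanach}) I may argue sequentially. Given $\alpha_n\to\alpha$, $\alpha_n'\to\alpha'$, $s_n\to s$, the key step is to pass to representatives $f_n\to f$ and $f_n'\to f'$ converging \emph{uniformly}: this is possible because tropical convergence lets me normalize so that $\inf(f_n-f)=0$, whence $\Vert f_n-f\Vert_\infty=\Vert\alpha_n-\alpha\Vert\to 0$ by Lemma~\ref{L:NormComparison}. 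Uniform convergence then propagates through $\underline{f_n'-f_n}\to\underline{f'-f}$ (as $\inf$ is $1$-Lipschitz), through $\min\big(s_n\rho(\alpha_n,\alpha_n'),\underline{f_n'-f_n}\big)+f_n$ (as $\min$ and $\rho$ are continuous), and finally under projectivization, the tropical distance being at most twice the uniform distance. Hence $\Psi$ is continuous and $U$ is compact. I expect this continuity verification—the careful choice of uniformly convergent representatives and the propagation of estimates through the path formula—to be the main technical obstacle of the theorem.

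For the final containment, fix $\beta\in U$; by the set equality $\beta\in\underline{[\alpha,\alpha']}$ for some $\alpha\in T$, $\alpha'\in T'$, and the projections $\pi:=\lowerpi_T(\beta)$, $\pi':=\lowerpi_{T'}(\beta)$ exist by Theorem~\ref{T:main}. The strategy is a comparison of minimizer sets. Proposition~\ref{P:TropSeg}(5) gives $\Xmin(\alpha-\beta)\cup\Xmin(\alpha'-\beta)=X$. Applying the projection criterion Corollary~\ref{C:CritTropProj}(a5) to the point $\alpha\in T$ yields $\Xmin(\alpha-\pi)\Cap\Xmin(\pi-\beta)\neq\emptyset$, so Lemma~\ref{L:XminXmax} gives $\Xmin(\alpha-\beta)=\Xmin(\alpha-\pi)\cap\Xmin(\pi-\beta)\subseteq\Xmin(\pi-\beta)$; symmetrically, using $\alpha'\in T'$, $\Xmin(\alpha'-\beta)\subseteq\Xmin(\pi'-\beta)$. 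Taking unions, $\Xmin(\pi-\beta)\cup\Xmin(\pi'-\beta)=X$, and invoking Proposition~\ref{P:TropSeg}(5) in the reverse direction gives $\beta\in\underline{[\pi,\pi']}=\underline{[\lowerpi_T(\beta),\lowerpi_{T'}(\beta)]}$, as required. This step is short once the earlier criteria are available; the only care needed is the sign bookkeeping in the projection criterion.
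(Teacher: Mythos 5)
Your set-equality argument and your final containment argument coincide in substance with the paper's: both rest on the torus description of Theorem~\ref{T:TconvTlinear} (splitting a $\minplus$-combination into its $T$-part and $T'$-part) and on the chain Corollary~\ref{C:CritTropProj}(a5) $\Rightarrow$ Lemma~\ref{L:XminXmax} $\Rightarrow$ Proposition~\ref{P:TropSeg}(5), respectively, and both are correct. Where you genuinely diverge is the compactness step. The paper proves compactness by verifying completeness and total boundedness directly: it takes a Cauchy sequence $\beta_n$, projects it to Cauchy sequences $\lowerpi_T(\beta_n)$ and $\lowerpi_{T'}(\beta_n)$ via the $1$-Lipschitz property of tropical projections (Proposition~\ref{P:TropProj}(5)), and controls $\rho(\beta_n,\beta)$ with the distance inequality $\rho(\beta,\gamma)\leq\max(\rho(\alpha,\alpha_i),\rho(\alpha',\alpha'_j))$ of Proposition~\ref{P:DistIneq}(2); the same inequality drives the finite $\epsilon$-net construction. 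You instead exhibit $\bigcup_{\alpha,\alpha'}\lowertconv(\alpha,\alpha')$ as the continuous image of the compact product $T\times T'\times[0,1]$ under $\Psi(\alpha,\alpha',s)=P_{(\alpha,\alpha')}(s\,\rho(\alpha,\alpha'))$, and your continuity verification is sound: normalizing representatives so that $\inf(f_n-f)=0$ converts tropical convergence into uniform convergence (Lemma~\ref{L:NormComparison}(1)), the operations $\underline{\,\cdot\,}$, $\min$, and $\rho$ are all Lipschitz for the uniform norm, and projectivization is $2$-Lipschitz (Lemma~\ref{L:NormComparison}(3)). Your route is shorter and more conceptual, needs neither Proposition~\ref{P:DistIneq} nor any tropical projection in the compactness step, and makes the "joint parametrization by endpoints" explicit; the paper's route is heavier but the auxiliary estimates it develops (notably Proposition~\ref{P:DistIneq}(2)) are reused later in Theorem~\ref{T:TropMazur} and Theorem~\ref{T:TropRetract}, so the extra work is amortized. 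Your nonemptiness caveat for $T,T'$ is a legitimate point the paper leaves implicit. I see no gap.
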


\begin{proof}
We give a proof for the lower tropical convexity case and the proof for the upper tropical convexity case can be derived analogously. 

Denote $\bigcup_{\alpha\in T,\alpha'\in T'}\lowertconv(\alpha,\alpha')$ by $\tilde{T}$. Then clearly $\tilde{T}\subseteq\lowertconv(T\bigcup T')$. We claim that  $\lowertconv(T\bigcup T')\subseteq\tilde{T}$.

Note that by Theorem~\ref{T:TconvTlinear}, $\lowertconv(T\bigcup T')=\hminplus(T\bigcup T')$ which means that for  each $[g]\in\lowertconv(T\bigcup T')$, we can write  $g=(c_1\odot f_1)\minplus\cdots\minplus (c_m\odot f_m)\minplus (c'_1\odot f'_1)\minplus\cdots\minplus (c'_n\odot f'_n)$ for some $m,n\in \mbbZ^+$,  $c_i,c'_j\in\mbbR$, $[f_i]\in T$ and $[f'_j]\in T'$. Let $f=(c_1\odot f_1)\minplus\cdots\minplus (c_m\odot f_m)$ and $f'=(c'_1\odot f'_1)\minplus\cdots\minplus (c'_n\odot f'_n)$. Then $[f]\in T$, $[f']\in T'$ and $g=f\minplus f'$. Thus $[g]\in\underline{[[f],[f']]}$ which implies $\lowertconv(T\bigcup T')\subseteq\tilde{T}$ as claimed.

Recall that a metric space is compact if and only if it is complete and totally bounded. Now let us show that if in addition $T$ and $T'$ are complete and totally bounded, then $\tilde{T}$ is also complete and totally bounded.

First, we claim that for each $\beta\in\tilde{T}$, we must have $\beta\in\underline{[\alpha,\alpha']}$ where $\alpha=\lowerpi_T(\beta)$ and $\alpha'=\lowerpi_{T'}(\beta)$. We must assume that $\beta\in\underline{[\alpha+0,\alpha'_0]}$  for some $\alpha_0\in T$ and $\alpha'_0\in T'$. Then 
By Corollary~\ref{C:CritTropProj}(a5) and Lemma~\ref{L:XminXmax}, we have $\Xmin(\alpha_0-\beta)\subseteq \Xmin(\alpha-\beta)$ and $\Xmin(\alpha'_0-\beta)\subseteq \Xmin(\alpha'-\beta)$. Therefore, $\Xmin(\alpha-\beta)\bigcup \Xmin(\alpha'-\beta)\supseteq \Xmin(\alpha_0-\beta)\bigcup \Xmin(\alpha'_0-\beta)=X$ which means that  $\beta\in\underline{[\alpha,\alpha']}$ by Proposition~\ref{P:TropSeg}(5).

Now we want to show that $\tilde{T}$ is complete. Let $\beta_1,\beta_2,\cdots$ be a Cauchy sequence in $\tilde{T}$, i.e., $\rho(\beta_m,\beta_n)\to 0$ as $m,n\to\infty$. We claim that there exists $\beta\in\tilde{T}$ such that $\rho(\beta_n,\beta)\to 0$ as $n\to\infty$, which implies the completeness of $\tilde{T}$. Since $T$ is compact, we may let $\alpha_i=\lowerpi_T(\beta_i)$ and $\alpha'_i=\lowerpi_{T'}(\beta_i)$.  Note that $\beta_i\in\underline{[\alpha_i,\alpha'_i]}$. Moreover, $\alpha_1,\alpha_2,\cdots$ is a Cauchy sequence in $T$ and $\alpha'_1,\alpha'_2,\cdots$ is a Cauchy sequence in $T'$ since $\rho(\alpha_m,\alpha_n)\leq \rho(\beta_m,\beta_n)$ and $\rho(\alpha'_m,\alpha'_n)\leq \rho(\beta_m,\beta_n)$ by Proposition~\ref{P:TropProj}(5). Now let $\alpha$ be the limit of $\alpha_1,\alpha_2,\cdots$ and $\alpha'$ be the limit of $\alpha'_1,\alpha'_2,\cdots$. Consider the tropical segment $\underline{[\alpha,\alpha']}$ and let $\gamma_i = \lowerpi_{\underline{[\alpha,\alpha']}}(\beta_i)$. Again $\gamma_1,\gamma_2,\cdots$ is a Cauchy sequence in $\underline{[\alpha,\alpha']}$ by Proposition~\ref{P:TropProj}(5). We let $\beta$ be the limit of $\gamma_1,\gamma_2,\cdots$ and claim $\beta$ is also the limit of $\beta_1,\beta_2,\cdots$. 
Note that  $\rho(\beta_n,\beta) \leq \rho(\beta_n,\gamma_n)+\rho(\gamma_n,\beta)$. By Proposition~\ref{P:DistIneq}(2), we have $\rho(\beta_n,\gamma_n)\leq \max(\rho(\alpha_n,\alpha),\rho(\alpha'_n,\alpha'))$. Now $\rho(\alpha_n,\alpha)\to 0$, $\rho(\alpha'_n,\alpha')\to 0$ and $\rho(\gamma_n,\beta)\to 0$ as $n\to\infty$, which implies $\rho(\beta_n,\beta)\to 0$ as $n\to\infty$ as claimed. 

Next, we want to show that $\tilde{T}$ is totally bounded, i.e., for every real $\epsilon>0$, there exists a finite cover of $\tilde{T}$ by open balls of radius $\epsilon$. 
We start with a finite cover of $T$ by open balls $\mcalB^0(\alpha_i,\epsilon/2)$ of radius $\epsilon/2$ centered at $\alpha_i\in T$ for $i=1,\cdots,n$, and a finite cover of $T'$ by open balls $\mcalB^0(\alpha'_j,\epsilon/2)$ of radius $\epsilon/2$ centered at $\alpha'_j\in T'$ for $j=1,\cdots,m$.
Then for each $\underline{[\alpha_i,\alpha'_j]}$, we have a finite cover by open balls $\mcalB^0(\beta^{(i,j)}_{k^{(i,j)}},\epsilon/2)$ of radius $\epsilon/2$ centered at $\beta^{(i,j)}_{k^{(i,j)}}\in \underline{[\alpha_i,\alpha'_j]}$ for $k^{(i,j)}=1,\ldots,m^{(i,j)}$. We claim that there is a finite cover of $\tilde{T}$ by open balls $\mcalB^0(\beta^{(i,j)}_{k^{(i,j)}},\epsilon)$ of radius $\epsilon$ centered at  $\beta^{(i,j)}_{k^{(i,j)}}\in\tilde{T}$ for $i=1,\ldots,n$, $j=1,\ldots,m$ and $k^{(i,j)}=1,\ldots,m^{(i,j)}$.
For any $\beta\in\tilde{T}$, there exist $\alpha\in T$ and $\alpha'\in T'$ such that $\beta\in\underline{[\alpha,\alpha']}$. Suppose $\alpha\in \mcalB^0(\alpha_i,\epsilon/2)$ for some $1\leq i\leq n$ and $\alpha'\in \mcalB^0(\alpha'_j,\epsilon/2)$ for some $1\leq j\leq m$. Let $\gamma=\lowerpi_{\underline{[\alpha_i,\alpha'_j]}}(\beta)$ and suppose $\gamma\in \mcalB^0(\beta^{(i,j)}_{k^{(i,j)}},\epsilon/2)$ for some $\beta^{(i,j)}_{k^{(i,j)}}$. Then by Proposition~\ref{P:DistIneq}(2),  we have 
$$\rho(\beta,\beta^{(i,j)}_{k^{(i,j)}})\leq \rho(\beta,\gamma)+\rho(\gamma,\beta^{(i,j)}_{k^{(i,j)}})\leq \max(\rho(\alpha,\alpha_i),\rho(\alpha',\alpha'_j))+\rho(\gamma,\beta^{(i,j)}_{k^{(i,j)}})<\epsilon/2+\epsilon/2=\epsilon.$$
Therefore $\beta$ lies in $\mcalB^0(\beta^{(i,j)}_{k^{(i,j)}},\epsilon)$ which means $\tilde{T}$ is covered by this finite collection of open balls as claimed.
\end{proof}

\begin{corollary} \label{C:Polytope}
Let $T_1,\cdots,T_n$ be compact subsets of $\mbbTP(X)$ which are also lower (respectively upper) tropically convex. Then $\lowertconv(T_1\bigcup \cdots\bigcup T_n)$ (respectively $\uppertconv(T_1\bigcup \cdots\bigcup T_n)$) is compact. In particular, all lower and upper tropical polytopes are compact. 
\end{corollary}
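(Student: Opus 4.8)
The plan is to prove the corollary by induction on $n$, using Theorem~\ref{T:construction} as the engine for the inductive step, since that theorem already handles the case of two compact lower (respectively upper) tropically convex sets. I will treat only the lower case explicitly, the upper case being entirely analogous.

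For the base case $n=1$, since $T_1$ is already lower tropically convex we have $\lowertconv(T_1)=T_1$, which is compact by hypothesis. For the inductive step, assume the statement holds for any $n-1$ compact lower tropically convex sets, and set $T=\lowertconv(T_1\bigcup\cdots\bigcup T_{n-1})$. By the inductive hypothesis $T$ is compact, and it is lower tropically convex by definition of the tropical convex hull. The crucial point is then to verify the identity
\[
\lowertconv(T_1\bigcup\cdots\bigcup T_n)=\lowertconv(T\bigcup T_n),
\]
after which Theorem~\ref{T:construction}, applied to the compact lower tropically convex sets $T$ and $T_n$, immediately yields that $\lowertconv(T\bigcup T_n)$---and hence $\lowertconv(T_1\bigcup\cdots\bigcup T_n)$---is compact.

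To establish the displayed identity I would use only monotonicity and idempotency of the hull operator $\lowertconv(\cdot)$, both of which follow directly from Definition~\ref{D:TropIndep}(1) (the hull is the intersection of all lower tropically convex supersets). For the inclusion $\subseteq$, note that $T_1\bigcup\cdots\bigcup T_n\subseteq T\bigcup T_n$, so taking hulls and using monotonicity gives the containment. For $\supseteq$, observe that $T=\lowertconv(T_1\bigcup\cdots\bigcup T_{n-1})\subseteq\lowertconv(T_1\bigcup\cdots\bigcup T_n)$ and $T_n\subseteq\lowertconv(T_1\bigcup\cdots\bigcup T_n)$, so $T\bigcup T_n$ lies in the lower tropically convex set $\lowertconv(T_1\bigcup\cdots\bigcup T_n)$; taking hulls and using idempotency finishes it.

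Finally, for the ``in particular'' clause, a lower tropical polytope is by Definition~\ref{D:TropIndep}(2) the hull $\lowertconv(\{[f_1],\dots,[f_n]\})$ of a finite set, so I would apply the corollary with each $T_i=\{[f_i]\}$ a singleton, which is trivially compact and lower tropically convex. I do not anticipate any serious obstacle here: the whole argument is a bootstrapping of Theorem~\ref{T:construction}, and the only mildly delicate step is confirming the hull identity, which is a purely formal consequence of the closure-operator properties of $\lowertconv(\cdot)$ rather than anything requiring the metric or measure structure of $\mbbTP(X)$.
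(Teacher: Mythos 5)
Your proof is correct and follows exactly the route the paper intends: the corollary is left as an immediate consequence of Theorem~\ref{T:construction}, obtained by the induction you describe, with the hull identity $\lowertconv(T_1\bigcup\cdots\bigcup T_n)=\lowertconv(\lowertconv(T_1\bigcup\cdots\bigcup T_{n-1})\bigcup T_n)$ being the standard closure-operator bookkeeping. The specialization to singletons for the polytope claim is also the intended reading.
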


By the corollary, since tropical polytopes are compact, we can always apply tropical projections from $\mbbTP(X)$ to any tropical polytope. 

\subsection{Closed Tropical Convex Hulls and The Tropical Version of Mazur's Theorem}
Recall that the closed (conventional) convex hull of a compact subset $S$ of a Banach space is also compact (Mazur's theorem). Note that $\mbbTP(X)$ is a Banach space and here we prove an analogue of Mazur's theorem for tropical convexity.

\begin{proposition} \label{P:closure}
The topological closure $\cl(T)$ of any lower  (respectively upper) tropical convex set $T$ is lower  (respectively upper) tropically convex. 
\end{proposition}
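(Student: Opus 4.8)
The plan is to prove the statement for lower tropical convexity; the upper case then follows by tropical inversion, since $\beta\mapsto-\beta$ is an isometry of $\mbbTP(X)$ (Lemma~\ref{L:TropNorm}(1)) and hence $\cl(-T)=-\cl(T)$, while Lemma~\ref{L:neg_tseg} identifies $-\,\overline{[-\alpha,-\beta]}$ with $\underline{[\alpha,\beta]}$, so that $T$ upper tropically convex makes $-T$ lower tropically convex. Since $\mbbTP(X)$ is a metric space, it suffices to argue sequentially. Fix $\alpha,\beta\in\cl(T)$ and an arbitrary $\gamma\in\underline{[\alpha,\beta]}$; I must show $\gamma\in\cl(T)$. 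Choose sequences $\alpha_n\to\alpha$ and $\beta_n\to\beta$ with $\alpha_n,\beta_n\in T$. Because $T$ is lower tropically convex, the whole segment $\underline{[\alpha_n,\beta_n]}$ lies in $T$ for every $n$. The idea is to produce points $\gamma_n\in\underline{[\alpha_n,\beta_n]}$ with $\gamma_n\to\gamma$, which immediately yields $\gamma\in\cl(T)$.

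To build the $\gamma_n$, I would first pass to good representatives. Writing $\alpha=[f]$ and $\beta=[g]$, convergence $\alpha_n\to\alpha$ in the tropical norm means $\Vert\alpha_n-\alpha\Vert\to 0$; by a centering trick (given any representative $\phi_n$ of $\alpha_n$, subtract the constant $\inf(\phi_n-f)$, so the new representative $f_n$ satisfies $f_n-f=\underline{\phi_n-f}\ge 0$ with $\Vert f_n-f\Vert_\infty=\Vert\alpha_n-\alpha\Vert$) I obtain representatives $f_n$ of $\alpha_n$ and $g_n$ of $\beta_n$ with $f_n\to f$ and $g_n\to g$ in $\Vert\cdot\Vert_\infty$. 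Since $\inf$ is continuous on $BC(X)$ (as used in Proposition~\ref{P:TPBanach}), it follows that $\underline{g_n-f_n}\to\underline{g-f}$ uniformly and that $d_n:=\rho(\alpha_n,\beta_n)\to d:=\rho(\alpha,\beta)$. Now write $\gamma=P_{(\alpha,\beta)}(t_0)=[\min(t_0,\underline{g-f})+f]$ for some $t_0\in[0,d]$, set $s_n:=\min(t_0,d_n)\in[0,d_n]$, and define $\gamma_n:=P_{(\alpha_n,\beta_n)}(s_n)=[\min(s_n,\underline{g_n-f_n})+f_n]$, which lies in $\underline{[\alpha_n,\beta_n]}\subseteq T$.

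The heart of the argument is then the convergence $\gamma_n\to\gamma$, which amounts to continuity of the segment map $(\alpha,\beta,t)\mapsto P_{(\alpha,\beta)}(t)$. Using $s_n=\min(t_0,d_n)\to t_0$ and the elementary pointwise estimate $|\min(a,b)-\min(a',b')|\le\max(|a-a'|,|b-b'|)$, I bound the uniform distance between the chosen representatives of $\gamma_n$ and $\gamma$ by $\max\bigl(|s_n-t_0|,\ \Vert\underline{g_n-f_n}-\underline{g-f}\Vert_\infty\bigr)+\Vert f_n-f\Vert_\infty$, which tends to $0$. Since $\Vert[h]\Vert\le 2\Vert h\Vert_\infty$ (Lemma~\ref{L:NormComparison}), uniform convergence of representatives forces $\Vert\gamma_n-\gamma\Vert\to 0$, so $\gamma_n\to\gamma$ and hence $\gamma\in\cl(T)$.

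The main obstacle is precisely this continuity step: one must simultaneously control the varying parametrization length $d_n$ and the varying break point $\underline{g_n-f_n}$ inside the $\min$, and be certain that the projective convergence $\alpha_n\to\alpha$ can genuinely be upgraded to uniform convergence of suitably normalized representatives. Once the representative-normalization and the Lipschitz bound for $\min$ are in place, the rest is routine, and the upper case is obtained verbatim by replacing $\min$ with $\max$, $\underline{\,\cdot\,}$ with $\overline{\,\cdot\,}$, and $P_{(\alpha,\beta)}$ with $P^{(\alpha,\beta)}$, or alternatively via the tropical-inversion symmetry noted above.
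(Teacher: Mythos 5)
Your proof is correct, but it takes a genuinely different route from the paper's. The paper produces the approximating points by tropical projection: it sets $\gamma_n=\lowerpi_{\underline{[\alpha_n,\beta_n]}}(\gamma)$ and invokes Proposition~\ref{P:DistIneq}(2) to get $\rho(\gamma,\gamma_n)\leq\max(\rho(\alpha,\alpha_n),\rho(\beta,\beta_n))$ in one line, so the whole argument rides on the projection machinery (and hence on the standing assumptions on the measure $\mu$ under which $\lowerpi$ is defined). You instead parametrize: you normalize representatives so that projective convergence becomes uniform convergence ($\Vert f_n-f\Vert_\infty=\Vert\alpha_n-\alpha\Vert$ after centering), transport the parameter via $s_n=\min(t_0,d_n)$, and prove joint continuity of $(\alpha,\beta,t)\mapsto P_{(\alpha,\beta)}(t)$ directly through the Lipschitz estimate $|\min(a,b)-\min(a',b')|\le\max(|a-a'|,|b-b'|)$ and $\Vert[h]\Vert\le2\Vert h\Vert_\infty$. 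What the paper's approach buys is brevity given the already-developed toolkit; what yours buys is elementarity (no $B$-pseudonorms, no tropical projections, no measure hypotheses) plus a slightly stronger byproduct, namely the continuity of the tropical path map in all three arguments, which the paper never states explicitly. Your reduction of the upper case to the lower case via tropical inversion (Lemma~\ref{L:neg_tseg} and Lemma~\ref{L:OpTropHull}(3)) is also sound, and matches the paper's ``analogous argument'' remark in spirit.
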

\begin{proof}
We show here the case for lower tropical convexity and the proof for the case of upper tropical convexity can be derived analogously. 
Let $\alpha$ and $\beta$ be elements in $\cl(T)$. Then there is a sequence $\alpha_1,\alpha_2,\cdots$ in $T$ converging to $\alpha$ and a sequence $\beta_1,\beta_2,\cdots$ in $T$ converging to $\beta$. Then for each $\gamma\in\underline{[\alpha,\beta]}$, let $\gamma_n=\lowerpi_{\underline{[\alpha_n,\beta_n]}}(\gamma)$. By Proposition~\ref{P:DistIneq}(2), we see that $\rho(\gamma,\gamma_n)\leq\max(\rho(\alpha,\alpha_n),\rho(\beta,\beta_n))$. Therefore, $\rho(\gamma,\gamma_n)\to 0$ as $n\to\infty$ since $\rho(\alpha,\alpha_n)\to 0$ and $\rho(\beta,\beta_n)\to 0$ as $n\to\infty$. Now since $\gamma_1,\gamma_2,\cdots$ is a sequence in $T$, we conclude that $\gamma\in\cl(T)$ which means $\cl(T)$ is also lower tropically convex. 
\end{proof}

\begin{definition} \label{D:ClTropConv}
Let $S$ be a subset of $\mbbTP(X)$. The \emph{closed lower tropical convex hull $\lowerTCONV(S)$} (respectively \emph{closed upper tropical convex hull $\upperTCONV(S)$}) generated by $S$ is the intersection of all closed lower (respectively upper) tropically convex subsets of $\mbbTP(X)$ containing $S$.
\end{definition}

\begin{lemma}
$\lowerTCONV(S)=\cl(\lowertconv(S))$, $\upperTCONV(S)=\cl(\uppertconv(S))$ and $\lowerTCONV(\upperTCONV(S))=\upperTCONV(\lowerTCONV(S))=\lowerTCONV(\uppertconv(S))=\upperTCONV(\lowertconv(S))=\cl(\loweruppertconv(S))$. 
\end{lemma}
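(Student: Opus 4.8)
The plan is to establish the two base identities $\lowerTCONV(S)=\cl(\lowertconv(S))$ and $\upperTCONV(S)=\cl(\uppertconv(S))$ first, and then bootstrap the whole chain of equalities from them. I will only treat the lower case in detail, since the upper case is entirely symmetric.

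For $\lowerTCONV(S)=\cl(\lowertconv(S))$ I would argue by double inclusion. On one hand, $\cl(\lowertconv(S))$ is closed, it contains $S$, and by Proposition~\ref{P:closure} it is lower tropically convex (being the closure of the lower tropically convex set $\lowertconv(S)$); hence it is one of the closed lower tropically convex sets whose intersection defines $\lowerTCONV(S)$, giving $\lowerTCONV(S)\subseteq\cl(\lowertconv(S))$. On the other hand, $\lowerTCONV(S)$ is an intersection of closed lower tropically convex sets, so it is itself closed and lower tropically convex (by the intersection lemma for tropical convex sets); since it contains $S$ and is lower tropically convex, minimality of the hull gives $\lowertconv(S)\subseteq\lowerTCONV(S)$, and since $\lowerTCONV(S)$ is closed, $\cl(\lowertconv(S))\subseteq\lowerTCONV(S)$. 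Combining the two inclusions yields the identity.

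With these in hand, two of the four equalities in the chain are immediate: applying them to the sets $\uppertconv(S)$ and $\lowertconv(S)$ and using $\loweruppertconv(S)=\lowertconv(\uppertconv(S))=\uppertconv(\lowertconv(S))$ from Theorem~\ref{T:TconvTlinear} gives
\[
\lowerTCONV(\uppertconv(S))=\cl(\lowertconv(\uppertconv(S)))=\cl(\loweruppertconv(S)),
\]
and likewise $\upperTCONV(\lowertconv(S))=\cl(\uppertconv(\lowertconv(S)))=\cl(\loweruppertconv(S))$.

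The remaining two terms nest a closed hull inside another, namely $\lowerTCONV(\upperTCONV(S))$ and $\upperTCONV(\lowerTCONV(S))$, and this is the one step requiring genuine care, i.e.\ the main obstacle. Writing $A=\uppertconv(S)$ and using the first identity twice, $\lowerTCONV(\upperTCONV(S))=\lowerTCONV(\cl(A))=\cl(\lowertconv(\cl(A)))$, so everything reduces to the idempotency-type identity $\cl(\lowertconv(\cl(A)))=\cl(\lowertconv(A))$. The inclusion $\supseteq$ is clear from monotonicity of $\lowertconv$ and $\cl$. For $\subseteq$, I would observe that $\cl(\lowertconv(A))$ is closed, lower tropically convex by Proposition~\ref{P:closure}, and contains $A$; being closed and containing $A$ it contains $\cl(A)$, and being lower tropically convex and containing $\cl(A)$ it contains the hull $\lowertconv(\cl(A))$ by minimality; taking closures and using that $\cl(\lowertconv(A))$ is already closed gives $\cl(\lowertconv(\cl(A)))\subseteq\cl(\lowertconv(A))$. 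Hence $\lowerTCONV(\upperTCONV(S))=\cl(\lowertconv(A))=\cl(\loweruppertconv(S))$, and the symmetric computation gives $\upperTCONV(\lowerTCONV(S))=\cl(\loweruppertconv(S))$, completing all five equalities.
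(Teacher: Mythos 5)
Your proof is correct and takes essentially the same route as the paper, which simply asserts that the statements follow directly from Definition~\ref{D:ClTropConv} and Proposition~\ref{P:closure}; you have filled in the routine double-inclusion and idempotency details that the paper leaves implicit. No gaps.
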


\begin{proof}
The statements follow from Definition~\ref{D:ClTropConv}  and Proposition~\ref{P:closure} directly. 
\end{proof} 

\begin{remark}
We will write $\lowerupperTCONV(S):=\cl(\loweruppertconv(S))$. 

\end{remark}

Before discussing the tropical version of Mazur's theorem, we will need the following proposition which is a generalization of Proposition~\ref{P:DistIneq}(2). 

\begin{proposition} \label{P:GeneralIneq}
Consider $\alpha_1,\cdots,\alpha_n,\beta_1,\cdots,\beta_n\in\mbbTP(X)$ and let $d_i=\rho(\alpha_i,\beta_i)$ for $i=1,\cdots,n$. Let $T=\lowertconv(\{\beta_1,\cdots,\beta_n\})$ (respectively $T=\uppertconv(\{\beta_1,\cdots,\beta_n\})$). Then for each $\alpha\in \lowertconv(\{\alpha_1,\cdots, \alpha_n\})$, $\rho(\alpha,T)=\rho(\alpha,\lowerpi_T(\alpha))\leq \max(d_1,\cdots,d_n)$ (respectively for each $\alpha\in \uppertconv(\{\alpha_1,\cdots, \alpha_n\})$, $\rho(\alpha,T)=\rho(\alpha,\upperpi_T(\alpha))\leq \max(d_1,\cdots,d_n)$). 
\end{proposition}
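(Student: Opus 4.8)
The plan is to prove the inequality by induction on $n$, using Theorem~\ref{T:construction} to decompose the relevant hulls into tropical segments and then invoking the two-segment estimate of Proposition~\ref{P:DistIneq}(2) as the single analytic input. I will carry out the lower tropically convex case; the upper case follows by the identical argument with $\lowertconv$, $\lowerpi$, $\underline{[\cdot,\cdot]}$ and $\Xmin$ replaced by their upper counterparts (alternatively one reduces to the lower case by tropical inversion, using Lemma~\ref{L:OpTropHull}(3) and Proposition~\ref{P:TropProj}(2)).

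For the base case $n=1$ we have $T=\{\beta_1\}$ and $\lowertconv(\{\alpha_1\})=\{\alpha_1\}$, so the only point is $\alpha=\alpha_1$ and $\rho(\alpha,T)=\rho(\alpha_1,\beta_1)=d_1$, which is the claim. For the inductive step I set $A=\lowertconv(\{\alpha_1,\dots,\alpha_{n-1}\})$ and $B=\lowertconv(\{\beta_1,\dots,\beta_{n-1}\})$, both compact (Corollary~\ref{C:Polytope}) and lower tropically convex. Since $\lowertconv(\{\alpha_1,\dots,\alpha_n\})=\lowertconv(A\cup\{\alpha_n\})$, Theorem~\ref{T:construction} shows that every $\alpha$ in this hull lies on a lower tropical segment $\underline{[\alpha',\alpha_n]}$ for some $\alpha'\in A$; likewise $T=\lowertconv(B\cup\{\beta_n\})=\bigcup_{\beta'\in B}\lowertconv(\beta',\beta_n)$, so each segment $\underline{[\beta',\beta_n]}$ with $\beta'\in B$ lies inside $T$. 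I then choose $\beta'=\lowerpi_B(\alpha')$, so that the inductive hypothesis (applied to the $n-1$ pairs $(\alpha_i,\beta_i)$) gives $\rho(\alpha',\beta')=\rho(\alpha',B)\leq\max(d_1,\dots,d_{n-1})$.

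The crux is a single application of Proposition~\ref{P:DistIneq}(2) to the two segments $\underline{[\beta',\beta_n]}$ and $\underline{[\alpha',\alpha_n]}$. Letting $\beta=\lowerpi_{\underline{[\beta',\beta_n]}}(\alpha)$ (this projection exists since the segment is compact by Proposition~\ref{P:TropSeg}(4) and lower tropically convex), I identify the first segment with $\underline{[\alpha_1,\gamma_1]}$ via $\alpha_1\mapsto\beta'$, $\gamma_1\mapsto\beta_n$, take $\beta_2\mapsto\alpha$ on the second segment $\underline{[\alpha',\alpha_n]}$, and check that the projection hypothesis of Proposition~\ref{P:DistIneq}(2) holds by construction. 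It then yields
$$\rho(\alpha,\beta)\leq\max\bigl(\rho(\beta',\alpha'),\rho(\beta_n,\alpha_n)\bigr)\leq\max\bigl(\max(d_1,\dots,d_{n-1}),\,d_n\bigr)=\max(d_1,\dots,d_n).$$
Since $\beta\in\underline{[\beta',\beta_n]}\subseteq T$ and $\rho(\alpha,T)=\rho(\alpha,\lowerpi_T(\alpha))$ by Theorem~\ref{T:main}, I conclude $\rho(\alpha,T)\leq\rho(\alpha,\beta)\leq\max(d_1,\dots,d_n)$, which completes the induction.

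I expect the main obstacle to be purely organizational rather than analytic: one must pair the endpoints correctly so that the hypotheses of Proposition~\ref{P:DistIneq}(2) are literally met --- in particular verifying that $\beta$ is the lower tropical projection of $\alpha$ onto the chosen segment --- and one must confirm that every invoked projection is legitimate, i.e.\ that $A$, $B$ and each tropical segment are compact and lower tropically convex. These points are settled by Corollary~\ref{C:Polytope} and Proposition~\ref{P:TropSeg}(4), so once the labeling is fixed the estimate falls out directly.
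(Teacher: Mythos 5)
Your proof is correct and follows essentially the same route as the paper's: induction on $n$, using Theorem~\ref{T:construction} to place $\alpha$ on a segment $\underline{[\alpha',\alpha_n]}$, projecting to get the matching segment $\underline{[\beta',\beta_n]}$ in $T$, and closing with a single application of Proposition~\ref{P:DistIneq}(2). The only cosmetic difference is that the paper takes $\alpha'=\lowerpi_{\lowertconv(\{\alpha_1,\dots,\alpha_{n-1}\})}(\alpha)$ explicitly, while you allow any admissible $\alpha'$; both choices satisfy the hypotheses equally well.
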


\begin{proof}
First we note that $\rho(\alpha,T)=\rho(\alpha,\lowerpi_T(\alpha))$ is always true by Theorem~\ref{T:main}. 

We will proceed  by using induction on $n$. 
Suppose the statement is true for $n$.

Now let us consider  $\alpha_1,\cdots,\alpha_{n+1},\beta_1,\cdots,\beta_{n+1}\in\mbbTP(X)$ and let $d_i=\rho(\alpha_i,\beta_i)$ for $i=1,\cdots,n+1$.
Let $S_n=\lowertconv(\{\alpha_1,\cdots,\alpha_n\})$, $S_{n+1}=\lowertconv(\{\alpha_1,\cdots,\alpha_{n+1}\})$, $T_n=\lowertconv(\{\beta_1,\cdots,\beta_n\})$ and $T_{n+1}=\lowertconv(\{\beta_1,\cdots,\beta_{n+1}\})$. 

For each $\alpha\in S_{n+1}$, let $\alpha_0=\lowerpi_{S_n}(\alpha)$ and $\beta_0=\lowerpi_{T_n}(\alpha_0)$. Then by assumption, since $\alpha_0$ is an element in $S_n$, we have $\rho(\alpha_0,T_n)=\rho(\alpha_0,\beta_0)\leq \max(d_1,\cdots,d_n)$. Moreover, 
we have $\alpha\in\underline{[\alpha_0,\alpha_{n+1}]}$ by Theorem~\ref{T:construction}. Let $\gamma = \lowerpi_{\underline{[\beta_0,\beta_{n+1}]}}(\alpha)$. Then $\rho(\alpha,T_{n+1})\leq \rho(\alpha,\gamma)\leq \max(\rho(\alpha_0,\beta_0),\rho(\alpha_{n+1,},\beta_{n+1}))\leq \max(d_1,\cdots,d_{n+1})$ where the second inequality follows from Proposition~\ref{P:DistIneq}(2).

The case for upper tropical convexity can be proved analogously. 

\end{proof}

\begin{theorem}[The Tropical Version of Mazur's Theorem] \label{T:TropMazur}
If $S$ is a compact subset of $\mbbTP(X)$, then the closed tropical convex hulls $\lowerTCONV(S)$, $\upperTCONV(S)$ and $\lowerupperTCONV(S)$ are all compact. 
\end{theorem}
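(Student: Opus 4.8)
The plan is to follow the classical proof of Mazur's theorem, substituting the tropical distance estimate Proposition~\ref{P:GeneralIneq} for the usual convex-combination bound. Since $\mbbTP(X)$ is a Banach space (Proposition~\ref{P:TPBanach}), a subset is compact exactly when it is closed and totally bounded. Because $\lowerTCONV(S)=\cl(\lowertconv(S))$ is closed, it is complete, and since the closure of a totally bounded set is again totally bounded, it will suffice to prove that $\lowertconv(S)$ itself is totally bounded; the same reduction handles the upper case.

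To show $\lowertconv(S)$ is totally bounded, I would fix $\epsilon>0$. As $S$ is compact it is totally bounded, so I can choose a finite $\epsilon$-net $s_1,\dots,s_n\in S$. Set $P=\lowertconv(\{s_1,\dots,s_n\})$, which is a lower tropical polytope and hence compact by Corollary~\ref{C:Polytope}; in particular $P$ admits a finite $\epsilon$-net $p_1,\dots,p_m$. The crucial claim is that every $\beta\in\lowertconv(S)$ satisfies $\rho(\beta,P)<\epsilon$. Indeed, by Corollary~\ref{C:LocalFinite} we have $\beta\in\lowertconv(\{t_1,\dots,t_k\})$ for finitely many $t_1,\dots,t_k\in S$; for each $j$ pick $s_{i_j}$ with $\rho(t_j,s_{i_j})<\epsilon$. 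Applying Proposition~\ref{P:GeneralIneq} with $\alpha_j=t_j$, $\beta_j=s_{i_j}$ and $T=\lowertconv(\{s_{i_1},\dots,s_{i_k}\})\subseteq P$ gives $\rho(\beta,P)\le\rho(\beta,T)\le\max_j\rho(t_j,s_{i_j})<\epsilon$. Consequently every point of $\lowertconv(S)$ lies within $2\epsilon$ of some $p_\ell$, so $\{p_1,\dots,p_m\}$ is a finite $2\epsilon$-net; as $\epsilon$ was arbitrary, $\lowertconv(S)$ is totally bounded and $\lowerTCONV(S)$ is compact.

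The upper case is entirely symmetric, using $\uppertconv$, $\upperpi$, and the upper halves of Corollary~\ref{C:Polytope} and Proposition~\ref{P:GeneralIneq}, and it yields that $\upperTCONV(K)$ is compact for \emph{every} compact $K$. For $\lowerupperTCONV(S)$ I would avoid repeating the estimate and instead invoke the identity $\lowerupperTCONV(S)=\upperTCONV(\lowerTCONV(S))$ recorded in the lemma following Definition~\ref{D:ClTropConv}: since $\lowerTCONV(S)$ has just been shown to be compact, applying the already-established upper case to the compact set $K=\lowerTCONV(S)$ shows that $\upperTCONV(\lowerTCONV(S))$, and hence $\lowerupperTCONV(S)$, is compact.

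The main obstacle is the estimate $\rho(\beta,P)<\epsilon$: unlike the classical situation, where a convex combination of nearby points is automatically nearby, here one must control a tropical (min-plus) combination, and this is exactly what Proposition~\ref{P:GeneralIneq} supplies once $\beta$ has been localized to finitely many generators via Corollary~\ref{C:LocalFinite}. I would take care to check that the inductive bound in Proposition~\ref{P:GeneralIneq} applies verbatim with the generators $t_j$ in the role of the $\alpha_i$, and that passing from $\lowertconv(S)$ to its closure genuinely preserves total boundedness in the metric $\rho$.
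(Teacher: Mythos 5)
Your proof is correct and follows essentially the same route as the paper's: reduce compactness to total boundedness of $\lowertconv(S)$ in the Banach space $\mbbTP(X)$, localize each point to a finite generating set via Corollary~\ref{C:LocalFinite}, and bound the distance to a polytope over a finite net of $S$ using Proposition~\ref{P:GeneralIneq} together with Corollary~\ref{C:Polytope}. The only notable difference is your handling of $\lowerupperTCONV(S)$ via the identity $\lowerupperTCONV(S)=\upperTCONV(\lowerTCONV(S))$ applied to the already-compact set $\lowerTCONV(S)$, which is a cleaner bootstrap than the paper's appeal to an analogous argument.
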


\begin{proof}

Since $\lowerTCONV(S)$ is closed in the Banach space $\mbbTP(X)$ (Proposition~\ref{P:TPBanach}), we know that $\lowerTCONV(S)$ is complete. So to show that $\lowerTCONV(S)$ is compact, we need to show that it is totally bounded. Actually first we will show that $\lowertconv(S)$ is totally bounded.

Since $S$ is compact and thus totally bounded, for each $\epsilon>0$, $S$ can be finitely covered by open balls $\mcalB^0(\beta_i,\epsilon/2)$ of radius $\epsilon/2$ centered at $\beta_i\in S$ for $i=1,\cdots,n$. Now consider the lower tropical polytope $\lowertconv(\{\beta_1,\cdots,\beta_n\})$ which is also compact by Corollary~\ref{C:Polytope}. Then we may also assume that $\lowertconv(\{\beta_1,\cdots,\beta_n\})$can be finitely covered by open balls $\mcalB^0(\beta_i,\epsilon/2)$ of radius $\epsilon/2$ centered at $\beta_i\in\lowertconv(\{\beta_1,\cdots,\beta_n\})$ for $i=1,\cdots,N$ where $N\geq n$.

Now let $\alpha$ be any element in $\lowertconv(S)$. We may assume that $\alpha$ is contained in a lower tropical polytope $\lowertconv(\{\alpha_1,\cdots,\alpha_m\})$ with $\alpha_i\in S$ for $i=1,\cdots,m$ by Corollary~\ref{C:LocalFinite}. Then there is a function $\phi:\{1,\cdots,m\}\to\{1,\cdots, n\}$ such that $\rho(\alpha_i,\beta_{\phi(i)})<\epsilon/2$ for $i=1,\cdots,m$. Therefore, if $\gamma$ be the lower tropical projection of $\alpha$ to $\lowertconv(\{\beta_1,\cdots,\beta_N\})$, then $\rho(\alpha,\gamma)=\rho(\alpha,\lowertconv(\{\beta_1,\cdots,\beta_N\}))\leq \rho(\alpha,\lowertconv(\{\beta_\phi(1),\cdots,\beta_\phi(m)\}))\leq \max(\rho(\alpha_i,\beta_{\phi(i)})\mid i=1,\cdots,m)<\epsilon/2$ by Proposition~\ref{P:GeneralIneq}. Again, since $\gamma$ is an element of $\lowertconv(\{\beta_1,\cdots,\beta_N\})$, there must be some $1\leq i(\alpha)\leq N$ such that $\rho(\gamma,\beta_{i(\alpha)})< \epsilon/2$. Therefore, $\rho(\alpha,\beta_{i(\alpha)})\leq \rho(\alpha,\gamma)+\rho(\gamma,\beta_{i(\alpha)})<\epsilon$. As a conclusion, $\lowertconv(S)$ can be finitely covered by open balls $\mcalB^0(\beta_i,\epsilon)$ of radius $\epsilon$ centered at $\beta_i$ for $i=1,\cdots,N$ which implies that $\lowertconv(S)$ is totally bounded. 

As $\lowerTCONV(S)=\cl(\lowertconv(S))$, each element $\alpha\in \lowerTCONV(S)$ is approachable by a sequence $\alpha_1,\alpha_2,\cdots$ in $\lowertconv(S)$. Note that by the above argument, $\rho(\alpha_i,\{\beta_1,\cdots,\beta_N\})< \epsilon$. 
 Therefore,  $\rho(\alpha,\{\beta_1\cdots,\beta_N\})=\lim\limits_{i\to\infty} \rho(\alpha_i,\{\beta_1\cdots,\beta_N\})\leq \epsilon$. This means that $\lowerTCONV(S)$ can be finitely covered by open balls $\mcalB^0(\beta_i,2\epsilon)$ of radius $2\epsilon$ centered at $\beta_i$ for $i=1,\cdots,N$. Thus $\lowerTCONV(S)$ is totally bounded and thus compact. 
 
 The compactness of $\upperTCONV(S)$ and $\lowerupperTCONV(S)$ can be derived analogously. 
\end{proof}

\begin{remark}
In case $\mbbTP^0(X)\neq \mbbTP(X)$, the above theorem is not generally true restricted to $\mbbTP^0(X)$, i.e., the compactness of $S\subseteq\mbbTP^0(X)$ is not able to guarantee the compactness of the tropical convex sets $\lowerTCONV(S)\bigcap \mbbTP^0(X)$, $\upperTCONV(S)\bigcap \mbbTP^0(X)$ and 
$\lowerupperTCONV(S)\bigcap \mbbTP^(X)$. The reason is that $\mbbTP^0(X)$ is not necessarily a Banach space as $\mbbTP(X)$. 
\end{remark}

\section{A Criterion for Tropical Weak Independence} \label{S:TropIndep}

The following theorem provides a set-theoretical criterion for tropical weak independence, which is a generalization of Proposition~\ref{P:TropSeg} and Lemma~\ref{L:TropProjSegment}.

\begin{theorem} \label{T:CriterionTropIndep}
Let $T\subseteq\mbbTP(X)$ be a lower (respectively upper) tropical convex hull generated by $S\subseteq \mbbTP(X)$. Then for any $\gamma\in\mbbTP(X)$, $\gamma$ lies in $T$ if and only if there is a finite subset $\{\beta_1,\cdots, \beta_n\}$ of $S$ such that   $\bigcup_{i=1}^n\Xmin(\beta_i-\gamma)=X$ (respectively $\bigcup_{i=1}^n\Xmax(\beta_i-\gamma)=X$). Furthermore, for each element $\beta$ in $T$, 
$\Xmin(\beta-\gamma)\subseteq\Xmin(\lowerpi_T(\gamma)-\gamma)=\bigcup_{i=1}^n\Xmin(\beta_i-\gamma)$ (respectively $\Xmax(\beta-\gamma)\subseteq\Xmax(\upperpi_T(\gamma)-\gamma)=\bigcup_{i=1}^n\Xmax(\beta_i-\gamma)$). 
\end{theorem}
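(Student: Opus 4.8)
The plan is to handle the lower case and deduce the upper case at the end by the symmetry $\Xmin(\alpha)=\Xmax(-\alpha)$ together with Lemma~\ref{L:OpTropHull}(3). Everything rests on a finite combinatorial lemma, which I would establish first: for finitely many $[f_1],\dots,[f_n]\in\mbbTP(X)$ and $\gamma=[h]$, one has $\gamma\in\lowertconv(\{[f_1],\dots,[f_n]\})$ if and only if $\bigcup_{i=1}^n\Xmin(f_i-h)=X$. This is precisely the $n$-fold generalization of Proposition~\ref{P:TropSeg}(5) (the $n=2$ case), and I would prove it through the dictionary between tropical combinations and pointwise minima furnished by Theorem~\ref{T:TconvTlinear}.

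For the reverse implication of the lemma, given $\bigcup_i\Xmin(f_i-h)=X$ I would set $c_i=-\inf(f_i-h)$, so that $f_i+c_i\ge h$ for every $i$ and hence $\min_i(f_i+c_i)\ge h$; for each $x$ the hypothesis provides an index $i$ with $x\in\Xmin(f_i-h)$, i.e. $f_i(x)+c_i=h(x)$, forcing $\min_i(f_i+c_i)(x)=h(x)$. Thus $h=(c_1\odot f_1)\minplus\cdots\minplus(c_n\odot f_n)$ and $\gamma\in[\hminplus(\{[f_i]\})]=\lowertconv(\{[f_i]\})$ by Theorem~\ref{T:TconvTlinear}. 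For the forward implication, if $h=\min_i(f_i+c_i)$ then, fixing $x$ and picking a minimizing index $i_0$, the bound $f_{i_0}-h\ge-c_{i_0}$ with equality at $x$ shows $\inf(f_{i_0}-h)=-c_{i_0}$ is attained at $x$, so $x\in\Xmin(f_{i_0}-h)$ and the union is $X$. The main ``if and only if'' of the theorem now follows: the backward direction is the reverse implication composed with $\lowertconv(\{\beta_i\})\subseteq\lowertconv(S)=T$, and the forward direction combines Theorem~\ref{T:TconvTlinear} (to write any $\gamma\in T$ as a finite tropical combination of elements of $S$) with the forward implication of the lemma.

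For the ``furthermore'' clause I would argue by two inclusions, assuming the projection $\alpha:=\lowerpi_T(\gamma)$ exists (automatic when $T$ is compact, and assumed otherwise). The pointwise-to-projection inclusion is the verbatim generalization of Lemma~\ref{L:TropProjSegment}: Corollary~\ref{C:CritTropProj}(a5) gives $\Xmin(\beta-\alpha)\Cap\Xmin(\alpha-\gamma)\ne\emptyset$ for every $\beta\in T$, whence Lemma~\ref{L:XminXmax} yields $\Xmin(\beta-\gamma)=\Xmin(\beta-\alpha)\cap\Xmin(\alpha-\gamma)\subseteq\Xmin(\alpha-\gamma)$. To identify $\Xmin(\alpha-\gamma)$ with a finite union, I would invoke Corollary~\ref{C:LocalFinite} to choose a finite $\{\beta_1,\dots,\beta_n\}\subseteq S$ with $\alpha\in\lowertconv(\{\beta_1,\dots,\beta_n\})$. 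Applying the inclusion just proved to each $\beta_i\in S\subseteq T$ gives $\bigcup_i\Xmin(\beta_i-\gamma)\subseteq\Xmin(\alpha-\gamma)$; for the reverse containment I would write $\alpha=[a]$ with $a=\min_i(f_i+c_i)$, take $x\in\Xmin(a-h)$ and a minimizing index $i_0$ with $a(x)=f_{i_0}(x)+c_{i_0}$, and combine $f_{i_0}+c_{i_0}\ge a$ with $a(x)-h(x)=\inf(a-h)$ to get $f_{i_0}(y)-h(y)\ge f_{i_0}(x)-h(x)$ for all $y$, so that $\inf(f_{i_0}-h)$ is attained at $x$ and $x\in\Xmin(\beta_{i_0}-\gamma)$.

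The step I expect to need the most care is this last set equality $\Xmin(\alpha-\gamma)=\bigcup_i\Xmin(\beta_i-\gamma)$, because it forces one to track the infima of the differences $f_i-h$ (which need not be attained in general) rather than those of $f_i-a$, and to keep straight that the constants $c_i$ realizing $\alpha$ as a tropical combination are exactly those entering the minimizing-index estimate. Everything else is either a direct citation (Theorem~\ref{T:TconvTlinear}, Corollary~\ref{C:LocalFinite}, Lemma~\ref{L:XminXmax}, Corollary~\ref{C:CritTropProj}) or an instance of the $n=2$ results Proposition~\ref{P:TropSeg}(5) and Lemma~\ref{L:TropProjSegment} that the theorem generalizes. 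Throughout, the upper case is obtained by replacing $\minplus,\Xmin,\lowerpi_T$ with $\maxplus,\Xmax,\upperpi_T$ and reversing the relevant inequalities.
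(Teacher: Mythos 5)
Your proof is correct, but it takes a genuinely different route from the paper for the main equivalence. The paper reduces to finite $S$ via Corollary~\ref{C:LocalFinite} and then runs an induction on the number of generators: writing $T=\lowertconv(T'\bigcup\{\beta_{n+1}\})$, it invokes Theorem~\ref{T:construction} to place $\alpha=\lowerpi_T(\gamma)$ on a tropical segment $\underline{[\alpha',\beta_{n+1}]}$ with $\alpha'\in T'$, applies Proposition~\ref{P:TropSeg}(5) and the inductive hypothesis to get $\bigcup_{i=1}^{n+1}\Xmin(\beta_i-\alpha)=X$, and then intersects with $\Xmin(\alpha-\gamma)$ using the projection criterion to obtain both the equivalence and the identity $\Xmin(\alpha-\gamma)=\bigcup_i\Xmin(\beta_i-\gamma)$ in one pass. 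You instead prove the finite combinatorial lemma directly from the explicit min-plus representation $h=\min_i(f_i+c_i)$ of Theorem~\ref{T:TconvTlinear}, with the canonical choice $c_i=-\inf(f_i-h)$ in one direction and a minimizing-index argument in the other; this avoids induction and Theorem~\ref{T:construction} entirely and, as a bonus, characterizes membership without any reference to the tropical projection. For the ``furthermore'' clause the two arguments partially reconverge --- both use Corollary~\ref{C:CritTropProj}(a5) and Lemma~\ref{L:XminXmax} to get $\Xmin(\beta-\gamma)\subseteq\Xmin(\alpha-\gamma)$ --- but you establish the reverse containment by a second direct pointwise estimate on $f_{i_0}-h$ rather than by the paper's set-theoretic distribution of $\Xmin(\alpha-\gamma)$ over $\bigcup_i\Xmin(\beta_i-\alpha)=X$. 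Your version is arguably more elementary and self-contained; the paper's version is shorter given that Theorem~\ref{T:construction} and the projection machinery are already in place. Your explicit caveat about the existence of $\lowerpi_T(\gamma)$ when $S$ is infinite is a point the paper silently finesses by reducing to polytopes, so flagging it is appropriate rather than a defect.
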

\begin{proof}
We prove here the lower tropical convexity case and the proof of upper tropical convexity case follows analogously. 

By Corollary~\ref{C:LocalFinite}, any element $\gamma$ in $T$ must be contained in lower tropical polytope with generators in a finite subset of $S$. So we only need to show the case when $S$ is itself finite. 

Let us prove by an induction on the number of generators. Suppose the statements are true for all lower tropical convex hulls generated by $n$ elements. Now consider a lower tropical polytope $T=\lowertconv(\{\beta_1,\cdots, \beta_n, \beta_{n+1}\})$ generated by $n+1$ elements and let $T'=\lowertconv(\{\beta_1,\cdots, \beta_n\})$. For $\gamma\in \mbbTP(X)$, let $\alpha=\lowerpi_T(\gamma)$. 

Then by Theorem~\ref{T:construction}, there exists $\alpha'\in T'$ such that $\alpha\in \lowerpi{[\alpha',\beta_{n+1}]}$. This implies that $\Xmin(\alpha'-\alpha)\bigcup \Xmin(\beta_{n+1}-\alpha)=X$. Then by assumption, $\Xmin(\alpha'-\alpha)\subseteq \bigcup_{i=1}^n\Xmax(\beta_i-\alpha)$. 
Thus $\bigcup_{i=1}^{n+1}\Xmax(\beta_i-\gamma)=X$. In addition, $\Xmin(\beta_i-\gamma)=\Xmin(\alpha-\gamma)\bigcap\Xmin(\beta_i-\gamma)$ for $i=1,\cdots,n+1$. Therefore
\begin{align*}
\Xmin(\alpha-\gamma)&=\Xmin(\alpha-\gamma)\bigcap(\bigcup_{i=1}^{n+1}\Xmin(\beta_i-\alpha)) \\
&= \bigcup_{i=1}^{n+1}(\Xmin(\alpha-\gamma)\bigcap\Xmin(\beta-\alpha)) \\
&= \bigcup_{i=1}^{n+1}\Xmin(\beta-\gamma).
\end{align*}
And this also implies $\gamma\in T$ if and only if $\bigcup_{i=1}^{n+1}\Xmin(\beta_i-\gamma)=X$.

\end{proof}

\begin{corollary}
Let $S,S_1,\cdots,S_n$ be finite subsets of $\mbbTP(X)$ such that $S=S_1\bigcup \cdots \bigcup S_n$. Let $T=\lowertconv(S)$ (respectively $T=\uppertconv(S)$) and for $i=1,\cdots,n$, let $T_i = \lowertconv(S_i)$ (respectively $T_i = \uppertconv(S_i)$). For $\gamma\in \mbbTP(X)$,  $\gamma\in T$ if and only if $\gamma\in\lowertconv(\{\lowerpi_{T_1}(\gamma),\cdots,\lowerpi_{T_n}(\gamma)\})$ (respectively $\gamma\in\uppertconv(\{\lowerpi_{T_1}(\gamma),\cdots,\upperpi_{T_n}(\gamma)\})$). 
\end{corollary}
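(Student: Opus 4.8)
The plan is to reduce the whole statement to the \emph{furthermore} clause of Theorem~\ref{T:CriterionTropIndep}, which identifies the minimizer set of a lower tropical projection with the union of the minimizer sets of the generators. I will treat the lower case; the upper case is verbatim the same with $\Xmin$, $\lowertconv$, $\lowerpi$ replaced by $\Xmax$, $\uppertconv$, $\upperpi$ throughout.

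First I would record the housekeeping. Each of $T=\lowertconv(S)$, $T_i=\lowertconv(S_i)$ and $T':=\lowertconv(\{\lowerpi_{T_1}(\gamma),\ldots,\lowerpi_{T_n}(\gamma)\})$ is a finitely generated lower tropical convex hull, hence a lower tropical polytope, hence compact by Corollary~\ref{C:Polytope}; so all the projections $\lowerpi_{T_i}(\gamma)$, $\lowerpi_T(\gamma)$ and $\lowerpi_{T'}(\gamma)$ exist. Since $S_i\subseteq S$ we have $T_i\subseteq T$, so each $\lowerpi_{T_i}(\gamma)\in T$, and because $T$ is lower tropically convex this forces $T'\subseteq T$. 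In particular the implication $\gamma\in T'\Rightarrow\gamma\in T$ is immediate, though the computation below will recover both directions at once.

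For the substantive content I would apply the identity $\Xmin(\lowerpi_{\lowertconv(R)}(\gamma)-\gamma)=\bigcup_{\beta\in R}\Xmin(\beta-\gamma)$ from Theorem~\ref{T:CriterionTropIndep}, valid for any finite generating set $R$ and any $\gamma$. Taking $R=S_i$ gives $\Xmin(\lowerpi_{T_i}(\gamma)-\gamma)=\bigcup_{\beta\in S_i}\Xmin(\beta-\gamma)$; taking $R=S$ and using $S=\bigcup_i S_i$ gives
\[
\Xmin(\lowerpi_T(\gamma)-\gamma)=\bigcup_{\beta\in S}\Xmin(\beta-\gamma)=\bigcup_{i=1}^n\bigcup_{\beta\in S_i}\Xmin(\beta-\gamma)=\bigcup_{i=1}^n\Xmin(\lowerpi_{T_i}(\gamma)-\gamma);
\]
and taking $R=\{\lowerpi_{T_1}(\gamma),\ldots,\lowerpi_{T_n}(\gamma)\}$ gives $\Xmin(\lowerpi_{T'}(\gamma)-\gamma)=\bigcup_{i=1}^n\Xmin(\lowerpi_{T_i}(\gamma)-\gamma)$. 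Chaining these three identities yields the key equality $\Xmin(\lowerpi_T(\gamma)-\gamma)=\Xmin(\lowerpi_{T'}(\gamma)-\gamma)$.

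Finally I would invoke the main equivalence of Theorem~\ref{T:CriterionTropIndep}: for a finitely generated lower tropical convex hull, $\gamma$ lies in it precisely when the associated union of minimizer sets is all of $X$, i.e. when $\Xmin(\lowerpi(\gamma)-\gamma)=X$. Since the two minimizer sets just shown are literally equal, one equals $X$ if and only if the other does, so $\gamma\in T\iff\gamma\in T'$, which is the claim. The only point demanding care is the bookkeeping in the displayed chain—verifying that the union over $S$ genuinely splits along the $S_i$ and recombines into the union of the $\Xmin(\lowerpi_{T_i}(\gamma)-\gamma)$—but this is purely set-theoretic and poses no real obstacle; the entire content is the repeated application of the generator-to-projection identity of Theorem~\ref{T:CriterionTropIndep}.
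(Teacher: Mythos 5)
Your proof is correct and follows essentially the same route as the paper: both rest entirely on Theorem~\ref{T:CriterionTropIndep}, using the identity $\Xmin(\lowerpi_{\lowertconv(R)}(\gamma)-\gamma)=\bigcup_{\beta\in R}\Xmin(\beta-\gamma)$ to rewrite $\bigcup_{i}\Xmin(\lowerpi_{T_i}(\gamma)-\gamma)$ as $\bigcup_{\beta\in S}\Xmin(\beta-\gamma)$ and then applying the membership criterion to both hulls. The extra step of projecting onto $T'$ and comparing minimizer sets is a harmless elaboration of what the paper does more directly.
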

\begin{proof}
Note that by Theorem~\ref{T:CriterionTropIndep}, $\Xmin(\lowerpi_{T_i}(\gamma)-\gamma)=\bigcup_{\beta\in S_i}\Xmin(\beta-\gamma)$ for $i=1,\cdots,n$. 
Then $\bigcup_{i=1}^n\Xmin(\lowerpi_{T_i}(\gamma)-\gamma)=\bigcup_{\beta\in S_1\bigcup \cdots \bigcup S_n}\Xmin(\beta-\gamma)=\bigcup_{\beta\in S}\Xmin(\beta-\gamma)$. Again, by Theorem~\ref{T:CriterionTropIndep}, this means that $\gamma\in T$ if and only if $\gamma\in\lowertconv(\{\lowerpi_{T_1}(\gamma),\cdots,\lowerpi_{T_n}(\gamma)\})$. The case for upper tropical convexity can be shown analogously. 
\end{proof}

Let $T$ be a lower (respectively upper) tropical convex set. For $\alpha\in T$, if $\alpha\notin \lowertconv(T\setminus\{\alpha\})$ (respectively $\alpha\notin \uppertconv(T\setminus\{\alpha\})$),  then we say $\alpha$ is an \emph{extremal} of $T$. It is clear from definition that any generating set of $T$ must contain all the extremals of $T$ and the set of extemals of $T$ is (lower or  upper) tropically independent.

\begin{theorem} \label{T:extremal}
Every lower (respectively upper) tropical polytope $T$ contains finitely many extremals. The set of all extremals of $T$ generates $T$ and is minimal among all generating sets of $T$.
\end{theorem}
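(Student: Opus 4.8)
The plan is to prove the lower case; the upper case then follows formally from the symmetry $-\lowertconv(S)=\uppertconv(-S)$ (Lemma~\ref{L:OpTropHull}) together with $\Xmin(\alpha)=\Xmax(-\alpha)$. Write $T=\lowertconv(\{\beta_1,\dots,\beta_n\})$ and let $E$ denote the set of extremals of $T$. Two of the three assertions are essentially formal consequences of the observation recorded just before the theorem, namely that \emph{every} generating set of $T$ contains all extremals. Indeed, applying this to the generating set $\{\beta_1,\dots,\beta_n\}$ gives $E\subseteq\{\beta_1,\dots,\beta_n\}$, so $E$ is finite (with at most $n$ elements); and once we know that $E$ generates $T$, the same observation shows $E$ is contained in every generating set, hence is the (unique) minimal one. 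Thus the entire content reduces to the single claim that $E$ generates $T$.

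To obtain this I would choose a \emph{minimal} generating subset $G^{*}\subseteq\{\beta_1,\dots,\beta_n\}$ of $T$, which exists because the index set is finite; here minimal means no proper subset of $G^{*}$ generates $T$. Minimality forces $G^{*}$ to be \emph{irredundant}: if some $\beta\in G^{*}$ satisfied $\beta\in\lowertconv(G^{*}\setminus\{\beta\})$, then $\lowertconv(G^{*}\setminus\{\beta\})=\lowertconv(G^{*})=T$, contradicting minimality. I then claim $G^{*}=E$. The inclusion $E\subseteq G^{*}$ is the observation above applied to the generating set $G^{*}$. For $G^{*}\subseteq E$ I must show each $\beta\in G^{*}$ is extremal; this is exactly the contrapositive of the following \textbf{Key Lemma}: \emph{if $G$ is any finite generating set of $T$ and $\beta\in G$ lies in $\lowertconv(T\setminus\{\beta\})$, then already $\beta\in\lowertconv(G\setminus\{\beta\})$.} Granting the Key Lemma with $G=G^{*}$, a non-extremal $\beta\in G^{*}$ would be redundant in $G^{*}$, contradicting irredundancy; hence every element of $G^{*}$ is extremal and $G^{*}=E$ generates $T$.

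The Key Lemma is where the real work lies, and I would prove it set-theoretically through Theorem~\ref{T:CriterionTropIndep}. Writing $\beta=[f]$ and the generators of $G$ as $[f_l]$, the criterion says $\beta\in\lowertconv(G\setminus\{\beta\})$ exactly when $\bigcup_{l:\,[f_l]\neq\beta}\Xmin(f_l-f)=X$. The engine is the pointwise estimate: for every $\delta=[g]\in T$ with $\delta\neq\beta$ one has $\Xmin(g-f)\subseteq\bigcup_{l:\,[f_l]\neq\beta}\Xmin(f_l-f)$. To see this, fix a representation $g=\min_l(f_l+c_l)$ furnished by Theorem~\ref{T:TconvTlinear}, take $x_0\in\Xmin(g-f)$, and let $f_{l_0}+c_{l_0}$ be a branch active at $x_0$, i.e.\ $g(x_0)=f_{l_0}(x_0)+c_{l_0}$. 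Since $g\leq f_{l_0}+c_{l_0}$ everywhere, a one-line computation gives $f_{l_0}(y)-f(y)\geq (g-f)(y)-c_{l_0}\geq \inf(g-f)-c_{l_0}=(f_{l_0}-f)(x_0)$ for all $y$, so $x_0\in\Xmin(f_{l_0}-f)$. If the \emph{only} active branch at $x_0$ were the one indexed by $\beta$, then $g-f$ would equal the constant $c_{\beta}$ both from below (it attains its infimum at $x_0$) and from above (the $\beta$-branch bounds $g-f$ by $c_{\beta}$ everywhere), forcing $g\equiv f+c_{\beta}$ and hence $\delta=\beta$, a contradiction; therefore some active branch is a non-$\beta$ generator and the estimate holds. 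Finally, if $\beta\in\lowertconv(T\setminus\{\beta\})$, then Corollary~\ref{C:LocalFinite} and Theorem~\ref{T:CriterionTropIndep} produce finitely many $\delta_1,\dots,\delta_m\in T\setminus\{\beta\}$ with $\bigcup_j\Xmin(\delta_j-\beta)=X$; feeding each $\delta_j$ into the estimate covers $X$ by the sets $\Xmin(f_l-f)$ with $[f_l]\neq\beta$, and the criterion returns $\beta\in\lowertconv(G\setminus\{\beta\})$.

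The step I expect to be most delicate is the measure-theoretic/topological bookkeeping inside the Key Lemma. First, one should confirm that a representative $g=\min_l(f_l+c_l)$ may be taken over the full generating set $G$ with finite coefficients—any generator $[f_l]$ missing from a given tropical combination can be reinstated with a coefficient exceeding $\sup(g-f_l)<\infty$ without altering $g$—or else simply handle the two cases (with or without the $\beta$-branch present) separately. Second, and more substantively, if $X$ is such that infima need not be attained, the bare sets $\Xmin(\cdot)$ may be empty and the ``active branch at $x_0$'' argument must instead be carried out with the $\Xmin^{\epsilon}$ and $\Cap$ formalism of Lemma~\ref{L:XminXmax}, replacing pointwise minimizers by $\epsilon$-approximate ones and passing to the limit; this is the same care already exercised in the proof of Theorem~\ref{T:CriterionTropIndep}. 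Everything else—finiteness, minimality, and the reduction to the Key Lemma—is purely formal.
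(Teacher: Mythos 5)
Your proposal is correct, and its overall skeleton coincides with the paper's: both arguments extract an irredundant finite generating subset, observe that every generating set must contain all extremals, and then concentrate all the work in showing that each irredundant generator is in fact extremal (equivalently, your Key Lemma: a non-extremal generator is redundant). The difference lies in how the crucial containment $\Xmin(\delta-\beta)\subseteq\bigcup_{\text{other generators }\alpha}\Xmin(\alpha-\beta)$ for $\delta\in T\setminus\{\beta\}$ is established. The paper reaches it geometrically: it invokes Theorem~\ref{T:construction} to write any $\delta\in T\setminus\{\alpha_1\}$ as a point of a segment $\underline{[\alpha_1,\gamma]}$ with $\gamma$ in the hull of the remaining generators, notes that $\Xmin(\delta-\alpha_1)=\Xmin(\gamma-\alpha_1)$ for $\delta\neq\alpha_1$ on such a segment, and then applies Theorem~\ref{T:CriterionTropIndep} to $\gamma$; it concludes by checking that $T\setminus\{\alpha_1\}$ is closed under tropical segments. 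You instead work directly with the min-plus representation $g=\min_l(f_l+c_l)$ from Theorem~\ref{T:TconvTlinear} and argue that no active branch at a point of $\Xmin(g-f)$ can be the $\beta$-branch (else $\delta=\beta$), which yields the same containment for \emph{all} $\delta\in T\setminus\{\beta\}$ at once --- in effect a sharpening of the second assertion of Theorem~\ref{T:CriterionTropIndep} that excludes $\beta$ itself from the union. Your route avoids Theorem~\ref{T:construction} entirely and packages the redundancy step as a clean, reusable lemma; the paper's route stays closer to the segment-decomposition machinery it has already built. Your closing caveats (reinstating missing generators with large coefficients, and the $\Xmin^{\epsilon}/\Cap$ bookkeeping when infima are not attained) are appropriate, and the second is no more of an issue for you than it already is for the paper's own statement of Theorem~\ref{T:CriterionTropIndep}, which likewise works with genuine minimizer sets.
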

\begin{proof}
We will prove the case for lower tropical convexity and the case for upper tropical convexity can be proved analogously. 

Since $T$ is a lower tropical polytope, we may choose a finite generating set  $S$ of $T$, i.e., $\lowertconv(S)=T$. Choose a subset $V$ of $S$ such that $\lowertconv(V)=T$ and $V$ is lower tropically independent,  i.e., $\lowertconv(V\setminus\{\alpha\})\neq T$ for all $\alpha\in V$. Note that this is doable since $S$ is finite. We claim that $V$ must be the set of all extremals of $T$, i.e.,  $V=\{\alpha\in T\mid \lowertconv(T\setminus\{\alpha\})= T\setminus\{\alpha\}\}$. 

First we note that all extremals must be contained in $V$ by definition. Now let us show that all elements of $V$ are extremals of $T$.

Let $V=\{\alpha_1,\cdots,\alpha_n\}$. Without loss of generality, we will show that  $\alpha_1$ is an extremal of $T$. 
We know that $\lowertconv(V)=T$ and $\lowertconv(V\setminus\{\alpha_1\})\neq T$. To show that $\alpha_1$ is an extremal of $T$, we will need to show that $T\setminus\{\alpha_1\}$ is lower tropically convex or equivalently $\alpha_1\notin \lowertconv(T\setminus\{\alpha_1\})$. Actually it suffices to show that for arbitrarily $\beta_1$ and $\beta_2$ in $T\setminus\{\alpha_1\}$,  $\alpha\notin \underline{[\beta_1,\beta_2]}$.

Let $T'=\lowertconv(\{\alpha_2,\cdots,\alpha_n\})$. By Theorem~\ref{T:construction}, there exist $\gamma_1$ and $\gamma_2$ in $T'$ such that $\beta_1\in\underline{[\alpha_1,\gamma_1]}$ and $\beta_2\in\underline{[\alpha_1,\gamma_2]}$. Then $\Xmin(\beta_1-\alpha_1)=\Xmin(\gamma_1-\alpha_1)$ and $\Xmin(\beta_2-\alpha_1)=\Xmin(\gamma_2-\alpha_1)$. 

Note that  $\alpha_1\notin T'$ since $V$ is lower tropically independent. Hence $\bigcup_{i=2}^n\Xmin(\alpha_i-\alpha_1)\neq X$ by Theorem~\ref{T:CriterionTropIndep}.

Moreover, $\Xmin(\gamma_1-\alpha_1)\subseteq \bigcup_{i=2}^n\Xmin(\alpha_i-\alpha_1)$ and $\Xmin(\gamma_2-\alpha_1)\subseteq \bigcup_{i=2}^n\Xmin(\alpha_i-\alpha_1)$ by Theorem~\ref{T:CriterionTropIndep}. Then 
$\Xmin(\beta_1-\alpha_1) \bigcup \Xmin(\beta_2-\alpha_1) =\Xmin(\gamma_1-\alpha_1) \bigcup  \Xmin(\gamma_2-\alpha_1) \subseteq \bigcup_{i=2}^n\Xmin(\alpha_i-\alpha_1) \neq X$ which means that $\alpha\notin \underline{[\beta_1,\beta_2]}$. 

\end{proof}

\begin{remark}
The statements in Theorem~\ref{T:extremal} is not generally true to all tropical convex sets. For example, if $X=\{1,\cdots,n\}$, then the whole space $\mbbTP(X)=\mbbR^{n-1}$  does not contain any extremals. 
\end{remark}

\section{A Fixed Point Theorem for Tropical Projections} \label{S:FixedPoint}

\begin{theorem} \label{T:FixedPoint}
Let $S$ and $T$ be compact subsets of $\mbbTP(X)$ and suppose $S$ is lower tropically convex and $T$ is upper tropically convex. Let $S_0=\{\lowerpi_S(\gamma)\mid \gamma\in T\}$ and $T_0=\{\upperpi_T(\gamma)\mid \gamma\in S\}$. Then $S_0$ and $T_0$ are isometric under $\upperpi_T\mid _{S_0}:S_0\to T_0$ and $\lowerpi_S\mid _{T_0}:T_0\to S_0$ which are inverse maps.
\end{theorem}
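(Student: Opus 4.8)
The plan is to prove the statement in three stages: first that the two projection maps are well defined as maps $S_0\to T_0$ and $T_0\to S_0$, second that they are mutually inverse, and third that the isometry property then follows formally. Well-definedness is immediate from the definitions: since $S_0=\{\lowerpi_S(\gamma)\mid\gamma\in T\}\subseteq S$, we get $\upperpi_T(S_0)\subseteq\upperpi_T(S)=T_0$, and symmetrically $\lowerpi_S(T_0)\subseteq\lowerpi_S(T)=S_0$. For the last stage, recall that $\lowerpi_S$ and $\upperpi_T$ are nonexpansive by Proposition~\ref{P:TropProj}(5) (here $S$ is lower and $T$ is upper tropically convex). Once the mutual-inverse identities $\lowerpi_S(\upperpi_T(\alpha))=\alpha$ on $S_0$ and $\upperpi_T(\lowerpi_S(\beta))=\beta$ on $T_0$ are in hand, then for $\alpha,\alpha'\in S_0$ the chain $\rho(\alpha,\alpha')=\rho(\lowerpi_S(\upperpi_T(\alpha)),\lowerpi_S(\upperpi_T(\alpha')))\le\rho(\upperpi_T(\alpha),\upperpi_T(\alpha'))\le\rho(\alpha,\alpha')$ forces equality throughout, so $\upperpi_T|_{S_0}$ preserves distances; the mirror computation handles $\lowerpi_S|_{T_0}$, and surjectivity of each restriction is a free consequence of the inverse identities. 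So everything reduces to the two inverse identities.

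The heart of the argument is $\lowerpi_S(\upperpi_T(\alpha))=\alpha$ for $\alpha\in S_0$, which I would prove using the \emph{additive} criterion for projections at $p=1$ (Corollary~\ref{C:CritTropProj}(a4),(b4)) together with $\llceil x\rrceil_1=\llfloor -x\rrfloor_1$ (Proposition~\ref{P:BnormProperty}(1)). Write $\alpha=\lowerpi_S(\gamma)$ with $\gamma\in T$, and set $\beta=\upperpi_T(\alpha)$. Criterion (a4) gives $\llfloor\sigma-\gamma\rrfloor_1=\llfloor\sigma-\alpha\rrfloor_1+\llfloor\alpha-\gamma\rrfloor_1$ for every $\sigma\in S$, while criterion (b4) for $\beta=\upperpi_T(\alpha)$, tested at the admissible point $\gamma\in T$ and rewritten through Proposition~\ref{P:BnormProperty}(1), reads $\llfloor\alpha-\gamma\rrfloor_1=\llfloor\beta-\gamma\rrfloor_1+\llfloor\alpha-\beta\rrfloor_1$. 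Substituting the second into the first yields $\llfloor\sigma-\gamma\rrfloor_1=\llfloor\sigma-\alpha\rrfloor_1+\llfloor\beta-\gamma\rrfloor_1+\llfloor\alpha-\beta\rrfloor_1$; combining this with the triangle inequality $\llfloor\sigma-\gamma\rrfloor_1\le\llfloor\sigma-\beta\rrfloor_1+\llfloor\beta-\gamma\rrfloor_1$ (Proposition~\ref{P:BnormProperty}(9)) and cancelling the finite term $\llfloor\beta-\gamma\rrfloor_1$ leaves $\llfloor\sigma-\alpha\rrfloor_1+\llfloor\alpha-\beta\rrfloor_1\le\llfloor\sigma-\beta\rrfloor_1$. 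Since the reverse inequality is again the triangle inequality, equality holds for every $\sigma\in S$, which by Corollary~\ref{C:CritTropProj}(a4) says precisely $\alpha=\lowerpi_S(\beta)=\lowerpi_S(\upperpi_T(\alpha))$.

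The companion identity $\upperpi_T(\lowerpi_S(\beta))=\beta$ for $\beta\in T_0$ follows by the exact mirror-image computation: write $\beta=\upperpi_T(\gamma')$ with $\gamma'\in S$, set $\alpha=\lowerpi_S(\beta)$, specialize the lower criterion for $\alpha$ at the admissible point $\gamma'\in S$, substitute into the upper criterion for $\beta$, and cancel as before to obtain $\beta=\upperpi_T(\alpha)$ via Corollary~\ref{C:CritTropProj}(b4). Together with the reduction of the first paragraph, this yields that $\upperpi_T|_{S_0}$ and $\lowerpi_S|_{T_0}$ are mutually inverse isometries, completing the proof.

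The main obstacle is conceptual rather than computational: recognizing that the $p=1$ pseudonorm is the right gadget. Its additive projection criterion converts the statement ``$\alpha$ is a projection'' into a linear additivity relation, and it is exactly this additivity — fed a single admissible test point ($\gamma\in T$ for the first identity, $\gamma'\in S$ for the second) — that produces the telescoping cancellation forcing the ping-pong between $S$ and $T$ to stabilize after one bounce. The remaining points are routine: existence of every projection invoked is guaranteed by the compactness of $S$ and $T$ through Theorem~\ref{T:main}, and the finiteness of each $\llfloor\cdot\rrfloor_1$ term needed for the cancellation is ensured by Proposition~\ref{P:BnormProperty}(2).
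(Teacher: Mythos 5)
Your proof is correct and rests on the same machinery as the paper's: the additive $p=1$ projection criterion of Corollary~\ref{C:CritTropProj}(a4)/(b4), the conversion $\llceil x\rrceil_1=\llfloor -x\rrfloor_1$, and a telescoping cancellation against the triangle inequality; the only structural difference is that you verify the characterizing identity for $\lowerpi_S(\beta)$ directly at an arbitrary test point $\sigma\in S$, whereas the paper introduces $\alpha'=\lowerpi_S(\beta)$ and bounds $\rho(\alpha,\alpha')\mu(X)\le 0$. You also spell out the isometry step via the non-expansiveness of Proposition~\ref{P:TropProj}(5), which the paper leaves implicit, so nothing is missing.
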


\begin{proof}
For each $\alpha\in S_0$, there exists $\gamma\in T$ such that $\alpha=\lowerpi_S(\gamma)$. Let $\beta=\upperpi_T(\alpha)$ and $\alpha'=\lowerpi_S(\beta)$. We want to show that $\alpha=\alpha'$. 

Using Theorem~\ref{T:main} and Corollary~\ref{C:CritTropProj}, we have the following relations:

\begin{align}
&\llfloor \alpha-\alpha'\rrfloor_1 + \llfloor \alpha' - \beta\rrfloor_1  = \llfloor \alpha - \beta\rrfloor_1; \\
&\llfloor \alpha'-\alpha\rrfloor_1 + \llfloor \alpha - \gamma\rrfloor_1  = \llfloor \alpha' - \gamma\rrfloor_1; \\
&\llceil \gamma-\beta\rrceil_1 + \llceil \beta - \alpha\rrceil_1  = \llceil \gamma - \alpha\rrceil_1. 
\end{align}

Then we have 

\begin{align*}
0\leq &\rho(\alpha,\alpha')\mu(X)=\llfloor \alpha-\alpha'\rrfloor_1+\llfloor \alpha'-\alpha\rrfloor_1 \quad\text{(Proposition~\ref{P:BnormProperty}(5))}\\
&=(\llfloor \alpha - \beta\rrfloor_1-\llfloor \alpha' - \beta\rrfloor_1)+(\llfloor \alpha' - \gamma\rrfloor_1- \llfloor \alpha - \gamma\rrfloor_1)\quad \text{(by (1) and (2))}\\
&=(\llceil \beta - \alpha\rrceil_1-\llceil \beta-\alpha' \rrceil_1)+(\llceil \gamma-\alpha' \rrceil_1- \llceil \gamma-\alpha\rrceil_1) \quad\text{(Proposition~\ref{P:BnormProperty}(1))}\\
&= ((\llceil \gamma - \alpha\rrceil_1-\llceil \gamma - \beta\rrceil_1)-\llceil \beta-\alpha' \rrceil_1)+(\llceil \gamma-\alpha' \rrceil_1- \llceil \gamma-\alpha\rrceil_1) \quad \text{(by (3))}\\
&= \llceil \gamma-\alpha' \rrceil_1-(\llceil \gamma - \beta\rrceil_1+\llceil \beta-\alpha' \rrceil_1) \\
& = \llceil(\gamma - \beta) +(\beta-\alpha' ) \rrceil_1-(\llceil \gamma - \beta\rrceil_1+\llceil \beta-\alpha' \rrceil_1) \leq 0 \quad\text{(Proposition~\ref{P:BnormProperty}(9))}. 
\end{align*}

Therefore $\rho(\alpha,\alpha')$ must be $0$ which means that $\alpha=\alpha'$ as claimed. 

Using an analogous argument, we can show that $\beta=\upperpi_T(\lowerpi_S(\beta))$ for each $\beta\in T_0$. 
\end{proof}

\section{An Application to the Divisor Theory on Metric Graphs} \label{S:AppMetGra}

\subsection{Tropical Convexity for Divisors and $\mbbR$-Divisors}

Let $\Gamma$ be a compact metric graph with finite edge lengths. We also denote the set of points of $\Gamma$ by $\Gamma$ for simplicity. Let $\Div(\Gamma)$ be the free abelian group on $\Gamma$ and $\RDiv(\Gamma) = \Div(\Gamma)\otimes\mR$. As in convention, we call the elements of $\Div(\Gamma)$ \emph{divisors} (or \emph{$\mbbZ$-divisors} when we want to emphasize the integer coefficients) and elements of $\RDiv(\Gamma)$ \emph{$\mR$-divisors}. An $\mR$-divisor on $\Gamma$ can be written as $D=\sum_{p\in\Gamma}m_p\cdot(p)$ where $m_p\in\mbbR$ is called the \emph{value} of $D$ at $p\in\Gamma$ which is zero for all but finitely many points $p\in \Gamma$. We also write the value of $D$ at $p\in\Gamma$ as $D(p)$.  Moreover, for an $\mbbR$-divisor $D=\sum_{p\in\Gamma}m_p\cdot(p)$, 

\begin{enumerate}
\item the \emph{degree} of $D$ is $\sum_{p\in\Gamma}m_p$;
\item the \emph{support} of $D$, denoted by $\supp(D)$, is the set of points $p\in\Gamma$ such that $m_p\neq 0$;  
\item $D$ is the \emph{zero divisor} if and only  if $\supp(D)=\emptyset$;
\item $D$ is a $\mbbZ$-divisor if and only  if $m_p$ is an integer for all $p\in\Gamma$;
\item we say $D$  is \emph{effective} if $m_p\geq 0$ for all $p\in\Gamma$;
\item $D$ can be uniquely written as $D^+-D^-$ where $D^+$ and $D^-$ are both effective divisors which have disjoint supports and are called the \emph{effective part} and \emph{noneffective part} of $D$ respectively.
\end{enumerate}

Note that there is a natural partial ordering on $\RDiv(\Gamma)$. We say $D'\leq D$ if $D-D'$ is effective. 

Let $\DivPlus(\Gamma)$ and $\RDivPlus(\Gamma)$ be the semigroups of effective $\mZ$-divisors and effective $\mR$-divisors respectively. If $d$ is a nonnegative integer, denote the set of divisors of degree $d$ by $\DivD(\Gamma)$ and the set of effective divisors of degree $d$ by $\DivPlusD(\Gamma)$. If $d$ is a nonnegative real, denote the set of $\mbbR$-divisors of degree $d$ by $\RDivD(\Gamma)$ and the set of effective $\mbbR$-divisors of degree $d$ by $\RDivPlusD(\Gamma)$. 

In this section, we will explore a tropical convexity theory on  $\RDivPlusD(\Gamma)$. To make a connection to the whole theory developed in the previous sections, we will need to relate $\mbbR$-divisors to elements in $\mbbTP(\Gamma)$ (note that $\mbbTP^0(\Gamma)=\mbbTP(\Gamma)$ since $\Gamma$ is compact). This can be realized using a potential theory on metric graphs \cite{BF06,BR07,BS13} with some results briefly summarized in Appendix~\ref{S:potential}. 

Let $CPA(\Gamma)\subset C(\Gamma)$ be the vector space consisting of all continuous piecewise-affine  functions on $\Gamma$.  If $D=\sum_{p\in\Gamma}m_p\cdot(p)\in\RDiv$, we let $\delta_D:=\sum_{p\in\Gamma}m_p\cdot\delta_p$ with $\delta_p$ the Dirac measure at $p$.
Consider $D_1,D_2\in\RDivPlusD(\Gamma)$. Then based on the potential theory on $\Gamma$, there exists a piecewise-linear function $f_{D_2-D_1}\in CPA(\Gamma)$  such that $\Delta f_{D_2-D_1} = \delta_{D_2}-\delta_{D_1}$ which is unique up to  a constant translation. We say $f_{D_2-D_1}$ is an \emph{associated function} of $D_2-D_1$, and in this sense, we may associate the unique element $[f_{D_2-D_1}]$ in $\mbbTP(\Gamma)$ to $D_2-D_1$. On the other hand, for each $f\in[f_{D_2-D_1}]$, we say $\divf(f)=\divf([f]):=D_2-D_1$ is the \emph{associated divisor} of $f$ or $[f]$. In particular, the value of $\divf(f)$ at $p\in\Gamma$ is the sum of slopes of $f$ along all incoming tangent directions at $p$.  Note that $[f_{D_2-D_1}]+[f_{D_1-D_0}]=[f_{D_2-D_0}]$ for all $D_0,D_1,D_2\in \RDivPlusD(\Gamma)$. 

More precisely, if $D_1=(q)$ and $D_2=(p)$ for some $p,q\in\Gamma$, then $\underline{f_{D_2-D_1}}(x)=j_q(x,p)$ (see the definition of $j_q(x,p)$ in Appendix~\ref{S:potential}).
Now let $D_1=\sum_{i=1}^{d_1} m_{1,i}\cdot(p_{1,i})$ and $D_2=\sum_{i=1}^{d_2} m_{2,i}\cdot(p_{2,i})$ such that $D_1,D_2\in\RDivPlusD(\Gamma)$ (this means $d=\sum_{i=1}^{d_1} m_{1,i}=\sum_{i=1}^{d_2} m_{2,i}$). Then by the linearity of the Laplacian, for an arbitrary $q\in\Gamma$, $\sum_{i=1}^{d_1}m_{1,i}\cdot j_q(x,p_{1,i})$, $\sum_{i=1}^{d_2}m_{2,i}\cdot j_q(x,p_{2,i})$, $\sum_{i=1}^{d_2}m_{2,i}\cdot j_q(x,p_{2,i})-\sum_{i=1}^{d_1}m_{1,i}\cdot j_q(x,p_{1,i})$ are  associated functions of $D_1-d\cdot(q)$, $D_2-d\cdot(q)$ and $D_2-D_1$ respectively. To have a quick understanding, one may think of $\Gamma$ as an electrical network with resistances given by the edge lengths. Then an associated function of $D_2-D_1$ is the electrical potential function on $\Gamma$ (with an arbitrary point of $\Gamma$ grounded)  when $m_{2,i}$ units of current enter the network at $p_{2,i}$ for all $i=1,\cdots,d_2$ and $m_{1,i}$ units of current exit the network at $p_{1,i}$ for all $i=1,\cdots,d_1$.

We may define $\iota:\RDivPlusD(\Gamma)\times \RDivPlusD(\Gamma)\to \mbbTP(\Gamma)$ by $\iota(D_1,D_2)=[f_{D_2-D_1}]$. Moreover, fixing an arbitrary $\mbbR$-divisor $D_0$ in  $\RDivPlusD(\Gamma)$, the map $\iota_{D_0}:\RDivPlusD(\Gamma)\to\mbbTP(\Gamma)$ with $D\mapsto [f_{D-D_0}]$ is an embedding of $\RDivPlusD(\Gamma)$ into $\mbbTP(\Gamma)$. Note that $\divf(\iota_{D_0}(D))=D-D_0$. 

Now we can translate the tropical convexity theory from $\mbbTP(\Gamma)$ to  $\RDivPlusD(\Gamma)$ based on the maps $\iota$ and $\iota_{D_0}$. 
\begin{enumerate}
\item The \emph{tropical metric} on $\RDivPlusD(\Gamma)$ is defined by the distance function $\rho(D_1,D_2):=\rho(\iota_{D_0}(D_1), \iota_{D_0}(D_2))=\Vert [f_{D_2-D_0}]-[f_{D_1-D_0}]\Vert=\Vert\iota(D_1,D_2)\Vert=\Vert [f_{D_2-D_1}]\Vert = \max(f_{D_2-D_1})-\min(f_{D_2-D_1})$. 
\item Let $l=\rho(D_1,D_2)$. Let $\alpha=\iota_{D_0}(D_1)$ and $\beta=\iota_{D_0}(D_2)$. The \emph{tropical path} from $D_1$ to $D_2$ in $\RDivPlusD(\Gamma)$ is defined as a map  $P_{D_2-D_1}:[0,l]\to \RDivPlusD(\Gamma)$ given by 
\begin{align*}
&P_{D_2-D_1}(t):=\divf(P_{(\alpha,\beta)})+D_0 \\
&=\divf([\min(t,\underline{f_{D_2-D_0}-f_{D_1-D_0}})+f_{D_1-D_0}])+D_0 \\
&=\divf([\min(t,\underline{f_{D_2-D_1}})])+\divf([f_{D_1-D_0}])+D_0 \\
&=\divf([\min(t,\underline{f_{D_2-D_1}})])+D_1.
\end{align*}
Note that the value of $\divf([\min(t,\underline{f_{D_2-D_1}})])$ at each point $p\in\Gamma$ is at least $-D_1(p)$, and thus $P_{D_2-D_1}(t)$ is effective of degree $d$ which means $P_{D_2-D_1}$ is well-defined.
In particular, $P_{D_2-D_1}(0)=D_1$ and $P_{D_2-D_1}(l) = D_2$. The image of $P_{D_2-D_1}$, denoted by $\tconv(D_1,D_2)$ or $[D_1,D_2]$, is called the \emph{tropical segment} connecting $D_1$ and $D_2$. 

\item It is easy to see that the tropical metric and the tropical paths on $\RDivPlusD(\Gamma)$ are independent of $D_0$ which we choose for the embedding $\iota_{D_0}$. 
\item A set $T\subseteq\RDivPlusD(\Gamma)$ is \emph{tropically convex} if for every $D_1,D_2\in T$, the whole tropical segment $[D_1,D_2]$ is contained in $T$. For a subset $S$ of $\RDivPlusD(\Gamma)$, the \emph{tropical convex hull} generated by $S$, denoted by $\tconv(S)$, is the intersection of all tropical convex sets in $\RDivPlusD(\Gamma)$ containing $S$. If $S$ is finite, we also call $\tconv(S)$ a \emph{tropical polytope}. 
\item All theorems about lower tropical convex sets in $\mbbTP(X)$ in the previous sections can be applied to tropical convex sets in $\RDivPlusD(\Gamma)$. 
\end{enumerate}

\begin{remark}
Recall that we have defined two types of tropical paths, the lower ones and upper ones, for $\mbbTP(X)$ in Definition~\ref{D:tpath}. But here we have essentially only one way to define tropical paths for $\RDivPlusD(\Gamma)$. As an analogy to Definition~\ref{D:tpath}., one may want to call $P_{D_2-D_1}$ the lower tropical path from $D_1$ to $D_2$, and define the upper tropical path from $D_1$ to $D_2$ as $P^{D_2-D_1}(t):=\divf(P^{(\alpha,\beta)})+D_0$. But an issue here is that in general $P^{D_2-D_1}(t)$ is not contained in $\RDivPlusD(\Gamma)$. Actually it can be verified that $P^{D_2-D_1}(t)=D_1+D_2-P_{D_2-D_1}(l-t)$ which is in general not effective. There are a few more points worth mentioning here:
\begin{enumerate}
\item In the above, we use lower tropical paths in $\mbbTP(\Gamma)$ to define tropical paths in $\RDivPlusD(\Gamma)$. But we can also use upper  tropical paths in $\mbbTP(\Gamma)$ to give an equivalent definition of tropical paths in $\RDivPlusD(\Gamma)$. Let $\alpha'=[f_{D_0-D_1}]$ and $\beta'=[f_{D_0-D_1}]$. Then $P_{D_2-D_1}(t)=D_0-\divf(P^{(\alpha',\beta')})$. 
\item As for $\RDivPlusD(\Gamma)$, we may also embed $\RDivD(\Gamma)$ into $\mbbTP(\Gamma)$ by fixing an $\mbbR$-divisor $D_0$ in  $\RDivD(\Gamma)$ and sending each $\mbbR$-divisor $D$ in  $\RDivD(\Gamma)$ to $[f_{D-D_0}]$ in $\mbbTP(\Gamma)$. Therefore a tropical convexity theory for $\RDivD(\Gamma)$ can be derived from the tropical convexity theory for  $\mbbTP(\Gamma)$, just as what we have done for $\RDivPlusD(\Gamma)$.  But in this way, there is some difference: both lower tropical paths $P_{D_2-D_1}(t):=\divf(P_{(\alpha,\beta)})+D_0$ and upper tropical paths $P^{D_2-D_1}(t):=\divf(P^{(\alpha,\beta)})+D_0$ can be defined for $\RDivD(\Gamma)$ since the $\mbbR$-divisors in the paths are not required to be effective and are fully contained in $\RDivD(\Gamma)$. 
\item Throughout the rest of this section, we will make our discussion focused on the tropical convexity theory on $\RDivPlusD(\Gamma)$ instead of that on $\RDivD(\Gamma)$, and  stick to the translation of theorems for lower tropical convexity on $\mbbTP(\Gamma)$ to those for the  tropical convexity on $\RDivPlusD(\Gamma)$. 
\end{enumerate}
\end{remark}

Now suppose $d$ is an integer and  let us consider $\DivPlusD(\Gamma)$ which is a subset of $\RDivPlusD(\Gamma)$. As in convention, a function $f\in CPA(\Gamma)$ is said to be \emph{rational} if $f$ is piecewise-linear with integral slopes. Clearly, $\divf(f)$ is a divisor of degree $0$. For each $D_1,D_2\in\DivD(\Gamma)$, we say $D_1$ is \emph{linearly equivalent} to $D_2$ (denoted $D_1\sim D_2$) if $f_{D_2-D_1}$ is rational. Note that the linear equivalence is an equivalence relation on $\Div(\Gamma)$ and we denote the linear equivalence class of a divisor $D$ by $[D]$.  The \emph{complete linear system} $|D|$ associated to $D\in\DivD(\Gamma)$ is the set of all effective divisors linearly equivalent to $D$ and we say the degree of $|D|$ is $d$. The following are some facts about $\DivPlusD(\Gamma)$:
\begin{enumerate}
 \item $\DivPlusD(\Gamma)$ with the metric topology induced from the tropical metric on $\RDivPlusD(\Gamma)$ is homeomorphic to the $d$-th symmetric product $\Gamma^d/S_d$ of $\Gamma$ with the symmetric product topology. We prove this statement in Appendix~\ref{S:EquiTop}. 
 \item For each $D\in\DivPlusD(\Gamma)$, $|D|$ is contained in $\DivPlusD(\Gamma)$. 
 \item For $D_1,D_2\in \DivPlusD(\Gamma)$, $D_1\sim D_2$ if and only if $[D_1,D_2]\subseteq \DivPlusD(\Gamma)$. 
 \item By definition, an equivalent way to say that a subset $T$ of  $\DivPlusD(\Gamma)$ is tropically convex is that $T$ is tropical-path-connected.
  \item $\DivPlusD(\Gamma)$ is not tropical-path-connected in general, and the nonempty complete linear systems of degree $d$ are exactly the tropical-path-connected components in $\DivPlusD(\Gamma)$.
  \item All complete linear systems $|D|$ are tropical polytopes which can be generated by the finite set of extremals of $|D|$. (See \cite{HMY12} for the definition and finiteness of  extremals of complete linear systems. This notion of extremals agrees with our notion of extremals in Section~\ref{S:TropIndep} in the case of $|D|$.) Note that by Corollary~\ref{C:Polytope}, complete linear systems are compact. 
\end{enumerate}

\begin{remark}
The divisor theory on finite graphs can be considered as a discretization of the divisor theory on metric graphs. Actually the divisor theory for $\Div(G)$ where $G$ is a finite graph is closely related to the divisor theory for $\Div(\Gamma)$ where $\Gamma$ is the metric graph geometrically realized as assigning the unit interval to all edges of $G$ \cite{Luo11,HKN13}. 
\end{remark}

\begin{remark} \label{R:ChipFiring}
Another way to think of linear equivalence on $\Div(\Gamma)$ is to use the so-called \emph{chip-firing moves}. We say a non-constant rational function $f$   on $\Gamma$ is $\emph{primitive}$ if $\Gamma\setminus(\Gmin(f)\bigcup \Gmax(f))$ is a disjoint union of open segments where $\Gmin(f)$ and $\Gmax(f)$ are the minimizer and maximizer of $f$ respectively. Note that these open segments form a cut of the metric graph. Then $\divf(f)=D^+-D^-$ where $D^+$ and $D^-$ are the effective part and noneffective part of $\divf(f)$ respectively. Now consider a divisor $D$ of degree $d$.  Then $D'=D+\divf(f)=D^++(D-D^-)$ is also a divisor of degree $d$ which is linearly equivalent to $D$. This also means $f_{D'-D}=f$. Note that $\supp(D^+)=\partial \Gmax(f)$ and $\supp(D^-)=\partial \Gmin(f)$. Let $l=\max(f)-\min(f)=\rho(D,D')$.  We may visualize the evolution from $D$ to $D'$ as follows:
\begin{enumerate}
\item For each point $x\in\supp(D)$ such that $D(x)\geq 0$, we put $D(x)$ chips at the point $x$. If $D(x)<0$, we simply say that the point $x\in\Gamma$ is in debt of $-D(x)$ chips. In this sense, the divisor $D$ is represented by the configuration of chips on $\Gamma$. 
\item Now we take a chip-firing move  from $\Gmin(f)$ to $\Gmax(f)$ along the cut formed by the open segments in $\Gamma\setminus(\Gmin(f)\bigcup \Gmax(f))$. More precisely, for each point $x\in\partial \Gmin(f)$ and the outgoing tangent  direction $\vt$ at $x$ along an open segment in $\Gamma\setminus(\Gmin(f)\bigcup \Gmax(f))$ adjacent to $x$, take out $m_\vt$ chips at $x$ where $m_\vt$ is the slope of $f$ along $\vt$ which is an integer (this might make the point $x$ in debt) and move these $m_\vt$ along $\vt$ at the speed of $1/m_\vt$. After a period of time $l$, we get a new configuration of chips which represents the divisor $D'$. 
\end{enumerate}
We call the above process the \emph{chip-firing move} associated to the primitive rational function $f$ or the chip-firing move from $D$ to $D'$ or the chip-firing move from $D$ of \emph{distance} $l$. By the \emph{direction} of this chip firing move, we mean the process of moving  $m_\vt$ chips from $x$ into $\Gmin(f)^c$ at the speed of of $1/m_\vt$  for each $x\in\partial \Gmin(f)$ and each tangent direction $\vt$ at $x$ shooting into $\Gmin(f)^c$ (ignoring the distance information). We also call $\Gmin(f)$ the \emph{base} of this chip-firing move or of this chip-firing move direction. If in addition, $D$ is effective and $D\geq D^-$, then we always have enough chips on the boundary of $\Gmin(f)$ to fire  and $D'=D^++(D-D^-)$ is effective, i.e., this chip-firing move makes no point in debt.  We call it an \emph{effective chip-firing move} from $D$. Moreover, it can be easily verified that any rational function is a finite sum of primitive rational functions. Therefore, two divisors $D,D'\in \Div(\Gamma)$ are linearly equivalent if and only if $D'$ can be reached by $D$ via finitely many steps of chip-firing moves. 
If in addition $D$ and $D'$ are effective, then it can also be shown that all the intermediate  chip-firing moves can be chosen to be effective, i.e., the firings are within $|D|$. Actually, one way to choose the chip-firing moves is along the tropical path from $D$ to $D'$. Furthermore, The above notions of chip-firing moves, chip-firing move directions, effective chip-firing moves, and bases of chip-firing moves can be straightforwardly generalized to  cases for $\mbbR$-divisors by allowing the slopes $m_\vt$ of the corresponding functions to be real numbers. 
\end{remark}

The divisor theory on finite graphs or metric graphs is an analogue of the divisor theory on algebraic curves, while in the latter not only complete linear systems are studied, linear systems in general are also studied. Here we give the following definition of linear systems in the context of metric graphs. 

\begin{definition} \label{D:LinSys}
For a divisor $D\in\DivPlusD(\Gamma)$, a tropical polytope $T\subseteq |D|$ containing $D$ is called a \emph{linear system} associated to $D$. We say the \emph{degree} of $T$ is $d$. For convenience, we also allow a linear system to be an empty set which is called the \emph{empty linear system}. 
\end{definition}

\begin{example} \label{E:LinearSys}
In Figure~\ref{F:LinearSys}, we consider a metric circle $\Gamma$ and effective divisors of degree $3$ on $\Gamma$. Suppose the total length of $\Gamma$ is $2\pi$. A point $x$ on $\Gamma$ can be represented by its polar angle $\theta(x)\in \mbbR/2\pi$. Let $v_1$, $v_2$, $v_3$, $w_{12}$, $w_{23}$ and $w_{13}$ be points on $\Gamma$ such that $\theta(v_1)=7\pi/6$, $\theta(v_2)=\pi/2$, $\theta(v_3)=-\pi/6$, $\theta(w_{12})=5\pi/6$, $\theta(w_{23})=\pi/6$ and $\theta(w_{13})=-\pi/2$ respectively.  Let $D_0=(v_1)+(v_2)+(v_3)$. Then one can verify that a divisor $D=(x_1)+(x_2)+(x_3)\in \DivPlusThree(\Gamma)$ is linearly equivalent to $D_0$ if and only if $\theta(x_1)+\theta(x_2)+\theta(x_3)=3\pi/2\ \mod 2\pi$. Then $|D_0|$ can be represented by the locus of $\theta(x_1)+\theta(x_2)+\theta(x_3)=3\pi/2\ \mod 2\pi$ in the quotient of $(\mbbR/2\pi)^3$ by the natural action of the symmetric group $S_3$, which can actually be viewed as an equilateral triangle centered at $D_0$ with vertices $D_1=3(v_1)$, $D_2=3(v_2)$ and $D_3=3(v_3)$ as shown in Figure~\ref{F:LinearSys}. Moreover, the midpoint  of the side $D_1D_2$ corresponds to the divisor $D_{12}=2(w_{12})+(v_3)$, the midpoint  of the side $D_2D_3$ corresponds to the divisor $D_{23}=2(w_{23})+(v_1)$, and the midpoint  of the side $D_1D_3$ corresponds to the divisor $D_{13}=2(w_{13})+(v_2)$. In Figure~\ref{F:LinearSys}, we also show some sub-linear systems of $|D|$. For example, one can verify that the tropical segment $[D_1,D_3]$ is exactly the side $D_1D_3$ of the triangle, the tropical segment $[D_1,D_{23}]$ is the exactly the median $D_1D_{23}$ of the triangle, the tropical segment $[D_{12},D_{23}]$ is the union of $[D_0,D_{12}]$ and  $[D_0,D_{23}]$ which are straight segments in the medians $D_3D_{12}$ and $D_1D_{23}$ respectively. In addition,
\begin{enumerate}
\item  the tropical path from $D_1$ to $D_3$ corresponds to exactly one step of chip-firing move where one chip moves from $v_1$ to $v_2$ along the segment through $w_{12}$ at the speed of two units and two chips move from $v_1$ to $v_3$ along the segment through $w_{13}$ at the speed of one unit,
\item the tropical path from $D_1$ to $D_{23}$ corresponds to exactly one step of chip-firing move where one chip moves from $v_1$ to $w_{23}$ along the segment through $v_2$ at the speed of one unit and one chip moves from $v_1$ to $w_{23}$ along the segment through $v_3$ at the speed of one unit, and
\item the tropical path from $D_{12}$ to $D_{23}$ corresponds to two steps of chip-firing moves: the first step is the chip-firing move from $D_{12}$ to $D_0$ where one chip moves from $w_{12}$ to $v_1$ at the speed of one unit and one chip moves from $w_{12}$ to $v_2$ at the speed of one unit, and the second step is the chip-firing move from $D_0$ to $D_{23}$ where one chip moves from $v_2$ to $w_{23}$ at the speed of one unit and one chip moves from $v_3$ to $w_{23}$ at the speed of one unit.
\end{enumerate}
Moreover, the linear systems $\tconv(\{D_{12},D_{23},D_{13}\})$, $\tconv(\{D_{12},D_2,D_{13}\})$ and $\tconv(\{D_{12},D_2,D_{23}\})$ are also illustrated, which is purely $1$-dimensional, not of pure dimension and purely $2$-dimensional respectively. 

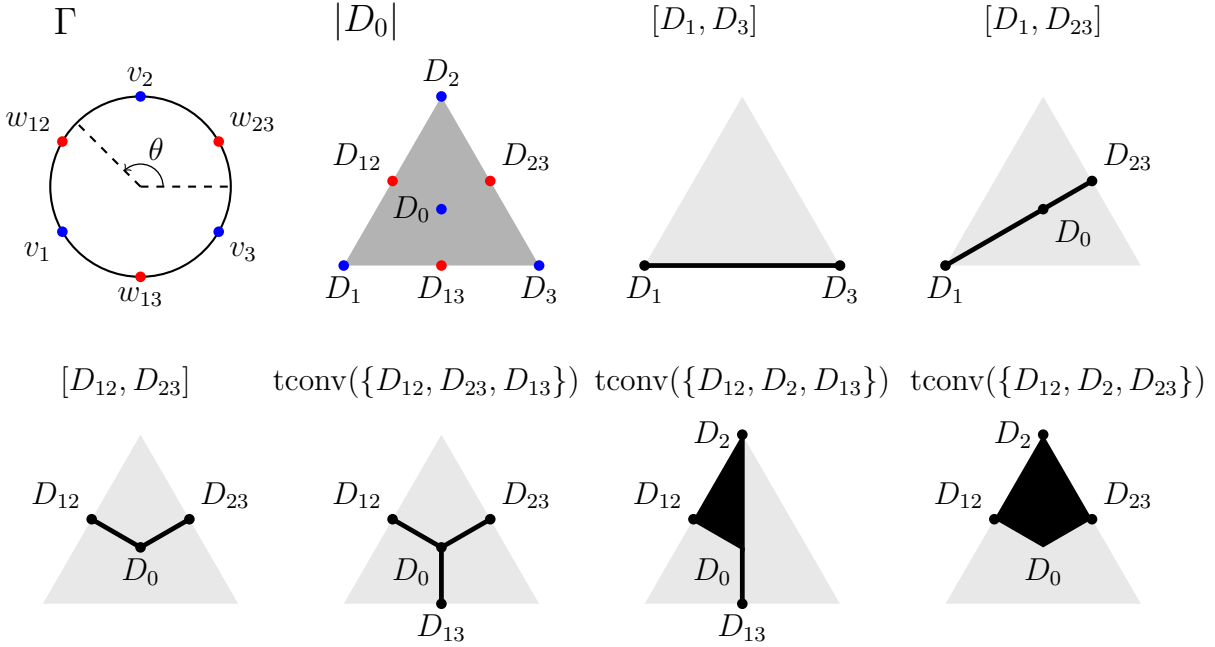
\begin{figure} 
\centering
\begin{tikzpicture}
\begin{scope}
 \draw (-1,2.5) node {\large $\Gamma$};
  \coordinate (center) at (0,0.3);
  \def\radius{1.2};
  \coordinate (v1) at ($(center)+(210:\radius)$);
  \coordinate (v2) at ($(center)+(90:\radius)$);
  \coordinate (v3) at ($(center)+(-30:\radius)$);
  \coordinate (w12) at ($(center)+(150:\radius)$);
  \coordinate (w23) at ($(center)+(30:\radius)$);
  \coordinate (w13) at ($(center)+(-90:\radius)$);
  \coordinate (th1) at ($(center)+(0:\radius)$);
  \coordinate (th2) at ($(center)+(135:\radius)$);

  \draw[thick] (center) circle[radius=\radius];
  \draw [dashed,line width = .8pt] (center) -- (th1);
  \draw [dashed,line width = .8pt] (center) -- (th2);
  \pic [draw, ->, "$\theta$", line width = .5pt, angle radius=0.3cm, angle eccentricity=1.8] {angle = th1--center--th2};

  \fill[blue] (v1) circle[radius=2pt];
  \fill[blue] (v2) circle[radius=2pt];
  \fill[blue] (v3) circle[radius=2pt];
  \fill[red] (w12) circle[radius=2pt];
  \fill[red] (w23) circle[radius=2pt];
  \fill[red] (w13) circle[radius=2pt];
  
  \draw [anchor=north east] (v1) node {$v_1$};
    \draw [anchor=south] (v2) node {$v_2$};
    \draw [anchor=north west] (v3) node {$v_3$};
    \draw [anchor=south east] (w12) node {$w_{12}$};
    \draw [anchor=south west] (w23) node {$w_{23}$};
    \draw [anchor=north] (w13) node {$w_{13}$};
    
\end{scope}

\begin{scope}[shift={(4,0)}]
\draw (-1,2.5) node {\large $|D_0|$};
  \coordinate (center) at (0,0);
  \def\radius{1.5};
  \coordinate (D1) at ($(center)+(210:\radius)$);
  \coordinate (D2) at ($(center)+(90:\radius)$);
  \coordinate (D3) at ($(center)+(-30:\radius)$);
  \coordinate (D12) at ($(center)+(150:\radius/2)$);
  \coordinate (D23) at ($(center)+(30:\radius/2)$);
  \coordinate (D13) at  ($(center)+(-90:\radius/2)$);
   
  \fill [black!30,opacity=1]  (D1) --(D2) -- (D3)--(D1);
  
 \fill[blue] (center) circle[radius=2pt];  
  \fill[blue] (D1) circle[radius=2pt];
  \fill[blue] (D2) circle[radius=2pt];
  \fill[blue] (D3) circle[radius=2pt];
  \fill[red] (D12) circle[radius=2pt];
  \fill[red] (D23) circle[radius=2pt];
  \fill[red] (D13) circle[radius=2pt];
  
\draw [anchor=east] (center) node {$D_0$};
  \draw [anchor=north] (D1) node {$D_1$};
    \draw [anchor=south] (D2) node {$D_2$};
    \draw [anchor=north] (D3) node {$D_3$};
    \draw [anchor=south east] (D12) node {$D_{12}$};
    \draw [anchor=south west] (D23) node {$D_{23}$};
    \draw [anchor=north] (D13) node {$D_{13}$};
    
\end{scope}

\begin{scope}[shift={(8,0)}]
\draw (-0.5,2.5) node {$[D_1,D_3]$};
  \coordinate (center) at (0,0);
  \def\radius{1.5};
  \coordinate (D1) at ($(center)+(210:\radius)$);
  \coordinate (D2) at ($(center)+(90:\radius)$);
  \coordinate (D3) at ($(center)+(-30:\radius)$);
  \coordinate (D12) at ($(center)+(150:\radius/2)$);
  \coordinate (D23) at ($(center)+(30:\radius/2)$);
  \coordinate (D13) at  ($(center)+(-90:\radius/2)$);
   
  \fill [black!30,opacity=0.3]  (D1) --(D2) -- (D3)--(D1);
  \draw [line width = 1.8pt] (D1) -- (D3);
  
  \fill[black] (D1) circle[radius=2pt];
  \fill[black] (D3) circle[radius=2pt];

  \draw [anchor=north] (D1) node {$D_1$};
    \draw [anchor=north] (D3) node {$D_3$};
\end{scope}

\begin{scope}[shift={(12,0)}]
\draw (0,2.5) node {$[D_1,D_{23}]$};
  \coordinate (center) at (0,0);
  \def\radius{1.5};
  \coordinate (D1) at ($(center)+(210:\radius)$);
  \coordinate (D2) at ($(center)+(90:\radius)$);
  \coordinate (D3) at ($(center)+(-30:\radius)$);
  \coordinate (D12) at ($(center)+(150:\radius/2)$);
  \coordinate (D23) at ($(center)+(30:\radius/2)$);
  \coordinate (D13) at  ($(center)+(-90:\radius/2)$);
   
  \fill [black!30,opacity=0.3]  (D1) --(D2) -- (D3)--(D1);
    \draw [line width = 1.8pt] (D1) -- (D23);
  
 \fill[black] (center) circle[radius=2pt];  
  \fill[black] (D1) circle[radius=2pt];
  \fill[black] (D23) circle[radius=2pt];

\draw [anchor=north west] (center) node {$D_0$};
  \draw [anchor=north] (D1) node {$D_1$};
    \draw [anchor=south west] (D23) node {$D_{23}$};
    
\end{scope}

\begin{scope}[shift={(0,-4.5)}]
\draw (-0.2,2.2) node {$[D_{12},D_{23}]$};
  \coordinate (center) at (0,0);
  \def\radius{1.5};
  \coordinate (D1) at ($(center)+(210:\radius)$);
  \coordinate (D2) at ($(center)+(90:\radius)$);
  \coordinate (D3) at ($(center)+(-30:\radius)$);
  \coordinate (D12) at ($(center)+(150:\radius/2)$);
  \coordinate (D23) at ($(center)+(30:\radius/2)$);
  \coordinate (D13) at  ($(center)+(-90:\radius/2)$);
   
  \fill [black!30,opacity=0.3]  (D1) --(D2) -- (D3)--(D1);
    \draw [line width = 1.8pt] (D12) --(center)-- (D23);
  
 \fill[black] (center) circle[radius=2pt];  
  \fill[black] (D12) circle[radius=2pt];
  \fill[black] (D23) circle[radius=2pt];

\draw [anchor=north] (center) node {$D_0$};
  \draw [anchor=south east] (D12) node {$D_{12}$};
    \draw [anchor=south west] (D23) node {$D_{23}$};
    
\end{scope}

\begin{scope}[shift={(4,-4.5)}]
\draw (-0.2,2.2) node {$\tconv(\{D_{12},D_{23},D_{13}\})$};
  \coordinate (center) at (0,0);
  \def\radius{1.5};
  \coordinate (D1) at ($(center)+(210:\radius)$);
  \coordinate (D2) at ($(center)+(90:\radius)$);
  \coordinate (D3) at ($(center)+(-30:\radius)$);
  \coordinate (D12) at ($(center)+(150:\radius/2)$);
  \coordinate (D23) at ($(center)+(30:\radius/2)$);
  \coordinate (D13) at  ($(center)+(-90:\radius/2)$);
   
  \fill [black!30,opacity=0.3]  (D1) --(D2) -- (D3)--(D1);
    \draw [line width = 1.8pt] (D12) --(center)-- (D23);
    \draw [line width = 1.8pt] (center)-- (D13);
  
 \fill[black] (center) circle[radius=2pt];  
  \fill[black] (D12) circle[radius=2pt];
  \fill[black] (D23) circle[radius=2pt];
  \fill[black] (D13) circle[radius=2pt];

\draw [anchor=north east] (center) node {$D_0$};
  \draw [anchor=south east] (D12) node {$D_{12}$};
    \draw [anchor=south west] (D23) node {$D_{23}$};
    \draw [anchor=north] (D13) node {$D_{13}$};
    
\end{scope}

\begin{scope}[shift={(8,-4.5)}]
\draw (0,2.2) node {$\tconv(\{D_{12},D_2,D_{13}\})$};
  \coordinate (center) at (0,0);
  \def\radius{1.5};
  \coordinate (D1) at ($(center)+(210:\radius)$);
  \coordinate (D2) at ($(center)+(90:\radius)$);
  \coordinate (D3) at ($(center)+(-30:\radius)$);
  \coordinate (D12) at ($(center)+(150:\radius/2)$);
  \coordinate (D23) at ($(center)+(30:\radius/2)$);
  \coordinate (D13) at  ($(center)+(-90:\radius/2)$);
   
  \fill [black!30,opacity=0.3]  (D1) --(D2) -- (D3)--(D1);
  \fill [black!100,opacity=1]  (D12) --(center)-- (D2) -- cycle;
    \draw [line width = 1.8pt] (center)-- (D13);
    \draw [line width = 1.8pt] (D12)--(center)-- (D2);

  \fill[black] (D12) circle[radius=2pt];
  \fill[black] (D2) circle[radius=2pt];
  \fill[black] (D13) circle[radius=2pt];

\draw [anchor=north east] (center) node {$D_0$};
  \draw [anchor=south east] (D12) node {$D_{12}$};
    \draw [anchor=east] (D2) node {$D_2$};
    \draw [anchor=north] (D13) node {$D_{13}$};
    
\end{scope}

\begin{scope}[shift={(12,-4.5)}]
\draw (0.2,2.2) node {$\tconv(\{D_{12},D_2,D_{23}\})$};
  \coordinate (center) at (0,0);
  \def\radius{1.5};
  \coordinate (D1) at ($(center)+(210:\radius)$);
  \coordinate (D2) at ($(center)+(90:\radius)$);
  \coordinate (D3) at ($(center)+(-30:\radius)$);
  \coordinate (D12) at ($(center)+(150:\radius/2)$);
  \coordinate (D23) at ($(center)+(30:\radius/2)$);
  \coordinate (D13) at  ($(center)+(-90:\radius/2)$);
   
  \fill [black!30,opacity=0.3]  (D1) --(D2) -- (D3)--(D1);
  \fill [black!100,opacity=1]  (D12) --(center)--(D23)-- (D2) -- cycle;

  \fill[black] (D12) circle[radius=2pt];
  \fill[black] (D2) circle[radius=2pt];
  \fill[black] (D23) circle[radius=2pt];

\draw [anchor=north] (center) node {$D_0$};
  \draw [anchor=south east] (D12) node {$D_{12}$};
    \draw [anchor=east] (D2) node {$D_2$};
    \draw [anchor=south west] (D23) node {$D_{23}$};
    
\end{scope}
 
\end{tikzpicture}
\caption{A metric circle $\Gamma$, a complete linear system $|D_0|$ on $\Gamma$ and several linear systems in $|D_0|$.} \label{F:LinearSys}
\end{figure}

\end{example}

\subsection{Tropical Projections and the Tropical Weak Independence Criterion for Divisors and $\mbbR$-Divisors}

As stated in the previous subsection, using the embedding map $\iota_{D_0}:\RDivPlusD(\Gamma)\to\mbbTP(\Gamma)$ with $D\mapsto [f_{D-D_0}]$ where $D_0\in \RDivPlusD$, we may translate the theorems for lower tropical convexity on $\mbbTP(\Gamma)$ to theorems for tropical convexity on $\RDivPlusD$ and this translation is essentially independent from the $\mbbR$-divisor $D_0$ used for the embedding $\iota_{D_0}$. 

First we may define the $B$-pseudonorms to all degree zero $\mbbR$-divisors $D$  as $\llfloor D \rrfloor_p := \llfloor f_D \rrfloor_p$ for all $p\in[1,\infty]$. In particular, 
$\llfloor D_2-D_1 \rrfloor_p = \llfloor f_{D_2-D_1} \rrfloor_p=\llfloor  \iota_{D_0}(D_2)-\iota_{D_0}(D_1)\rrfloor_p$ for all $D_1,D_2\in \RDivPlusD$.
Then we can rewrite the tropical projection theorem (Theorem~\ref{T:main} and Corollary~\ref{C:CritTropProj}) as follows:

\begin{theorem}  \label{T:TropProjDiv} 
 For a compact  tropically convex subset $T$ of $\RDivPlusD(\Gamma)$ and an arbitrary element $E$ in $\RDivPlusD(\Gamma)$, consider  the following real-valued  functions $\Theta^{(T,E)}_p:T \to [0,\infty)$ with $p\in[1,\infty]$ given by  $D\mapsto\llfloor D-E\rrfloor_p$. In particular, $\Theta^{(T,E)}_\infty(D)= \rho(D,E)$. 
 \begin{enumerate}
 \item  There is a unique element $\pi_T(E)$ called the tropical projection of $E$ to $T$ which minimizes $\Theta^{(T,E)}_p$  for all $p\in[1,\infty)$.
 \item  The minimizer of $\Theta^{(T,E)}_\infty$ is compact and tropically convex which contains $\pi_T(E)$. 
 \item The following are equivalent:
 \begin{enumerate}
 \item $D_0=\pi_T(E)$;
 \item For each $D\in T$, $\llfloor D-E\rrfloor_1=\llfloor D-D_0\rrfloor_1+\llfloor D_0-E\rrfloor_1$;
 \item For each $D\in T$, $\Gmin(f_{D-D_0})\bigcap\Gmin(f_{D_0-E})\neq\emptyset$;
 \item For each $D\in T$, $\Gmin(f_{D-D_0})\bigcap\Gmin(f_{D_0-E})=\Gmin(f_{D-E})$;
 \item For each $D\in T$ such that $D\neq D_0$,  $\Gmin(f_{D_0-D})\bigcap\Gmin(f_{D-E})=\emptyset$.
 \end{enumerate}
 \end{enumerate}
\end{theorem}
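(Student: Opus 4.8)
The plan is to reduce the entire statement to Theorem~\ref{T:main} and Corollary~\ref{C:CritTropProj} by transporting the whole situation into $\mbbTP(\Gamma)$ through a single embedding. Since $E\in\RDivPlusD(\Gamma)$, I would use $\iota_E:\RDivPlusD(\Gamma)\to\mbbTP(\Gamma)$, $D\mapsto[f_{D-E}]$, so that $E$ itself is sent to the origin $[0]=\iota_E(E)$. First I would record the three structural facts that make this embedding faithful: (i) $\iota_E$ is an isometry, because $\iota_E(D)-\iota_E(D')=[f_{D-E}]-[f_{D'-E}]=[f_{D-D'}]$ and hence $\rho(\iota_E(D),\iota_E(D'))=\Vert[f_{D-D'}]\Vert=\rho(D,D')$; (ii) $\iota_E$ carries the tropical segment $[D',D]$ onto the lower tropical segment $\lowertconv(\iota_E(D'),\iota_E(D))$, since the divisorial tropical path was defined precisely so that $\iota_E(P_{D-D'}(t))=P_{(\iota_E(D'),\iota_E(D))}(t)$, whence a set $T\subseteq\RDivPlusD(\Gamma)$ is tropically convex exactly when $\iota_E(T)$ is lower tropically convex; and (iii) being an isometry, $\iota_E$ preserves compactness. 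Consequently $\iota_E(T)$ is a compact lower tropically convex subset of $\mbbTP(\Gamma)$.

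Before invoking the general theorems I would fix on $\Gamma$ the Lebesgue measure $\mu$ coming from the edge lengths, which has $\mu(\Gamma)\in(0,\infty)$; since every nonempty open subset of $\Gamma$ has positive measure, $\mcalN(\Gamma,\mu)$ is trivial and the standing hypotheses of Section~\ref{S:Bpseudonorm} are met. Using the identity $\llfloor D-E\rrfloor_p=\llfloor f_{D-E}\rrfloor_p=\llfloor\iota_E(D)-[0]\rrfloor_p$, the function $\Theta^{(T,E)}_p$ is exactly $\lowerTheta^{(\iota_E(T),[0])}_p\circ\iota_E$. Theorem~\ref{T:main}(1) then produces a unique minimizer $\lowerpi_{\iota_E(T)}([0])$ of these functions for all $p\in[1,\infty)$, together with the compactness and lower tropical convexity of the minimizer of the $\infty$-version and the fact that it contains $\lowerpi_{\iota_E(T)}([0])$. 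Pulling back by $\iota_E^{-1}$ on $\iota_E(T)$ and setting $\pi_T(E)=\iota_E^{-1}(\lowerpi_{\iota_E(T)}([0]))$ gives parts (1) and (2); the independence of $\pi_T(E)$ from all auxiliary choices is consistent with Corollary~\ref{C:CritTropProj}(c).

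For part (3) I would translate the criteria (a1)--(a6) of Corollary~\ref{C:CritTropProj} through the dictionary $\iota_E(D)-\iota_E(D_0)=[f_{D-D_0}]$, $\iota_E(D_0)-[0]=[f_{D_0-E}]$, and $\iota_E(D_0)-\iota_E(D)=[f_{D_0-D}]$. Thus (a1) becomes (3a), (a4) becomes (3b), and the set-theoretic conditions (a5) and (a6) become (3c) and (3e) after rewriting $\Xmin$ as $\Gmin$. The one point needing care is the passage from $\Cap$ to an honest intersection $\cap$: since $\Gamma$ is compact we have $\mbbTP^0(\Gamma)=\mbbTP(\Gamma)$, so Lemma~\ref{L:XminXmaxTP0} permits replacing every $\Cap$ of minimizer sets by $\cap$. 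Finally (3d) follows from (3c) by Lemma~\ref{L:XminXmax}: once $\Gmin(f_{D-D_0})\cap\Gmin(f_{D_0-E})\neq\emptyset$, the intersection equals $\Xmin([f_{D-D_0}]+[f_{D_0-E}])=\Gmin(f_{D-E})$.

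The main obstacle is not any hard estimate but the bookkeeping of this correspondence: checking that $\iota_E$ simultaneously intertwines the tropical metric, the tropical-convex structure, and the $B$-pseudonorms, and confirming that the effectivity constraint on $\RDivPlusD(\Gamma)$---which forbids genuine upper tropical paths---does not obstruct the argument. It does not, because only the lower half of Theorem~\ref{T:main} and Corollary~\ref{C:CritTropProj} is ever invoked. Once the dictionary is pinned down, every assertion of Theorem~\ref{T:TropProjDiv} is a literal restatement of an already established fact.
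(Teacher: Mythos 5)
Your proposal is correct and follows essentially the same route as the paper, which presents Theorem~\ref{T:TropProjDiv} as a direct translation of Theorem~\ref{T:main} and Corollary~\ref{C:CritTropProj} through the embedding $\iota_{D_0}$ of $\RDivPlusD(\Gamma)$ into $\mbbTP(\Gamma)$; your only deviation is the convenient normalization $D_0=E$, and your handling of the measure, the $\Cap$-versus-$\bigcap$ issue via Lemma~\ref{L:XminXmaxTP0}, and the derivation of (3d) from Lemma~\ref{L:XminXmax} are all consistent with the paper's setup.
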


\begin{corollary} \label{C:TropProjDiv}
For a compact  tropically convex subset $T$ of $\RDivPlusD(\Gamma)$, let $cT+F:=\{c\cdot D+F\mid D\in T\}$ for $F\in\RDivPlusM(\Gamma)$ and $c>0$. Then $cT+F$ is a compact tropically convex subset of $\RDivPlusDM(\Gamma)$ and $\pi_{cT+F}(c\cdot E+F)=c\cdot \pi_T(E)+F$ for all $E\in\RDivPlusD(\Gamma)$. 
\end{corollary}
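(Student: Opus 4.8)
The plan is to reduce everything to Proposition~\ref{P:TropProj}(1) by transporting the divisor picture into $\mbbTP(\Gamma)$ through a carefully chosen pair of embeddings. Fix a base divisor $D_0\in\RDivPlusD(\Gamma)$, so that $\iota_{D_0}:\RDivPlusD(\Gamma)\to\mbbTP(\Gamma)$ with $D\mapsto[f_{D-D_0}]$ is the isometric embedding used to define the tropical structure on $\RDivPlusD(\Gamma)$; to embed the larger space I would use the base divisor $cD_0+F\in\RDivPlusDM(\Gamma)$, which gives $\iota_{cD_0+F}:\RDivPlusDM(\Gamma)\to\mbbTP(\Gamma)$.

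The key computation is that, under this choice of bases, the divisor operation $D\mapsto cD+F$ becomes pure scaling by $c$ in $\mbbTP(\Gamma)$, with the translation by $F$ cancelling out. Indeed, $(cD+F)-(cD_0+F)=c(D-D_0)$, and by linearity of the Laplacian $f_{c(D-D_0)}=c\,f_{D-D_0}$ up to an additive constant, so that $\iota_{cD_0+F}(cD+F)=[f_{c(D-D_0)}]=c\,[f_{D-D_0}]=c\,\iota_{D_0}(D)$. Hence $\iota_{cD_0+F}(cT+F)=c\cdot\iota_{D_0}(T)$ and $\iota_{cD_0+F}(cE+F)=c\cdot\iota_{D_0}(E)$. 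I expect this identity, namely that the combined scaling-and-translation degenerates to a scaling once the embedding base is itself shifted by $F$, to be the one genuinely delicate point; everything else is bookkeeping.

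With this in hand, the two assertions follow directly. Since $\iota_{D_0}(T)$ is compact and lower tropically convex, Proposition~\ref{P:TropProj}(1) (applied with $\alpha_0=[0]$ and scalar $c$) shows that $c\cdot\iota_{D_0}(T)$ is lower tropically convex, and as scaling by $c>0$ is a similarity of $\mbbTP(\Gamma)$ it preserves compactness; pulling back through the isometry $\iota_{cD_0+F}$ yields that $cT+F$ is a compact tropically convex subset of $\RDivPlusDM(\Gamma)$. For the projection formula, the same proposition gives $\lowerpi_{c\cdot\iota_{D_0}(T)}(c\cdot\iota_{D_0}(E))=c\cdot\lowerpi_{\iota_{D_0}(T)}(\iota_{D_0}(E))$, and since the tropical projection on divisors is by construction the lower tropical projection on $\mbbTP(\Gamma)$, the right-hand side equals $c\cdot\iota_{D_0}(\pi_T(E))=\iota_{cD_0+F}(c\,\pi_T(E)+F)$; translating back through $\iota_{cD_0+F}$ gives $\pi_{cT+F}(cE+F)=c\,\pi_T(E)+F$.

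As an alternative to the last step that avoids invoking Proposition~\ref{P:TropProj}(1) for the formula, one could verify the projection criterion Theorem~\ref{T:TropProjDiv}(3)(b) directly. Since $\pi_T(E)$ is the projection of $E$ to $T$, for every $D\in T$ we have $\llfloor D-E\rrfloor_1=\llfloor D-\pi_T(E)\rrfloor_1+\llfloor\pi_T(E)-E\rrfloor_1$. Multiplying through by $c$ and using $(cD+F)-(cE+F)=c(D-E)$ together with the homogeneity $\llfloor c(D-E)\rrfloor_1=c\llfloor D-E\rrfloor_1$ (Proposition~\ref{P:BnormProperty}(7), applied via $f_{c(D-E)}=c\,f_{D-E}$), one recovers the criterion for $c\,\pi_T(E)+F$ tested against an arbitrary point $cD+F$ of $cT+F$; the uniqueness in Theorem~\ref{T:TropProjDiv}(1) then identifies $\pi_{cT+F}(cE+F)=c\,\pi_T(E)+F$.
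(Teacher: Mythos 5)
Your proposal is correct and follows essentially the same route as the paper: the paper's proof likewise declares the statement an interpretation of Proposition~\ref{P:TropProj}(1) and then verifies it through the key identity $(c\cdot D+F)-(c\cdot D_0+F)=c(D-D_0)$ together with invariance under positive scaling, the only cosmetic difference being that the paper checks the set-theoretic criterion $\Gmin(f_{D-D_0})\bigcap\Gmin(f_{D_0-E})\neq\emptyset$ of Theorem~\ref{T:TropProjDiv}(3)(c) (using $\Gmin(c\cdot f)=\Gmin(f)$), whereas you either pass through the explicit embeddings $\iota_{D_0}$, $\iota_{cD_0+F}$ or check the equivalent $1$-pseudonorm criterion (3)(b) via homogeneity. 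Both of your variants are sound, and your explicit treatment of the compactness and convexity of $cT+F$ fills in details the paper leaves implicit.
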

\begin{proof}
This is an interpretation of Proposition~\ref{P:TropProj}(1) for $\RDivPlus(\Gamma)$. 

Let $D_0=\pi_T(E)$. Then for each $D\in T$, $\Gmin(f_{D-D_0})\bigcap\Gmin(f_{D_0-E})\neq\emptyset$ by Theorem~\ref{T:TropProjDiv}. Since  $\Gmin(f_{(c\cdot D+F)-(c\cdot D_0+F)})=\Gmin(c\cdot f_{D-D_0})=\Gmin(f_{D-D_0})$ and $\Gmin(f_{(c\cdot D_0+F)-(c\cdot E+F)})=\Gmin(c\cdot f_{D_0-E})=\Gmin(f_{D_0-E})$, we must have for each $c\cdot D+F \in cT+F$, 
$$\Gmin(f_{(c\cdot D+F)-(c\cdot D_0+F)})\bigcap \Gmin(f_{(c\cdot D_0+F)-(c\cdot E+F)})\neq \emptyset$$ which means $\pi_{cT+F}(c\cdot E+F)=c\cdot D_0+F$. 
\end{proof}

The following proposition is the version of Proposition~\ref{P:SeqTropProj} for $\RDivPlusD$. 
\begin{proposition} \label{P:SeqTropProjDiv}
Let $T$ and $T'$ be compact tropical convex subsets of $\RDivPlusD(\Gamma)$ such that $T'\subseteq T$. Then for each $E\in\RDivPlusD(\Gamma)$,  $\pi_{T'}(E)=\pi_{T'}(\pi_T(E))$.
\end{proposition}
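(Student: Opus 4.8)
The plan is to mimic the proof of Proposition~\ref{P:SeqTropProj} almost verbatim, now using the projection criterion for $\RDivPlusD(\Gamma)$ recorded in Theorem~\ref{T:TropProjDiv}(3b). Under the embedding $\iota_{D_0}:\RDivPlusD(\Gamma)\to\mbbTP(\Gamma)$, the tropical projection $\pi_T$ corresponds to the lower tropical projection $\lowerpi_{\iota_{D_0}(T)}$, and the identity $\llfloor D-E\rrfloor_1=\llfloor f_{D-E}\rrfloor_1$ is precisely the pullback of the $\underline{1}$-pseudonorm criterion of Corollary~\ref{C:CritTropProj}(a4). One could therefore simply invoke Proposition~\ref{P:SeqTropProj} for the compact lower tropically convex sets $\iota_{D_0}(T')\subseteq\iota_{D_0}(T)$ and translate back; but it is just as quick to argue directly, which is the route I would write out.

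First I note that $\pi_T(E)$ and $\pi_{T'}(\cdot)$ exist because $T$ and $T'$ are compact and tropically convex (Theorem~\ref{T:TropProjDiv}(1)). By Theorem~\ref{T:TropProjDiv}(3b), to prove $\pi_{T'}(E)=\pi_{T'}(\pi_T(E))$ it suffices to show that the divisor $\pi_{T'}(\pi_T(E))\in T'$ satisfies the defining additivity relation for the projection of $E$ onto $T'$, namely
\[
\llfloor D-E\rrfloor_1=\llfloor D-\pi_{T'}(\pi_T(E))\rrfloor_1+\llfloor \pi_{T'}(\pi_T(E))-E\rrfloor_1
\]
for every $D\in T'$.

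Next I would assemble this identity from three applications of the same criterion. Applying Theorem~\ref{T:TropProjDiv}(3b) to the projection onto $T$ with respect to $E$, and using $T'\subseteq T$, gives for every $D\in T'$
\[
\llfloor D-E\rrfloor_1=\llfloor D-\pi_T(E)\rrfloor_1+\llfloor \pi_T(E)-E\rrfloor_1,
\]
and in particular, since $\pi_{T'}(\pi_T(E))\in T'\subseteq T$,
\[
\llfloor \pi_{T'}(\pi_T(E))-E\rrfloor_1=\llfloor \pi_{T'}(\pi_T(E))-\pi_T(E)\rrfloor_1+\llfloor \pi_T(E)-E\rrfloor_1.
\]
Applying the criterion again to the projection onto $T'$ with respect to $\pi_T(E)$ yields, for every $D\in T'$,
\[
\llfloor D-\pi_T(E)\rrfloor_1=\llfloor D-\pi_{T'}(\pi_T(E))\rrfloor_1+\llfloor \pi_{T'}(\pi_T(E))-\pi_T(E)\rrfloor_1.
\]
Substituting this last identity into the first expression for $\llfloor D-E\rrfloor_1$, and then using the second identity to collapse the two $\pi_T(E)$-terms into $\llfloor \pi_{T'}(\pi_T(E))-E\rrfloor_1$, produces exactly the desired additivity relation; by Theorem~\ref{T:TropProjDiv}(3b) this identifies $\pi_{T'}(\pi_T(E))$ as $\pi_{T'}(E)$.

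There is no genuine obstacle here: the whole content is that Theorem~\ref{T:TropProjDiv}(3b) is the faithful divisor-theoretic translation of Corollary~\ref{C:CritTropProj}(a4). The only point requiring a moment's care is the bookkeeping of the three additivity identities, and the observation that it is the inclusion $T'\subseteq T$ that makes the first identity applicable to elements of $T'$. If one prefers to avoid this bookkeeping entirely, the statement follows immediately from Proposition~\ref{P:SeqTropProj} via the embedding argument sketched above.
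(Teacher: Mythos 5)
Your proof is correct and follows exactly the route the paper takes: the paper states this proposition as the divisor-theoretic translation of Proposition~\ref{P:SeqTropProj}, whose proof is precisely the three-identity bookkeeping with the additivity criterion (Corollary~\ref{C:CritTropProj}(a4), here Theorem~\ref{T:TropProjDiv}(3b)) that you reproduce. Nothing is missing.
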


The following theorem is the version of Theorem~\ref{T:CriterionTropIndep} and Theorem~\ref{T:extremal} for $\RDivPlusD$. 

\begin{theorem}\label{T:FiniteCriterion}
Let $T\subseteq\RDivPlusD$ be a tropical polytope generated by $D_1,\cdots,D_n$. Let $E$ be an $\mbbR$-divisor of degree $d$.
\begin{enumerate}
\item $E\in T$ if and only if $\bigcup_{i=1}^n\Gmin(f_{D_i-E})=\Gamma$.
\item If $D_0=\pi_T(E)$ and $D$ is an arbitrary $\mbbR$-divisor in $T$, then
$\Gmin(f_{D-E})\subseteq\Gmin(f_{D_0-E})=\bigcup_{i=1}^n\Gmin(f_{D_i-E})$.
\item Let $S$ be the set of extremals of $T$. Then $S\subseteq \{D_1,\cdots,D_n\}$ and $T=\tconv(S)$. 
\end{enumerate}
\end{theorem}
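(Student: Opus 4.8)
The plan is to reduce all three statements to their counterparts in $\mbbTP(\Gamma)$, namely Theorem~\ref{T:CriterionTropIndep} and Theorem~\ref{T:extremal}, by transporting everything through the embedding $\iota_F:\RDivD(\Gamma)\to\mbbTP(\Gamma)$, $D\mapsto[f_{D-F}]$, where $F$ is a fixed base divisor in $\RDivPlusD(\Gamma)$ (denoted $D_0$ in the preceding subsection). As recorded there, $\iota_F$ is an isometric embedding carrying tropical segments to lower tropical segments, and the computation showing $P_{D_2-D_1}(t)$ stays effective of degree $d$ shows that the image $\iota_F(\RDivPlusD(\Gamma))$ is itself lower tropically convex; hence $\iota_F$ intertwines $\tconv$ with $\lowertconv$ (for subsets of the image), maps tropical polytopes to lower tropical polytopes, and, by construction of Theorem~\ref{T:TropProjDiv}, intertwines the divisorial projection $\pi_T$ with $\lowerpi_{\iota_F(T)}$. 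Although $\iota_F$ was introduced on $\RDivPlusD(\Gamma)$, the same formula defines $\iota_F(E)=[f_{E-F}]$ for any degree-$d$ $\mbbR$-divisor $E$ (since $E-F$ then has degree $0$), and it remains injective on all of $\RDivD(\Gamma)$ because $\divf([f_{E-F}])=E-F$ recovers $E$.

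The first concrete step is to record the dictionary identity. For $D,E\in\RDivD(\Gamma)$ the function $f_{D-F}-f_{E-F}$ is an associated function of $(D-F)-(E-F)=D-E$, so $\iota_F(D)-\iota_F(E)=[f_{D-E}]$ in $\mbbTP(\Gamma)$; since $\Gamma$ is compact and $f_{D-E}$ is continuous, this gives $\Xmin\bigl(\iota_F(D)-\iota_F(E)\bigr)=\Gmin(f_{D-E})$. Thus, with $X=\Gamma$, the sets $\Xmin(\beta_i-\gamma)$ of Theorem~\ref{T:CriterionTropIndep} become exactly the divisorial sets $\Gmin(f_{D_i-E})$ when $\beta_i=\iota_F(D_i)$ and $\gamma=\iota_F(E)$.

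With this in hand the three parts follow mechanically. For (1), apply Theorem~\ref{T:CriterionTropIndep} to the finite-generated lower tropical polytope $\iota_F(T)=\lowertconv(\{\iota_F(D_1),\dots,\iota_F(D_n)\})$ with $\gamma=\iota_F(E)$: since the generating set is finite, $E\in T$ iff $\iota_F(E)\in\iota_F(T)$ iff $\bigcup_i\Xmin(\iota_F(D_i)-\iota_F(E))=\Gamma$, which by the dictionary is $\bigcup_i\Gmin(f_{D_i-E})=\Gamma$ (and if $E$ is a priori only a degree-$d$ $\mbbR$-divisor, this union condition forces $\iota_F(E)\in\iota_F(T)$, hence $E\in T\subseteq\RDivPlusD(\Gamma)$, so effectivity is automatic). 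Part (2) is the ``furthermore'' clause of Theorem~\ref{T:CriterionTropIndep}: translating $\Xmin(\beta-\gamma)\subseteq\Xmin(\lowerpi_{\iota_F(T)}(\gamma)-\gamma)=\bigcup_i\Xmin(\iota_F(D_i)-\gamma)$ through $\iota_F(\pi_T(E))=\lowerpi_{\iota_F(T)}(\iota_F(E))$ and the dictionary gives $\Gmin(f_{D-E})\subseteq\Gmin(f_{D_0-E})=\bigcup_i\Gmin(f_{D_i-E})$. Part (3) is Theorem~\ref{T:extremal}: since $\iota_F$ is an injection preserving lower tropical convex hulls, $D\in T$ is an extremal iff $\iota_F(D)$ is an extremal of $\iota_F(T)$, so $S$ maps bijectively onto the finite extremal set of $\iota_F(T)$, which by Theorem~\ref{T:extremal} is contained in every generating set and itself generates; pulling back yields $S\subseteq\{D_1,\dots,D_n\}$ and $T=\tconv(S)$.

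I expect no genuine obstacle of ideas here; the only friction is bookkeeping. Specifically, one must be careful to justify that $\iota_F$ really does intertwine the divisorial tropical projection with $\lowerpi$ and the divisorial extremals with the $\mbbTP(\Gamma)$-extremals, both of which rest on $\iota_F$ being an injective, isometric, convexity-preserving embedding with lower tropically convex image (all established in the setup), and to confirm that extending $\iota_F$ to non-effective degree-$d$ divisors is harmless. Beyond that, every claim is a direct transcription, and no new estimate or construction is needed.
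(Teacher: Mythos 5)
Your proposal is correct and follows essentially the same route as the paper: the paper gives no separate proof of Theorem~\ref{T:FiniteCriterion}, presenting it as the translation of Theorem~\ref{T:CriterionTropIndep} and Theorem~\ref{T:extremal} to $\RDivPlusD(\Gamma)$ through the embedding $\iota_{D_0}$ set up in the preceding subsection, which is exactly the dictionary you make explicit (including the identification $\Xmin(\iota_F(D)-\iota_F(E))=\Gmin(f_{D-E})$ and the intertwining of $\pi_T$ with $\lowerpi_{\iota_F(T)}$). Your added care about extending $\iota_F$ injectively to non-effective degree-$d$ divisors for part (1) is a worthwhile clarification but does not change the argument.
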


Note that in the above theorem, if we make a further restriction to $\DivPlusD(\Gamma)$ with $T$ being a linear system (Definition~\ref{D:LinSys}), then it will be a finitely verifiable criterion to tell whether an arbitrary divisor $D\in \DivPlusD(\Gamma)$ is contained in $T$. Moreover, we have the following immediate corollary. 

\begin{corollary}
Let $T$ be a linear system, i.e., $T=\tconv(\{D_1,\cdots,D_n\})$ where $D_1,\cdots,D_n$  are linearly equivalent divisors. Then for each divisor $E\in\RDivPlusD\setminus \DivPlusD$, we must have $\bigcup_{i=1}^n\Gmin(f_{D_i-E})\neq\Gamma$. 
\end{corollary}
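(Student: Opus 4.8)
The plan is to recognize this corollary as the contrapositive reading of Theorem~\ref{T:FiniteCriterion}(1), combined with the elementary observation that a linear system consists entirely of integer divisors. The whole argument is a short deduction from the finite criterion, so I would keep it to a few lines.

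First I would invoke Theorem~\ref{T:FiniteCriterion}(1): for the tropical polytope $T=\tconv(\{D_1,\cdots,D_n\})$ and any $\mbbR$-divisor $E$ of degree $d$, the equality $\bigcup_{i=1}^n\Gmin(f_{D_i-E})=\Gamma$ holds if and only if $E\in T$. (Here $E$ has degree $d$ since $E\in\RDivPlusD(\Gamma)$ and the $D_i$ have degree $d$, so the hypotheses of that theorem are met.) Consequently, the desired inequality $\bigcup_{i=1}^n\Gmin(f_{D_i-E})\neq\Gamma$ is logically equivalent to $E\notin T$, and it suffices to prove the latter.

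Next I would show $E\notin T$ by verifying that every element of $T$ is an integer divisor. By Definition~\ref{D:LinSys}, the linear system $T$ is a tropical polytope contained in a complete linear system $|D|$ for some $D\in\DivPlusD(\Gamma)$. Since $|D|$ consists of effective divisors linearly equivalent to $D$, it lies inside $\DivPlusD(\Gamma)$; hence $T\subseteq\DivPlusD(\Gamma)$ and every divisor in $T$ has integer coefficients. By hypothesis $E\in\RDivPlusD(\Gamma)\setminus\DivPlusD(\Gamma)$ is \emph{not} an integer divisor, so $E$ cannot lie in $T$. Combining this with the previous paragraph yields $\bigcup_{i=1}^n\Gmin(f_{D_i-E})\neq\Gamma$.

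I do not expect any real obstacle here: the statement is an immediate corollary of the finite covering criterion, and the only point needing a remark is the inclusion $T\subseteq\DivPlusD(\Gamma)$, which is forced by the definition of a linear system as a tropical polytope inside a complete linear system of genuine $\mZ$-divisors. The corollary can thus be read as saying that a proper $\mbbR$-divisor can never have its ``reduced-divisor'' covering condition satisfied against a linear system of integral divisors.
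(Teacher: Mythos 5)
Your proof is correct and follows exactly the paper's route: the paper likewise deduces the corollary from Theorem~\ref{T:FiniteCriterion} by noting that $E$, not being a $\mZ$-divisor, cannot lie in the linear system $T\subseteq\DivPlusD(\Gamma)$. Your write-up merely makes explicit the inclusion $T\subseteq\DivPlusD(\Gamma)$ that the paper leaves implicit.
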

\begin{proof}
This follows from Theorem~\ref{T:FiniteCriterion} knowing that  $E$  cannot be an element of $T$. 
\end{proof}

\begin{example}
Let $T$ be the linear system $\tconv(\{D_{12},D_2,D_{13}\})$ in Figure~\ref{F:LinearSys}. Here we will use Theorem~\ref{T:FiniteCriterion} to verify that $D_0\in T$ and $D_{23}\notin T$. We note that $\Gmin(f_{D_{12}-D_0})$ is the segment $v_1v_3v_2$, $\Gmin(f_{D_2-D_0})$ is the segment $v_1v_3$, $\Gmin(f_{D_{13}-D_0})$ is the segment $v_1v_2v_3$, $\Gmin(f_{D_{12}-D_{23}})$ is the point $w_{23}$, $\Gmin(f_{D_2-D_{23}})$ is the segment $v_1v_3w_{23}$ and $\Gmin(f_{D_{13}-D_{23}})$ is the point $w_{23}$. Thus $\Gmin(f_{D_{12}-D0})\bigcup \Gmin(f_{D_2-D_0}) \bigcup \Gmin(f_{D_{13}-D_0}) =\Gamma$ which means $D_0\in T$ and $\Gmin(f_{D_{12}-D_{23}})\bigcup \Gmin(f_{D_2-D_{23}}) \bigcup \Gmin(f_{D_{13}-D_{23}}) $ is the segment $v_1v_3w_{23}$ which means $D_{23}\notin T$ by Theorem~\ref{T:FiniteCriterion}. 
\end{example}

\subsection{Reduced Divisors: from  $b$-Functions to $B$-Pseudnorms} \label{SS:bFuncBPseudoNorm}
Let us first recall the definition of reduced divisors on a metric graph $\Gamma$. Let $X$ be a closed subset of $\Gamma$. For each $p\in X$, we denote the number of segments leaving $X$ at $p$ by  $\outdeg_X(p)$. Note that $\outdeg_X(p)=0$ for all $p\in X\setminus \partial X$. 

\begin{definition} \label{D:RedDiv}
Fix a point $q\in \Gamma$. We say a divisor $D\in\Div(\Gamma)$ is \emph{$q$-reduced} if 
\begin{enumerate}
\item $D(x)\geq 0$ for all $x\in \Gamma\setminus\{q\}$ and 
\item for every closed subset $X$ of $\Gamma\setminus\{q\}$,  there exists a point $p\in\partial X$ such that $D(p)<\outdeg_X(p)$. 
\end{enumerate}
\end{definition}

The most important property of reduced divisors is that  for each point $q\in\Gamma$ each divisor $D$, there exists a unique $q$-reduced divisor $D_q$ in the  linear equivalence class $[D]$. In particular,  if $D$ is effective, then $D_q\in|D|$. 

Adapted from an algorithm  in the context of sanpile models \cite{Dhar90},  there is a classical algorithm commonly called Dhar's algorithm which can efficiently determine whether a divisor is $q$-reduced for finite graphs and metric graphs \cite{Luo11}. 
Baker and Shokrieh \cite{BS13}  obtained  an elegant characterization of reduced divisors for finite graphs using energy pairing. In particular, they introduced the notion of $b_q$-function on the divisor group of a finite graph $G$ where $q$ is an arbitrary vertex of $G$ and proved that an effective divisor $D$ is $q$-reduced if and only if $D$ minimizes the $b_q$-function restricted to the complete linear system $|D|$ on $G$ (Theorem~4.14 in \cite{BS13}). 

An interesting observation is that the natural translation of $b_q$-function in the context of metric graphs is exactly the $\underline{B}^1$-pseudonorm of $f_{D-d\cdot(q)}$ where $D$ is a divisor of degree $d$ on $\Gamma$ (actually we name $B$-pseudonorms after $b$-functions) . On one hand,  the $q$-reduced divisors in the context of metric graphs should also be minimizers of $b_q$-function on complete linear systems as in the context of finite graphs proved in \cite{BS13}. 
On the other hand, we know that $|D|$ is a tropical polytope  and thus we can make a tropical projection of any effective divisor  $E$ of degree $d$ to $|D|$ which minimizes all $B$-pseudonorms of $\llfloor \cdot - E\rrfloor_p$ restricted to $|D|$. This actually means that the $q$-reduced divisor in $|D|$ should be exactly the tropical projection of the divisor $d\cdot(q)$ to $|D|$. We give a precise characterization in the following proposition.

\begin{proposition} \label{P:RedTroPro}
Consider a complete linear system $|D|\subseteq \DivPlusD(\Gamma)$ on a metric graph $\Gamma$. For an arbitrary point $q\in\Gamma$, the following are equivalent:
\begin{enumerate}
\item $D_0\in |D|$ is $q$-reduced. 
\item The base of any possible effective chip-firing move from $D_0$ contains $q$. 
\item For each $D'\in |D|$, $\Gmin(f_{D'-D_0})$ contains $q$. 
\item For each $D'\in |D|$, $\Gmin(f_{D'-D_0})\bigcap \Gmin(f_{D_0-d\cdot (q)})\neq\emptyset$.
\item $D_0=\pi_{|D|}(d\cdot (q))$.
\end{enumerate}
\end{proposition}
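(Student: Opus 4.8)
The plan is to prove Proposition~\ref{P:RedTroPro} by establishing a cycle of implications, routing the equivalences through the characterizations already available to us. The cleanest strategy is to prove $(5)\Leftrightarrow(4)$, $(4)\Leftrightarrow(3)$, $(3)\Leftrightarrow(2)$, and $(2)\Leftrightarrow(1)$, so that each link connects adjacent statements that differ by a single translation of language.

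First I would dispose of the two easy links that are essentially reformulations. For $(5)\Leftrightarrow(4)$, note that by definition $D_0=\pi_{|D|}(d\cdot(q))$ means $D_0$ is the tropical projection of $d\cdot(q)$ to the complete linear system $|D|$, which is compact and tropically convex. Applying the criterion of Theorem~\ref{T:TropProjDiv}(3), specifically the equivalence of part (a) with part (c), translated to $E=d\cdot(q)$, gives precisely that $\Gmin(f_{D'-D_0})\bigcap\Gmin(f_{D_0-d\cdot(q)})\neq\emptyset$ for all $D'\in|D|$; this is exactly statement (4). For $(4)\Leftrightarrow(3)$, I would observe that $f_{D_0-d\cdot(q)}=\sum_{i}m_i\cdot j_q(x,p_i)$ is (up to constant) the potential associated to $D_0-d\cdot(q)$ with $q$ grounded, so by the properties in the potential-theory appendix, $\Gmin(f_{D_0-d\cdot(q)})$ always contains the grounding point $q$. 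Hence requiring $\Gmin(f_{D'-D_0})\bigcap\Gmin(f_{D_0-d\cdot(q)})\neq\emptyset$ reduces, given $q$ is already a minimizer of the second function, to requiring $q\in\Gmin(f_{D'-D_0})$, which is statement (3). I should verify carefully that $q$ is indeed a global minimizer of $f_{D_0-d\cdot(q)}$ — this follows because all $d$ chips of the current sink at $q$, so the potential attains its minimum there.

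The implication $(3)\Leftrightarrow(2)$ is where the chip-firing interpretation from Remark~\ref{R:ChipFiring} enters. An effective chip-firing move from $D_0$ corresponds to a primitive rational function $f$ with $D_0\geq(\divf f)^-$, and its base is $\Gmin(f)$. The key point is that $D'=D_0+\divf(f)\in|D|$ and $f=f_{D'-D_0}$, so $\Gmin(f_{D'-D_0})=\Gmin(f)$ is the base of the move. Thus ``$q\in\Gmin(f_{D'-D_0})$ for all $D'\in|D|$'' translates into ``$q$ lies in the base of every effective chip-firing move from $D_0$.'' I would need to be careful that every effective move arises this way and that passing between arbitrary $D'\in|D|$ and primitive moves does not lose generality — decomposing a general rational function into primitive pieces (as noted in the Remark) handles the subtlety, since if $q$ fails to be in some base one can extract a primitive move whose base omits $q$.

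The main obstacle will be the final equivalence $(2)\Leftrightarrow(1)$, connecting the chip-firing base condition to Definition~\ref{D:RedDiv} of a $q$-reduced divisor. The hard direction is showing that the combinatorial reducedness condition (for every closed $X\subseteq\Gamma\setminus\{q\}$ there is a boundary point $p$ with $D_0(p)<\outdeg_X(p)$) is equivalent to saying no effective chip-firing move has a base avoiding $q$. The bridge is Dhar's algorithm: a closed set $X$ witnessing non-reducedness (where every boundary point can afford to fire into $X^c$) is exactly the complement of a base $\Gmin(f)$ of an effective chip-firing move omitting $q$, because $D_0(p)\geq\outdeg_X(p)$ at all $p\in\partial X$ means enough chips sit on $\partial X$ to fire $X=\Gmin(f)$ effectively. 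I would argue both directions by this correspondence: a firing set not containing $q$ yields a violating $X=\Gmin(f)$, and conversely a violating $X$ yields an effective move based at $X$ which omits $q$. The care needed is in the metric (continuous) setting — ensuring the outdegree/slope bookkeeping matches the integrality of slopes of primitive rational functions, and that ``closed subset of $\Gamma\setminus\{q\}$'' aligns with ``base of a move omitting $q$'' — but this is precisely the content of the known equivalence between Dhar's burning criterion and reducedness, which I would invoke rather than reprove.
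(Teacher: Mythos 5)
Your overall route is the same as the paper's: the chain $(1)\Leftrightarrow(2)\Leftrightarrow(3)\Leftrightarrow(4)\Leftrightarrow(5)$, with $(4)\Leftrightarrow(5)$ coming from the tropical projection criterion (Theorem~\ref{T:TropProjDiv}), $(2)\Leftrightarrow(3)$ from the correspondence between primitive rational functions and effective chip-firing moves, and $(1)\Leftrightarrow(2)$ from the Dhar-type bookkeeping $D_0(p)\geq D^-(p)\geq \outdeg_X(p)$ on the boundary of a base. Those three links are fine and at the same level of detail as the paper.

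The gap is in your step $(4)\Leftrightarrow(3)$. You argue that since $q\in\Gmin(f_{D_0-d\cdot(q)})$, the condition $\Gmin(f_{D'-D_0})\bigcap\Gmin(f_{D_0-d\cdot(q)})\neq\emptyset$ ``reduces'' to $q\in\Gmin(f_{D'-D_0})$. But $q$ being a minimizer of $f_{D_0-d\cdot(q)}$ only gives the easy direction $(3)\Rightarrow(4)$. For $(4)\Rightarrow(3)$ you would need to rule out the possibility that the two minimizing sets meet at some point other than $q$ while $q\notin\Gmin(f_{D'-D_0})$. This is a real issue because $\Gmin(f_{D_0-d\cdot(q)})$ is generally much larger than $\{q\}$: it contains every connected component of $\Gamma\setminus\{q\}$ that is disjoint from $\supp(D_0)$ (no current flows there, so the potential is constantly equal to its minimum). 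The paper closes this gap with Lemma~\ref{L:CritEqui}, whose content is a component-by-component support analysis: for each component $\Gamma_i$ of $\Gamma\setminus\{q\}$, one has $\Gmin(f_{D_0-d\cdot(q)})\cap\Gamma_i\neq\emptyset$ iff $\supp(D_0)\cap\Gamma_i=\emptyset$, whereas if $q\notin\Gmin(f_{D'-D_0})$ then $\Gmin(f_{D'-D_0})\cap\Gamma_i\neq\emptyset$ iff $\supp(D_0)\cap\Gamma_i\neq\emptyset$; these two conditions are incompatible, so the intersection forces $q\in\Gmin(f_{D'-D_0})$. You need to supply this argument (or an equivalent one); as written, your justification for $(4)\Rightarrow(3)$ does not go through.
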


\begin{proof}
(1)$\Leftrightarrow$(2): By Remark~\ref{R:ChipFiring}, to make an effective chip-firing move from $D_0$ with base $X$, we must have enough chips at all boundary points of $X$, i.e., for each point $p\in\partial X$, $D_0(p)\geq D^-(p)\geq \outdeg_X(p)$ where $D^-$ is the noneffective part of the primitive rational function with respect to the chip-firing move. Therefore, by Definition~\ref{D:RedDiv}, the base of any effective chip-firing move from $D_0$ must contain $q$, since $D_0$ is $q$-reduced. 

(2)$\Leftrightarrow$(3): This is because on one hand chip-firing moves are associated to primitive rational functions and on the other hand $\Gmin(f_{D'-D_0})$ for each $D'\in |D|$ must coincide with the base of some effective chip-firing move from $D_0$. 

(3)$\Leftrightarrow$(4): This follows from Lemma~\ref{L:CritEqui}. 

(4)$\Leftrightarrow$(5): This follows from  Theorem~\ref{T:TropProjDiv}. 
\end{proof}

\begin{lemma} \label{L:CritEqui}
For each $q\in\Gamma$ and $D,D'\in\RDivPlusD(\Gamma)$ with $d>0$, $\Gmin(f_{D'-D})\bigcap \Gmin(f_{D-d\cdot (q)})\neq \emptyset$ if and only if $q\in\Gmin(f_{D'-D})$.
\end{lemma}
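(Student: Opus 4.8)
The plan is to reduce the biconditional to a single structural fact from the potential theory in Appendix~\ref{S:potential}: that $q$ is automatically a minimizer of the potential of any effective $\mbbR$-divisor relative to $d\cdot(q)$. Concretely, writing $E=\sum_i m_i\cdot(p_i)$ for an effective $\mbbR$-divisor of degree $d>0$ (so each $m_i>0$), the excerpt records that $f_{E-d\cdot(q)}(x)=\sum_i m_i\, j_q(x,p_i)$ is an associated function. Since $j_q(x,p_i)\geq 0$ for all $x$ and $j_q(q,p_i)=0$, we get $f_{E-d\cdot(q)}\geq 0$ with $f_{E-d\cdot(q)}(q)=0$, so $\inf(f_{E-d\cdot(q)})=0$ is attained at $q$ and hence $q\in\Gmin(f_{E-d\cdot(q)})$. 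I would state this as a preliminary observation and apply it to both $E=D$ and $E=D'$.

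Granting this observation, the forward implication is immediate: if $q\in\Gmin(f_{D'-D})$, then since also $q\in\Gmin(f_{D-d\cdot(q)})$ by the observation applied to $E=D$, the point $q$ lies in both minimizer sets, so the intersection is nonempty. For the reverse implication, suppose $\Gmin(f_{D'-D})\bigcap\Gmin(f_{D-d\cdot(q)})\neq\emptyset$. Because $\Gamma$ is compact we have $\mbbTP(\Gamma)=\mbbTPP(\Gamma)$, so Lemma~\ref{L:XminXmaxTP0}(1) applies and yields
\[
\Gmin(f_{D'-D})\bigcap\Gmin(f_{D-d\cdot(q)})=\Gmin\big([f_{D'-D}]+[f_{D-d\cdot(q)}]\big).
\]
By the additivity of associated functions recorded in the excerpt, $[f_{D'-D}]+[f_{D-d\cdot(q)}]=[f_{D'-d\cdot(q)}]$, so this common set equals $\Gmin(f_{D'-d\cdot(q)})$. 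Applying the preliminary observation to $E=D'$ gives $q\in\Gmin(f_{D'-d\cdot(q)})$, and therefore $q$ lies in the intersection; in particular $q\in\Gmin(f_{D'-D})$, as desired.

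The only genuine work is the preliminary observation, and the main obstacle is simply pinning down the two kernel properties $j_q(x,p)\geq 0$ and $j_q(q,p)=0$. These are standard facts of the potential theory on metric graphs and I expect them to be available directly from Appendix~\ref{S:potential}: nonnegativity is what forces $\inf=0$, while vanishing at $q$ is what identifies $q$ as a minimizer. Notably, no delicate analysis of the shape or connectivity of the minimizer sets is required, since Lemma~\ref{L:XminXmaxTP0} converts the nonempty-intersection hypothesis directly into the additive identity that makes $q$ appear.
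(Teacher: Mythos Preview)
Your proof is correct and takes a genuinely different route from the paper's. The paper argues the harder direction by contradiction through a structural analysis of the connected components $\Gamma_1,\ldots,\Gamma_m$ of $\Gamma\setminus\{q\}$: it shows that $\Gmin(f_{D-d\cdot(q)})$ meets $\Gamma_i$ exactly when $\supp(D)\cap\Gamma_i=\emptyset$, while (assuming $q\notin\Gmin(f_{D'-D})$) the set $\Gmin(f_{D'-D})$ meets $\Gamma_i$ exactly when $\supp(D)\cap\Gamma_i\neq\emptyset$, so the two minimizer sets can never meet inside any $\Gamma_i$, contradicting the hypothesis. Your argument bypasses this geometry entirely: you observe via the $j$-function properties that $q$ is a minimizer of $f_{E-d\cdot(q)}$ for every effective $E$ of degree $d$, and then use the additivity $[f_{D'-D}]+[f_{D-d\cdot(q)}]=[f_{D'-d\cdot(q)}]$ together with Lemma~\ref{L:XminXmaxTP0} to rewrite the nonempty intersection as $\Gmin(f_{D'-d\cdot(q)})$, which automatically contains $q$. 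Your approach is slicker and stays within the functional framework developed earlier in the paper; the paper's approach is more hands-on and makes the obstruction visible at the level of $\supp(D)$. One small remark: to go from ``$\cap$ nonempty'' to the identity $\Gmin(\alpha)\cap\Gmin(\beta)=\Gmin(\alpha+\beta)$ you are implicitly using both parts (1) and (2) of Lemma~\ref{L:XminXmaxTP0} (or equivalently Lemma~\ref{L:XminXmax} after first passing through $\Cap$), so you may want to cite both.
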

\begin{proof}
Clearly $\Gmin(f_{D-d\cdot (q)})$ must contain $q$. So if $q\in\Gmin(f_{D'-D})$, then $\Gmin(f_{D'-D})\bigcap \Gmin(f_{D-d\cdot (q)})\neq \emptyset$.

Suppose $\Gamma_1,\cdots,\Gamma_m$ are the connected components of $\Gamma\setminus\{q\}$. We note that for $i=1,\cdots,m$, 
\begin{enumerate}
\item $\Gmin(f_{D-d\cdot (q)})\bigcap \Gamma_i\neq \emptyset$ if and only if $\Gmin(f_{D-d\cdot (q)})\bigcap \Gamma_i=\Gamma_i$ if and only if $\supp(D)\bigcap \Gamma_i=\emptyset$,
and \item if $q\notin\Gmin(f_{D'-D})$, then $\Gmin(f_{D'-D})\bigcap \Gamma_i\neq \emptyset$ if and only if $\supp(D)\bigcap \Gamma_i\neq\emptyset$. 
\end{enumerate}

Now suppose $\Gmin(f_{D'-D})\bigcap \Gmin(f_{D-d\cdot (q)})\neq \emptyset$. If $q\notin\Gmin(f_{D'-D})$, then there must be some $\Gamma_i$ such that $\Gmin(f_{D'-D})\bigcap \Gamma_i\neq \emptyset$ and $\Gmin(f_{D-d\cdot (q)})\bigcap \Gamma_i\neq \emptyset$. Then by (1), $\Gmin(f_{D-d\cdot (q)})\bigcap \Gamma_i\neq \emptyset$ means that $\supp(D)\bigcap \Gamma_i=\emptyset$, and by (2), $\Gmin(f_{D-d\cdot (q)})\bigcap \Gamma_i\neq \emptyset$ means that $\supp(D)\bigcap \Gamma_i\neq\emptyset$, a contradiction. 
\end{proof}

\begin{remark}
Let $X$ be the vertex set of $G$ equipped with the counting measure and we can also use our theory to study the $b$-functions on a finite graph $G$ directly. In this way, the $b_q$-function on is exactly the $\underline{B}^1$-pseudonorm of $f_{D-d\cdot(q)}$ where $q$ is a vertex of $G$ and $D$ is a divisor of degree $d$ on $G$. Note that Remark~4.15 of \cite{BS13} provides variations of the $b_q$-function which says that the weights of different vertices can be different as long as all being non-negative. Such variations  do not affect the $q$-reduced divisors being the minimizers of the $b_q$-function restricted to complete linear systems. This fact is exactly reflected in Corollary~\ref{C:CritTropProj}(c) which says tropical projections are independent of the measure on $X$, since different measures on $X$ can be considered as different distributions of weights on vertices. 
\end{remark}

Proposition~\ref{P:RedTroPro} tells us that the $q$-reduced divisor in a complete linear system $|D|$ of degree $d$ is no more than the tropical projection of the divisor $d\cdot(q)$ to $|D|$. Therefore we have the following natural generalization of the notion of reduced divisors. 

\begin{definition} \label{D:GeneralRed}
Let $T$ be a compact  tropically convex subset of $\RDivPlusD$. Then for each $q\in\Gamma$, the $q$-reduced $\mbbR$-divisor in $T$ is the tropical projection of $d\cdot(q)$ to $T$. 
\end{definition} 

We can derive the following proposition as an analogue of Proposition~\ref{P:RedTroPro}. 
\begin{proposition} \label{P:GeneralRed}
Consider  a compact  tropically convex subset $T$ of $\RDivPlusD(\Gamma)$. For an arbitrary point $q\in\Gamma$, the following are equivalent:
\begin{enumerate}
\item $D_0$ is the $q$-reduced in $T$. 
\item The base of any possible effective chip-firing move inside $T$ from $D_0$ contains $q$. 
\item For each $D\in T$, $\Gmin(f_{D-D_0})$ contains $q$. 
\item For each $D\in T$, $\Gmin(f_{D-D_0})\bigcap \Gmin(f_{D_0-d\cdot (q)})\neq\emptyset$.
\item $D_0=\pi_T(d\cdot (q))$.
\end{enumerate}
\end{proposition}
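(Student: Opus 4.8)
The plan is to prove Proposition~\ref{P:GeneralRed} by establishing the cycle of equivalences (1)$\Leftrightarrow$(5), (5)$\Leftrightarrow$(4), (4)$\Leftrightarrow$(3) and (3)$\Leftrightarrow$(2), mirroring the structure of the proof of Proposition~\ref{P:RedTroPro} but now working with a general compact tropically convex set $T$ rather than a complete linear system $|D|$. The key observation is that the generalization from $|D|$ to arbitrary compact tropically convex $T$ costs essentially nothing for the analytic implications, since the machinery of Theorem~\ref{T:TropProjDiv} applies verbatim to any such $T$; the only place where the special structure of $|D|$ was genuinely used in Proposition~\ref{P:RedTroPro} was in invoking the chip-firing description of $\Gmin(f_{D'-D_0})$, which I will need to re-examine in the present setting.

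First I would dispose of (1)$\Leftrightarrow$(5): this is immediate from Definition~\ref{D:GeneralRed}, which simply \emph{defines} the $q$-reduced $\mbbR$-divisor in $T$ to be $\pi_T(d\cdot(q))$, so these two statements are literally the same assertion. Next, (5)$\Leftrightarrow$(4) follows directly from Theorem~\ref{T:TropProjDiv}(3), specifically from the equivalence of (a) $D_0=\pi_T(E)$ and (c) $\Gmin(f_{D-D_0})\bigcap\Gmin(f_{D_0-E})\neq\emptyset$ for all $D\in T$, upon taking $E=d\cdot(q)$. Then (4)$\Leftrightarrow$(3) is exactly Lemma~\ref{L:CritEqui}, applied with $D=D_0$ and $D'=D$ ranging over $T$: the lemma states precisely that $\Gmin(f_{D-D_0})\bigcap\Gmin(f_{D_0-d\cdot(q)})\neq\emptyset$ if and only if $q\in\Gmin(f_{D-D_0})$, so this step is a clean citation.

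The remaining equivalence (3)$\Leftrightarrow$(2) is where the shift from complete linear systems to general $T$ requires care, and I expect this to be the main obstacle. In the proof of Proposition~\ref{P:RedTroPro} the argument that $\Gmin(f_{D'-D_0})$ coincides with the base of an effective chip-firing move relied on the fact that \emph{every} $D'\in|D|$ is reached from $D_0$ by effective chip-firing moves staying inside $|D|$ (Remark~\ref{R:ChipFiring}). For a general compact tropically convex $T$, I would argue as follows: by the very definition of the tropical path $P_{D-D_0}$ in $\RDivPlusD(\Gamma)$, moving from $D_0$ toward any $D\in T$ along $\underline{[D_0,D]}\subseteq T$ is realized by effective chip-firing moves (since $T$ is tropically convex the whole segment lies in $T$), and the base of the initial chip-firing move from $D_0$ along this path is exactly $\Gmin(f_{D-D_0})$. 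Conversely, every effective chip-firing move inside $T$ from $D_0$ is the initial segment of the tropical path toward its endpoint $D\in T$, so its base is $\Gmin(f_{D-D_0})$ for that $D$. Hence ``$q$ lies in the base of every effective chip-firing move from $D_0$ inside $T$'' is equivalent to ``$q\in\Gmin(f_{D-D_0})$ for every $D\in T$,'' which is (2)$\Leftrightarrow$(3). I would package this by citing Remark~\ref{R:ChipFiring} for the identification of effective chip-firing moves with tropical-path initial segments and emphasizing that tropical convexity of $T$ is exactly what keeps all intermediate configurations inside $T$. Once this chip-firing bookkeeping is checked, the proposition follows, and I would close the proof with a brief remark that the argument is formally identical to that of Proposition~\ref{P:RedTroPro} with $|D|$ replaced throughout by $T$.
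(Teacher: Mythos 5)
Your proposal is correct and follows essentially the same route as the paper: the paper's own proof simply notes that (1)$\Leftrightarrow$(5) is Definition~\ref{D:GeneralRed} and that (2)--(5) follow by the same arguments as in Proposition~\ref{P:RedTroPro} (via Theorem~\ref{T:TropProjDiv}, Lemma~\ref{L:CritEqui}, and the $\mbbR$-divisor chip-firing formalism of Remark~\ref{R:ChipFiring}), which is exactly the chain you spell out. Your expanded treatment of (2)$\Leftrightarrow$(3) --- identifying bases of effective chip-firing moves inside $T$ with the sets $\Gmin(f_{D-D_0})$ for $D\in T$ via initial segments of tropical paths --- is a faithful and slightly more explicit rendering of what the paper leaves implicit.
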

\begin{proof}
The equivalence of (1) and (5) follows from Definition. The equivalence of (2)--(5) follows from arguments analogous to those in the proof of Proposition~\ref{P:RedTroPro} with the notion of chip-firing moves generalized to cases for $\mbbR$-divisors (Remark~\ref{R:ChipFiring}). 
\end{proof}

\begin{corollary}\label{C:RedVal}
Let $q$ be an arbitrary point in $\Gamma$ and $T$ be a compact tropical convex subset of $\RDivPlusD(\Gamma)$. For each $D\in T$, the value of $D$ at $q$ is at most the value of $q$-reduced divisor in $T$ at $q$. 
\end{corollary}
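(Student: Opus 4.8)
The plan is to read the claim off directly from the characterization of the $q$-reduced divisor in Proposition~\ref{P:GeneralRed}, combined with the potential-theoretic description of $\divf$ on $\Gamma$. Write $D_0$ for the $q$-reduced $\mbbR$-divisor in $T$; by Definition~\ref{D:GeneralRed} this is the tropical projection $\pi_T(d\cdot(q))$. The first step is to invoke the equivalence (1)$\Leftrightarrow$(3) of Proposition~\ref{P:GeneralRed}, which says that for every $D\in T$ the point $q$ belongs to $\Gmin(f_{D-D_0})$; that is, the associated function $f_{D-D_0}$ attains its global minimum at $q$.

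Next I would translate this into a statement about slopes. Since $\divf(f_{D-D_0})=D-D_0$, the value $(D-D_0)(q)$ equals $\divf(f_{D-D_0})(q)$, which by the potential theory on $\Gamma$ is the sum of the slopes of $f_{D-D_0}$ along all incoming tangent directions at $q$. Because $q$ is a global minimizer of $f_{D-D_0}$, the function is non-decreasing as one moves away from $q$ along each edge emanating from $q$; hence every outgoing slope at $q$ is $\ge 0$, and every incoming slope, being the negative of the corresponding outgoing slope, is $\le 0$. Summing over the tangent directions at $q$ then yields $(D-D_0)(q)=\divf(f_{D-D_0})(q)\le 0$, i.e.\ $D(q)\le D_0(q)$, which is exactly the assertion.

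The argument is short, and the only point that requires real care is the bookkeeping with the sign convention. One must use the specific definition adopted in this paper, that $\divf(f)(p)$ is the sum of the \emph{incoming} slopes at $p$, in order to conclude that a point lying in the minimizer has non-positive divisor value there (a strict local minimum giving a strictly negative contribution, a point in the interior of a minimizing segment contributing $0$). I would therefore spell out this slope computation explicitly so that the direction of the inequality is unambiguous; once that is fixed, the remainder is a direct appeal to the characterization in Proposition~\ref{P:GeneralRed}, with no further estimates needed.
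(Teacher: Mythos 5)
Your argument is correct and is essentially the paper's own proof: the paper likewise cites Proposition~\ref{P:GeneralRed} to get $q\in\Gmin(f_{D-D_0})$ and then observes that this forces $(D-D_0)(q)\le 0$, merely leaving the slope/sign bookkeeping implicit where you spell it out. No substantive difference.
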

\begin{proof}
Let $D_0$ be the $q$-reduced divisor in $T$. Then by Proposition~\ref{P:GeneralRed}, we see that for each $D\in T$, $\Gmin(f_{D-D_0})$ contains $q$. This actually implies the value of $D-D_0$ at $q$ is non-positive. 
\end{proof}

\begin{remark} \label{R:RedVal}
We can actually be more precise about Corollary~\ref{C:RedVal}. As in the above proof, let $B=\Gmin(f_{D-D_0})$. If $q$ is a boundary point of $B$, then $D(q)<D_0(q)$, and if $q$ is not a boundary point of $B$, then $D(q)=D_0(q)$. 
\end{remark}

The following proposition says that reduced $\mbbR$-divisors are invariant under ``translation'' and scaling of the corresponding compact tropically convex set.

\begin{proposition}
Let $T$ be a compact  tropically convex subset of $\RDivPlusD$ and $F$ be an effective $\mbbR$-divisor of degree $m$. Let $cT+F:=\{c\cdot D+F\mid D\in T\}\subseteq \RDivPlusDM$. Then $\pi_{cT+F}((cd+m)\cdot (q))=\pi_{cT+F}(cd\cdot (q)+F)=c\cdot \pi_T(d\cdot (q))+F$ for each $q\in\Gamma$. 
\end{proposition}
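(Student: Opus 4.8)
The plan is to separate the displayed chain into its two equalities and dispatch each with a tool already available in this subsection. The second equality $\pi_{cT+F}(cd\cdot(q)+F)=c\cdot\pi_T(d\cdot(q))+F$ will follow at once from Corollary~\ref{C:TropProjDiv}: taking $E=d\cdot(q)\in\RDivPlusD(\Gamma)$ there gives $c\cdot E+F=cd\cdot(q)+F$, and hence $\pi_{cT+F}(cd\cdot(q)+F)=c\cdot\pi_T(d\cdot(q))+F$, with no additional argument required.

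For the first equality $\pi_{cT+F}((cd+m)\cdot(q))=c\cdot\pi_T(d\cdot(q))+F$, the idea is to recognize the left side as the $q$-reduced $\mbbR$-divisor in $cT+F$ and to use the purely set-theoretic characterization of reduced divisors in Proposition~\ref{P:GeneralRed}. By Corollary~\ref{C:TropProjDiv}, $cT+F$ is a compact tropically convex subset of $\RDivPlusDM(\Gamma)$, so it has degree $cd+m$; thus Definition~\ref{D:GeneralRed} identifies its $q$-reduced divisor with $\pi_{cT+F}((cd+m)\cdot(q))$. It therefore suffices to verify that $D_0^\ast:=c\cdot\pi_T(d\cdot(q))+F$ meets criterion (3) of Proposition~\ref{P:GeneralRed} for $cT+F$, namely that $q\in\Gmin(f_{\tilde D-D_0^\ast})$ for every $\tilde D\in cT+F$.

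I would carry this out as follows. Write $D_0=\pi_T(d\cdot(q))$; since $D_0$ is by definition the $q$-reduced divisor in $T$ (of degree $d$), Proposition~\ref{P:GeneralRed}(3) gives $q\in\Gmin(f_{D-D_0})$ for all $D\in T$. Every element of $cT+F$ has the form $\tilde D=c\cdot D+F$ with $D\in T$, and
\[
f_{\tilde D-D_0^\ast}=f_{(cD+F)-(cD_0+F)}=f_{c(D-D_0)}.
\]
By linearity of the Laplacian $[f_{c(D-D_0)}]=c\cdot[f_{D-D_0}]$ in $\mbbTP(\Gamma)$, and since the minimizer $\Gmin$ depends only on the class and is invariant under multiplication by the positive scalar $c$ (the fact $\Xmin(f)=\Xmin(cf)$ for $c>0$ from Section~\ref{S:PreAna}), we obtain $\Gmin(f_{\tilde D-D_0^\ast})=\Gmin(f_{D-D_0})\ni q$. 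As this holds for all $\tilde D\in cT+F$, Proposition~\ref{P:GeneralRed}(3) shows $D_0^\ast$ is the $q$-reduced divisor in $cT+F$, which is the first equality.

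The proof involves no real analytic difficulty; the main point to get right is the bookkeeping that $\deg(cT+F)=cd+m$, so that Definition~\ref{D:GeneralRed} is applied to $cT+F$ with the correct multiple $(cd+m)\cdot(q)$. The conceptual heart of the statement is the observation that the reduced-divisor criterion in Proposition~\ref{P:GeneralRed}(3) refers only to the point $q$ and the ambient set, never to the particular divisor being projected; this is exactly what forces the two distinct projection targets $(cd+m)\cdot(q)$ and $cd\cdot(q)+F$ to yield the same reduced divisor.
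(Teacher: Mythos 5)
Your proposal is correct and follows essentially the same route as the paper's own proof: the second equality via Corollary~\ref{C:TropProjDiv}, and the first equality by identifying $c\cdot\pi_T(d\cdot(q))+F$ as the $q$-reduced divisor in $cT+F$ through criterion (3) of Proposition~\ref{P:GeneralRed}, using the scaling/translation invariance $\Gmin(f_{(cD+F)-(cD_0+F)})=\Gmin(f_{D-D_0})$. The only difference is that you make the positive-scalar invariance of $\Gmin$ explicit, which the paper leaves implicit.
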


\begin{proof}
First we note that $\pi_{cT+F}(cd\cdot (q)+F)=c\cdot \pi_T(d\cdot (q))+F$ follows from Corollary~\ref{C:TropProjDiv} directly. 

Now let $D_0$ be the $q$-reduced $\mbbR$-divisor $\pi_T(d\cdot (q))$ in $T$. Then by Proposition~\ref{P:GeneralRed}, for each $D\in T$, $\Gmin(f_{D-D_0})$ contains $q$. Therefore, for each $c\cdot D+F\in cT+F$, we have $q\in \Gmin(f_{(c\cdot D+F)-(c\cdot D_0+F)})=\Gmin(f_{D-D_0})$. Using Proposition~\ref{P:GeneralRed} again, this means that $c\cdot D_0+E$ is exactly the $q$-reduced $\mbbR$-divisor $\pi_{cT+F}((cd+e)\cdot (q))$ in $cT+F$.
\end{proof}

\begin{example} \label{E:Red}
Consider the metric circle $\Gamma$ in Figure~\ref{F:LinearSys}. Let us use Proposition~\ref{P:GeneralRed}(2) to verify some divisors as being reduced with respect to certain points. 
\begin{enumerate}
\item Consider the complete linear system $|D_0|$. There are three possible effective chip-firing move directions from $D_1$: 
\begin{enumerate}
\item along the direction of $D_1D_2$, i.e., two chips move from $v_1$ towards $w_{12}$ at the speed of one unit and one chip moves from $v_1$ towards $w_{13}$ at the speed of two unit;
\item along the direction of $D_1D_3$, i.e., two chips move from $v_1$ towards $w_{13}$ at the speed of one unit and one chip moves from $v_1$ towards $w_{12}$ at the speed of two unit; and
\item along the direction of $D_1D_0$, i.e.,  one chip moves from $v_1$ towards $w_{12}$ and one chip moves from $v_1$ towards $w_{13}$, both at the speed of one unit. 
\end{enumerate}
The bases of all the above chip-firing move directions are identical to $\{v_1\}$. Therefore, $v_1$-reduced divisor in $|D_0|$ is $D_1$ by Proposition~\ref{P:GeneralRed}(2). It can be verified analogously that the $v_2$-reduced divisor in $|D_0|$ is $D_2=3(v_2)$, and the $v_3$-reduced divisor in $|D_0|$ is $D_3=3(v_3)$. 

\item Consider the tropical segment $[D_1,D_3]$. We observe that  an effective chip-firing move inside $[D_1,D_3]$ from $D_1$ can only be along the direction of $D_1D_3$ whose base is the point $v_1$, and an effective chip-firing move inside $[D_1,D_3]$ from $D_3$ can only be along the direction of $D_3D_1$ whose base is the point $v_3$. Therefore, $D_1$ and $D_3$ are the $v_1$-reduced and $v_3$-reduced divisors in $T$ respectively. Moreover, an effective chip-firing move inside $[D_1,D_3]$ from $D_{13}$ can be either along the direction of $D_{13}D_1$ whose base is the segment $w_{13}v_3v_2$ or along the direction of $D_{13}D_3$ whose base is the segment $w_{13}v_1v_2$. Both bases contain $w_{13}$ and $v_2$ which means that $D_{13}$ is the reduced divisor in $[D_1,D_3]$ with respect to both $w_{13}$ and $v_2$. 
\item Let $T=\tconv(\{D_{12},D_{23},D_{13}\})$. We observe that an effective chip-firing move inside $T$ from $D_0$ should be along one of the directions of  $D_0D_{12}$, $D_0D_{23}$ and $D_0D_{13}$ whose bases are the segment $v_1v_3v_2$, the segment $v_2v_1v_3$ and the segment $v_1v_2v_3$ respectively. This means that the reduced divisors in $T$ with respect to $v_1$, $v_2$ and $v_3$ are all identical to $D_0$. From $D_{12}$, there is only one direction of chip-firing move which is along $D_{12}D_0$ with base being the point $w_{12}$. Therefore, $D_{12}$ is the $w_{12}$-reduced divisor in $T$. Analogously, $D_{23}$ and $D_{13}$ are the $w_{23}$-reduced and $w_{13}$-reduced divisors in $T$ respectively. 

\end{enumerate}
\end{example}

\subsection{Reduced Divisor Maps and Tropical Trees}
By sending a point $q$ in a metric graph $\Gamma$ to a complete linear system $|D|$ on $\Gamma$, we can naturally define a map called the reduced divisor map from $\Gamma$ to $|D|$, which was originally studied in \cite{Amini13}. Using the generalized notion of reduced divisors introduced in the previous subsection, we can broaden the notion of reduced divisor maps. 

\begin{definition} \label{D:RedDivMap}
Let $T$ be a compact  tropically convex subset of $\RDivPlusD$. The map $\Red_T:\Gamma\to T$ given by $q\mapsto \pi_T(d\cdot (q))$ is called the \emph{reduced divisor map} from $\Gamma$ to $T$. 
\end{definition}

The following proposition is a direct corollary of Proposition~\ref{P:SeqTropProjDiv}. 
\begin{proposition} \label{P:SeqRedjDiv}
Let $T$ and $T'$ be compact tropical convex subsets of $\RDivPlusD(\Gamma)$ such that $T'\subseteq T$. Then for each $q\in\Gamma$,  $\Red_{T'}(q)=\pi_{T'}(\Red_T(q))$.
\end{proposition}

Clearly reduced divisor maps are continuous since tropical projections are continuous. In the following discussions, we will focus on reduced divisor maps to linear systems. 

\begin{figure} 
\centering
\begin{tikzpicture}[scale=0.9]
\begin{scope}
\draw (-1.5,1.5) node[anchor=east] {(a)};
\begin{scope} [shift = {(1,0)}]
  \coordinate (center) at (0,0);
  \def\radius{1.5};
  \coordinate (v1) at ($(center)+(210:\radius)$);
  \coordinate (v2) at ($(center)+(90:\radius)$);
  \coordinate (v3) at ($(center)+(-30:\radius)$);
  \coordinate (w12) at ($(center)+(150:\radius)$);
  \coordinate (w23) at ($(center)+(30:\radius)$);
  \coordinate (w13) at ($(center)+(-90:\radius)$);
   
  \draw[thick] (center) circle[radius=\radius];

  \fill[blue] (v1) circle[radius=3pt];
  \fill[blue] (v2) circle[radius=3pt];
  \fill[blue] (v3) circle[radius=3pt];
  
  \draw [anchor=north east] (v1) node {$v_1$};
    \draw [anchor=south] (v2) node {$v_2$};
    \draw [anchor=north west] (v3) node {$v_3$};
    \draw [anchor=north east] (w13) node {$w_{13}$};
   
    \coordinate (center) at (0,-3.5);
      \def\radius{1.5};
  \coordinate (D1) at ($(center)+(210:\radius)$);
  \coordinate (D2) at ($(center)+(90:\radius)$);
  \coordinate (D3) at ($(center)+(-30:\radius)$);
  \coordinate (D12) at ($(center)+(150:\radius/2)$);
  \coordinate (D23) at ($(center)+(30:\radius/2)$);
  \coordinate (D13) at  ($(center)+(-90:\radius/2)$);
   
  \fill [black!30,opacity=0.3]  (D1) --(D2) -- (D3)--(D1);
    \draw [line width = 1.8pt] (D1)-- (D3);

  \fill[black] (D1) circle[radius=2pt];
  \fill[black] (D3) circle[radius=2pt];
  \fill[black] (D13) circle[radius=2pt];

\draw [anchor=north] (D1) node {$D_1$};
  \draw [anchor=north] (D3) node {$D_3$};
    \draw [anchor=north] (D13) node {$D_{13}$};

   \def\ht{0.2}

 \draw [->, dashed, line width = 1pt] (v1)--  ($(D1)+(0,\ht)$);
 \draw [->, dashed, line width = 1pt] (v3)-- ($(D3)+(0,\ht)$);
 \draw [->, dashed, line width = 1pt] (v2)-- ($(D13)+(0,\ht)$);
 
  \fill[red] (w13) circle[radius=3pt];
    
\end{scope}

\begin{scope} [shift= {(6,0)}]
  \coordinate (center) at (0,0);
  \def\radius{1.5};
  \coordinate (v1) at ($(center)+(210:\radius)$);
  \coordinate (v2) at ($(center)+(90:\radius)$);
  \coordinate (v3) at ($(center)+(-30:\radius)$);

  \fill[black] (v1) circle[radius=3pt];
  \fill[black] (v2) circle[radius=3pt];
  \fill[black] (v3) circle[radius=3pt];
  
  \draw [anchor=north east] (v1) node {$v_1$};
    \draw [anchor=south] (v2) node {$v_2$};
    \draw [anchor=north west] (v3) node {$v_3$};
    
   \coordinate (D1) at ($(v1)+(0,-3)$);
  \coordinate (D3) at ($(v3)+(0,-3)$);   
    \draw [line width = 1.8pt] (D1)-- (D3);

  \fill[black] (D1) circle[radius=2pt];
  \fill[black] (D3) circle[radius=2pt];
  
  \draw [anchor=north] (D1) node {$D_1$};
  \draw [anchor=north] (D3) node {$D_3$};

 \path [line width = 1pt]
(v1) edge node[pos=0.5,left]{$1$} (v2)
(v2) edge node[pos=0.5,right]{$1$} (v3)
(v1) edge node[pos=0.5,below]{$2$} (v3);

 \draw [->, dashed, line width = 1pt] ($(center)+(0,-1.5)$)--  ($(center)+(0,-3.5)$);
 
\end{scope}
\end{scope}

\begin{scope} [shift={(0,-7)}]
\draw (-1.5,1.5) node[anchor=east] {(b)};
\begin{scope}[shift={(0,-0.5)}]
  \coordinate (center) at (0,0);
  \def\radius{1.5};
  \coordinate (v1) at ($(center)+(210:\radius)$);
  \coordinate (v2) at ($(center)+(90:\radius)$);
  \coordinate (v3) at ($(center)+(-30:\radius)$);
  \coordinate (w12) at ($(center)+(150:\radius)$);
  \coordinate (w23) at ($(center)+(30:\radius)$);
  \coordinate (w13) at ($(center)+(-90:\radius)$);
   
  \draw [name path=  circle, thick] (center) circle (\radius);

  \fill[blue] (v1) circle[radius=3pt];
  \fill[blue] (v2) circle[radius=3pt];
  \fill[blue] (v3) circle[radius=3pt];
    \fill[red] (w12) circle[radius=3pt];
    \fill[red] (w23) circle[radius=3pt];
    \fill[red] (w13) circle[radius=3pt];
  
  \draw [anchor=north east] (v1) node {$v_1$};
    \draw [anchor=south] (v2) node {$v_2$};
    \draw [anchor=north west] (v3) node {$v_3$};
    \draw [anchor=south east] (w12) node {$w_{12}$};
    \draw [anchor=south west] (w23) node {$w_{23}$};
    \draw [anchor=north] (w13) node {$w_{13}$};
    
        \draw[blue, dashed] (center) -- (v1);
    \draw[blue, dashed] (center) -- (v2);
    \draw[blue, dashed] (center) -- (v3);

 \foreach \i in {1,2,3}
       {\coordinate (P) at ($(v1)+(150:\i*\radius/4)$) ;
       \coordinate (Q) at ($(v2)+(150:\i*\radius/4)$) ;
       \coordinate (O) at ($(center)+(150:\i*\radius/4)$) ;
       
          \path [name path=OP] (O) -- (P);
             \path [name path=OQ] (O) -- (Q);

        \draw [name intersections={of=circle and OP}]
        (intersection-1) coordinate (P1);    
                \draw [name intersections={of=circle and OQ}]
        (intersection-1) coordinate (Q1); 
     \draw [dashed] (P1)--  (O) -- (Q1);
      }
      
 \foreach \i in {1,2,3}
       {\coordinate (P) at ($(v2)+(30:\i*\radius/4)$) ;
       \coordinate (Q) at ($(v3)+(30:\i*\radius/4)$) ;
       \coordinate (O) at ($(center)+(30:\i*\radius/4)$) ;
       
          \path [name path=OP] (O) -- (P);
             \path [name path=OQ] (O) -- (Q);

        \draw [name intersections={of=circle and OP}]
        (intersection-1) coordinate (P1);    
                \draw [name intersections={of=circle and OQ}]
        (intersection-1) coordinate (Q1); 
     \draw [dashed] (P1)--  (O) -- (Q1);
      }
      
\foreach \i in {1,2,3}
       {\coordinate (P) at ($(v1)+(-90:\i*\radius/4)$) ;
       \coordinate (Q) at ($(v3)+(-90:\i*\radius/4)$) ;
       \coordinate (O) at ($(center)+(-90:\i*\radius/4)$) ;
       
          \path [name path=OP] (O) -- (P);
             \path [name path=OQ] (O) -- (Q);

        \draw [name intersections={of=circle and OP}]
        (intersection-1) coordinate (P1);    
                \draw [name intersections={of=circle and OQ}]
        (intersection-1) coordinate (Q1); 
     \draw [dashed] (P1)--  (O) -- (Q1);
      }
   
    \coordinate (center) at (0,-3.5);
  \def\radius{1.5};
  \coordinate (D1) at ($(center)+(210:\radius)$);
  \coordinate (D2) at ($(center)+(90:\radius)$);
  \coordinate (D3) at ($(center)+(-30:\radius)$);
  \coordinate (D12) at ($(center)+(150:\radius/2)$);
  \coordinate (D23) at ($(center)+(30:\radius/2)$);
  \coordinate (D13) at  ($(center)+(-90:\radius/2)$);
   
  \fill [black!30,opacity=0.3]  (D1) --(D2) -- (D3)--(D1);
    \draw [line width = 1.8pt] (D12) --(center)-- (D23);
    \draw [line width = 1.8pt] (center)-- (D13);
  
 \fill[black] (center) circle[radius=2pt];  
  \fill[black] (D12) circle[radius=2pt];
  \fill[black] (D23) circle[radius=2pt];
  \fill[black] (D13) circle[radius=2pt];

\draw [anchor=north east] (center) node {$D_0$};
  \draw [anchor=south east] (D12) node {$D_{12}$};
    \draw [anchor=south west] (D23) node {$D_{23}$};
    \draw [anchor=north] (D13) node {$D_{13}$};
   
\end{scope}

\begin{scope}[shift={(5,0)}, x  = {(1cm,0cm)},
                    y  = {(0.5cm,0.8cm)},
                    z = {(0cm,1cm)}]

    \coordinate (center) at (0,0,-3.5);
      \def\radius{2};
  \coordinate (D12) at ($(center)+(150:\radius)$);
  \coordinate (D23) at ($(center)+(30:\radius)$);
  \coordinate (D13) at  ($(center)+(-90:\radius)$);
  
      \coordinate (v1) at (0,0,0);
      \coordinate (v2) at (0,0,0.4);
      \coordinate (v3) at (0,0,-0.4);
  \coordinate (w12) at ($(v2)+(150:\radius)$);
  \coordinate (w23) at ($(v1)+(30:\radius)$);
  \coordinate (w13) at  ($(v3)+(-90:\radius)$);
  
  \draw [line width = 1.8pt] (center)-- (D12);
  \draw [line width = 1.8pt] (center)-- (D23);
  \draw [line width = 1.8pt] (center)-- (D13);
  
 \draw [line width = 1pt] (w12)-- (v2) -- (w23);
 \draw [line width = 1pt] (w23)-- (v3) -- (w13);
 \draw [line width = 1pt] (w13)-- (v1) -- (w12);
 
   \fill[black] (v1) circle[radius=2pt];
  \fill[black] (v2) circle[radius=2pt];
  \fill[black] (v3) circle[radius=2pt];
  \fill[black] (w12) circle[radius=2pt];
  \fill[black] (w23) circle[radius=2pt];
  \fill[black] (w13) circle[radius=2pt];
  \fill[black] (D12) circle[radius=2pt];
  \fill[black] (D23) circle[radius=2pt];
  \fill[black] (D13) circle[radius=2pt];
 
 \draw [anchor=east] (v1) node {$v_1$};
  \draw [anchor=south] (v2) node {$v_2$};
    \draw [anchor=north west] (v3) node {$v_3$};
 \draw [anchor=south] (w12) node {$w_{12}$};
  \draw [anchor=south] (w23) node {$w_{23}$};
    \draw [anchor=west] (w13) node {$w_{13}$};
    \draw [anchor=north west] (center) node {$D_0$};
     \draw [anchor=east] (D12) node {$D_{12}$};
  \draw [anchor=north] (D23) node {$D_{23}$};
    \draw [anchor=north] (D13) node {$D_{13}$};
    
    \def\ht{0.25}
     \draw [->, dashed, line width = 1pt] (v2)-- ($(center)+(0,0,\ht)$);
     \draw [->, dashed, line width = 1pt] (w12)-- ($(D12)+(0,0,\ht)$);
     \draw [->, dashed, line width = 1pt] (w23)-- ($(D23)+(0,0,\ht)$);
     \draw [->, dashed, line width = 1pt] (w13)-- ($(D13)+(0,0,\ht)$);
\end{scope}

\begin{scope}[shift={(10,0)}, x  = {(1cm,0cm)},
                    y  = {(0.5cm,0.8cm)},
                    z = {(0cm,1cm)}]

    \coordinate (center) at (0,0,-3.5);
      \def\radius{2};
  \coordinate (D12) at ($(center)+(150:\radius)$);
  \coordinate (D23) at ($(center)+(30:\radius)$);
  \coordinate (D13) at  ($(center)+(-90:\radius)$);
  
      \coordinate (v1) at (0,0,0);
      \coordinate (v2) at (0,0,0.4);
      \coordinate (v3) at (0,0,-0.4);
  \coordinate (w12) at ($(v2)+(150:\radius)$);
  \coordinate (w23) at ($(v1)+(30:\radius)$);
  \coordinate (w13) at  ($(v3)+(-90:\radius)$);
    \coordinate (v12) at ($(v3)+(150:\radius)$);
  \coordinate (v23) at ($(v2)+(30:\radius)$);
  \coordinate (v13) at  ($(v2)+(-90:\radius)$);
  
  \draw [line width = 1.8pt] (center)-- (D12);
  \draw [line width = 1.8pt] (center)-- (D23);
  \draw [line width = 1.8pt] (center)-- (D13);
  
 \draw [line width = 1pt] (w12)-- (v2) -- (w23);
 \draw [line width = 1pt] (w23)-- (v3) -- (w13);
 \draw [line width = 1pt] (w13)-- (v1) -- (w12);
 
  \draw [red, line width = 1pt] (v2) -- (v13);
 \draw [red, line width = 1pt] (v1) -- (v23);
 \draw [red, line width = 1pt] (v3) -- (v12);
 
   \fill[black] (v1) circle[radius=2pt];
  \fill[black] (v2) circle[radius=2pt];
  \fill[black] (v3) circle[radius=2pt];
  \fill[black] (w12) circle[radius=2pt];
  \fill[black] (w23) circle[radius=2pt];
  \fill[black] (w13) circle[radius=2pt];
    \fill[red] (v12) circle[radius=2pt];
  \fill[red] (v23) circle[radius=2pt];
  \fill[red] (v13) circle[radius=2pt];
  \fill[black] (D12) circle[radius=2pt];
  \fill[black] (D23) circle[radius=2pt];
  \fill[black] (D13) circle[radius=2pt];

    \def\ht{0.25}
     \draw [->, dashed, line width = .8pt] (v2)-- ($(center)+(0,0,\ht)$);
     \draw [->, dashed, line width = .8pt] (w12)-- ($(D12)+(0,0,\ht)$);
     \draw [->, dashed, line width = .8pt] (v23)-- ($(D23)+(0,0,\ht)$);
     \draw [->, dashed, line width = .8pt] (v13)-- ($(D13)+(0,0,\ht)$);
\end{scope}
\end{scope}
\end{tikzpicture}
\caption{Examples of reduced divisor maps and harmonic morphisms.}  \label{F:RedHarm} 
\end{figure}
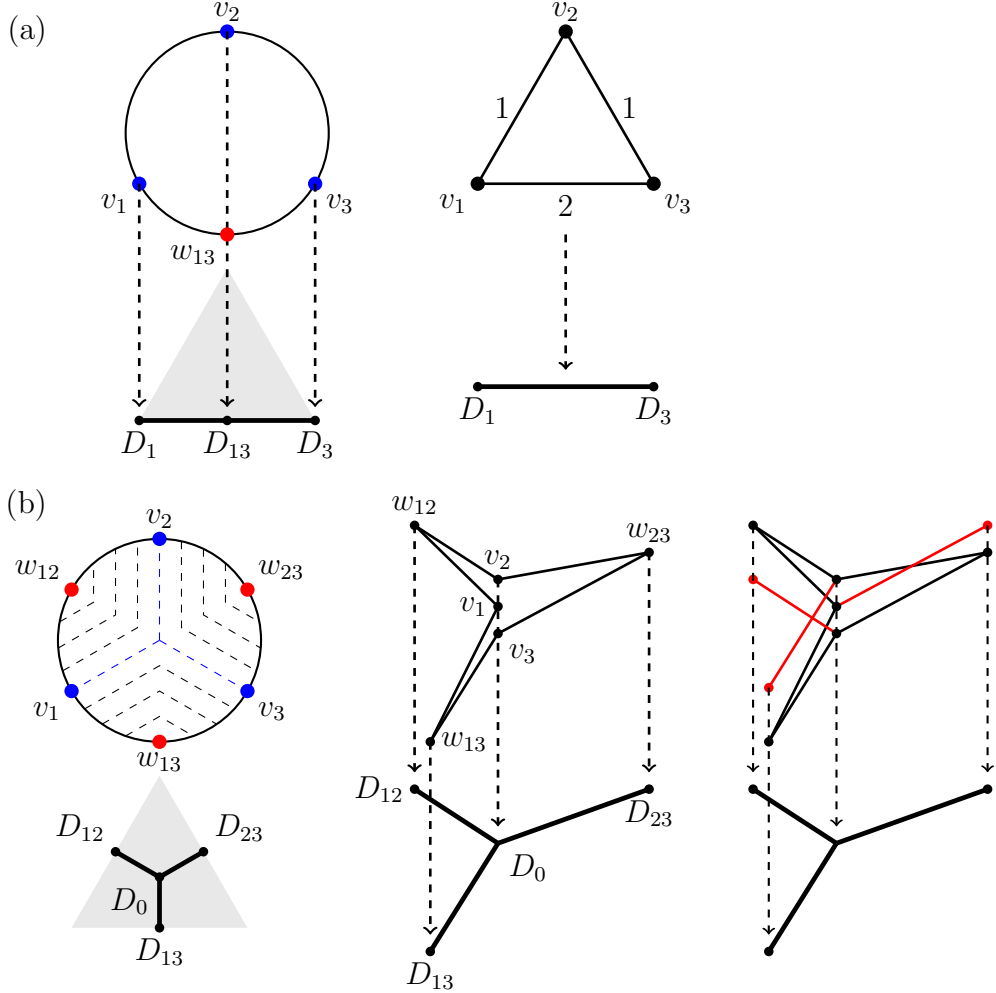

\begin{example} \label{E:RedMap}
As discussed in Example~\ref{E:Red}(1), we see that $\Red_{|D_0|}(v_1)=D_1$, $\Red_{|D_0|}(v_2)=D_2$ and $\Red_{|D_0|}(v_3)=D_3$. 
Actually $\Red_{|D_0|}$ is an embedding of $\Gamma$ into $|D_0|$ where the image of $\Red_{|D_0|}$ is the circumference of $|D_0|$. Now consider the reduced divisor map to the tropical segment $[D_1,D_3]$. In Example~\ref{E:Red}(2), we've shown that $\Red_{[D_1,D_3]}(v_1)=D_1$, $\Red_{[D_1,D_3]}(v_3)=D_3$ and $\Red_{[D_1,D_3]}(v_2)=\Red_{[D_1,D_3]}(w_{13})=D_{13}$. Note that from Example~\ref{E:LinearSys}, we know that in the chip-firing move from $D_1$ to $D_3$, two chips move from $v_1$ to $v_3$ along the segment $v_1w_{13}v_3$ at the speed of one unit and one chip moves from $v_1$ to $v_3$ along the segment $v_1v_2v_3$ at the speed of two units. 
Thus we may write the $P_{D_3-D_1}(t) = 2\cdot (x)+(y)$ where $x$ is the point on the segment  $v_1w_{13}v_3$ of distance  $t/2$ from $v_1$ and $x$ is the point on the segment  $v_1v_2v_3$ of distance  $t$ from $v_1$ for $t\in[0,\rho(D_1,D_3)]$. One can verify that $\Red_{[D_1,D_3]}(x)=\Red_{[D_1,D_3]}(y)=P_{D_3-D_1}(t)$. This reduced divisor map to $[D_1,D_3]$ is also illustrated in Figure~\ref{F:RedHarm}(a). Now consider $T=\tconv(\{D_{12},D_{23},D_{13}\})$. In Example~\ref{E:Red}(3), we've shown that $\Red_T(v_1)=\Red_T(v_2)=\Red_T(v_3)=D_0$, $\Red_T(w_{12})=D_{12}$, $\Red_T(w_{23})=D_{23}$ and $\Red_T(w_{13})=D_{13}$. Actually, $T$ can be realized by gluing  $v_1w_{12}$ with $v_2w_{12}$, $v_2w_{23}$ with $v_3w_{23}$, and $v_1w_{13}$ with $v_3w_{13}$, as illustrated in Figure~\ref{F:RedHarm}(b). More precisely, the divisors on the tropical path from $D_{12}$ to $D_0$ can be written as $P_{D_{12}-D_0}(t)=(x)+(y)+(v_3)$ where $x$ and $y$ are points on the segments $w_{12}v_1$ and $w_{12}v_2$ respectively, both of distance $t$ to $w_{12}$ where $t\in[0,\rho(D_{12},D_0)]$. Then $\Red_T(x)=\Red_T(y)=P_{D_{12}-D_0}(t)$. Similarly, we can derive the reduced divisor map restricted to the segment $v_2w_{23}v_3$ whose image is $[D_{23},D_0]$ and restricted to the segment $v_1w_{13}v_3$ whose image is $[D_{13},D_0]$. 
\end{example}

Now let us study one-dimensional linear systems and the corresponding reduced divisor maps. For a  linear system $T$, let $\supp(T)=\bigcup_{D\in T}\supp(D)$. We have the following notions. 

\begin{definition} \label{D:TropTree}
Let $T$ be a linear system on a metric graph $\Gamma$ generated by $D_1,\cdots,D_m$. We say that $T$ is a \emph{tropical tree} if $T=\bigcup_{i,j=1,\cdots,m}[D_i,D_j]$. A tropical tree $T$ is called \emph{dominant} if $\supp(T)=\Gamma$. For two tropical trees $T$ and $T'$, we say $T'$ is a tropical \emph{subtree} of $T$ if $T'\subseteq T$. We say a tropical tree $T$ is \emph{maximal} if the only tropical tree containing $T$ is $T$ itself.
\end{definition}

Clearly, when not being a singleton, since the intersection of two tropical segments is a tropical segment (Proposition~\ref{P:TropSeg}(6)),  a tropical tree $T=\tconv(\{D_1,\cdots,D_m\})$ actually has a tree structure and the extremals of $T$ are the leaves of $T$. Moreover, if $T'$ is a tropical subtree of $T$, denote the set of connected components of  $T\setminus T'$ by $\mcalU(T\setminus T')$. Note that the closure $\cl(U)$ of each $U\in \mcalU(T\setminus T')$  is a also a tropical subtree of $T$ and  the intersection of $T'$ and $\cl(U)$ is a single point  which we call the attaching point  between $T'$ and $\cl(U)$. Then following lemma says that the tropical projection from $T$ to $T'$ respects the natural retraction from $T$ to $T'$. 

\begin{lemma} \label{L:ConnCompRed}
Let $T'$ be a tropical subtree of a tropical tree $T$. For $U\in \mcalU(T\setminus T')$, let $D_0$ be the attaching point between $T'$ and $\cl(U)$. Then $\pi_{T'}(D)=D_0$ for all $D\in \cl(U)$. 
\end{lemma}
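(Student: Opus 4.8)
The plan is to verify directly the projection criterion of Theorem~\ref{T:TropProjDiv}(3): I will show that
$\llfloor D'-D\rrfloor_1=\llfloor D'-D_0\rrfloor_1+\llfloor D_0-D\rrfloor_1$
holds for every $D'\in T'$, which by the equivalence (a)$\Leftrightarrow$(b) there identifies $D_0$ as $\pi_{T'}(D)$. The case $D=D_0$ is immediate, since $D_0\in T'$ forces $\pi_{T'}(D_0)=D_0$ by Proposition~\ref{P:TropProj}(3); so I may assume $D$ lies in the open component $U$. The heart of the matter is to show that the (unique, by the Remark following Definition~\ref{D:tpath} for divisors) tropical segment $[D,D']$ passes through the attaching point $D_0$ for every $D'\in T'$.

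First I would establish $D_0\in[D,D']$ using the tree structure. Since $T$ is a tropical tree it carries an honest tree structure (as noted after Definition~\ref{D:TropTree}), so between any two of its points there is a unique embedded arc. By tropical convexity of $T$ the segment $[D,D']$ is contained in $T$, and by Proposition~\ref{P:TropSeg}(3) it is the image of an isometry of $[0,\rho(D,D')]$, hence an injective arc joining $D$ and $D'$. Uniqueness of arcs in a tree then forces $[D,D']$ to coincide with the tree arc from $D$ to $D'$. Now $T'$ is connected (being tropically convex) and compact hence closed, so $U$ is an open component of $T\setminus T'$ whose closure meets $T'$ only in $D_0$; thus $D_0$ is the sole boundary point of $\cl(U)$, and any arc leaving $\cl(U)$ toward a point of $T'\setminus\{D_0\}$ must pass through $D_0$. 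Consequently $D_0$ lies on the tree arc $=[D,D']$.

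Once $D_0\in[D,D']$, writing $D_0=P_{D'-D}(s)$ under the isometric parametrization gives $\rho(D,D_0)=s$ and $\rho(D_0,D')=\rho(D,D')-s$, so $\rho(D,D')=\rho(D,D_0)+\rho(D_0,D')$. To upgrade this symmetric distance identity to the one-sided $\llfloor\cdot\rrfloor_1$ identity required by the criterion, I would multiply through by $\mu(\Gamma)$ and invoke Proposition~\ref{P:BnormProperty}(5) and (1) to replace each $\rho(\,\cdot\,,\cdot\,)\mu(\Gamma)$ by a sum of two $\llfloor\cdot\rrfloor_1$ terms, obtaining
$\llfloor D'-D\rrfloor_1+\llfloor D-D'\rrfloor_1=\bigl(\llfloor D_0-D\rrfloor_1+\llfloor D-D_0\rrfloor_1\bigr)+\bigl(\llfloor D'-D_0\rrfloor_1+\llfloor D_0-D'\rrfloor_1\bigr)$.
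The two triangle inequalities $\llfloor D'-D\rrfloor_1\le\llfloor D'-D_0\rrfloor_1+\llfloor D_0-D\rrfloor_1$ and $\llfloor D-D'\rrfloor_1\le\llfloor D-D_0\rrfloor_1+\llfloor D_0-D'\rrfloor_1$ (Proposition~\ref{P:BnormProperty}(9)) have right-hand sides summing to exactly the right-hand side above, so both must be equalities. The first equality is precisely condition (b) of Theorem~\ref{T:TropProjDiv}(3), valid for all $D'\in T'$, whence $D_0=\pi_{T'}(D)$.

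I expect the main obstacle to be the geometric step $D_0\in[D,D']$: it requires combining the tree structure of $T$, the fact that tropical segments are injective arcs (Proposition~\ref{P:TropSeg}(3)), and the separation property $\cl(U)\cap T'=\{D_0\}$ into the statement that this particular geodesic passes through the cut point. The remaining passage from distance-additivity to $\llfloor\cdot\rrfloor_1$-additivity is a routine sandwiching argument, and the appeal to the projection criterion is then immediate.
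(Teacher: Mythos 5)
Your argument is correct, and the geometric core --- that because $T$ is a tree, any tropical segment from a point of $\cl(U)$ to a point of $T'$ must pass through the cut point $D_0$ --- is exactly the observation the paper's proof rests on. Where you diverge is in how you finish. The paper needs the cut-point fact only for the single segment $[D,\pi_{T'}(D)]$, and then concludes in two lines from Proposition~\ref{P:TropProj}(3) and (4): since $D_0\in[D,\pi_{T'}(D)]$, part (4) gives $\pi_{T'}(D_0)=\pi_{T'}(D)$, and part (3) gives $\pi_{T'}(D_0)=D_0$. You instead establish $D_0\in[D,D']$ for \emph{every} $D'\in T'$ and verify criterion (b) of Theorem~\ref{T:TropProjDiv}(3) directly; your passage from the additivity $\rho(D,D')=\rho(D,D_0)+\rho(D_0,D')$ to the one-sided $\llfloor\cdot\rrfloor_1$-identity via Proposition~\ref{P:BnormProperty}(5),(9) is in effect a re-proof of Lemma~\ref{L:SegSum}, which the paper already has and which you could cite to shorten this step. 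Your route is more self-contained and makes the projection criterion visible, at the cost of proving a slightly stronger geometric claim (all $D'$, not just $D'=\pi_{T'}(D)$) and redoing machinery the paper has packaged into Proposition~\ref{P:TropProj}; the paper's route buys brevity by leaning on the already-established functoriality of tropical projections along segments.
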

\begin{proof}
This is a corollary of Proposition~\ref{P:TropProj} (3) and (4). Actually for $D\in \cl(U)$, $D_0$ must be contained in the tropical segment $[D,\pi_{T'}(D)]$ since $T$ is a tropical tree. Now by Proposition~\ref{P:TropProj} (3) and (4), $\pi_{T'}(D)=\pi_{T'}(D_0)=D_0$. 
\end{proof}

The following proposition says that tropical trees  can be tested locally. 

\begin{proposition} \label{P:TropTreeCrit}
A linear system $T$ is a tropical tree if and only if for each $D\in T$ and the bases $B_1,\cdots,B_s$ of all possible effective chip-firing moves inside $T$ from $D$, we have $B_i\bigcup B_j=\Gamma$ for all distinct $B_i,B_j\in\{B_1,\cdots,B_s\}$. 
\end{proposition}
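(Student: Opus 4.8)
The plan is to recast the chip-firing hypothesis set-theoretically and then prove the two implications using the tree structure (for ``only if'') and a maximal-segment extension argument (for ``if''). First I would record the basic dictionary. For $D\in T$ and any $D'\in T$ with $D'\neq D$, the effective chip-firing move along the tropical path $P_{D'-D}$ (Remark~\ref{R:ChipFiring}) has base $\Gmin(f_{D'-D})$, and this base is constant along the whole open segment: for $D''\in(D,D']$ the function $f_{D''-D}$ is associated to $[\min(t,\underline{f_{D'-D}})]$ with $t>0$, whence $\Gmin(f_{D''-D})=\Gmin(f_{D'-D})$ by a direct computation of the minimizer. Thus the finitely many bases $B_1,\dots,B_s$ of effective chip-firing moves inside $T$ from $D$ are exactly the distinct sets $\Gmin(f_{D'-D})$, one per direction issuing from $D$. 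The key translation, which I would isolate, is that for directions toward $D',D''\in T$ one has $\Gmin(f_{D'-D})\bigcup\Gmin(f_{D''-D})=\Gamma$ if and only if $D\in[D',D'']$; this is exactly Theorem~\ref{T:FiniteCriterion}(1) applied to $\tconv(\{D',D''\})=[D',D'']$ with the point $D$. I would also note $\Gmin(f_{D'-D})\neq\Gamma$ always (a chip-firing move requires $\Gmax\neq\emptyset$), so two directions with $D$ between them necessarily have distinct bases.

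For the forward direction, assume $T=\bigcup_{i,j}[D_i,D_j]$ is a tropical tree, so by the discussion following Definition~\ref{D:TropTree} the set $T$ carries a genuine tree structure in which every tropical segment is the unique arc between its endpoints. Fix $D$ and two distinct bases $B'=\Gmin(f_{D'-D})$ and $B''=\Gmin(f_{D''-D})$. By base-constancy, $[D,D']\bigcap[D,D'']=\{D\}$, since any common point $E\neq D$ would force $B'=\Gmin(f_{E-D})=B''$. In the tree the arcs $[D,D']$ and $[D,D'']$ then meet only at $D$, so $D$ separates $D'$ from $D''$ and lies on the unique arc joining them, which is the tropical segment $[D',D'']$ (tropically convex by hypothesis, hence an arc by Proposition~\ref{P:TropSeg}(3)). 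Therefore $D\in[D',D'']$, and by the translation $B'\bigcup B''=\Gamma$, as required.

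For the converse I would show that under the local hypothesis every point of $T$ lies on a segment between two extremals $D_1,\dots,D_m$ of $T$ (Theorem~\ref{T:FiniteCriterion}(3)); since $T=\tconv(\{D_1,\dots,D_m\})$, this gives $T=\bigcup_{i,j}[D_i,D_j]$. The crux, and the step I expect to be the main obstacle, is an extension lemma: for any $D$ and any direction with base $B$ there is a tropical segment $[D,D^*]\subseteq T$ of maximal length in that direction (existing by compactness of $T$ from Corollary~\ref{C:Polytope} together with closedness of segments), and its far endpoint $D^*$ must be an extremal. To prove this I would suppose $D^*$ is not extremal; then $D^*\in\tconv(T\setminus\{D^*\})$, so Theorem~\ref{T:construction} yields $E,F\in T\setminus\{D^*\}$ with $D^*\in[E,F]$, producing two opposite directions at $D^*$. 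Comparing these with the direction from $D^*$ back to $D$ and invoking the hypothesis at $D^*$, at least one of $E,F$, say $E$, satisfies $D^*\in[D,E]$; then $[D,E]\subseteq T$ is a segment whose base at $D$ is $\Gmin(f_{D^*-D})=B$ and whose length $\rho(D,D^*)+\rho(D^*,E)$ exceeds $\rho(D,D^*)$, contradicting maximality. Granting the lemma, if $D$ is not an extremal then $D\in[E,F]$ for some $E,F\neq D$, the directions toward $E$ and $F$ are opposite with distinct bases, and extending each to an extremal $D_i,D_j$ preserves those bases, so $\Gmin(f_{D_i-D})\bigcup\Gmin(f_{D_j-D})=\Gamma$ and hence $D\in[D_i,D_j]$ by the translation; the extremals lie on such segments trivially. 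The delicate points will be the careful verification of the maximal segment via compactness and a clean treatment of the case distinction (direction back to $D$ coinciding with one of the two opposite directions at $D^*$) in the extension lemma.
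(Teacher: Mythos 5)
Your proof is correct in substance, and the forward direction is essentially the paper's: both reduce the claim to the observation that two distinct local directions at $D$ in a tropical tree concatenate into a single tropical segment through $D$, so that Theorem~\ref{T:FiniteCriterion} applied to $[D',D'']=\tconv(\{D',D''\})$ converts $D\in[D',D'']$ into $B'\bigcup B''=\Gamma$; your justification via unique arcs and the base-constancy of segments just makes explicit what the paper asserts. The converse is where you genuinely diverge. The paper's converse is much shorter: it applies Theorem~\ref{T:FiniteCriterion} to the extremal generating set $\{D_1,\dots,D_m\}$ all at once, so that $\bigcup_i\Gmin(f_{D_i-D})=\Gamma$ forces two of these particular bases to be distinct proper subsets, and the hypothesis then immediately yields $D\in[D_1,D_2]$ --- no prolongation of segments is needed. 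Your route instead proves an extension lemma (every direction at $D$ prolongs to a maximal segment ending at an extremal); this is correct but does strictly more work. The compactness step you flag does go through: a limit of segments issuing from $D$ with base $B$ is a segment whose base contains $B$, and the hypothesis at $D$ rules out a strictly larger base $B'$, since $B\bigcup B'=B'\neq\Gamma$ would violate it. One citation to repair: Theorem~\ref{T:construction} does not directly give $D^*\in[E,F]$ from $D^*\in\tconv(T\setminus\{D^*\})$, because $T\setminus\{D^*\}$ is not presented as a union of two tropically convex sets; the correct route is Corollary~\ref{C:LocalFinite} followed by Theorem~\ref{T:FiniteCriterion} and the hypothesis at $D^*$ --- which is precisely the paper's converse argument, and once you write that down you will find the extension lemma can be dispensed with entirely.
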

\begin{proof}
Suppose $T$ is a tropical tree generated by linear equivalent divisors $D_1,\cdots,D_m$ of degree $d$. We may assume that $D_1,\cdots,D_m$ are all extremals (or leaves) of $T$. Consider an arbitrary divisor $D\in T$.

 If $D$ is an extremal of $T$, then we may assume $D=D_1$ without loss of generality. In this way, $\bigcap_{i=2}^m [D,D_i]$ is a tropical segment $[D,D']$ for some $D'\in T$ which is distinct from $D$ (Proposition~\ref{P:TropSeg}(6)). This actually means that from $D$, we can only have exactly one effective chip-firing base $B_1=\Gmin(f_{D'-D})$. 

Now suppose that $D$ is not an extremal of $T$. Let $D'_1,\cdots,D'_s$ be all the distinct divisors close enough to $D$ such that $f_{D'_i-D}$ is a primitive rational function for $i=1,\cdots,s$. Note that $s$ is at least two since $D$ is not a leaf of $T$. Let $B_i=\Gmin(f_{D'_i-D})$ for $i=1,\cdots,s$. Then $B_1,\cdots,B_s$ are the bases of all possible effective chip-firing moves inside $T$ from $D$. Note that $B_i\neq \Gamma$. For each distinct $i,j=1,\cdots,s$, we know that $D\in[D'_i,D'_j]$ which means $B_i\bigcup B_j=\Gmin(f_{D'_i-D})\bigcup \Gmin(f_{D'_j-D})=\Gamma$ by Theorem~\ref{T:FiniteCriterion}. 

Conversely, suppose $D$ is a divisor in a linear system $T$ generated by the extremals $D_1,\cdots,D_m$ such that for the bases $B_1,\cdots,B_s$ of all possible effective chip-firing moves inside $T$ from $D$, we have $B_i\bigcup B_j=\Gamma$ for all distinct $B_i,B_j\in\{B_1,\cdots,B_s\}$. Claim that $D\in [D_i,D_j]$ for some $D_i,D_j\in \{D_1,\cdots,D_m\}$. This will imply that $T$ is a tropical tree. 

Let $B'_i=\Gmin(f_{D_i-D})$ for $i=1,\cdots,m$. If there is some $B'_i$ identical to $\Gamma$, then $D$ must be the extremal $D_i$ of $T$. 
Now suppose $D$ is not an extremal of $T$ which means $B'_i\neq \Gamma$ for all $i=1,\cdots,m$. Then it is clear that $\{B'_1,\cdots,B'_m\}\subseteq \{B_1,\cdots,B_s\}$. Since $D\in T=\tconv(\{D_1,\cdots,D_m\})$, we must have $\bigcup_{i=1}^m B'_i = \Gamma$ by Theorem~\ref{T:FiniteCriterion}. This actually means that $B'_1,\cdots,B'_m$ can not be all identical since they are proper subsets of $\Gamma$. Without loss of generality, we may assume $B'_1=B_1$ is distinct from $B'_2=B_2$. Then by assumption, $B'_1\bigcup B'_2 = B_1\bigcup B_2 =\Gamma$.  Therefore, by applying Theorem~\ref{T:FiniteCriterion} again, we conclude that $D\in[D_1,D_2]$. 
\end{proof}

\begin{remark} \label{R:TropTree}
For a linear system $T$ and a divisor $D\in T$, let $C_1,\cdots, C_n$ be the connected components of $\Gamma\setminus \supp(D)$ and $B_1,\cdots,B_s$ be the bases of all possible effective chip-firing moves inside $T$ from $D$. Then a chip firing of base $B_i$ will move some chips on the boundary of $B_i$ into  the complement $B_i^c$ of $B_i$. Note that  $B_i^c$ must contain  some $\mcalC_i=C_{i_1}\bigcup \cdots \bigcup C_{i_{n_i}}$ as a dense subset.  The local criterion for $D\in T$ with $T$ being a  tropical tree $T$ (Proposition~\ref{P:TropTreeCrit})  is equivalent to saying that when $D$ is not an extremal of $T$, $s$ must be at least $2$ and for each distinct $\mcalC_i=C_{i_1}\bigcup \cdots \bigcup C_{i_{n_i}}$ and $\mcalC_j=C_{j_1}\bigcup \cdots \bigcup C_{j_{n_j}}$, we must have $\{C_{i_1}, \cdots , C_{i_{n_i}}\}\bigcap\{C_{j_1},\cdots,C_{j_{n_j}}\}=\emptyset$. This also implies that if $T$ is a tropical tree and $D\in T$, then there is a one-to-one correspondence among the following objects:
\begin{enumerate}
\item  the bases of all possible effective chip-firing moves inside $T$ from $D$, 
\item the directions of all possible effective chip-firing moves inside $T$ from $D$,  
\item  all the outgoing tangent directions from $D$ in $T$, and 
\item the connected components of $T\setminus\{D\}$.  
\end{enumerate}
For future discussions, we denote the set of bases of all possible effective chip-firing moves inside $T$ from $D$ by $\mcalB_T(D)$, the set of outgoing tangent directions from $D$ in $T$ by $\Tan_T(D)$, and the set of components of $T\setminus \{D\}$ by $\mcalU_T(D)$. Then we have one-to-one correspondence among elements in $\mcalB_T(D)$, $\Tan_T(D)$ and $\mcalU_T(D)$. Moreover, when talking about chip-firing directions,  given an outgoing tangent direction $\vt$ from $D$ in $T$, we may also say that $D$ takes an effective chip-firing move along $\vt$. 
\end{remark}

\begin{example} \label{E:TropTree}
Again, we consider the linear systems on the metric circle $\Gamma$ in Figure~\ref{F:LinearSys}. Let $T=\tconv(\{D_{12},D_{23},D_{13}\})$ and $T'=\tconv(\{D_{12},D_2,D_{13}\})$. One can easily verify that $T$ is a tropical tree while $T'$ is not by Definition~\ref{D:TropTree}.  Note $D_0=(v_1)+(v_2)+(v_3)$ is a divisor in both $T$ and $T'$. There are three connected components $C_1$, $C_2$ and $C_3$ of $\Gamma\setminus \supp(D_0)$ where $C_1$ is the open segment between $v_2$ and $v_3$ through $w_{23}$, $C_2$ is the open segment between $v_1$ and $v_3$ through $w_{13}$, and $C_3$ is the open segment between $v_1$ and $v_2$ through $w_{12}$.  Let us test $D_0$ locally based on Proposition~\ref{P:TropTreeCrit} and Remark~\ref{R:TropTree}.
\begin{enumerate}
\item There are three directions of effective chip-firing moves from $D_0$ allowed in $T$: (1) the chips at $v_2$ and $v_3$ move towards $w_{23}$ at the same speed whose base is $B_1=C_2\bigcup C_3\bigcup \{v_1,v_2,v_3\}$; (2) the chips at $v_1$ and $v_3$ move towards $w_{13}$ at the same speed whose base is $B_2=C_1\bigcup C_3\bigcup \{v_1,v_2,v_3\}$; and (3) the chips at $v_1$ and $v_2$ move towards $w_{12}$ at the same speed whose base is $B_3=C_1\bigcup C_2\bigcup \{v_1,v_2,v_3\}$. Clearly $B_i^c=C_i$ for $i=1,2,3$.  Then $B_1\bigcup B_2=B_2\bigcup B_3=B_1\bigcup B_3=\Gamma$ or equivalently $C_1\bigcap C_2=C_2\bigcap C_3=C_1\bigcap C_3=\emptyset$. Therefore $D_0$ satisfies the local tropical tree condition for $T_1$. 
\item There are three directions of effective chip-firing moves from $D_0$ allowed in $T'$: (1) the chips at $v_1$ and $v_3$ move towards $w_{13}$ at the same speed whose base is $B'_1=C_1\bigcup C_3\bigcup \{v_1,v_2,v_3\}$; (2) the chips at $v_1$ and $v_3$ move towards $v_2$ at the same speed whose base is $B'_2=C_2\bigcup \{v_1,v_3\}$;  (3) the chips at $v_1$ and $v_2$ move towards $w_{12}$ at the same speed whose base is $B'_3=C_1\bigcup C_2\bigcup \{v_1,v_2,v_3\}$. Note that ${B'_1}^c=C_2$, ${B'_2}^c=C_1\bigcup C_3\bigcup \{v_2\}$, and ${B'_3}^c=C_3$. Now $B'_2\bigcup B'_3=B'_3\neq \Gamma$ or equivalently $\{C_1,C_3\}\bigcap \{C_3\}=\{C_3\}\neq\emptyset$. Therefore, $D_0$ does not satisfy the local tropical tree condition for $T'$ which means $T'$ cannot be a tropical tree. 
\end{enumerate}
\end{example}

\begin{lemma} \label{L:TropTreeSurj}
For any tropical tree $T$, the corresponding reduced divisor map $\Red_T$ is surjective and the preimage of $D\in T$ under $\Red_T$ is $\bigcap_{B\in \mcalB_T(D)}B$. 
\end{lemma}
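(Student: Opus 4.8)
The plan is to prove the two claims separately: surjectivity of $\Red_T$, and the identification of the fiber $\Red_T^{-1}(D)$ with $\bigcap_{B\in\mcalB_T(D)}B$. For surjectivity, I would fix an arbitrary $D\in T$ and produce a point $q\in\Gamma$ with $\Red_T(q)=\pi_T(d\cdot(q))=D$. By Proposition~\ref{P:GeneralRed}(3), $\pi_T(d\cdot(q))=D$ exactly when $q\in\Gmin(f_{D'-D})$ for every $D'\in T$. So the first step is to show that the intersection $\bigcap_{D'\in T}\Gmin(f_{D'-D})$ is nonempty. Since $T$ is a tropical tree generated by its extremals $D_1,\dots,D_m$, Theorem~\ref{T:FiniteCriterion}(2) gives $\bigcup_{i=1}^m\Gmin(f_{D_i-D})=\Gamma$, but I want the opposite kind of statement (a common point of all the $\Gmin$'s). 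The key observation is that $\bigcap_{D'\in T}\Gmin(f_{D'-D})=\bigcap_{B\in\mcalB_T(D)}B$: by Remark~\ref{R:TropTree}, the bases $\mcalB_T(D)$ are exactly the $\Gmin(f_{D'-D})$ for $D'$ along each outgoing tangent direction from $D$, and for any $D'\in T$ the segment $[D,D']$ starts along one such direction, so $\Gmin(f_{D'-D})\supseteq$ the corresponding base $B$ (monotonicity along a tropical path, from Proposition~\ref{P:working} applied via $\Xmin$ inclusions). Hence the intersection over all $D'$ collapses to the finite intersection over $\mcalB_T(D)$.

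The crux is therefore to show $\bigcap_{B\in\mcalB_T(D)}B\neq\emptyset$ and that every $q$ in this intersection satisfies $\Red_T(q)=D$, which simultaneously delivers surjectivity and the fiber description. For nonemptiness I would use the tropical tree condition (Proposition~\ref{P:TropTreeCrit}): for each pair of distinct bases $B_i,B_j\in\mcalB_T(D)$ we have $B_i\cup B_j=\Gamma$, equivalently $B_i^c\cap B_j^c=\emptyset$ where each $B_i^c$ is a union of connected components of $\Gamma\setminus\supp(D)$ (Remark~\ref{R:TropTree}). Thus the complements $B_1^c,\dots,B_s^c$ are pairwise disjoint open subsets of $\Gamma$. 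This pairwise disjointness is the main structural input: it forces $\bigcup_i B_i^c\neq\Gamma$ as long as the $B_i^c$ do not exhaust $\Gamma$, and in fact $\bigcap_i B_i=\Gamma\setminus\bigcup_i B_i^c$ is nonempty because each $B_i$ contains $\supp(D)$ (a chip-firing base always contains the points it fires from), so at minimum $\supp(D)\subseteq\bigcap_i B_i$. I expect that establishing $\supp(D)\subseteq B$ for every $B\in\mcalB_T(D)$ — i.e.\ that the base of an effective chip-firing move from $D$ contains the full support of $D$ — is the point requiring the most care, since I must argue from the definition of effective chip-firing moves (Remark~\ref{R:ChipFiring}) that the base, being $\Gmin$ of the associated primitive rational function, cannot omit support points without violating effectiveness or the primitivity structure.

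With nonemptiness in hand, the fiber identification goes both directions. If $q\in\bigcap_{B\in\mcalB_T(D)}B$, then $q\in\Gmin(f_{D'-D})$ for all $D'\in T$ by the collapsing argument above, so $D=\pi_T(d\cdot(q))=\Red_T(q)$ by Proposition~\ref{P:GeneralRed}(3)$\Leftrightarrow$(5). Conversely, if $\Red_T(q)=D$, then again by Proposition~\ref{P:GeneralRed}(3) every $\Gmin(f_{D'-D})$ contains $q$; specializing $D'$ to the divisors defining each base $B\in\mcalB_T(D)$ yields $q\in B$ for all such $B$, hence $q\in\bigcap_{B\in\mcalB_T(D)}B$. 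This shows $\Red_T^{-1}(D)=\bigcap_{B\in\mcalB_T(D)}B$ precisely, and since this set is nonempty for each $D\in T$, the map $\Red_T$ is surjective. I would organize the write-up so that the collapsing identity $\bigcap_{D'\in T}\Gmin(f_{D'-D})=\bigcap_{B\in\mcalB_T(D)}B$ is proved once as a lemma-internal step and then reused for both conclusions, keeping the chip-firing base containment as the single nontrivial geometric fact to verify.
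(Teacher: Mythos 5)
Your overall architecture is sound and genuinely different from the paper's: the paper proves surjectivity by a topological argument (continuity of $\Red_T$ plus the fact that in a tree the image of a path contains the tropical segment between the images of its endpoints, then a separate check that extremals are hit), whereas you propose to get everything at once from nonemptiness of $\bigcap_{B\in\mcalB_T(D)}B$. The collapsing identity $\bigcap_{D'\in T}\Gmin(f_{D'-D})=\bigcap_{B\in\mcalB_T(D)}B$ and the two-way fiber identification via Proposition~\ref{P:GeneralRed}(3)$\Leftrightarrow$(5) are both correct. But the single geometric fact you flag as the crux --- that $\supp(D)\subseteq B$ for every $B\in\mcalB_T(D)$ --- is false, and with it your proof of nonemptiness collapses. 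Counterexample: on the metric circle of Example~\ref{E:LinearSys}, take the tropical tree $T=[D_1,D_{23}]$ (a single tropical segment, hence a tropical tree) and let $D=P_{D_{23}-D_1}(t)=(v_1)+(x)+(y)$ for small $t>0$, where $x,y$ are the two points at distance $t$ from $v_1$. The base of the effective chip-firing move from $D$ back towards $D_1$ is $\Gmin(f_{D_1-D})=\{p:\underline{f_{D_{23}-D_1}}(p)\geq t\}$, the closed arc from $x$ through $w_{23}$ to $y$, which does \emph{not} contain $v_1\in\supp(D)$. What is true is only that $\partial B\subseteq\supp(D)$ (the base contains the points it fires \emph{from}); you have silently promoted this to containment of the whole support.

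The good news is that the intended conclusion survives and the fix is short, and it is essentially what the paper records in the proof of Proposition~\ref{P:PreImagRed}: for distinct $B,B'\in\mcalB_T(D)$ the tree condition (Proposition~\ref{P:TropTreeCrit}) gives $B\cup B'=\Gamma$, i.e.\ $B^c\subseteq B'$, and since $B'$ is closed, $\partial B\subseteq\cl(B^c)\subseteq B'$; combined with $\partial B\subseteq B$ this yields $\emptyset\neq\partial B\subseteq\bigcap_{B'\in\mcalB_T(D)}B'$ whenever $|\mcalB_T(D)|\geq 2$ (and the case of an extremal, with a single base, is trivial). In my example this correctly produces $\bigcap B=\{x,y\}=\partial B$, matching Corollary~\ref{C:RedFunc}. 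So replace your support-containment step by this boundary-containment argument and your route goes through; as written, the proof has a genuine gap at its central step.
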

\begin{proof}
First we note that for each $p$ and $q$ in $\Gamma$ and any path $P$ connecting $p$ and $q$, the tropical segment $[\Red_T(p),\Red_T(q)]$ must be a subset of the image of $P$ under $\Red_T$ since $\Red_T$ is continuous. So it remains to show that the extremals $D_1,\cdots,D_m$ of $T$ are contained in the image of $\Red_T$. Consider an extremal $D_i$ of $T$. Note that there is exactly one base $B$ for all possible effective chip-firing moves inside $T$ from $D_i$. By Proposition~\ref{P:GeneralRed}, this means that $D_i$ is reduced with respect to all the points in $B$. Also by Proposition~\ref{P:GeneralRed}, in general we have $\Red_T^{-1}(D)=\bigcap_{B\in \mcalB_T(D)}B$ for all $D\in T$. 
\end{proof}

\begin{corollary} \label{C:RedFunc}
Let $T$ be a tropical tree. For each $D_1,D_2\in T$, let $f$ be an associated function of $D_2-D_1$, i.e., $\divf(f)=D_2-D_1$. Consider a divisor $D=P_{D_2-D_1}(t)$ for some $t\in[0,\rho(D_1,D_2)]$. 
\begin{enumerate}
\item $\Red_{[D_1,D_2]}^{-1}(D)=\underline{f}^{-1}(t)$, i.e.,  $\Red_{[D_1,D_2]}$ is exactly $P_{D_2-D_1} \circ \underline{f}$. 
\item  If  $\underline{f}^{-1}(t)$ is a finite set, then $\Red_T^{-1}(D)=\underline{f}^{-1}(t)$. 
\end{enumerate}
\end{corollary}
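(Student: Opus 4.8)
\textbf{Proof proposal for Corollary~\ref{C:RedFunc}.}

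The plan is to reduce both statements to the characterization of reduced divisors in Proposition~\ref{P:GeneralRed} together with the description of tropical projections onto segments from Lemma~\ref{L:TropProjSegment}. First I would set up notation: fix $D_1,D_2\in T$, let $f$ be an associated function of $D_2-D_1$ so that $\underline{f}$ is the normalized potential with $\min(\underline{f})=0$, and recall that by the definition of the tropical path in $\RDivPlusD(\Gamma)$ we have $P_{D_2-D_1}(t)=\divf([\min(t,\underline{f})])+D_1$ for $t\in[0,\rho(D_1,D_2)]$. The key observation driving part (1) is that for a point $x\in\Gamma$, the reduced divisor $\Red_{[D_1,D_2]}(x)=\pi_{[D_1,D_2]}(d\cdot(x))$ is the unique divisor $D=P_{D_2-D_1}(t)$ on the segment such that $x\in\Gmin(f_{D'-D})$ for every $D'\in[D_1,D_2]$ (Proposition~\ref{P:GeneralRed}(3)).

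For part (1), I would show directly that $x\in\underline{f}^{-1}(t)$ if and only if $\Red_{[D_1,D_2]}(x)=P_{D_2-D_1}(t)=:D$. The natural route is to compute $\Gmin(f_{D_1-D})$ and $\Gmin(f_{D_2-D})$ explicitly in terms of $\underline{f}$. Using $f_{D-D_1}=\min(t,\underline{f})$ up to a constant, one finds $\Xmin(D_1-D)=\{x:\underline{f}(x)\ge t\}$ and $\Xmin(D_2-D)=\{x:\underline{f}(x)\le t\}$, exactly as in the computation in Proposition~\ref{P:TropSeg}(5). Their intersection is precisely $\underline{f}^{-1}(t)$, and by Lemma~\ref{L:TropProjSegment} every $\Gmin(f_{D'-D})$ for $D'\in[D_1,D_2]$ is contained in $\Gmin(f_{D_1-D})\cup\Gmin(f_{D_2-D})$, while for $x$ to be reduced we need $x$ in each such minimizer. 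This forces $x\in\underline{f}^{-1}(t)$, and conversely if $x\in\underline{f}^{-1}(t)$ then $x$ lies in every $\Gmin(f_{D'-D})$ because $D'$ is itself a tropical combination. This yields $\Red_{[D_1,D_2]}^{-1}(D)=\underline{f}^{-1}(t)$, and since $\underline{f}(x)$ is exactly the parameter value sending $x$ to its projection, $\Red_{[D_1,D_2]}=P_{D_2-D_1}\circ\underline{f}$ as claimed.

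For part (2), I would invoke Proposition~\ref{P:SeqRedjDiv} (the compatibility $\Red_{T'}=\pi_{T'}\circ\Red_T$ for $T'\subseteq T$) applied to $T'=[D_1,D_2]\subseteq T$, giving $\Red_{[D_1,D_2]}(x)=\pi_{[D_1,D_2]}(\Red_T(x))$. Thus $x\in\Red_T^{-1}(D)$ implies $x\in\Red_{[D_1,D_2]}^{-1}(D)=\underline{f}^{-1}(t)$, so $\Red_T^{-1}(D)\subseteq\underline{f}^{-1}(t)$ always holds; the content is the reverse inclusion under the finiteness hypothesis. Here the main obstacle is ruling out the possibility that some $x\in\underline{f}^{-1}(t)$ projects under $\Red_T$ to a divisor off the segment $[D_1,D_2]$: a priori the tree $T$ may branch at $D$, so that $x$ could be reduced with respect to a divisor lying in a branch attached at $D$. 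I expect to control this using Lemma~\ref{L:ConnCompRed} and the local tree structure described in Remark~\ref{R:TropTree}, namely that the preimage $\Red_T^{-1}(D)=\bigcap_{B\in\mcalB_T(D)}B$ (Lemma~\ref{L:TropTreeSurj}). The finiteness of $\underline{f}^{-1}(t)$ should ensure that $D$ is an interior point of the segment where no extra branching occurs along the directions that $x$ sees, so that the only bases of effective chip-firing moves relevant to $x$ are the two segment directions whose intersection is $\underline{f}^{-1}(t)$. Making the connection between ``$\underline{f}^{-1}(t)$ finite'' and ``the branching directions at $D$ do not cut down the intersection $\bigcap_{B\in\mcalB_T(D)}B$ below $\underline{f}^{-1}(t)$'' is the delicate step, and I would carry it out by analyzing how each additional base $B\in\mcalB_T(D)$ beyond the two segment bases meets $\underline{f}^{-1}(t)$, showing that when $\underline{f}^{-1}(t)$ is finite these extra bases still contain all of $\underline{f}^{-1}(t)$.
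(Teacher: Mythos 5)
Your proof of part (1) is correct and is essentially the paper's argument: the sets $\{\underline{f}\ge t\}$ and $\{\underline{f}\le t\}$ are exactly the minimizers $\Gmin(f_{D_1-D})$ and $\Gmin(f_{D_2-D})$, i.e.\ the bases of the (at most two) effective chip-firing directions from $D$ inside $[D_1,D_2]$, and $\Red_{[D_1,D_2]}^{-1}(D)$ is their intersection $\underline{f}^{-1}(t)$ by Lemma~\ref{L:TropTreeSurj} (equivalently, by your criterion from Proposition~\ref{P:GeneralRed}).

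For part (2) you correctly isolate the issue — the branching of $T$ at $D$ could a priori shrink $\bigcap_{B\in\mcalB_T(D)}B$ below $\underline{f}^{-1}(t)$ — but you leave this, the only nontrivial point, as a plan. The way to close it is the local tree criterion (Proposition~\ref{P:TropTreeCrit}): any base $B\in\mcalB_T(D)$ other than $B_1=\{\underline{f}\ge t\}$ and $B_2=\{\underline{f}\le t\}$ must satisfy $B\bigcup B_1=B\bigcup B_2=\Gamma$, hence $B\supseteq B_1^c\bigcup B_2^c=\Gamma\setminus(B_1\bigcap B_2)=\Gamma\setminus\underline{f}^{-1}(t)$. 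Since $B$ is closed and $\underline{f}^{-1}(t)$ is finite (so its complement is dense in $\Gamma$), this forces $B=\Gamma$, which is impossible for the base of a genuine chip-firing move. Hence $\mcalB_T(D)=\mcalB_{[D_1,D_2]}(D)$, and Lemma~\ref{L:TropTreeSurj} gives $\Red_T^{-1}(D)=B_1\bigcap B_2=\underline{f}^{-1}(t)$. This is exactly the content of the paper's (equally terse) one-sentence justification of (2), so once this step is written out your argument matches the intended proof.
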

\begin{proof}
Note that for $D$ in the interior of the tropical segment  $[D_1,D_2]$, there are  two directions of effective chip-firing moves from $D$ inside $[D_1,D_2]$, one towards $D_1$ with base $\{p\in\Gamma\mid \underline{f}(p)\geq t\}$ and the other towards $D_2$ with base $\{p\in\Gamma\mid \underline{f}(p)\leq t\}$. Then $\Red_{[D_1,D_2]}^{-1}=\bigcap_{B\in \mcalB_T(D)}B=\{p\in\Gamma\mid \underline{f}(p)\geq t\} \bigcap \{p\in\Gamma\mid \underline{f}(p)\leq t\}=\underline{f}^{-1}(t)$. The same result for $D$ being $D_1$ or $D_2$ follows from analogous arguments while instead only one direction of effective chip-firing moves from $D$ is allowed. 

Now if $\underline{f}^{-1}(t)$ is a finite set, we can extend the result even to $\Red_T^{-1}(D)$. The reason is that the finiteness of $\underline{f}^{-1}(t)$ can guarantee that there are no more directions of effective chip-firing moves from $D$ inside $T$ than the directions only inside $[D_1,D_2]$. 
\end{proof}

\begin{remark}
For Corollary~\ref{C:RedFunc}(2), we note that the level sets $\underline{f}^{-1}(t)$ of $\underline{f}$  is generically finite, i.e., the set of  $t$ such that  $\underline{f}^{-1}(t)$ is infinite  is a finite set. 
\end{remark}

\begin{lemma} \label{L:RedComp}
For each $q\in \Gamma$ and  $U\in \mcalU_T(\Red_T(q))$, all divisors in $U$ take the same value at $q$, which is at most the value of $\Red_T(q)$ at $q$. 
\end{lemma}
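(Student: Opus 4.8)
The plan is to fix $D_0=\Red_T(q)=\pi_T(d\cdot(q))$ and analyze the tropical paths that leave $D_0$ into the prescribed component $U$. The inequality part is essentially free: since $D_0$ is by definition the $q$-reduced divisor in $T$, Corollary~\ref{C:RedVal} gives $D(q)\le D_0(q)$ for every $D\in T$, and in particular for every $D\in U$. Thus the real content is the constancy of $D\mapsto D(q)$ on $U$, and everything will be reduced to a local computation at $q$ together with the tree structure of $T$.

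First I would record the one input that encodes $D_0=\pi_T(d\cdot(q))$: by Proposition~\ref{P:GeneralRed}(3) we have $q\in\Gmin(f_{D-D_0})$, equivalently $\underline{f_{D-D_0}}(q)=0$, for every $D\in T$. Now fix $D\in U$ and consider the tropical path $P_{D-D_0}(t)=\divf(\min(t,\underline{f_{D-D_0}}))+D_0$ for $t\in[0,\rho(D_0,D)]$. For each fixed $t>0$, continuity of $\underline{f_{D-D_0}}$ and $\underline{f_{D-D_0}}(q)=0<t$ produce a neighborhood of $q$ on which $\min(t,\underline{f_{D-D_0}})=\underline{f_{D-D_0}}$; since the value of $\divf(\cdot)$ at $q$ is the sum of incoming slopes at $q$ and hence depends only on the germ of the function at $q$, this yields $\divf(\min(t,\underline{f_{D-D_0}}))(q)=\divf(\underline{f_{D-D_0}})(q)=(D-D_0)(q)$. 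Consequently the value at $q$ of $P_{D-D_0}(t)$ equals $D_0(q)+(D-D_0)(q)=D(q)$ for every $t\in(0,\rho(D_0,D)]$; that is, every divisor on the segment $[D_0,D]$ other than $D_0$ takes the value $D(q)$ at $q$.

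Finally I would glue these single-path statements across $U$ using that $T$ is a tropical tree. By Remark~\ref{R:TropTree} the components in $\mcalU_T(D_0)$ correspond one-to-one with the tangent directions in $\Tan_T(D_0)$, so for any $D_1,D_2\in U$ the segments $[D_0,D_1]$ and $[D_0,D_2]$ leave $D_0$ in the same direction. Their intersection is a tropical segment $[D_0,D^\ast]$ (Proposition~\ref{P:TropSeg}(6)), and $D^\ast\neq D_0$: otherwise, by the tree structure of $T$, the unique path from $D_1$ to $D_2$ would pass through $D_0$, contradicting that $D_1,D_2$ lie in the same component of $T\setminus\{D_0\}$. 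Picking any $D\in[D_0,D^\ast]$ with $D\neq D_0$ (so $D\in U$), the previous step applied to the paths $[D_0,D_1]$ and $[D_0,D_2]$ gives $D_1(q)=D(q)=D_2(q)$. Hence $D(q)$ is constant on $U$, completing the argument.

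The step I expect to be the main obstacle is the middle one: making precise that $\divf$ evaluated at $q$ is a genuinely local quantity, so that truncating $\underline{f_{D-D_0}}$ at height $t$ leaves its value at $q$ unchanged, and then confirming that the two paths into $U$ really share a nondegenerate initial piece. Both points rest on the metric-tree geometry of $T$ and the locality of the slope/Laplacian bookkeeping from the potential theory recalled in Section~\ref{S:AppMetGra}, so I anticipate only careful verification rather than a conceptual difficulty; the essential nontrivial ingredient is the identity $\underline{f_{D-D_0}}(q)=0$ supplied by $D_0=\pi_T(d\cdot(q))$.
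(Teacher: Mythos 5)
Your argument is correct and follows essentially the same route as the paper's proof: both rest on the fact that $q\in\Gmin(f_{D-D_0})$ for every $D\in T$ (Proposition~\ref{P:GeneralRed}) together with the locality of $\divf$ at $q$, so that the relevant function is unchanged near $q$ and the divisor difference vanishes there. Your treatment of the reduction to a single segment via the common initial piece $[D_0,D^\ast]$ is a slightly more explicit version of the paper's ``we may assume $D_1$ lies in the interior of $[D_0,D_2]$,'' but the substance is the same.
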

\begin{proof}
Let $D_0=\Red_T(q)$. By Corollary~\ref{C:RedVal}, we know that the value of all divisors in $T$ is at most the value of $D_0$ at $q$. Now consider any two divisors $D_1$ and $D_2$ in a component $U$ of  $T\setminus \{D\}$. Let $B$ be the base of effective chip-firing moves inside $T$ from $D$ corresponding to $U$ (Remark~\ref{R:TropTree}). Then $B=\Gmin(f_{D_2-D_0})=\Gmin(f_{D_1-D_0})$ and $q$ belongs to $B$ by Proposition~\ref{P:GeneralRed}. Since $T$ is a tropical tree, we may assume that $D_1$ is in the interior of the tropical segment $[D_0,D_2]$.  Let $B' = \Gmin(f_{D_2-D_1}) $. Then $B$ is a proper subset of $\Gmax(f_{D_1-D_0})^c$, $B'$ is the closure of $\Gmax(f_{D_1-D_0})^c$, and we conclude that $q$ belongs to $B'\setminus \partial B'$. Thus the value of $D_2-D_1$ is $0$ at $q$. 
\end{proof}

\begin{remark} \label{R:tauq}
By Lemma~\ref{L:RedComp}, we can define a function $\tau_q$ on $\mcalU_T(\Red_T(q))$ such that for each $U\in\mcalU_T(\Red_T(q))$,  $\tau_q(U)$ is the value of a divisor on $U$ at $q$.  We say $\tau_q$ is trivial if $\tau_q(U)=0$ for all $U\in \mcalU_T(\Red_T(q))$.

Moreover, let $T_q:=\{D\in T\mid q\in\supp(D)\}$. We have the following cases for $T_q$:
\begin{enumerate}
\item If $q\notin \supp(\Red_T(q))$, then $T_q=\emptyset$. 
\item If $q\in \supp(\Red_T(q))$, then $T_q=\{q\}\bigcup(\bigcup_{U\in \mcalU_T(\Red_T(q)),\tau_q(U)>0} U)$. In particular, when in addition $\tau_q$ is trivial, $T_q$ is exactly the singleton $\{q\}$. Moreover, there are only finitely many points $q\in\Gamma$ with nontrivial $\tau_q$. This can be proved by the following arguments. Let $Q$ be the set of points $q$ such that $\tau_q$ is nontrivial. Consider the extremals $D_1,\cdots,D_m$ of $T$. Then for each $q\in Q$, $T_q\bigcap \{D_1,\cdots,D_m\}$ must be nonempty. Then if $Q$ is an infinite set, then there must be some $D_i\in \{D_1,\cdots,D_m\}$ such that there are infinitely many points $q\in Q$ with $D_i\in T_q$. But this means that $\supp(D_i)$ is an infinite set, which is impossible. 
\end{enumerate}
\end{remark}

The following proposition provides several criteria for a tropical tree being dominant. 

\begin{proposition} \label{P:DomTreeCrit}
Let $T$ be a tropical tree. Then the following are equivalent:
\begin{enumerate}
\item $T$ is dominant. 
\item For each $p\in\Gamma$, $p\in\supp(\Red_T(p))$. 
\item The corresponding reduced divisor map $\Red_T$ is finite., i.e.,  the preimage of any divisor in $T$ is a finite set.
\item (Local criterion) For each $D\in T$, we have $\supp(D)\bigcup(\bigcup_{B\in\mcalB_T(D)} B^c)=\Gamma$ where $B^c$ is the complement of $B$ in $\Gamma$. 
\end{enumerate}
\end{proposition}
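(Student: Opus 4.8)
The plan is to prove Proposition~\ref{P:DomTreeCrit} by establishing the cycle of implications $(1)\Rightarrow(2)\Rightarrow(3)\Rightarrow(4)\Rightarrow(1)$, since each adjacent pair has a reasonably direct bridge through the machinery already built up, whereas jumping across non-adjacent statements would require reproving intermediate facts. The central tools will be the reduced-divisor-map characterizations in Proposition~\ref{P:GeneralRed} (which identifies $\Red_T(p)=\pi_T(d\cdot(p))$ with the divisor whose every relevant $\Gmin$ contains $p$), Lemma~\ref{L:TropTreeSurj} (which computes the fiber $\Red_T^{-1}(D)=\bigcap_{B\in\mcalB_T(D)}B$), and the apparatus of $\tau_q$ and $T_q$ from Remark~\ref{R:tauq}, together with the local tree condition of Proposition~\ref{P:TropTreeCrit} and the one-to-one correspondence in Remark~\ref{R:TropTree} between $\mcalB_T(D)$, $\Tan_T(D)$, and $\mcalU_T(D)$.

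First I would prove $(1)\Rightarrow(2)$: assuming $\supp(T)=\Gamma$, I want to show every point $p$ lies in the support of its own reduced divisor $\Red_T(p)$. Suppose not; then by Remark~\ref{R:tauq}(1), $p\notin\supp(\Red_T(p))$ forces $T_p=\emptyset$, meaning no divisor in $T$ charges $p$. But Lemma~\ref{L:RedComp} shows that for each $U\in\mcalU_T(\Red_T(p))$ the common value $\tau_p(U)$ at $p$ is at most $\Red_T(p)(p)=0$ and hence is $\leq 0$; combined with effectivity this gives $\tau_p(U)=0$ for all $U$, so indeed $p\in\supp(D)$ for no $D\in T$, contradicting $\supp(T)=\Gamma$. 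For $(2)\Rightarrow(3)$, I would use Lemma~\ref{L:TropTreeSurj}: the fiber over $D$ is $\bigcap_{B\in\mcalB_T(D)}B$, and for a point $p$ in this fiber we have $D=\Red_T(p)$, so condition~(2) says $p\in\supp(D)$. Since $\supp(D)$ is finite and the fiber is contained in $\supp(D)$, every fiber is finite, i.e.\ $\Red_T$ is finite.

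The implication $(3)\Rightarrow(4)$ is where I expect the real work to lie, and it is the main obstacle. Fix $D\in T$; I must show $\supp(D)\bigcup(\bigcup_{B\in\mcalB_T(D)}B^c)=\Gamma$. The contrapositive is cleaner: if some point $p$ lies in neither $\supp(D)$ nor any $B^c$, then $p\in\bigcap_{B\in\mcalB_T(D)}B$, which by Lemma~\ref{L:TropTreeSurj} places $p$ in the fiber $\Red_T^{-1}(D)$, and moreover $p\notin\supp(D)=\supp(\Red_T(p))$. The task is then to leverage $p\notin\supp(D)$ together with $p$ lying in every base to manufacture an interval's worth of points in the fiber, contradicting finiteness. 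Concretely, since $p\notin\supp(D)$, $p$ sits in the interior of some connected component $C$ of $\Gamma\setminus\supp(D)$, and I would argue via Remark~\ref{R:TropTree} and Remark~\ref{R:tauq} that $p$ being interior to $C$ and in every base means the reduced divisor is locally constant along a whole neighborhood of $p$ inside $C$; the slopes of the associated chip-firing functions $f_{D'-D}$ vanish at such an interior point in every admissible direction, so a nondegenerate subarc maps to $D$ under $\Red_T$. That produces an infinite fiber and contradicts~(3). Finally, $(4)\Rightarrow(1)$ follows by taking an arbitrary $p\in\Gamma$ and its reduced divisor $D=\Red_T(p)$: the local criterion at $D$, applied together with the fact that $p\in\bigcap_{B\in\mcalB_T(D)}B$ (so $p\notin B^c$ for every base), forces $p\in\supp(D)\subseteq\supp(T)$, whence $\supp(T)=\Gamma$ and $T$ is dominant.

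The delicate point throughout is converting between the combinatorial base data $\mcalB_T(D)$ and the pointwise support/slope information, so I would be careful to invoke the correspondence of Remark~\ref{R:TropTree} only where $T$ is genuinely a tropical tree, and to handle the boundary case distinguishing whether $p$ is a boundary point of a base (giving a strict drop in value, as in Remark~\ref{R:RedVal}) versus an interior point (giving equality). I expect the finiteness argument in $(3)\Rightarrow(4)$ to require the observation from Remark~\ref{R:tauq}(2) that only finitely many points carry nontrivial $\tau_q$, which rules out pathological accumulation and lets the local analysis at a single interior point propagate to an interval.
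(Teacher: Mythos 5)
Your proposal is correct and uses essentially the same ingredients as the paper's proof: the fiber formula $\Red_T^{-1}(D)=\bigcap_{B\in\mcalB_T(D)}B$ from Lemma~\ref{L:TropTreeSurj}, the value comparison of Corollary~\ref{C:RedVal}, and the observation that $\partial B\subseteq\supp(D)$ forces a whole component of $\Gamma\setminus\supp(D)$ into the fiber when some fiber point misses the support. The only difference is organizational: you run a cycle $(1)\Rightarrow(2)\Rightarrow(3)\Rightarrow(4)\Rightarrow(1)$, while the paper proves the star $(1)\Leftrightarrow(2)$, $(2)\Leftrightarrow(3)$, $(2)\Leftrightarrow(4)$, placing the infinite-fiber argument in $(3)\Rightarrow(2)$ and reducing $(2)\Leftrightarrow(4)$ to a set-theoretic identity.
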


\begin{proof}
(1)$\Leftrightarrow$(2): Suppose $T$ is dominant, i.e., for each $p\in \Gamma$, there exists a divisor $D\in T$ such that $p\in\supp(D)$. By Corollary~\ref{C:RedVal}, we know that the value of $\Red_T(p)$ at $p$ is at least the value of $D$ at $p$, which means $p\in\supp(\Red_T(p))$. 

(2)$\Leftrightarrow$(3): If $p\in\supp(\Red_T(p))$ for each $p\in\Gamma$, then clearly the preimage of each $D\in T$ under $\Red_T$ must be finite since it is a subset of $\supp(D)$. Conversely,  if there exists  a point $q\in\Gamma$ such that $q\notin \supp(\Red_T(p))$, then we claim that the preimage of $\Red_T(q)$ under the reduced divisor map $\Red_T$ is an infinite set. Let $D_0=\Red_T(q)$. By Lemma~\ref{L:TropTreeSurj}, we know that $\Red_T^{-1}(D_0)=\bigcap_{B\in \mcalB_T(D_0)}B\neq \emptyset$. Note that the boundary points of each $B\in \mcalB_T(D_0)$ must be contained in $\supp(D_0)$. Now $q$ is contained in $\Red_T^{-1}(D_0)$ but $q\notin \supp(D_0)$. Let $C$ be the connected component of $\Gamma\setminus\supp(D_0)$ which contains $q$. Then we must have $C\subseteq \Red_T^{-1}(D_0)$. 

(2)$\Leftrightarrow$(4): Recall that  $\Red_T^{-1}(D)=\bigcap_{B\in\mcalB_T(D)}B$ by Lemma~\ref{L:TropTreeSurj}. Therefore $\Red_T^{-1}(D)\subseteq \supp(D)$ is equivalent to $\supp(D)\bigcup(\bigcup_{B\in\mcalB_T(D)} B^c)=\supp(D)\bigcup(\bigcap_{B\in\mcalB_T(D)}B)^c=\Gamma$.

\end{proof}

Lemma~\ref{L:TropTreeSurj} says that the reduced divisor map to a tropical tree $T$ is surjective and provides a characterization of the inverse image of the reduced divisor map. By Proposition~\ref{P:DomTreeCrit}, we know that if in addition $T$ is dominant, then the preimage of each $D\in T$ under the reduced divisor map must be a subset of $\supp(D)$. The following proposition  provides a criterion to characterize the preimage of $D\in T$ under $\Red_T$  more easily. 

\begin{proposition} \label{P:PreImagRed}
For a dominant tropical tree $T$ and a divisor $D\in T$, the following are equivalent:
\begin{enumerate}
\item $D=\Red_T(q)$.
\item $q\in\supp(D)$ and $q$ is a boundary point of some $B\in\mcalB_T(D)$. 
\item  $q\in\supp(D)$ and there exists at least one outgoing tangent direction $\vt\in\Tan_T(D)$ such that as $D$ takes an effective chip-firing move along  $\vt$, at least one chip at $q$ moves.
\end{enumerate}
\end{proposition}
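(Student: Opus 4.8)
The plan is to prove the equivalence of the three conditions characterizing when a divisor $D$ in a dominant tropical tree $T$ equals $\Red_T(q)$. The central tool will be Lemma~\ref{L:TropTreeSurj}, which tells us that $\Red_T^{-1}(D)=\bigcap_{B\in\mcalB_T(D)}B$, combined with Proposition~\ref{P:GeneralRed} (characterizing reduced divisors as tropical projections via the minimizer/base condition) and Proposition~\ref{P:DomTreeCrit}, which guarantees that for a dominant tropical tree the preimage $\Red_T^{-1}(D)$ is a subset of $\supp(D)$. I would also lean on Remark~\ref{R:TropTree}, which establishes the one-to-one correspondence among bases in $\mcalB_T(D)$, outgoing tangent directions in $\Tan_T(D)$, and connected components in $\mcalU_T(D)$, since condition (3) is phrased in terms of chip-firing along tangent directions while conditions (1) and (2) are phrased set-theoretically.

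First I would establish (1)$\Leftrightarrow$(2). By Lemma~\ref{L:TropTreeSurj}, $D=\Red_T(q)$ is equivalent to $q\in\bigcap_{B\in\mcalB_T(D)}B$. Since $T$ is dominant, Proposition~\ref{P:DomTreeCrit} forces $q\in\supp(D)$. The key geometric observation, already noted in the proof of Proposition~\ref{P:DomTreeCrit}, is that the boundary points of every base $B\in\mcalB_T(D)$ lie in $\supp(D)$, while each $B$ is a closed set whose complement is a union of connected components of $\Gamma\setminus\supp(D)$. Thus a point $q\in\supp(D)$ belongs to every $B\in\mcalB_T(D)$ precisely when it is not separated out by any base, and I would argue that $q\in\bigcap_B B$ together with $q\in\supp(D)$ is equivalent to $q$ being a boundary point of at least one $B\in\mcalB_T(D)$: if $q$ were an interior point of every base it would fail to be in $\supp(D)$ in the relevant sense, and conversely being a boundary point of some base while lying in all bases is exactly the preimage condition. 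This step requires carefully tracking which points of $\supp(D)$ can be interior to all bases versus boundary of some.

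Next I would prove (2)$\Leftrightarrow$(3) by translating the boundary condition into the chip-firing language via Remark~\ref{R:TropTree}. Under the correspondence between bases $B\in\mcalB_T(D)$ and tangent directions $\vt\in\Tan_T(D)$, an effective chip-firing move from $D$ along $\vt$ fires chips located on $\partial B$ into $B^c$. Saying that a chip at $q$ moves during this firing is exactly saying $q\in\partial B$ for the base $B$ corresponding to $\vt$, since the slopes of the associated primitive rational function are nonzero precisely at boundary points of $\Gmin$. Therefore the existence of a tangent direction $\vt$ along which a chip at $q$ moves is equivalent to $q$ being a boundary point of some $B\in\mcalB_T(D)$, giving (2)$\Leftrightarrow$(3).

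The main obstacle I anticipate is the fine point in the (1)$\Leftrightarrow$(2) step: distinguishing boundary from interior membership in the bases and ensuring that the intersection $\bigcap_{B\in\mcalB_T(D)}B$ restricted to $\supp(D)$ coincides exactly with the set of points that are boundary points of \emph{some} base. One must rule out the degenerate possibility that $q\in\supp(D)$ lies in every base only as an interior point of each, which for a dominant tropical tree should be impossible because such a $q$ would contribute a nontrivial component of $\Gamma\setminus\supp(D)$ to the preimage and violate finiteness (Proposition~\ref{P:DomTreeCrit}(3)); making this contradiction precise, and handling the case where $D$ is an extremal with only a single base, is where the real care is needed.
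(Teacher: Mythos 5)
Your overall skeleton matches the paper's: (2)$\Leftrightarrow$(3) via the correspondence in Remark~\ref{R:TropTree}, and (1)$\Leftrightarrow$(2) reduced, via Lemma~\ref{L:TropTreeSurj} and Proposition~\ref{P:DomTreeCrit}, to the identity $\Red_T^{-1}(D)=\bigcap_{B\in\mcalB_T(D)}B=\bigcup_{B\in\mcalB_T(D)}\partial B\subseteq\supp(D)$, with a finiteness contradiction doing the real work. But the direction (2)$\Rightarrow$(1) has a genuine gap. To get from ``$q\in\partial B$ for \emph{some} base $B$'' to ``$q\in B'$ for \emph{every} base $B'$'' you must show $\partial B\subseteq B'$ for all $B'\neq B$, and your justification for this rests on the claim that $B^c$ is a union of connected components of $\Gamma\setminus\supp(D)$. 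That claim is false in general: Remark~\ref{R:TropTree} only says $B^c$ \emph{contains} such a union as a dense subset, and $B^c$ can perfectly well contain points of $\supp(D)$ (chips of $D$ sitting strictly outside the base, which the firing does not move). Worse, if the claim were true it would force every point of $\supp(D)$ into every base, proving far more than the proposition asserts. The missing ingredient is Proposition~\ref{P:TropTreeCrit}: since $T$ is a tropical tree, any two distinct bases satisfy $B\bigcup B'=\Gamma$, hence $B^c\subseteq B'$, and since $B'$ is closed, $\partial B\subseteq \cl(B^c)\subseteq B'$. This is the step the paper uses and that your argument never supplies.

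The other direction, (1)$\Rightarrow$(2), you essentially have right: if $q\in\bigcap_B B$ were not a boundary point of any base, an entire connected neighborhood of $q$ would lie in every base and hence in $\Red_T^{-1}(D)$, contradicting the finiteness of the preimage guaranteed by dominance. Just note that the relevant complement is $\Gamma\setminus\bigcup_{B\in\mcalB_T(D)}\partial B$ (a cofinite open set), not $\Gamma\setminus\supp(D)$ as you wrote --- the point $q$ lies in $\supp(D)$, so it is not in any component of the latter. With the $B\bigcup B'=\Gamma$ ingredient added and this set corrected, your proof becomes the paper's.
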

\begin{proof}
The equivalence of (2) and (3) follows from the one-to-one correspondence of $\mcalB_T(D)$ and $\Tan_T(D)$ (Remark~\ref{R:TropTree}) and the fact that an effective chip-firing move along  $\vt$ moves chips on the boundary of the base of that chip-firing move. It remains to prove that $\Red^{-1}(D)=\bigcup_{B\in\mcalB_T(D)}\partial B\subseteq \supp(D)$ where $\partial B$ is the set of boundary points of $B$ in $\Gamma$.

By Lemma~\ref{L:TropTreeSurj}, we have $\Red_T^{-1}(D)=\bigcap_{B\in\mcalB_T(D)}B$. Then $\Red_T^{-1}(D) \subseteq \supp(D)$ follows from Proposition~\ref{P:DomTreeCrit} directly. Note that this also means that $\Red_T^{-1}(D)$ is finite. Now by Proposition~\ref{P:TropTreeCrit}, for each $B\in \mcalB_T(D)$, we must have $B^c\subseteq B'$ for all $B'\in \mcalB_T(D)$ such that $B'\neq B$. This actually means that $\partial B\subseteq B'$ since $B'$ is closed in $\Gamma$. Therefore, $\bigcup_{B\in\mcalB_T(D)}\partial B\subseteq \bigcap_{B\in\mcalB_T(D)}B=\Red_T^{-1}(D)$. Now let $A=\bigcup_{B\in\mcalB_T(D)}\partial B$ which is a finite set and suppose that there exists $q\in \Red_T^{-1}(D)\setminus A$.  Let $C$ be the connected component of $\Gamma\setminus A$ which contains $q$. Then we must have $C\subseteq \Red_T^{-1}(D)$ since $q\in C\subseteq B$ for all $B\in\mcalB_T(D)$. This contradicts the fact that $\Red_T^{-1}(D)$ is finite.
\end{proof}

\begin{corollary}
Dominant tropical trees are maximal. 
\end{corollary}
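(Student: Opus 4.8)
The plan is to prove the stronger assertion that any tropical tree $T'$ containing the dominant tropical tree $T$ already satisfies $T'\subseteq T$, which together with $T\subseteq T'$ gives $T'=T$. Since $\Red_{T'}$ is surjective onto $T'$ by Lemma~\ref{L:TropTreeSurj}, it suffices to show $\Red_{T'}(q)\in T$ for every $q\in\Gamma$. First I would fix $q$, set $E=\Red_{T'}(q)$ and $D_0=\pi_T(E)$, and use Proposition~\ref{P:SeqRedjDiv} (applied with $T\subseteq T'$) to identify $D_0=\pi_T(\Red_{T'}(q))=\Red_T(q)$. The goal then reduces to showing $E=D_0$, and I would argue by contradiction, assuming $E\neq D_0$.

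The second step extracts compatible chip-firing data at $D_0$ from the two reduced divisor maps. Because $T$ is a \emph{dominant} tropical tree and $D_0=\Red_T(q)$, Proposition~\ref{P:PreImagRed} yields $q\in\supp(D_0)$ together with a tangent direction $\vt\in\Tan_T(D_0)\subseteq\Tan_{T'}(D_0)$ whose effective chip-firing move, with base $B_{\vt}\in\mcalB_T(D_0)\subseteq\mcalB_{T'}(D_0)$, moves a chip sitting at $q$; in particular $q\in\partial B_{\vt}$ and some tangent direction of $\Gamma$ at $q$ points into $B_{\vt}^c$. On the other side, the assumption $E\neq D_0$ produces a nontrivial chip-firing move from $D_0$ toward $E$ inside $T'$, in a direction $\vt_0\in\Tan_{T'}(D_0)$ with base $B_0=\Gmin(f_{E-D_0})$; since $D_0=\pi_T(E)$ is the nearest point of $T$ to $E$ (Theorem~\ref{T:TropProjDiv}), a short initial piece of $[D_0,E]$ cannot lie in $T$, so $\vt_0\notin\Tan_T(D_0)$ and hence $B_0\neq B_{\vt}$. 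Finally, because $E$ is the $q$-reduced divisor of $T'$, Proposition~\ref{P:GeneralRed} gives $q\in\Gmin(f_{D_0-E})=\Gmax(f_{E-D_0})$, which is disjoint from $B_0=\Gmin(f_{E-D_0})$ as $f_{E-D_0}$ is nonconstant; thus $q$ lies in the open set $B_0^c$.

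The contradiction then comes from the local criterion for tropical trees. Applying Proposition~\ref{P:TropTreeCrit} to $T'$ at $D_0$, the two distinct bases $B_{\vt},B_0\in\mcalB_{T'}(D_0)$ must satisfy $B_{\vt}\cup B_0=\Gamma$, i.e. $B_{\vt}^c\cap B_0^c=\emptyset$. But $B_0^c$ is an open neighborhood of $q$, while the chip at $q$ moves into $B_{\vt}^c$, so points of $B_{\vt}^c$ lie arbitrarily close to $q$ and therefore inside $B_0^c$; hence $B_{\vt}^c\cap B_0^c\neq\emptyset$, a contradiction, forcing $E=D_0\in T$. The hard part will not be the logical skeleton but the bookkeeping of identifications: I must verify that tangent directions, bases, and effective chip-firing moves are matched consistently when passing from $T$ to the larger tree $T'$, so that the base--direction correspondence of Remark~\ref{R:TropTree} is legitimately invoked in $T'$ and the inclusions $\Tan_T(D_0)\subseteq\Tan_{T'}(D_0)$ and $\mcalB_T(D_0)\subseteq\mcalB_{T'}(D_0)$ hold, and I must phrase ``a chip at $q$ moves into $B_{\vt}^c$'' precisely enough to guarantee a genuine point of $B_{\vt}^c$ in every neighborhood of $q$.
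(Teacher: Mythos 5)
Your proof is correct, but it takes a genuinely different route from the paper's. The paper argues by cardinality: assuming a strictly larger tropical tree $T'\supseteq T$ exists, it picks a connected component $U$ of $T'\setminus T$, shows via Corollary~\ref{C:RedFunc} that $\Red_{T'}^{-1}(U)$ is an infinite subset of $\Gamma$, uses Lemma~\ref{L:ConnCompRed} and Proposition~\ref{P:SeqRedjDiv} to see that every point of $\Red_{T'}^{-1}(U)$ has the same $\Red_T$-image $D_0$ (the attaching point of $\cl(U)$), and then contradicts Proposition~\ref{P:DomTreeCrit}, which forces the infinite set $\Red_{T'}^{-1}(U)$ into the finite set $\supp(D_0)$. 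Your proof instead works locally at a single point $q$ where $E=\Red_{T'}(q)\neq D_0=\Red_T(q)$: you play the boundary characterization of Proposition~\ref{P:PreImagRed} ($q\in\partial B_{\vt}$ for some base $B_{\vt}\in\mcalB_T(D_0)$, so $B_{\vt}^c$ accumulates at $q$) against the local tree criterion of Proposition~\ref{P:TropTreeCrit} applied in $T'$ ($B_{\vt}\bigcup B_0=\Gamma$ for the distinct base $B_0=\Gmin(f_{E-D_0})$) and the $q$-reducedness of $E$ in $T'$ (which puts $q\in\Gmin(f_{D_0-E})=\Gmax(f_{E-D_0})$, hence in the open set $B_0^c$). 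Both arguments are sound; yours avoids the infinitude argument entirely and exposes the precise local incompatibility, at the cost of the identification bookkeeping you already flag. The one step worth writing out carefully is that $\vt_0\neq\vt$ in $\Tan_{T'}(D_0)$, equivalently $B_0\neq B_{\vt}$: since tangent directions in the tree $T'$ correspond to connected components of $T'\setminus\{D_0\}$, you should note that if some $F\in T\setminus\{D_0\}$ lay in the same component as $E$, then the arcs $[D_0,F]\subseteq T$ and $[D_0,E]$ would share a nontrivial initial segment, putting points of $[D_0,E]$ strictly closer to $E$ than $D_0$ inside $T$ and contradicting $D_0=\pi_T(E)$ (Theorem~\ref{T:TropProjDiv}); with that observation the component of $E$ in $T'\setminus\{D_0\}$ is disjoint from $T$ and your base--direction correspondence goes through.
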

\begin{proof}
Consider a dominant tropical tree $T$. Then by Lemma~\ref{L:TropTreeSurj}, we know that the reduced divisor map $\Red_T:\Gamma\to T$ is surjective. If $T$ is not maximal, then there exists a larger tropical tree $T'\supseteq T$ such that $T'\setminus T$ is nonempty. Consider a connected component $U$ of $T'\setminus T$. First we note that $\Red_{T'}^{-1}(U)$ must be an infinite subset of $\Gamma$ (this can be shown in various ways, for example as a simple consequence of Corollary~\ref{C:RedFunc} by considering a tropical segment contained in $U$). By Lemma~\ref{L:ConnCompRed} and Proposition~\ref{P:SeqRedjDiv}, this means that for all $p\in \Red_{T'}^{-1}(U)$,  $\Red_T(p)=\pi_T(\Red_{T'}(p))=D_0$ with $D_0$ being the attaching point between $T'$ and $\cl(U)$. However, by Proposition~\ref{P:DomTreeCrit}, it follows that $\Red_{T'}^{-1}(U)\subseteq \supp(D_0)$ which is impossible since $\Red_{T'}^{-1}(U)$ is an infinite set. Therefore, $T$ must be maximal. 
\end{proof}

We have the following characterization of the reduced divisor map to a dominant tropical tree as a whole. 

\begin{proposition} \label{P:CritRedDivMap}
Let $\omega$ be a continuous map from $\Gamma$ to a complete linear system. Let $T$ be the image of $\omega$. Let $T_p:=\{D\in T\mid p\in \supp(D)\}$. The following are equivalent.
\begin{enumerate}
\item $T$ is a dominant tropical tree and $\omega$ is the reduced divisor map to $T$.
\item $T$ is a linear system and for each $p\in\Gamma$, we have $\omega(p)\in T_p$. 
\item $T$ is a linear system and there exists a dense subset $\Gamma_0$ of $\Gamma$ such that for each $p\in\Gamma_0$, $T_p$ is exactly the singleton $\{\omega(p)\}$. 
\end{enumerate}
\end{proposition}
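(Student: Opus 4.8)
<br>

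The plan is to prove the equivalence of the three conditions in Proposition~\ref{P:CritRedDivMap} by establishing the cyclic chain $(1)\Rightarrow(2)\Rightarrow(3)\Rightarrow(1)$, leaning heavily on the structural results already proved for dominant tropical trees. For $(1)\Rightarrow(2)$, suppose $T$ is a dominant tropical tree and $\omega=\Red_T$. Then $T$ is a linear system by definition, and for each $p\in\Gamma$, Proposition~\ref{P:DomTreeCrit}(2) gives $p\in\supp(\Red_T(p))=\supp(\omega(p))$, which is exactly the statement $\omega(p)\in T_p$.

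For $(2)\Rightarrow(3)$, the strategy is to produce the dense set $\Gamma_0$ from Remark~\ref{R:tauq}. The key observation is that the condition $T_p=\{\omega(p)\}$ being a singleton is, by the case analysis in Remark~\ref{R:tauq}, essentially the statement that $\tau_p$ is trivial together with $p\in\supp(\omega(p))$. However, there is a subtlety: condition~(2) does not yet assume $T$ is a tropical tree, so I cannot directly invoke the function $\tau_q$ (which was defined only for tropical trees). The plan is to first bootstrap enough structure. Since $\omega$ is continuous with image $T$ and $T$ is a linear system (hence a tropical polytope), I would use condition~(2) to show $\omega$ must essentially be the reduced divisor map restricted appropriately, then extract $\Gamma_0$ as the complement of the finitely many ``bad'' points where $T_p$ fails to be a singleton. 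The finiteness of such bad points mirrors the argument at the end of Remark~\ref{R:tauq}: if there were infinitely many points $p$ with $T_p$ strictly larger than $\{\omega(p)\}$, then some extremal of $T$ would appear in $T_p$ for infinitely many $p$, forcing that extremal to have infinite support, a contradiction.

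The hard part will be $(3)\Rightarrow(1)$, which is where the real content lies: from the weak hypothesis that $T$ is merely a linear system with $T_p=\{\omega(p)\}$ on a dense set, I must recover both that $T$ is a dominant tropical tree and that $\omega=\Red_T$. The plan here is as follows. First I would show $\omega=\Red_T$ on $\Gamma_0$: for $p\in\Gamma_0$, since $T_p=\{\omega(p)\}$ and any $D\in T$ with $p\in\supp(D)$ lies in $T_p$, the reduced divisor $\Red_T(p)=\pi_T(d\cdot(p))$ must coincide with $\omega(p)$ because Corollary~\ref{C:RedVal} forces $\Red_T(p)$ to have maximal value at $p$, hence $p\in\supp(\Red_T(p))$, hence $\Red_T(p)\in T_p=\{\omega(p)\}$. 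By continuity of both $\omega$ and $\Red_T$ and density of $\Gamma_0$, the equality $\omega=\Red_T$ extends to all of $\Gamma$. Next, I would establish that $T$ is a tropical tree using the local criterion Proposition~\ref{P:TropTreeCrit}: for each $D\in T$ and distinct bases $B_i,B_j\in\mcalB_T(D)$ of effective chip-firing moves, I must verify $B_i\cup B_j=\Gamma$. The leverage comes from $T=\omega(\Gamma)=\Red_T(\Gamma)$ being the image of a path-connected space under a continuous map, so every divisor of $T$ is in the image of the reduced divisor map; combined with the singleton condition on the dense set, this should rule out the branching behaviour that would violate the tropical tree condition. Finally, dominance follows from Proposition~\ref{P:DomTreeCrit}: since $T_p=\{\omega(p)\}$ is nonempty on $\Gamma_0$, we have $p\in\supp(\omega(p))=\supp(\Red_T(p))$ for dense $p$, and I would upgrade this to all $p\in\Gamma$ using that $\supp$ behaves well along tropical paths, yielding condition~(2) of Proposition~\ref{P:DomTreeCrit}, which is equivalent to dominance.

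I anticipate the main obstacle is the tropical tree verification in $(3)\Rightarrow(1)$, precisely because condition~(3) gives information only generically (on $\Gamma_0$) and only about singleton fibers of $T_p$, whereas the tropical tree condition is a statement about \emph{every} divisor $D\in T$ and all pairs of its chip-firing bases. Bridging this gap requires carefully transferring the generic singleton information across the continuous surjection $\Red_T$ and controlling what happens at the measure-zero set of exceptional points; I expect to need Lemma~\ref{L:TropTreeSurj} and the fiber description $\Red_T^{-1}(D)=\bigcap_{B\in\mcalB_T(D)}B$ together with a connectedness argument to exclude the possibility that two chip-firing bases fail to cover $\Gamma$.
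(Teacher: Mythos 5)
Your overall architecture $(1)\Rightarrow(2)\Rightarrow(3)\Rightarrow(1)$ matches the paper, and two of the three legs are sound: $(1)\Rightarrow(2)$ via Proposition~\ref{P:DomTreeCrit} is exactly the paper's argument, and your $(3)\Rightarrow(1)$ skeleton (identify $\omega$ with $\Red_T$ on $\Gamma_0$ via Corollary~\ref{C:RedVal}, extend by continuity and density, then get dominance from $p\in\supp(\omega(p))$ on a dense set plus continuity) is essentially what the paper does.

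The genuine gap is in $(2)\Rightarrow(3)$, which is the technical heart of the proposition. Your plan is to ``bootstrap'' by first showing that $\omega$ is essentially the reduced divisor map and that the bad set $\{p\mid T_p\supsetneq\{\omega(p)\}\}$ is finite, quoting the extremal-support argument from Remark~\ref{R:tauq}. But that remark, the function $\tau_q$, and the claim that a non-singleton $T_p$ must contain an extremal of $T$ all rest on Lemma~\ref{L:RedComp}, i.e.\ on already knowing that $T$ is a tropical tree \emph{and} that $\omega(p)=\Red_T(p)$ — which is precisely statement (1). Condition (2) alone gives you only $p\in\supp(\omega(p))$; since $T_p$ may contain several divisors, this does not identify $\omega(p)$ with $\Red_T(p)$ (Corollary~\ref{C:RedVal} tells you $\Red_T(p)\in T_p$, not that $T_p$ is a singleton), so the bootstrap is circular and you give no independent argument for it. The paper avoids this by working directly with $\omega$: it first proves that each $T_p$ is tropically convex (by computing the value of $P_{D_2-D_1}(t)$ at $p$ along a tropical path between two elements of $T_p$), and then, assuming a whole neighborhood $N_q$ of bad points, builds a nested sequence of tropical segments $[D_1,F_1]\supseteq[D_2,F_2]\supseteq\cdots$ with $[D_i,F_i]\subseteq T_{q_i}$ for distinct points $q_i\in N_q$, forcing $\{q_1,\dots,q_i\}\subseteq\supp(D_i)$ for all $i$ and contradicting finiteness of supports. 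Note also that the paper only establishes that the bad set has empty interior, not that it is finite; your stronger finiteness claim is true a posteriori but is not available at this stage of the proof. To repair your argument you would need to supply the tropical convexity of $T_p$ and some version of the nested-segment (or another) construction that derives a contradiction from an open set of bad points without presupposing $\omega=\Red_T$.
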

\begin{proof}
(1)$\Rightarrow$(2): This follows from Proposition~\ref{P:DomTreeCrit} directly. 

(2)$\Rightarrow$(3): Since the image $T$ of the continuous map $\omega$  is a linear system, it can be  verified easily that $T$ must be a tropical tree (using  Definition~\ref{D:TropTree} directly). Let $\Gamma_0:=\{p\in\Gamma\mid T_p=\{\omega(p)\}\}$. Then we want to show that $\Gamma_0$ is dense in $\Gamma$. Suppose for contradiction that $\Gamma_0$ is not dense in $\Gamma$, which means that there exists $q\in \Gamma\setminus \Gamma_0$ and a small enough closed neighborhood $N_q=\{p\in\Gamma\mid \dist(p,q)\leq\epsilon\}\subseteq \Gamma\setminus \Gamma_0$ with $\epsilon>0$. Here we suppose $\epsilon$ is small enough that $N_q$ is star-shaped with center $q$. Then $\omega(N_q)=\bigcup_{i=1}^k[\omega(q),E_i]$ where $E_i$'s are some divisors close enough to $\omega(q)$ in $T$. Note that $\omega(N_p)$ is tropically convex by this construction and cannot be the singleton $\{\omega(q)\}$. Otherwise the support of $\omega(q)$ is an infinite set which is impossible. 

First we claim that for each $p\in \Gamma$,  $T_p$ is tropically convex. For each $D_1,D_2\in T_p$, we have $D_1(p),D_2(p)\geq 1$. Note that the divisors on the tropical path from $D_1$ to $D_2$ are $P_{D_2-D_1}(t)=\divf([\min(t,\underline{f_{D_2-D_1}})])+D_1$ for $t\in[0,\rho(D_1,D_2)]$. Then the value of $P_{D_2-D_1}(t)$ at $p$ is either $D_1(p)$ or $D_2(p)$ for $t\in[0,\rho(D_1,D_2)]$, which means that $P_{D_2-D_1}(t) \in T_p$. 

Now let us construct a sequence  of divisors in $T$. We choose a divisor $D_1$ from the interior of the tropical segment $[\omega(q),E_1]$ (with $\omega(q)\neq E_1$). Then there must be a point $q_1\in N_q$ such that $D_1=\omega(q_1)$. This means that $T_{q_1}$ is not a singleton. Since $T_{q_1}$ is tropically convex, the set $T_{q_1}\bigcap [\omega(q),E_1]$ is a tropical segment  which contains $D_1$ but is not the singleton $\{D_1\}$. Let $[D_1,F_1]$ with $D_1\neq F_1$ be a tropical segment contained in $T_{q_1}\bigcap [\omega(q),E_1]$. Choose a divisor $D_2$ from the interior of $[D_1,F_1]$. Again, there is a point $q_2\in N_q$ such that $D_2=\omega(q_2)$. Clearly $q_1\neq q_2$ since $D_1\neq D_2$. But we note that $\{q_1,q_2\}\subseteq \supp(D_2)$ since $D_2\in T_{q_1}$ and $D_2=\omega(q_2)$. Moreover, we can keep doing this process and construct a sequence of divisors $D_1,D_2,\cdots$:
\begin{enumerate}
\item  Suppose we have already derived the tropical segment $[D_i,F_i]\in T_{q_i}\bigcap [D_{i-1},F_{i-1}]$. 
\item Choose a divisor $D_{i+1}$ from the interior of $[D_i,F_i]$ and find a point $q_{i+1}\in N_q$ such that $D_{i+1}=\omega(q_{i+1})$. 
\item The set  $T_{q_{i+1}}\bigcap [D_i,F_i]$ is a tropical segment  which contains $D_{i+1}$ but is not the singleton $\{D_{i+1}\}$. Let $[D_{i+1},F_{i+1}]$ with $D_{i+1}\neq F_{i+1}$ be a tropical segment contained in $T_{q_{i+1}}\bigcap [D_i,F_i]$. 
\item Let $i\leftarrow i+1$ and go to (1). 
\end{enumerate}

In this way, we derive a nested sequence of tropical segments $[D_1,F_1]\supseteq[D_2,F_2]\supseteq \cdots$ where $[D_i,F_i]\subseteq T_{q_i}$. Note that all $q_i$'s are distinct since $D_i$'s are distinct. Then we conclude that $\{q_1,\cdots,q_i\}\subseteq \supp(D_i)$. But this is impossible since $\supp(D_i)$ must be a finite set. Therefore, $\Gamma_0$ must be a dense subset of $\Gamma$. 

(3)$\Rightarrow$(1): For $p\in\Gamma_0$, we must have $\omega(p)=\Red_T(p)$ since $\omega(p)$ is the only divisor in $T$ with $p$ belonging to $\supp(\omega(p))$ and the value of $\Red_T(p)$ at the point $p$ is the largest among the values of all divisors in $T$ at the point $p$ (Corollary~\ref{C:RedVal}). Now since $\omega$ and $\Red_T$ are both continuous maps which coincide restricted to the dense subset $\Gamma_0$ of $\Gamma$ and , we conclude that $\omega=\Red_T$. By the continuity of $\Red_T$ and the equivalence of metric topology and symmetric product topology of $T$ (Appendix~\ref{S:EquiTop}), $p\in\supp(\Red_T)$ is true for all $p\in \Gamma$, which means that $T$ is a dominant tropical tree by  Proposition~\ref{P:DomTreeCrit}. 
\end{proof}

\begin{example}
Let us reconsider Example~\ref{E:TropTree} about the tropical tree $T=\tconv(\{D_{12},D_{23},D_{13}\})$ shown in Figure~\ref{F:LinearSys}. We note that $T$ is a dominant tropical tree (which can be verified in several ways, e.g., using Definition~\ref{D:TropTree} directly, using Proposition~\ref{P:DomTreeCrit} or Proposition~\ref{P:CritRedDivMap}). Now specifically let us consider the divisor $D_0\in T$. In Example~\ref{E:TropTree},  we have verified that $D_0$ satisfies the condition in the local criterion for tropical trees (Proposition~\ref{P:TropTreeCrit}). Here we will verify that $D_0$ satisfies the condition in the  local criterion for dominant tropical trees  (Proposition~\ref{P:DomTreeCrit}(4)). As shown in Example~\ref{E:TropTree}, there are three possible effective chip-firing directions from $D_0$ allowed in $T$: along $D_0D_{23}$ with base $B_1$ being the segment $v_2v_1v_3$, along $D_0D_{13}$ with the base $B_2$ being the segment $v_1v_2v_3$, and along $D_0D_{12}$ with the base $B_3$ being the segment $v_1v_3v_2$. Therefore, the local condition $\supp(D)\bigcup(\bigcup_{B\in\mcalB_T(D)} B^c)=\Gamma$ in Proposition~\ref{P:DomTreeCrit}(4) is satisfied. Furthermore, by Proposition~\ref{P:PreImagRed}, we get $\Red_T^{-1}(D_0)=\{v_1,v_2,v_3\}$. 
\end{example}

\subsection{Harmonic Morphisms to Trees}
For a metric graph $\Gamma$ and a point $p\in\Gamma$, we denote the set of all outgoing tangent directions from $p$ in $\Gamma$ by $\Tan_\Gamma(p)$. 

\begin{definition} \label{D:Harmonic}
Let $\Gamma$ and $\Gamma'$ be two metric graphs. 
\begin{enumerate}
\item A \emph{pseudo-harmonic morphism} $\phi$ from  $\Gamma$ to $\Gamma'$  is a continuous finite surjective piecewise-linear map with nonzero integral slopes. (Here we say $\phi$ is finite if $\phi^{-1}(y)$ is finite for all $y\in\Gamma'$.) In particular, for all points $p\in\Gamma$ and tangent directions $\vt\in \Tan_\Gamma(p)$, the \emph{expansion factor} $d_\vt(\phi)$ along $\vt$ is the absolute value of the integral slope of $\phi$ along $\vt$, i.e., the ratio of the distance between $\phi(x)$ and $\phi(y)$ in $\Gamma'$ over the distance between $x$ and $y$ in $\Gamma$ where $x$ and $y$ are points close enough to $p$ in the direction $\vt$.
\item  We say a pseudo-harmonic morphism $\phi:\Gamma\to\Gamma'$ is \emph{harmonic} at a point $p\in\Gamma$ if $\phi$  satisfies the following \emph{balancing} condition:
 for any tangent direction $\vt' \in \Tan_{\Gamma'}(\phi(p))$, the  sum of the expansion factors $d_\vt(\phi)$ over all tangent directions $\vt$ in $\Tan_\Gamma(p)$ that map to $\vt'$,  i.e., the integer $$\sum_{\vt \in \Tan_\Gamma(p),~\vt \mapsto \vt'}d_\vt(\phi),$$ is independent of $\vt'$ and is called the \emph{degree} of $\phi$ at $p$, denoted by $\deg_p(\phi)$.
\item A pseudo-harmonic morphism $\phi:\Gamma\to\Gamma'$ is a \emph{harmonic morphism} if $\phi$ is harmonic at all $p\in\Gamma$.
\item For a harmonic morphism $\phi:\Gamma\to\Gamma'$, we define the \emph{degree} of $\phi$ to be $$\deg(\phi):=\sum_{p \in \phi^{-1} (q)}\deg_p(\phi),$$ which can be shown to be independent of $q\in \Gamma'$. 
\end{enumerate}
\end{definition}

\begin{remark}
\begin{enumerate}
\item Harmonic morphisms and pseudo-harminic morphisms can also be defined to metrized complexes (metric graphs with certain points assoicated with algebraic curves) \cite{ABBR15,ABBR15_2,LM18}.
\item Recall that the genus of a metric graph $\Gamma$ is the first Betti number of $\Gamma$. For example, a metric tree has genus $0$ and a metric circle has genus $1$. We say that a metric graph $\Gamma^{\Mod}$ is a  modification of $\Gamma$ if $\Gamma$ is isometric to a subgraph of $\Gamma^{\Mod}$ and the genus of $\Gamma^{\Mod}$ is the same as the genus of $\Gamma$. This actually means that $\Gamma^{\Mod}$ can be realized by attaching metric trees to $\Gamma$ as extra ``branches''. For simplicity, we will treat $\Gamma$ just as a subgraph of $\Gamma^{\Mod}$.
\item For a modification $\Gamma^{\Mod}$ of $\Gamma$, there is a natural retraction map $\gamma:\Gamma^{\Mod}\to\Gamma$. By abuse of notation, for each divisor $D=\sum_{p\in\Gamma^{\Mod}}m_p\cdot(p)$ on $\Gamma^{\Mod}$, we also write $\gamma(D)=\sum_{p\in\Gamma^{\Mod}}m_p\cdot(\gamma(p))$ which is a divisor on $\Gamma$ called the retraction $D$. Note that for each $D_1$  and $D_2$ in $\Div(\Gamma^{\Mod})$, $D_1$ and $D_2$ are linearly equivalent as divisors on $\Gamma^{\Mod}$  if and only if $\gamma(D_1)$ and $\gamma(D_2)$ are linearly equivalent as divisors on $\Gamma$. 
\item Consider a harmonic morphism $\phi:\Gamma\to\Gamma'$. Then any divisor $D$ on $\Gamma'$ can be naturally pulled back to a divisor $\phi^*(D)=\sum_{p\in\Gamma}\deg_p(\phi) D(\phi(p))\cdot(p)$ on $\Gamma$. It can be easily verified that $\deg(\phi^*(D))=\deg(\phi)\deg(D)$. 

\end{enumerate}

\end{remark}

Now let us focus on pseudo-harmonic morphisms and harmonic morphisms to metric trees. 
The following lemma says that there is not much difference between pseudo-harmonic morphisms and harmonic morphisms to metric trees. 

\begin{lemma} \label{L:Pseudo}
For a pseudo-harmonic morphism $\phi:\Gamma\to\ T$ where $T$ is a metric tree, there exists a harmonic morphism $\phi^{\Mod}:\Gamma^{\Mod}\to\ T$ where $\Gamma^{\Mod}$ is a modification of $\Gamma$ such that the restriction of $\phi^{\Mod}$ to $\Gamma$ is exactly $\phi$. 
\end{lemma}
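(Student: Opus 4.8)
The plan is to realize $\phi^{\Mod}$ by grafting finitely many tree branches onto $\Gamma$, one bundle of branches at each point where the balancing condition of Definition~\ref{D:Harmonic}(2) fails, and mapping each new branch isometrically onto a subtree of $T$. The first step is to show that, after refining the piecewise-linear structure, $\phi$ can only fail to be harmonic on a finite vertex set $V\subseteq\Gamma$. Choose a finite $V$ containing all branch points of $\Gamma$, all points where the slope of $\phi$ changes, and the preimages under $\phi$ of the branch points and leaves of $T$; the last set is finite because $\phi$ is finite and piecewise-linear while $T$ has finitely many branch points and leaves. On each open edge $e$ of the resulting model, $\Gamma$ is $2$-valent, $\phi|_e$ is affine with constant nonzero integral slope $s$, and $\phi(e)$ lies in the interior of a single edge of $T$. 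For an interior point $p\in e$ with $q=\phi(p)$, the two tangent directions at $p$ map to the two distinct tangent directions at the $2$-valent point $q$, each with expansion factor $|s|$; hence $\sum_{\vt\mapsto\vt'}d_\vt(\phi)=|s|$ for both $\vt'\in\Tan_T(q)$, so $\phi$ is automatically harmonic at $p$ and only the points of $V$ remain.

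Next I would carry out the local correction at each $p\in V$. Write $q=\phi(p)$ and, for each $\vt'\in\Tan_T(q)$, set $m(\vt')=\sum_{\vt\in\Tan_\Gamma(p),\,\vt\mapsto\vt'}d_\vt(\phi)$ and $D_p=\max_{\vt'\in\Tan_T(q)}m(\vt')$; note that $D_p$ and each $m(\vt')$ are nonnegative integers. For each $\vt'$ with $m(\vt')<D_p$, let $T_{q,\vt'}$ denote the subtree of $T$ spanned by $q$ together with the connected component of $T\setminus\{q\}$ in the direction $\vt'$, so that $q$ is a leaf of $T_{q,\vt'}$. I attach $D_p-m(\vt')$ disjoint copies of $T_{q,\vt'}$ to $\Gamma$ by gluing the copy of $q$ to $p$, and extend $\phi$ over each copy by the identifying isometry onto $T_{q,\vt'}$. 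Each grafted copy is a finite tree, so the grafting does not change the first Betti number; doing this at every $p\in V$ yields a modification $\Gamma^{\Mod}$ of $\Gamma$ and a continuous finite surjective piecewise-linear extension $\phi^{\Mod}$ with $\phi^{\Mod}|_\Gamma=\phi$.

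It then remains to verify that $\phi^{\Mod}$ is harmonic everywhere. At a point interior to a grafted copy, $\phi^{\Mod}$ is a local isometry onto $T_{q,\vt'}\subseteq T$, which is bijective on tangent directions with all expansion factors equal to $1$, so balancing holds trivially. At $p\in V$, each grafted copy contributes exactly one new tangent direction pointing into $\vt'$ with expansion factor $1$, so the corrected value of $m(\vt')$ becomes $m(\vt')+(D_p-m(\vt'))=D_p$ for every deficient $\vt'$ while the maximal directions are unchanged; hence $\sum_{\vt\mapsto\vt'}d_\vt(\phi^{\Mod})=D_p$ for all $\vt'\in\Tan_T(q)$, and $\phi^{\Mod}$ is harmonic at $p$ with $\deg_p(\phi^{\Mod})=D_p$. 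Together with the automatic harmonicity on open edges, this makes $\phi^{\Mod}$ a harmonic morphism.

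I expect the main obstacle to be the correct choice of \emph{what} to graft. A naive fix appending a single segment of the required slope in the direction $\vt'$ fails precisely when $T$ branches further along $\vt'$: at the preimage of such a branch point the segment supplies only two tangent directions while the image requires at least three, reintroducing a balancing failure. Grafting an isometric copy of the \emph{entire} subtree $T_{q,\vt'}$ is what makes balancing self-propagate down the new branch, and using $D_p-m(\vt')$ copies of expansion factor $1$ rather than one branch of expansion factor $D_p-m(\vt')$ keeps every new slope a nonzero integer while requiring no further correction. The remaining care is bookkeeping: deducing finiteness of $V$ from finiteness of the fibers of $\phi$ and of the branch and leaf set of $T$, and confirming that only trees are attached, so that $\Gamma^{\Mod}$ is genuinely a modification in the genus-preserving sense.
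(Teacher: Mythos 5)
Your proposal is correct and follows essentially the same route as the paper: at each unbalanced point $p$ you attach $d_p-d_{p,\vt'}$ isometric copies of the component of $T\setminus\{\phi(p)\}$ in the deficient direction $\vt'$ and extend $\phi$ by the identification, exactly as in the paper's proof. The only difference is that you spell out the finiteness of the set of unbalanced points and the harmonicity check on the grafted branches, which the paper leaves implicit.
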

\begin{proof}
For a point $p\in \Gamma$, if the balancing condition in Definition~\ref{D:Harmonic}(2) is not satisfied, i.e., the integer $d_{p,\vt'}=\sum_{\vt \in \Tan_\Gamma(p),~\vt \mapsto \vt'}d_\vt(\phi)$ is not independent of  $\vt' \in \Tan_{\Gamma'}(\phi(p))$, then we ``modify'' $\Gamma$ as follows: 
\begin{enumerate}
\item Let $d_p:=\max_{\vt' \in \Tan_{\Gamma'}(\phi(p))}(d_{p,\vt'})$.
\item Let $\mcalU$ be the set of connected components of $T\setminus\{\phi(p)\}$. Note that elements in $\mcalU$ is in one-to-one correspondence to elements in $\Tan_\Gamma(p)$. 
\item For each $\vt' \in \Tan_{\Gamma'}(\phi(p))$, if $d_{p,\vt'}<d_p$, then attach $d_p-d_{p,\vt'}$ copies of the connected component $U\in \mcalU$ corresponding to $\vt'$ to $\Gamma$ at point $p$ as extra branches. 
\item The natural identification of each extra branch with the connected component below is incorporated into the morphism from the modification of $\Gamma$ to $T$. 
\end{enumerate}
After these operations at $p$, we derive a new morphism which is balanced at $p$ of degree $d_p$. Note that there can only be finitely many unbalanced points. Applying the above procedure to all of them, we can derive a new metric graph $\Gamma^{\Mod}$  which is a modification of $\Gamma$ and a harmonic morphism $\phi^{\Mod}:\Gamma^{\Mod}\to\ T$ such that the restriction of $\phi^{\Mod}$ to $\Gamma$ is exactly $\phi$. 
\end{proof}

\begin{remark}
The modification in the above proof is not necessarily unique and can be quite flexible. For example, in Step (3), we may instead just attach a single branch to $\Gamma$ at $p$ which is a scaling of $U$ by a factor of $1/(d_p-d_{p,\vt'})$. Actually in Proposition~6.2 of \cite{LM18}, a pseudo-harmonic morphism of a metrized complex $\mfrakC(\Gamma)$ to a genus-$0$ metrized complex $\mfrakC(T)$ can be extended to a harmonic morphism from a modification of $\mfrakC(\Gamma)$ to $\mfrakC(T)$ where the corresponding modification of the underlying metric graph $\Gamma$ can be subtly adjusted to respect the the finite morphisms between the associated algebraic curves. 
\end{remark}

\begin{remark}
We will call the harmonic morphism $\phi^{\Mod}:\Gamma^{\Mod}\to\ T$ an \emph{extension} of the pseudo-harmonic morphism in Lemma~\ref{L:Pseudo}. Note that depending on different allowable modifications of $\Gamma$, a pseudo-harmonic morphism can have various extensions to harmonic morphisms. 
\end{remark}

Lemma~\ref{L:Pseudo} tells us that pseudo-harmonic morphisms and harmonic morphisms to metric trees are strongly correlated. The following theorem says that pseudo-harmonic morphisms to metric trees are exactly the reduced divisor maps to dominant tropical trees discussed in the previous subsection.

\begin{theorem} \label{T:RedHarm}
\begin{enumerate}
\item For each dominant tropical tree $T\subseteq\DivPlusD(\Gamma)$, if $T$ is treated as a metric tree,  the reduced divisor map $\Red_T:\Gamma\to T$ is a pseudo-harmonic morphism which can be extended to a degree-$d$ harmonic morphism. 

\item For each pseudo-harmonic morphism $\phi:\Gamma\to\ T$ where $T$ is a metric tree which which can be extended to a degree-$d$ harmonic morphism, $T$ can be isometrically embedded into a degree-$d$ complete linear system as a  dominant tropical tree and $\phi$ coincides with the reduced divisor map $\Red_T$.
\end{enumerate}
\end{theorem}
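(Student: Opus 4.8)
\textbf{Proof proposal for Theorem~\ref{T:RedHarm}.}

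The plan is to establish the two directions separately, using the local correspondence between chip-firing data and tangent directions built up in Remark~\ref{R:TropTree} together with the preimage characterization in Proposition~\ref{P:PreImagRed}.

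For part (1), I would start by verifying that $\Red_T$ qualifies as a pseudo-harmonic morphism in the sense of Definition~\ref{D:Harmonic}(1). Continuity of $\Red_T$ is already noted (tropical projections are continuous), surjectivity is Lemma~\ref{L:TropTreeSurj}, and finiteness of fibers is exactly the content of Proposition~\ref{P:DomTreeCrit}(3), which holds because $T$ is dominant. The piecewise-linearity and the computation of the expansion factors is where the chip-firing picture does the work: for a point $p\in\Gamma$ with $D=\Red_T(p)$ and an outgoing tangent direction $\vt\in\Tan_\Gamma(p)$, the image $\Red_T$ moves in $T$ along a tropical path, and by Corollary~\ref{C:RedFunc} the local rate of motion is governed by the slope of the associated function $\underline{f}$. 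Concretely, the expansion factor $d_\vt(\Red_T)$ along $\vt$ should equal the number of chips that flow out of $p$ along $\vt$ under the effective chip-firing move corresponding to the relevant base in $\mcalB_T(D)$, i.e.\ the slope of $f$ along $\vt$. The balancing condition then reduces to the following statement: for each outgoing tangent direction $\vt'\in\Tan_T(D)$ (equivalently each base $B\in\mcalB_T(D)$ by Remark~\ref{R:TropTree}), the total number of chips leaving the finite set $\partial B = \Red_T^{-1}(D)\cap\partial B$ is the same, namely $d=\deg(T)$. This is precisely the conservation of chips across the cut defined by the primitive rational function, since the chip-firing move moves a total of $d$ chips (all of the divisor's mass crosses the cut for a degree-$d$ effective move whose base is a proper subset). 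I would make this quantitative by summing $\sum_{\vt\mapsto\vt'}d_\vt(\Red_T)$ over $\vt\in\Tan_\Gamma(p)$ mapping to $\vt'$ and identifying it with the total outgoing slope across $\partial B$, which is degree-independent of $\vt'$. Extending to a degree-$d$ harmonic morphism is then immediate from Lemma~\ref{L:Pseudo}.

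For part (2), given a pseudo-harmonic morphism $\phi:\Gamma\to T$ extendable to a degree-$d$ harmonic morphism, the natural construction is to define a map $\Gamma\to\DivPlusD(\Gamma)$ by $p\mapsto \phi^*(\phi(p))$ suitably interpreted, or more directly by sending each $x\in T$ to the fiber divisor $\phi^*(x)=\sum_{p\in\phi^{-1}(x)}\deg_p(\phi)\cdot(p)$, which is effective of degree $\deg(\phi)=d$. Linear equivalence of these fiber divisors as $x$ ranges over $T$ follows because $T$ is a tree: any two points of $T$ are joined by a path, and moving $x$ along an edge of $T$ realizes a chip-firing move on $\Gamma$ (the harmonicity/balancing condition guarantees the fibers move by an honest effective chip-firing move, keeping degrees balanced), so all fiber divisors lie in a single complete linear system $|D|$. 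This gives an embedding of $T$ into $|D|$; I would check it is isometric by comparing the metric on $T$ with the tropical metric via the expansion factors, and that the image is a tropical tree using Definition~\ref{D:TropTree}. Dominance follows since every $p\in\Gamma$ lies in the support of its fiber divisor $\phi^*(\phi(p))$ (as $\deg_p(\phi)\geq 1$), so condition (2) of Proposition~\ref{P:DomTreeCrit} holds. Finally, to see $\phi$ coincides with $\Red_T$, I would invoke Proposition~\ref{P:CritRedDivMap}: the image is a linear system and for each $p\in\Gamma$ the fiber divisor $\phi^*(\phi(p))$ lies in $T_p$, which is exactly criterion (2) there, forcing $\phi=\Red_T$.

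The main obstacle I anticipate is part (1)'s balancing computation: translating the combinatorial ``equal number of chips flows across every tangent direction at $D$'' into the precise slope-sum statement of Definition~\ref{D:Harmonic}(2), uniformly in $p$ and correctly handling the finitely many special points where $\tau_q$ is nontrivial (Remark~\ref{R:tauq}) or where $p$ is not a boundary point of the relevant base. Getting the expansion factors to match on the nose requires carefully identifying, via Corollary~\ref{C:RedFunc} and Proposition~\ref{P:PreImagRed}, the slope of the associated function along $\vt$ with $\deg_p(\Red_T)$-weighted contributions, and ensuring the total across each base equals $d$ rather than some proper sub-count. The subtlety is that a single fiber point $p$ may contribute chips along several tangent directions in $\Gamma$ that map to the same $\vt'$, so the degree $\deg_p(\Red_T)$ is itself a sum, and I must confirm this bookkeeping is consistent with the global degree being $d$.
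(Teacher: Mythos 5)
Your part (2) follows essentially the paper's route (fiber divisors of the harmonic extension, retracted to $\Gamma$, linear equivalence by moving $x$ along edges of $T$, dominance from $\deg_p\geq 1$, and Proposition~\ref{P:CritRedDivMap}(2) to identify $\phi$ with $\Red_T$); the only imprecision there is that $\deg_p(\phi)$ is not defined for the pseudo-harmonic $\phi$ itself, so you must pull back along the harmonic extension $\phi^{\Mod}$ and then retract, as the paper does. Your verification that $\Red_T$ is pseudo-harmonic in part (1) also matches the paper (continuity, surjectivity via Lemma~\ref{L:TropTreeSurj}, finiteness via Proposition~\ref{P:DomTreeCrit}, expansion factors read off as slopes of the associated rational functions via Corollary~\ref{C:RedFunc}).

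The genuine gap is the extension step in part (1). You assert that ``extending to a degree-$d$ harmonic morphism is then immediate from Lemma~\ref{L:Pseudo},'' but Lemma~\ref{L:Pseudo} only produces \emph{some} harmonic extension and gives no control over its degree: its construction balances each unbalanced point $p$ at local degree $\max_{\vt'} d_{p,\vt'}$, and the resulting global degree need not be $d$. Relatedly, your attempted balancing argument (``the total number of chips leaving $\partial B$ is the same, namely $d$'') conflates the global degree of the chip-firing move with the pointwise balancing condition of Definition~\ref{D:Harmonic}(2); the latter genuinely fails at points where $\tau_p$ is nonconstant, which is exactly why $\Red_T$ is only pseudo-harmonic. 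The paper closes this by first proving the identity $d_{p,\vt'}=\Red_T(p)(p)-\tau_p(U_{\vt'})$ (using Lemma~\ref{L:RedComp} and Remark~\ref{R:tauq}), which shows harmonicity at $p$ holds iff $\tau_p$ is trivial or constant, and then performing a \emph{tailored} modification: at each of the finitely many $p$ with nontrivial $\tau_p$, attach $\tau_p(U_{\vt'})$ copies of the component $U_{\vt'}$ for each $\vt'\in\Tan_T(\Red_T(p))$. This forces the local degree at every $p\in\Gamma$ to equal $\Red_T(p)(p)$ and hence the global degree to equal $d=\deg(T)$. Your proposal names this as an anticipated obstacle but does not resolve it, and the generic modification of Lemma~\ref{L:Pseudo} will not do the job.
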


\begin{proof}
For (1), let $D_1,\cdots, D_m$ be the extremals of $T$. Note that the rational function $f_{D_i-D_j}$ is a piecewise linear function with integral slopes (possibly zero at certain points) for each $i,j=1,\cdots,m$. Let $\Gamma_{ij}=\Red_T^{-1}([D_i,D_j])$. Note that  $\Red_{[D_i,D_j]}=\pi_{[D_i,D_j]}\circ \Red_T$ by Proposition~\ref{P:SeqRedjDiv} and $\pi_{[D_i,D_j]}$ restricted to $T$ is exactly the natural retraction from $T$ to $[D_i,D_j]$ by Lemma~\ref{L:ConnCompRed}. Hence $\Red_T\mid_{\Gamma_{ij}}=\Red_{[D_i,D_j]}\mid_{\Gamma_{ij}}$. Recall that by   Corollary~\ref{C:RedFunc}, $\Red_{[D_i,D_j]}=P_{D_j-D_i} \circ \underline{f_{D_j-D_i}}$ where the tropical path $P_{D_j-D_i}$ is an isometry from the segment $[0,\rho(D_i,D_j)]$ to the tropical segment $[D_i,D_j]$ and $f_{D_j-D_i}$ is a rational function associated to $D_j-D_i$. 

Consider a point $p\in\Gamma$ and a tangent direction $\vt\in \Tan_\Gamma(p)$. Since $T$  is a dominant tropical tree which means that $\Red_T$ is a continuous finite surjection with $p\in\supp(\Red_T(p))$ for all $p\in\Gamma$ (Proposition~\ref{P:DomTreeCrit}),  there is a tangent direction $\vt' \in \Tan_T(\Red_T(p))$ which is the pushforward of $\vt$ by $\Red_T$. Therefore we can choose a tropical path from $D_i$ to $D_j$ which goes through $\Red_T(p)$ along $\vt'$. Note that $p\in\Gamma_{ij}$ and $\Red_T$ coincides with $\Red_{[D_i,D_j]}=P_{D_j-D_i} \circ \underline{f_{D_j-D_i}}$ over $\Gamma_{ij}$. Letting the expansion factor $d_\vt(\Red_T)$ of $\Red_T$ at $p$ along $\vt$  be the slope of $f_{D_j-D_i}$ at $p$ along $\vt$ (which is always a positive integer), we conclude that $\Red_T$ is a pseudo-harmonic morphism from $\Gamma$ to $T$.  

Now let us  extend $\Red_T$ to a degree-$d$ harmonic morphism. Recall that by Lemma~\ref{L:RedComp} and Remark~\ref{R:tauq}, for each $p\in \Gamma$ and  $U\in \mcalU_T(\Red_T(p))$ where  $\mcalU_T(\Red_T(p))$ is the set of connected components of $T\setminus \Red_T(p)$, all the divisors in $U$ take the same value at $p$ and therefore we can define a function $\tau_p$ on $\mcalU_T(\Red_T(p))$ such that for each $U\in\mcalU_T(\Red_T(p))$,  $\tau_p(U)$ is the value of a divisor on $U$ at $q$. Moreover, we say $\tau_p$ is trivial if $\tau_p(U)=0$ for all $U\in \mcalU_T(\Red_T(p))$, and there are only finitely many $p\in\Gamma$ with nontrivial $\tau_p$. Since $T$ is a dominant tropical tree, the divisor $\Red_T(p)$ takes value at least $1$ at $q$. 

We will get a modification $\Gamma^{\Mod}$ of $\Gamma$ as follows. 
\begin{enumerate}
\item First we note that for each point $p\in\Gamma$ and $\vt' \in \Tan_{\Gamma'}(\phi(p))$,  the integer $d_{p,\vt'}=\sum_{\vt \in \Tan_\Gamma(p),~\vt \mapsto \vt'}d_\vt(\Red_T(p))$ is exactly $\Red_T(p)(p)-\tau_p(U_{\vt'})$ where $\Red_T(p)(p)$ is the value of the $p$-reduced divisor $\Red_T(p)$ at $p$ and $U_{\vt'}$ is the connected component of $T\setminus \Red_T(p)$ corresponding to $\vt'$. An interpretation of this is that the chip-firing move along $\vt'$ consumes $d_{p,\vt'}=\Red_T(p)(p)-\tau_p(U_{\vt'})$ chips at $p$. 
\item The above argument implies that $\Red_T$ is harmonic at $p$ if and only if $\tau_p$ is trivial or a constant function. That is, for each possible effective chip-firing from $\Red_T(p)$ inside $T$, all chips at $\Red_T(p)$ are consumed. Therefore, the degree $\deg_p(\Red_T)$ of $\Red_T$ at $p$ is exactly $\Red_T(p)(p)$. 
\item We will modify $\Gamma$ at all those points $p$ with nontrivial $\tau_p$. Note that this is a little different from what we have done in the proof of Lemma~\ref{L:Pseudo} where modifications are taken only to non-harmonic points (recall that $\tau_p$ being a constant function also implies that $\Red_T$ is harmonic at $p$). 
\item Let $p$ be a point with nontrivial $\tau_p$. For each $\vt' \in \Tan_{\Gamma'}(\phi(p))$, $U_{\vt'}$ is the connected component of $T\setminus \Red_T(p)$ corresponding to $\vt'$. We attach $\tau_p(U_{\vt'})$ copies of $U_{\vt'}$ to $\Gamma$ at $p$ as extra branches. 
\item After such attachments for all $p$ with nontrivial $\tau_p$, we obtain a modification $\Gamma^{\Mod}$ of $\Gamma$. Incorporating the natural identification of each extra branch with the connected component below, we extend the reduced divisor map $\Red_T$ to a new pseudo-harmonic morphism $\Red_T^{\Mod}:\Gamma^{\Mod}\to T$. 
\item $\Red_T^{\Mod}$ is then harmonic at all points $p\in\Gamma$ of degree $\Red_T(p)(p)$ at $p$. Since $\Red_T^{\Mod}$ is automatically harmonic at the points in the newly attached branches of degree $1$, $\Red_T^{\Mod}$ is overall a harmonic morhpism and the degree of $\Red_T^{\Mod}$ is identical to $d$ which is the degree of $T$. 
\end{enumerate}

For (2), extend the pseudo-harmonic morphism  $\phi:\Gamma\to\ T$  to a harmonic morphism $\phi^{\Mod}:\Gamma^{\Mod}\to\ T$ of degree $d$.  Let $\gamma$ be the natural retraction of divisors on $\Gamma^{\Mod}$ to divisors on $\Gamma$. Choose an arbitrary point $x\in T$, let $D_x^{\Mod}$ be the pullback divisor of the divisor $(x)$ on $T$ by $\phi^{\Mod}$ and let $D_x=\gamma(D_x^{\Mod})$. Then $D_x$ is an effective divisor on $\Gamma$ of degree $d$ and clearly $p\in\supp(D_{\phi(p)})$ for all $p\in\Gamma$. For every two points $x_1,x_2\in T$, denote the segment connecting $x_1$ and $x_2$ in $T$ by $[x_1,x_2]_T$ and let $\rho$ be the distance between $x_1$ and $x_2$ in $T$. Claim that the tropical segment $[D_{x_1},D_{x_2}]$ is exactly $\{D_x\mid x\in[x_1,x_2]_T\}$ which will imply that $T$ is a tropical tree with each $x\in T$ identified with $D_x$. By sending each $p\in\Gamma$ to the distance between $x$ and $x_1$ where $x$ is the retraction of $\phi(p)$ to the segment $[x_1,x_2]_T$, we can derive a rational function $f$ on $\Gamma$. It can be easily verified that $\divf(f)=D_{x_2}-D_{x_1}$ and $P_{D_{x_2}-D_{x_1}}(t)=D_x$ where $t\in[0,\rho]$ and $x$ is the unique point of distance $t$ to $x_1$ in the segment $[x_1,x_2]_T$. Now $T$ is a tropical tree and $p\in\supp(D_{\phi(p)})$ for all $p\in\Gamma$. Therefore, by Proposition~\ref{P:CritRedDivMap}, we conclude that $T$ is a dominant tropical tree and $\phi$ is exactly the reduced divisor map to $T$. 
\end{proof}

\begin{example}
In Figure~\ref{F:RedHarm}, we give two examples of reduced divisor maps with extensions to harmonic morphisms. 
In Example~\ref{E:RedMap}, we have discussed the reduced divisor map $\Red_{[D_1,D_3]}$ to the tropical segment $[D_1,D_3]$ and the reduced divisor map $\Red_T$ to the tropical tree $T=\tconv(\{D_{12},D_{23},D_{13}\})$. Note that $[D_1,D_3]$ and $T$ are both dominant tropical trees. Thus $\Red_{[D_1,D_3]}$  and $\Red_T$ are both pseudo-harmonic. 
\begin{enumerate}
\item As shown in  Figure~\ref{F:RedHarm}(a), the expansion factor of $\Red_{[D_1,D_3]}$ along the segment $v_1w_{13}v_3$ is $2$ and the expansion factor of $\Red_{[D_1,D_3]}$ along the segment $v_1v_2v_3$ is $1$, since the chip-firing from $D_1$ to $D_3$ moves two chips along $v_1w_{13}v_3$ and one chip along $v_1v_2v_3$. $\Red_{[D_1,D_3]}$ is harmonic at all points and thus is a harmonic morphism.
\item The reduced divisor map $\Red_T$ is illustrated in Figure~\ref{F:RedHarm}(b) (middle panel). By analyzing the chip-firing moves from $D_0$ to $D_{12}$, from $D_0$ to $D_{23}$ and from $D_0$ to $D_{13}$, we see that the expansion factors along all segments $v_1w_{12}$, $v_1w_{13}$, $v_2w_{12}$, $v_2w_{23}$, $v_3w_{13}$ and $v_3w_{23}$ are identical to $1$. Note that $\Red_T^{-1}(D_0)=\{v_1,v_2,v_3\}$.  Therefore, the only non-harmonic points are $v_1$, $v_2$ and $v_3$. The right panel of Figure~\ref{F:RedHarm}(b) shows an extension of $\Red_T$ to a degree-$3$ harmonic morphism. The extra attachments in the modification of $\Gamma$ include an attachment of  a copy of $D_0D_{23}$ at $v_1$, an attachment of  a copy of $D_0D_{13}$ at $v_2$ and an attachment of  a copy of $D_0D_{12}$ at $v_3$. 
\end{enumerate}
\end{example}

\subsection{Stable Gonality and  Geometric Rank Functions}
The gonality of an algebraic curve $C$ has several interpretations such as the minimum degree of a rank-$1$ linear system on $C$ or the minimum degree of a morphism from $C$ to the projective line $\mbbP^1$. Accordingly, there are several types of gonality defined on finite graphs or metric graphs, e.g., the divisorial gonality \cite{Baker08} and the stable gonality \cite{CKK15}. However, unlike curve gonality, these definitions of gonality in the case of finite graphs or metric graphs are not equivalent \cite{Caporaso14}. 

In their foundational paper on graph-theoretical Riemann-Roch theory\cite{BN07}, Baker and Norine introduced a rank function on the set of divisors for a finite graph. In the context of metric graphs, the Baker-Norine rank function or combinatorial rank function $r_c:\Div(\Gamma) \to \mbbZ$ on a metric graph $\Gamma$ is defined as follows:
for a divisor $D\in \Div(\Gamma)$, if the complete linear system $|D|$ is empty, then $r_c(D)=-1$, and  otherwise, $r_c(D)$ is the greatest non-negative integer $r$ such that for every $E\in\DivPlusR(\Gamma)$, there exists a divisor $D'\in|D|$ such that $E\leq D'$. Essentially, this combinatorial rank function can be consider to be a function on the set of complete linear systems since all divisors in a complete linear system have the same rank. The \emph{divisorial gonality} of $\Gamma$ is defined as the minimum degree $d$ such that there exists a complete linear system on $\Gamma$ of degree $d$ and rank at least $1$. 

Recall that we have defined linear systems in general as tropical polytopes contained in some complete linear systems in Definition~\ref{D:LinSys}. Therefore, the combinatorial rank function can be generalized as a function on the set of linear systems.
\begin{definition}
For  a linear system $T$ on a metric graph $\Gamma$, the combinatorial rank $r_c(T)$ of $T$ is defined as follows:
\begin{enumerate}
\item If $T$ is the empty linear system, then $r_c(T)=-1$;
\item Otherwise, $r_c(T)=\max\{r\mid\forall E\in \DivPlusR(\Gamma), \exists D'\in T\ \mbox{such that}\ E\leq D'\}$.
\end{enumerate}
\end{definition}

The following notion of stable gonality of metric graphs comes from \cite{CKK15}. 
\begin{definition} \label{D:StaGonality}
A metric graph $\Gamma$ is \emph{stably $d$-gonal} if it admits a degree-$d$ harmonic morphism from a modification $\Gamma^{\Mod}$ of $\Gamma$ to a metric tree. The \emph{stable gonality} of $\Gamma$ is the minimum degree $d$ such that there exists a harmonic morphism of degree $d$ from a modification of $\Gamma$ to a metric tree. 
\end{definition}

Now we introduce a new rank function called geometric rank function here. 
\begin{definition} \label{D:GeoRank}
For  a linear system $T$ on a metric graph $\Gamma$, the \emph{geometric rank} $r_g(T)$ of $T$ is defined as follows:
\begin{enumerate}
\item If $T$ is the empty linear system, then $r_g(T)=-1$;
\item Otherwise, $r_g(T)$ is the maximum of the integers $r$ such that there exists a map $\pi:\DivPlusR\rightarrow T$ with the following conditions satisfied:
\begin{enumerate}
  \item $\pi$ is continuous.
  \item For every $E\in\DivPlusR(\Gamma)$, $E\leqslant\pi(E)$.
  \item The image $\pi(\DivPlusR(\Gamma))$ is tropically convex.
\end{enumerate}
In particular, we say $\pi$ is a \emph{rank-$r$} map to $T$.
\end{enumerate}
Moreover, the geometric rank of a divisor $D\in\Div(\Gamma)$ is defined to be the geometric rank of the complete linear system $|D|$. 
\end{definition}

\begin{proposition} \label{P:SGon}
$\Gamma$ is stable $d$-gonal if and only if $\Gamma$ admits a degree-$d$ dominant tropical tree if and only if there exists a divisor  $D\in\Div(\Gamma)$ of degree $d$ and geometric rank $1$. 
\end{proposition}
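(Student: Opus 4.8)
\textbf{Proof plan for Proposition~\ref{P:SGon}.}
The plan is to prove the chain of equivalences by treating it as two separate biconditionals and relying heavily on the machinery already built up in this section. The first equivalence, that $\Gamma$ is stably $d$-gonal if and only if $\Gamma$ admits a degree-$d$ dominant tropical tree, is essentially a repackaging of Theorem~\ref{T:RedHarm}. First I would unwind the definitions: by Definition~\ref{D:StaGonality}, $\Gamma$ is stably $d$-gonal exactly when there is a degree-$d$ harmonic morphism $\phi^{\Mod}:\Gamma^{\Mod}\to T$ from a modification to a metric tree $T$. By Lemma~\ref{L:Pseudo}, such harmonic morphisms are, up to the modification process, the same data as pseudo-harmonic morphisms $\phi:\Gamma\to T$ that extend to degree-$d$ harmonic morphisms. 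Theorem~\ref{T:RedHarm}(2) then embeds $T$ isometrically into a degree-$d$ complete linear system as a dominant tropical tree with $\phi=\Red_T$, giving the forward direction; Theorem~\ref{T:RedHarm}(1) supplies the reverse direction, since any degree-$d$ dominant tropical tree $T\subseteq\DivPlusD(\Gamma)$ yields $\Red_T:\Gamma\to T$ which extends to a degree-$d$ harmonic morphism. I would state these two applications of Theorem~\ref{T:RedHarm} explicitly and note that the degree $d$ matches on both sides by construction.

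The second equivalence, relating the existence of a degree-$d$ dominant tropical tree to the existence of a divisor $D$ of degree $d$ with geometric rank $1$, is where I expect the real work to be, and it ties back to Definition~\ref{D:GeoRank}. For the direction from a dominant tropical tree to geometric rank $1$, suppose $T\subseteq|D|$ is a degree-$d$ dominant tropical tree for some divisor $D$. I would construct a rank-$1$ map $\pi:\DivPlusR(\Gamma)\to T$ witnessing $r_g(|D|)\geq 1$. The natural candidate is to first reduce an arbitrary $E\in\DivPlusR(\Gamma)$ to a point of $\Gamma$ in its support (or more carefully, to use the reduced divisor map evaluated at points of $\supp(E)$) and then project into $T$; since $T$ is dominant, Proposition~\ref{P:DomTreeCrit} guarantees $p\in\supp(\Red_T(p))$ for every $p$, which is exactly what forces the domination condition $E\leq\pi(E)$ to be arrangeable. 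The image is tropically convex because it lands in the tropical tree $T$, and continuity follows from continuity of tropical projections and of $\Red_T$. To see that the rank is at least $1$ and not merely $0$, I would use that a dominant tropical tree has $\supp(T)=\Gamma$ and dimension at least one, so the map is genuinely nonconstant in the required sense.

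For the converse direction, suppose $D$ has degree $d$ and geometric rank $1$, witnessed by a rank-$1$ map $\pi:\DivPlusR(\Gamma)\to T'$ where $T'=\pi(\DivPlusR(\Gamma))$ is tropically convex, contained in $|D|$, and satisfies $E\leq\pi(E)$ for all $E$. I would argue that $T'$ contains, or can be refined to, a degree-$d$ dominant tropical tree. The key point is that the domination property $E\leq\pi(E)$ applied to $E=(p)$ for each $p\in\Gamma$ forces $p\in\supp(\pi((p)))\subseteq\supp(T')$, so $\supp(T')=\Gamma$; this is precisely the dominance condition. The main obstacle, and the step I expect to require the most care, is passing from an arbitrary tropically convex image $T'$ of geometric rank $1$ to an honest \emph{tropical tree} in the sense of Definition~\ref{D:TropTree}: geometric rank $1$ controls the covering/domination behavior but does not a priori force the one-dimensional tree structure. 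I would handle this by taking a suitable one-dimensional tropically convex subset of $T'$ — for instance a tropical segment or the tropical convex hull of two well-chosen extremals — and verifying dominance is preserved, invoking Proposition~\ref{P:CritRedDivMap} to recognize the resulting object as a dominant tropical tree and its defining map as a reduced divisor map. Throughout I would lean on Theorem~\ref{T:FiniteCriterion} and Corollary~\ref{C:RedVal} to translate the inequality $E\leq\pi(E)$ into statements about minimizers $\Gmin(f_{D'-E})$, which is the language in which dominance and the tree criteria are most naturally checked.
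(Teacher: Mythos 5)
Your first equivalence and the forward half of the second follow the paper exactly: Theorem~\ref{T:RedHarm} identifies stable $d$-gonality with the existence of a degree-$d$ dominant tropical tree, and for such a tree $T$ the reduced divisor map $\Red_T$ is a rank-$1$ map in the sense of Definition~\ref{D:GeoRank} (for $r=1$ the domain $\DivPlus^1(\Gamma)$ is just $\Gamma$, so your preliminary ``reduce $E$ to a point of its support'' step is vacuous; continuity, $p\in\supp(\Red_T(p))$ from Proposition~\ref{P:DomTreeCrit}, and surjectivity onto the tropically convex set $T$ from Lemma~\ref{L:TropTreeSurj} give the three conditions).

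The converse half of the second equivalence is where your plan has a genuine gap. You propose to pass from the tropically convex image $T'$ of a rank-$1$ map to a dominant tropical tree by extracting ``a suitable one-dimensional tropically convex subset of $T'$'' and ``verifying dominance is preserved.'' Dominance is not inherited by tropically convex subsets: in Figure~\ref{F:gonality} the tropical segment $[E_1,E_3]$ sits inside a complete linear system whose support is all of $\Gamma$, yet it is not dominant, so a tropical segment or the hull of two extremals of $T'$ has no reason to remain dominant, and you give no mechanism for choosing one that does. More importantly, the extraction is unnecessary and is not what the paper does: a rank-$1$ map is precisely a continuous map $\omega:\Gamma\to|D|$ with tropically convex image satisfying $\omega(p)\in T_p$ for every $p$, which is condition (2) of Proposition~\ref{P:CritRedDivMap}, and the implication (2)$\Rightarrow$(1) there already shows that the image \emph{itself} is a dominant tropical tree with $\omega=\Red_T$. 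The one-dimensional tree structure you are worried about is exactly what the (2)$\Rightarrow$(3)$\Rightarrow$(1) argument inside Proposition~\ref{P:CritRedDivMap} establishes; the paper's proof of Proposition~\ref{P:SGon} is nothing more than Theorem~\ref{T:RedHarm} plus this direct application of Proposition~\ref{P:CritRedDivMap}, so replacing that application by your subset-extraction leaves the argument incomplete.
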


\begin{proof}
By  Theorem~\ref{T:RedHarm}, we see that $\Gamma$ is stable $d$-gonal if and only if $\Gamma$ admits a degree-$d$ dominant tropical tree $T$. By Proposition~\ref{P:CritRedDivMap}, the reduced divisor map to $T$ is exactly a rank-$1$ map in Definition~\ref{D:GeoRank}. Therefore, the existence of a degree-$d$ dominant tropical tree is equivalent to the existence of a divisor  $D\in\Div(\Gamma)$ of degree $d$ and geometric rank $1$. 
\end{proof}

An immediate consequence of Proposition~\ref{P:SGon} is that the stable gonality is larger than or equal to the divisorial gonality, while the inequality can be strict as shown in the following example.

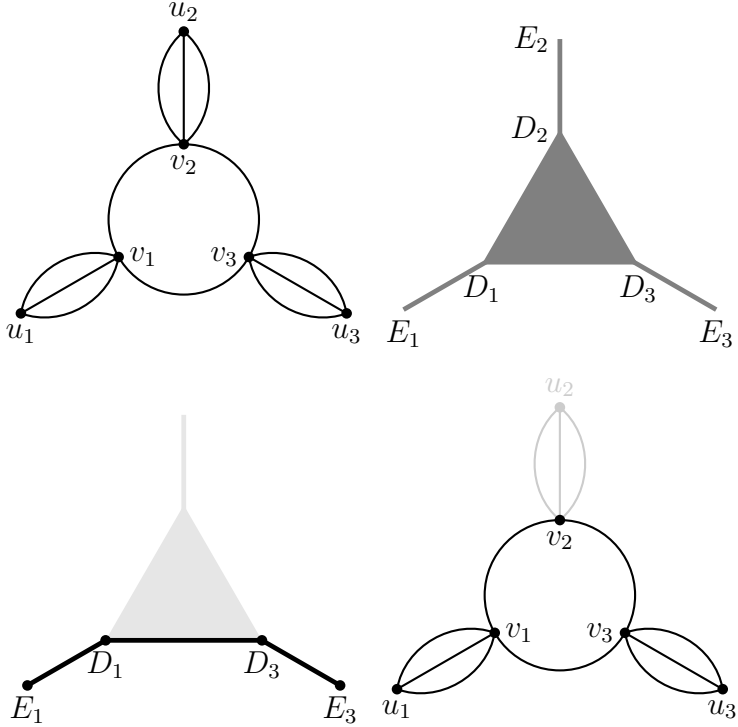
\begin{figure}
\centering
\begin{tikzpicture}
\begin{scope}
  \coordinate (center) at (0,0);
  \def\radius{1};
  \def\sca{2.5};
  \coordinate (v1) at ($(center)+(210:\radius)$);
  \coordinate (v2) at ($(center)+(90:\radius)$);
  \coordinate (v3) at ($(center)+(-30:\radius)$);
    \coordinate (u1) at ($(center)+(210:\sca*\radius)$);
  \coordinate (u2) at ($(center)+(90:\sca*\radius)$);
  \coordinate (u3) at ($(center)+(-30:\sca*\radius)$);

  \draw[thick] (center) circle[radius=\radius];

  \fill[black] (v1) circle[radius=2pt];
  \fill[black] (v2) circle[radius=2pt];
  \fill[black] (v3) circle[radius=2pt];
   \fill[black] (u1) circle[radius=2pt];
  \fill[black] (u2) circle[radius=2pt];
  \fill[black] (u3) circle[radius=2pt];

  \foreach \i in {1,2,3}
  \path [thick]
		(u\i) edge (v\i)
		(u\i) edge [bend left=50] (v\i)
		(u\i) edge [bend right=50] (v\i);
  
  \draw [anchor=west] (v1) node {$v_1$};
    \draw [anchor=north] (v2) node {$v_2$};
    \draw [anchor=east] (v3) node {$v_3$};
      \draw [anchor=north] (u1) node {$u_1$};
    \draw [anchor=south] (u2) node {$u_2$};
    \draw [anchor=north] (u3) node {$u_3$};

\end{scope}

\begin{scope}[shift={(5,0)}]
  \coordinate (center) at (0,0);
  \def\radius{1.2};
  \def\sca{2};
  \coordinate (D1) at ($(center)+(210:\radius)$);
  \coordinate (D2) at ($(center)+(90:\radius)$);
  \coordinate (D3) at ($(center)+(-30:\radius)$);
    \coordinate (E1) at ($(center)+(210:\sca*\radius)$);
  \coordinate (E2) at ($(center)+(90:\sca*\radius)$);
  \coordinate (E3) at ($(center)+(-30:\sca*\radius)$);
  \coordinate (D12) at ($(center)+(150:\radius/2)$);
  \coordinate (D23) at ($(center)+(30:\radius/2)$);
  \coordinate (D13) at  ($(center)+(-90:\radius/2)$);
   
  \fill [black!50,opacity=1]   (D1) --(D2) -- (D3)-- (D1);
  \draw [black!50, line width = 1.8pt]  (center) -- (E1);
  \draw [black!50, line width = 1.8pt]  (center) -- (E2);
  \draw [black!50, line width = 1.8pt]  (center) -- (E3);

  \draw [anchor=north] (D1) node {$D_1$};
    \draw [anchor=east] (D2) node {$D_2$};
    \draw [anchor=north] (D3) node {$D_3$};
      \draw [anchor=north] (E1) node {$E_1$};
    \draw [anchor=east] (E2) node {$E_2$};
    \draw [anchor=north] (E3) node {$E_3$};
\end{scope}

\begin{scope}[shift={(0,-5)}]
  \coordinate (center) at (0,0);
  \def\radius{1.2};
  \def\sca{2};
  \coordinate (D1) at ($(center)+(210:\radius)$);
  \coordinate (D2) at ($(center)+(90:\radius)$);
  \coordinate (D3) at ($(center)+(-30:\radius)$);
    \coordinate (E1) at ($(center)+(210:\sca*\radius)$);
  \coordinate (E2) at ($(center)+(90:\sca*\radius)$);
  \coordinate (E3) at ($(center)+(-30:\sca*\radius)$);
  \coordinate (D12) at ($(center)+(150:\radius/2)$);
  \coordinate (D23) at ($(center)+(30:\radius/2)$);
  \coordinate (D13) at  ($(center)+(-90:\radius/2)$);
   
  \fill [black!10,opacity=1]   (D1) --(D2) -- (D3)-- (D1);

  \draw [black!10, line width = 1.8pt]  (center) -- (E2);
  \draw [black, line width = 1.8pt]  (E1) -- (D1) -- (D3) -- (E3);

  \fill (D1) circle[radius=2pt];
  \fill (D3) circle[radius=2pt];
    \fill (E1) circle[radius=2pt];
  \fill (E3) circle[radius=2pt];

  \draw [anchor=north] (D1) node {$D_1$};
    \draw [anchor=north] (D3) node {$D_3$};
      \draw [anchor=north] (E1) node {$E_1$};

    \draw [anchor=north] (E3) node {$E_3$};
\end{scope}

\begin{scope}[shift={(5,-5)}]
  \coordinate (center) at (0,0);
  \def\radius{1};
  \def\sca{2.5};
  \coordinate (v1) at ($(center)+(210:\radius)$);
  \coordinate (v2) at ($(center)+(90:\radius)$);
  \coordinate (v3) at ($(center)+(-30:\radius)$);
    \coordinate (u1) at ($(center)+(210:\sca*\radius)$);
  \coordinate (u2) at ($(center)+(90:\sca*\radius)$);
  \coordinate (u3) at ($(center)+(-30:\sca*\radius)$);
  
\path [black!20,thick]
		(u2) edge (v2)
		(u2) edge [bend left=50] (v2)
		(u2) edge [bend right=50] (v2);
		
\fill[black!20] (u2) circle[radius=2pt];
 \draw [black!20,anchor=south] (u2) node {$u_2$};
   
  \draw[thick] (center) circle[radius=\radius];

  \fill[black] (v1) circle[radius=2pt];
  \fill[black] (v2) circle[radius=2pt];
  \fill[black] (v3) circle[radius=2pt];
   \fill[black] (u1) circle[radius=2pt];
  \fill[black] (u3) circle[radius=2pt];
  
  \foreach \i in {1,3}
  \path [thick]
		(u\i) edge (v\i)
		(u\i) edge [bend left=50] (v\i)
		(u\i) edge [bend right=50] (v\i);
  
  \draw [anchor=west] (v1) node {$v_1$};
    \draw [anchor=north] (v2) node {$v_2$};
    \draw [anchor=east] (v3) node {$v_3$};
      \draw [anchor=north] (u1) node {$u_1$};
    \draw [anchor=north] (u3) node {$u_3$};

\end{scope}
\end{tikzpicture} 
\caption{An example of a metric graph of divisorial gonality of $3$ and stable gonality $4$. A complete linear system $|D|$ whose combinatorial rank is $1$ and geometric rank is $0$ is shown. Divisors $D_1=3(v_1)$, $D_2=3(v_2)$, $D_3=3(v_3)$, $E_1=3(u_1)$, $E_2=3(u_2)$ and $E_3=3(u_3)$ are all elements of $|D|$.} \label{F:gonality}
\end{figure}

\begin{example}
Figure~\ref{F:gonality} shows an example of a metric graph $\Gamma$ of divisorial gonality $3$ and stable gonality $4$. This example also appears in  \cite{ABBR15_2} (Example~5.13). $\Gamma$ is genus-$7$ metric graph made of a circle attached with three banana graphs of genus $2$. We let $u_1$, $u_2$, $u_3$, $v_1$, $v_2$ and $v_3$ be the vertices of $\Gamma$ with all edge lengths being identical. We consider the linearly equivalent divisors $D_1=3(v_1)$, $D_2=3(v_2)$, $D_3=3(v_3)$, $E_1=3(u_1)$, $E_2=3(u_2)$ and $E_3=3(u_3)$.  In Figure~\ref{F:gonality}, the complete linear system $|D|$ which contains $D_1$, $D_2$, $D_3$, $E_1$, $E_2$ and $E_3$ is shown in the upper-right panel, the tropical segment $[E_1,E_3]$ is shown in the lower-left panel with its support shown in the lower-right panel. As a tropical tree, $[E_1,E_3]$ is maximal but not dominant. On the other hand, $\supp(|D|)=\Gamma$ and it can be easily verified the $|D|$ is the only complete linear system of degree at most $3$ and combinatorial rank at least $1$. Therefore, the divisorial gonality of $\Gamma$ is $3$. On the other hand, $|D|$ contains no degree-$3$ dominant tropical tree, which means that the geometric rank of $|D|$ is $0$ and the stable gonality is strictly larger than $3$. Actually, it can be verified  that $\tconv(\{E_1+(u_2),E_3+(u_2), 2(w_{13})+2(x_1), 2(w_{13})+2(x_2), 2(w_{13})+2(x_3)\})$ is a tropical dominant tree of degree $4$ where $w_{13}$ is the middle point of $v_1v_3$, and $x_i$'s are the middle points of the three edges between $u_2$ and $v_2$ respectively. Therefore the stable gonality is $4$ by Proposition~\ref{P:SGon}. 
\end{example}

Lastly, we note that a dominant tropical tree or the image of a rank-$1$ map in Definition~\ref{D:GeoRank} is the degeneration of a linear series of rank $1$ on an algebraic curve $C$ which degenerates to $\Gamma$. A conjecture is that in general, the image of a rank-$r$ map is the degeneration of a linear series of rank $r$ on $C$.

\section*{Acknowledgments}
We are very thankful to Matthew Baker for his encouragement and support of broadening our original results of tropical convexity in the context of metric graphs \cite{Luo13} to the more general context of analysis. 

\bibliographystyle{alpha}
\bibliography{citation}

\appendix
\section{Potential Theory on Metric Graphs} \label{S:potential}
We list here some standard terminologies and basic facts concerning potential theory on metric graphs. The reader may refer \cite{BF06,BR07,BS13} for details.

For a metric graph $\Gamma$, we let $C(\Gamma)$ be the $\mbbR$-algebra of continuous real-valued functions on $\Gamma$, and let $CPA(\Gamma)\subset C(\Gamma)$ be the vector space consisting of all continuous piecewise-affine  functions on $\Gamma$. Note that $CPA(\Gamma)$ is dense in $C(\Gamma)$. Let $\measure(\Gamma)$ be the vector space of finite signed Borel measures of total mass zero on $\Gamma$. Denote by $\mbbR\in C(\Gamma)$ the space of constant functions on $\Gamma$.

In terms of electric network theory, we may think of $\Gamma$ as an electrical network with resistances given by the edge lengths. For $p,q,x\in \Gamma$, we define a $j$-function $j_q(x,p)$ as the potential at $x$ when one unit of current enters the network at $p$ and exits at $q$ with $q$ grounded (potential $0$).

We summarize some properties of  $j$-functions as follows.
\begin{enumerate}
\item $j_q(x,p)$ is jointly continuous in $p$, $q$ and $x$.
\item $j_q(x,p)\in\CPA$.
\item $j_q(q,p)=0$.
\item $0\leqslant j_q(x,p) \leqslant j_q(p,p)$.
\item $j_q(x,p)=j_q(p,x)$.
\item $j_q(x,p)+j_p(x,q)$ is constant for all $x\in\Gamma$. Denoted by $r(p,q)$, this constant is the effective resistance between $p$ and $q$.
\item $r(p,q)=j_q(p,p)=j_p(q,q)$.
\item $r(p,q)\leqslant\dist(p,q)$ where $\dist(p,q)$ is the distance between $p$ and $q$ on $\Gamma$.
\item $\frac{r(p,q)}{\dist(p,q)}\to 1$ as $\dist(p,q)\to 0$.
\end{enumerate}

Let $BDV(\Gamma)$ be the vector space of functions of bounded differential variation \cite{BR07}. Then we have $CPA(\Gamma)\subset BDV(\Gamma)\subset C(\Gamma)$.

The Laplacian $\Delta: BDV(\Gamma)\rightarrow\measure(\Gamma)$ is defined as an operator in the following sense.
\begin{enumerate}
\item $\Delta$ induces an isomorphism between $BDV(\Gamma)/\mR$ and $\measure(\Gamma)$ as vector spaces.
\item For $f\in CPA(\Gamma)$,  we have $$\Delta f=\sum_{p\in\Gamma}\sigma_p(f)\delta_p$$ where $-\sigma_p(f)$ is the sum of the slopes of $f$ in all tangent directions emanating from $p$ and $\delta_p$ is the Dirac measure (unit point mass) at $p$. In particular, $\Delta j_q(x,p)=\delta_p(x)-\delta_q(x)$.
\item An inverse to $\Delta$ is given by $$\nu\mapsto\int_{\Gamma}j_q(x,y)d\nu(y)\in\{f\in BDV(\Gamma):f(q)=0\}.$$
\end{enumerate}

\section{Equivalence of Topologies on Metric Graphs} \label{S:EquiTop}
\begin{proposition}
On $\DivPlusD(\Gamma)$, the metric topology is the same as the induced topology as a $d$-fold symmetric product of $\Gamma$.
\end{proposition}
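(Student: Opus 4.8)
The plan is to produce an explicit homeomorphism and reduce everything to a single potential-theoretic inequality, then invoke compactness. First I would consider the summation map $\Phi\colon\Gamma^d\to\DivPlusD(\Gamma)$ defined by $\Phi(p_1,\dots,p_d)=\sum_{i=1}^d(p_i)$. It is invariant under the coordinate-permuting action of $S_d$, and two tuples share an image precisely when they are permutations of one another, so $\Phi$ descends to a bijection $\bar\Phi\colon\Gamma^d/S_d\to\DivPlusD(\Gamma)$ (every effective divisor of degree $d$ is a sum of $d$ points counted with multiplicity). Because $\Gamma$ is compact, $\Gamma^d$ and hence the quotient $\Gamma^d/S_d$ are compact, while $(\DivPlusD(\Gamma),\rho)$ is a metric space and so is Hausdorff. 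Consequently, once $\bar\Phi$ is shown to be continuous, the standard fact that a continuous bijection from a compact space onto a Hausdorff space is a homeomorphism will complete the argument.

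The heart of the matter is the continuity of $\Phi$ with respect to the metric $\rho$ on the target, which I would deduce from the following one-point estimate: for all $p,p'\in\Gamma$,
\begin{equation*}
\Vert f_{(p')-(p)}\Vert = r(p,p')\le\dist(p,p').
\end{equation*}
To see this, note that $f_{(p')-(p)}$ and the function $j_p(\cdot,p')$ both have Laplacian $\delta_{p'}-\delta_p$, so they differ by a constant and have the same tropical norm. Writing $g=j_p(\cdot,p')$, the properties of $j$-functions give $0\le g(x)\le j_p(p',p')=r(p,p')$ together with $g(p)=0$ and $g(p')=r(p,p')$, whence $\Vert g\Vert=\max(g)-\min(g)=r(p,p')$; the final inequality is property (8) of the appendix.

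With this in hand, continuity follows by changing one point at a time. Given $D=\sum_i(p_i)$ and $D'=\sum_i(p'_i)$, set $D^{(j)}=\sum_{i\le j}(p'_i)+\sum_{i>j}(p_i)$, so that $D^{(0)}=D$, $D^{(d)}=D'$, and $D^{(j)}-D^{(j-1)}=(p'_j)-(p_j)$. The triangle inequality for $\rho$ together with the estimate above yields
\begin{equation*}
\rho(D,D')\le\sum_{j=1}^d\rho(D^{(j-1)},D^{(j)})=\sum_{j=1}^d\Vert f_{(p'_j)-(p_j)}\Vert\le\sum_{j=1}^d\dist(p_j,p'_j).
\end{equation*}
Thus $\Phi$ is continuous from $\Gamma^d$ (product topology) into $(\DivPlusD(\Gamma),\rho)$, and since $\Phi$ factors through the quotient map $\Gamma^d\to\Gamma^d/S_d$, the induced map $\bar\Phi$ is continuous as well. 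Combining this with the compactness and Hausdorff observations above finishes the proof. The only genuinely nontrivial input is the displayed estimate, and even that is immediate once one recognizes $f_{(p')-(p)}$ as the electrical potential $j_p(\cdot,p')$; I expect no further obstacles, the remaining steps being formal point-set topology.
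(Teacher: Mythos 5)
Your proof is correct, but it reaches the conclusion by a genuinely different route than the paper. The forward implication (symmetric-product convergence implies $\rho$-convergence) is the same in both arguments: you both reduce to the one-point estimate $\rho((p),(p'))=r(p,p')\le\dist(p,p')$ and chain it over the $d$ points, and your identification of $f_{(p')-(p)}$ with $j_p(\cdot,p')$ via the Laplacian is a clean justification of that estimate. The difference is in the hard direction. The paper proves it constructively: given $\rho(D^{(n)},D)\to 0$ it runs an explicit slope-chasing induction through the vertices of $\Gamma$ to match each $q_i\in\supp D$ with a point $p_i^{(n)}\in\supp D^{(n)}$ satisfying $\dist(p_i^{(n)},q_i)\le C_i\cdot\rho(D^{(n)},D)$ with explicit constants $C_i$ depending on the maximal valence and the number of vertices. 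You instead observe that the summation map descends to a continuous bijection $\bar\Phi:\Gamma^d/S_d\to(\DivPlusD(\Gamma),\rho)$ from a compact space (using compactness of $\Gamma$) onto a Hausdorff space (using that $\rho$ is a genuine metric, which follows from injectivity of the Laplacian on $BDV(\Gamma)/\mbbR$ --- worth a sentence in a final write-up), and invoke the standard compact-to-Hausdorff homeomorphism criterion. Your argument is shorter and avoids the combinatorial bookkeeping entirely; what it gives up is the quantitative content of the paper's proof, namely the explicit bi-Lipschitz-type comparison showing that supports can be matched within a uniform constant multiple of the tropical distance. Both proofs are valid for the stated topological claim.
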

\begin{proof}
Denote the first topology by $\mathcal{T}_1$ and the second by $\mathcal{T}_2$. To show $\mathcal{T}_1=\mathcal{T}_2$, it suffices to show that for a divisor $D=\sum_{i=1}^d (q_i)$ with $q_i\in\Gamma$, a sequence $\{D^{(n)}\}_n$ converges to $D$ in $\mathcal{T}_2$ if and only if $\rho(D^{(n)},D)\to 0$. In addition, we note that to say $D^{(n)}\to D$ in $\mathcal{T}_2$ is equivalent to say that there exists $d$ sequences of points on $\Gamma$, $\{p_i^{(n)}\}_n$ for $i=1,\dots,d$, such that $D^{(n)}=\sum_{i=1}^d(p_i^{(n)})$ and $p_i^{(n)}\to q_i$ on $\Gamma$.

Suppose $D^{(n)}\to D$ in $\mathcal{T}_2$. Since $$\rho(D^{(n)},D)\leqslant\sum_{i=1}^d\rho((p_i^{(n)}),(q_i))=\sum_{i=1}^d r(p_i^{(n)},q_i) \leqslant\sum_{i=1}^d\dist(p_i^{(n)},q_i)$$ where $r(p_i^{(n)},q_i)$ is the effective resistance between $p_i^{(n)}$ and $q_i$ (see Section~\ref{S:potential}),
we conclude that $D^{(n)}\to D$ in $\mathcal{T}_1$.

Now suppose $D^{(n)}\to D$ in $\mathcal{T}_1$ which means $\rho(D^{(n)},D)=\max(\underline{f_{D^{(n)}-D}})\to 0$.
Considering the divisors $D$ and $D^{(n)}$, for each point $q_i\in\supp D$, we will associate a point $p_i^{(n)}\in\supp D^{(n)}$ with an procedure as follows.

Let $M$ be the maximum of degrees among all the points in $\Gamma$. This means each point $p\in\Gamma$ has at most $M$ adjacent edges. Denote the sum of slopes of $f_{D^{(n)}-D}$ for all outgoing directions from $p\in\Gamma$ by $\chi(p)$. Then $\chi(p)=D(p)-D^{(n)}(p)$. Let $V(\Gamma)$ be a vertex set of $\Gamma$.

First, we will determine $p_1^{(n)}$ for $q_1$.

If $q_1\in\supp D^{(n)}$, we let $p_1^{(n)}=q_1$.

Otherwise, we must have $\chi(q_1)\geqslant 1$ and there must be an outgoing direction $\vt_{q_1}$ from $q_1$ with a slope at least $1/M$. Let $w(q_1)\in V(\Gamma)$ be the adjacent vertex of $q_1$ in direction $\vt_{q_1}$. If there exists a point in $\supp D^{(n)}$ that lies in the half-open-half-closed segment $(q_1,w(q_1)]$, then we let $p_1^{(n)}$ be this point. Clearly, $f_{D^{(n)}-D}(q_1)<f_{D^{(n)}-D}(p_1^{(n)})$ and $\dist(p_1^{(n)},q_1)\leqslant M\cdot\rho(D^{(n)},D)$ in this case.

Otherwise, we must have $\chi(w(q_1))\geqslant 0$. Since the slope of the outgoing direction from $w(q_1)$ to $q_1$ is at most $-1/M$, the sum of slopes in the remaining outgoing directions from $w(q_1)$ is at least $1/M$ and there must be an outgoing direction $\vt_{w(q_1)}$ from $w(q_1)$ with a slope at least $1/(M(M-1))$. Let $w^2(q_1)\in V(\Gamma)$ be the adjacent vertex of $w(q_1)$ in direction $\vt_{w(q_1)}$. Following the same procedure, we let $p_1^{(n)}$ be a point contained in both $\supp D^{(n)}$ and $(w(q_1),w^2(q_1)]$ if their intersection is nonempty, and otherwise keep seeking $p_1^{(n)}$ in the next outgoing direction from $w^2(q_1)$ with slope at least $1/(M(M-1)^2)$.

The procedure must terminate in finitely many steps since we only have finitely many elements in $V(\Gamma)$. Let $N=|V(\Gamma)|$. We conclude that
we can find $p_1^{(n)}$ within $N$ steps and $\dist(p_1^{(n)},q_1)\leqslant C_1\cdot\rho(D^{(n)},D)$ where $C_1=M(M-1)^N$.

Next we will determine $p_i^{(n)}$ one by one inductively. Suppose for $i=2,\ldots,d'$ ($d'<d$), we have determined $p_i^{(n)}$ and known that $\dist(p_i^{(n)},q_i)\leqslant C_i\cdot\rho(D^{(n)},D)$ where $C_i$'s are constants. We let $D^{(n)}_{d'}=D^{(n)}-\sum_{i=1}^{d'}(p_i^{(n)})$ and
$D_{d'}=D-\sum_{i=1}^{d'}(q_i)$. Then
\begin{align*}
\rho(D^{(n)}_{d'},D_{d'})&\leqslant\rho(D^{(n)},D)+\sum_{i=1}^{d'}r(p_i^{(n)},q_i) \\
&\leqslant\rho(D^{(n)},D)+\sum_{i=1}^{d'}\dist(p_i^{(n)},q_i) \\
&=(1+\sum_{i=1}^{d'}C_i)\rho(D^{(n)},D).
\end{align*}
Following exactly the same procedure we used to seek $p_1^{(n)}$, we can find $p_{d'+1}^{(n)}\in\supp D^{(n)}_{d'}$ such that
$$\dist(p_{d'+1}^{(n)},q_{d'+1})\leqslant C_1\cdot\rho(D^{(n)}_{d'},D_{d'})=C_{d'+1}\cdot\rho(D^{(n)},D)$$ where $C_{d'+1}=C_1(1+\sum_{i=1}^{d'}C_i)$.

In this way, for each $D^{(n)}$, we can find $p_i^{(n)}$ such that $D^{(n)}=\sum_{i=1}^d(p_i^{(n)})$ and $\dist(p_i^{(n)},q_i)$ is bounded by $C_i\cdot\rho(D^{(n)},D)$. This means $D^{(n)}\rightarrow D$ in $\mathcal{T}_1$ implies $D^{(n)}\rightarrow D$ in $\mathcal{T}_2$.
\end{proof}

\end{document}